\newcommand{\fa}{\mathfrak{a}}
\newcommand{\fb}{\mathfrak{b}}
\newcommand{\fc}{\mathfrak{c}}
\newcommand{\fg}{\mathfrak{g}}
\newcommand{\fh}{\mathfrak{h}}
\newcommand{\fm}{\mathfrak{m}}
\newcommand{\fn}{\mathfrak{n}}
\newcommand{\fo}{\mathfrak{o}}
\newcommand{\fp}{\mathfrak{p}}
\newcommand{\fs}{\mathfrak{s}}
\newcommand{\fu}{\mathfrak{u}}
\newcommand{\fz}{\mathfrak{z}}
\newcommand{\fX}{\mathfrak{X}}
\newcommand{\bA}{\mathbb{A}}
\newcommand{\C}{\mathbb{C}}
\newcommand{\Q}{\mathbb{Q}}
\newcommand{\R}{\mathbb{R}}
\newcommand{\Z}{\mathbb{Z}}
\newcommand{\cA}{\mathcal{A}}
\newcommand{\cB}{\mathcal{B}}
\newcommand{\cE}{\mathcal{E}}
\newcommand{\cF}{\mathcal{F}}
\newcommand{\cH}{\mathcal{H}}
\newcommand{\cO}{\mathcal{O}}
\newcommand{\cS}{\mathcal{S}}
\newcommand{\cT}{\mathcal{T}}
\newcommand{\cU}{\mathcal{U}}
\newcommand{\rd}{\mathrm{d}}
\newcommand{\tS}{\mathtt{S}}
\newcommand{\rB}{\ensuremath{\mathrm{B}}}
\newcommand{\rG}{\ensuremath{\mathrm{G}}}
\newcommand{\rH}{\ensuremath{\mathrm{H}}}
\newcommand{\rR}{\ensuremath{\mathrm{R}}}
\newcommand{\rS}{\ensuremath{\mathrm{S}}}
 \DeclareMathOperator{\supp}{supp}
 \DeclareMathOperator{\vol}{vol}
\DeclareMathOperator{\Gal}{Gal}
\DeclareMathOperator{\GL}{GL}
\DeclareMathOperator{\U}{U}
\DeclareMathOperator{\gl}{\mathfrak{gl}}
\DeclareMathOperator{\Hom}{Hom}
\newtheorem{theorem}{Theorem}
\newtheorem{proposition}[theorem]{Proposition}
\newtheorem{lemma}[theorem]{Lemma}
\newtheorem{`conjecture'}[theorem]{``Conjecture''}
\newtheorem{corollary}[theorem]{Corollary}
\theoremstyle{definition}
\newtheorem{definition}[theorem]{Definition}
\newtheorem{remark}[theorem]{Remark}
\numberwithin{equation}{section}
\numberwithin{theorem}{section}
\renewcommand{\to}{%
   \ifbool{@display}{\longrightarrow}{\rightarrow}%
   }
\let\shortmapsto\mapsto
\renewcommand{\mapsto}{%
   \ifbool{@display}{\longmapsto}{\shortmapsto}%
   }
\newcommand{\hooklongrightarrow}{\mathrel{\mkern 0.5mu\lhook\mkern -3.5mu\relbar\mkern -3mu \rightarrow }}
\newcommand{\inj}{%
   \ifbool{@display}{\hooklongrightarrow}{\hookrightarrow}
   }
\newcommand{\isoarrow}{%
   \ifbool{@display}{\overset{\sim}{\longrightarrow}}{\xrightarrow\sim}%
   }
\newlength{\olen}
\newlength{\ulen}
\newlength{\xlen}
\newcommand{\xra}[2][]{%
   \ifbool{@display}%
      {\settowidth{\olen}{$\overset{#2}{\longrightarrow}$}%
       \settowidth{\ulen}{$\underset{#1}{\longrightarrow}$}%
       \settowidth{\xlen}{$\xrightarrow[#1]{#2}$}%
       \ifdimgreater{\olen}{\xlen}%
          {\underset{#1}{\overset{#2}{\longrightarrow}}}%
          {\ifdimgreater{\ulen}{\xlen}%
             {\underset{#1}{\overset{#2}{\longrightarrow}}}
             {\xrightarrow[#1]{#2}}}}%
      {\xrightarrow[#1]{#2}}
   }
\newcommand{\xyra}[2][]{%
   \settowidth{\xlen}{$\xrightarrow[#1]{#2}$}%
   \ifbool{@display}%
      {\settowidth{\olen}{$\overset{#2}{\longrightarrow}$}%
       \settowidth{\ulen}{$\underset{#1}{\longrightarrow}$}%
       \ifdimgreater{\olen}{\xlen}%
          {\mathrel{\xymatrix@M=.12ex@C=3.2ex{\ar[r]^-{#2}_-{#1} &}}}%
          {\ifdimgreater{\ulen}{\xlen}%
             {\mathrel{\xymatrix@M=.12ex@C=3.2ex{\ar[r]^-{#2}_-{#1} &}}}
             {\mathrel{\xymatrix@M=.12ex@C=\the\xlen{\ar[r]^-{#2}_-{#1} &}}}}}%
      {\mathrel{\xymatrix@M=.12ex@C=\the\xlen{\ar[r]^-{#2}_-{#1} &}}}%
   }
\newcommand{\xla}[2][]{%
   \ifbool{@display}%
      {\settowidth{\olen}{$\overset{#2}{\longleftarrow}$}%
       \settowidth{\ulen}{$\underset{#1}{\longleftarrow}$}%
       \settowidth{\xlen}{$\xleftarrow[#1]{#2}$}%
       \ifdimgreater{\olen}{\xlen}%
          {\underset{#1}{\overset{#2}{\longleftarrow}}}%
          {\ifdimgreater{\ulen}{\xlen}%
             {\underset{#1}{\overset{#2}{\longleftarrow}}}
             {\xleftarrow[#1]{#2}}}}%
      {\xleftarrow[#1]{#2}}
   }
\newcommand{\lra}{%
   \ifbool{@display}{\longleftrightarrow}{\leftrightarrow}%
   }
\begin{document}

\title{On the Geometric Side of the Jacquet-Rallis Relative Trace Formula}

\author{Weixiao Lu}
\address{Massachusetts Institute of Technology, Department of Mathematics, 77 Massachusetts Avenue, Cambridge, MA 02139, USA}
\email{weixiaol@mit.edu}

\date{\today}

\begin{abstract}

We study some aspects of the geometric side of the Jacquet-Rallis relative trace formula. Globally, we compute each geometric term of the Jacquet-Rallis relative trace formula on the general linear group for regular supported test functions. We prove that it can be described by the regular orbital integral. Locally, we show that the regular orbital integral can be compared with the semisimple orbital integral on the unitary group.

\end{abstract}

\maketitle

\tableofcontents

\section{Introduction}\label{sec:introduction}

The global Gan--Gross--Prasad(GGP) conjecture \cite{GGP12} relates the non-vanishing of period integral on classical groups to the non-vanishing of the central value of certain L-functions. In \cite{JR11}, Jacquet and Rallis proposed a relative trace formula(RTF) approach to the GGP conjecture for the Bessel periods on $\U(n) \times \U(n+1)$. In \cite{Zhang14}, Zhang solved the smooth transfer conjecture and proved the global Gan-Gross-Prasad conjecture for $\U(n) \times \U(n+1)$ under some local conditions as a consequence. 

We briefly review the Jacquet-Rallis RTF on the general linear group here. Let $E/F$ be a quadratic extension of number fields. For $k \in \Z_{\ge 1}$, let
\[
        \rG'_k := \mathrm{Res}_{E/F}\GL_{k,E}. 
    \]
Let $\rG'= \rG'_n \times \rG'_{n+1}$. $\rG'$ has two subgroups $\rH_1,\rH_2$, where
    \begin{equation} \label{eq:subgroup G'}
          \rH_1 = (h,\begin{pmatrix}
            h & \\ & 1
        \end{pmatrix})(h \in \rG'_n), \quad \rH_2 = \GL_{n,F} \times \GL_{n+1,F}.
    \end{equation}
Let $\bA=\bA_F$, the ad\`{e}le ring of $F$ and $[\rG'] := \rG'(F)\backslash \rG'(\bA)$. We write $\eta$ for the quadratic character on $\bA^\times$ associated with $E/F$. By an abuse of notation, we also write $\eta$ for the character on $\rH_2(\bA)$ defined by 
\[
    \eta(h_{2,n},h_{2,n+1}) = \eta(h_{2,n})^{n+1} \eta(h_{2,n+1})^n.
\]
Let $f \in \cS(\rG'(\bA))$ be a Schwartz function and let $K_f(x,y)$ be its automorphic kernel function:
    \[
        K_f(x,y) = \sum_{\gamma \in \rG'(F)} f(x^{-1}\gamma y) ,\quad (x,y) \in [\rG'] \times [\rG'].
    \]
Consider the distribution
    \begin{equation} \label{eq:I(f)}
         I(f) = \int_{[\rH_1]} \int_{[\rH_2]} K_f(h_1,h_2) \eta(h_2) \rd h_1 \rd h_2.
    \end{equation}
 Under some conditions on $f$, this integral is absolutely convergent and has a geometric and spectral expansion.

In ~\cite{Zydor20}, Zydor defined a regularization of the integral ~\eqref{eq:I(f)} for all compactly supported smooth functions $f \in C_c^\infty(\rG'(\bA))$, hence ~\eqref{eq:I(f)} makes sense for such $f$, and it has geometric and spectral expansion. The regularization and geometric/spectral expansions were extended by Beuzart-Plessis, Chaudouard, and Zydor in ~\cite{BPCZ} and ~\cite{CZ21} for general Schwartz functions, hence a full coarse Jacquet-Rallis RTF is established. Together with other techniques developed in ~\cite{Yun},\cite{BP21c},~\cite{Xue19},\cite{CZ21},~\cite{BPLZZ}, the endoscopic case of the global GGP conjecture for $\U(n) \times \U(n+1)$ was proved in ~\cite{BPCZ}. It was extended to certain Eisenstein series and higher corank cases by Beuzart-Plessis--Chaudouard ~\cite{BPC23}. In this article, we will study some aspects of the geometric side of the Jacquet--Rallis relative trace formula.

\subsection{Global Results}
Let $\rB=\rH_1 \backslash \rG' /\rH_2$ be the GIT quotient and let $q:\rG' \to \rB$ be the quotient map, for any $b \in \rB(F)$, we write $\rG'_b$ for the fiber of $b$ under the quotient map. Then there is a $b$-part of the distribution $I$, denoted by $I_b$, which is a regularization of the integral
\[
    \int_{[\rH_1]} \int_{[\rH_2]} \left( \sum_{\gamma \in \rG'_b(F)} f(h_1^{-1} \gamma h_2) \right) \eta(h_2) \rd h_1 \rd h_2.
\]
The geometric expansion of $I$ is then the identity
    \[
        I(f) = \sum_{b \in \rB(F)} I_b(f).
    \]
If $b$ is regular semi-simple, pick any $\gamma \in \rG'(F)$ with image $b$, the distribution $I_b$ can be computed via the orbital integral
    \[
        I_b(f) = \int_{\rH_1(\bA)} \int_{\rH_2(\bA)} f(h_1^{-1} \gamma h_2) \eta(h_2) \rd h_1 \rd h_2.
    \]
But for general $b \in \rB(F)$, there is no easy interpretation of $I_b$. One of the main goals of this paper is to compute $I_b(f)$, for general $b \in \rB(F)$, under the assumption that there is a place $v$ of $F$ such that $f=f_v f^v$, where $f^v \in \cS(\rG'(\bA^v)),f_v \in \cS(\rG'(F_v))$ and $f_v$ is supported in the regular subset $\rG'_{\mathrm{reg}}(F_v)$ of $\rG'(F_v)$, where $\rG_{\mathrm{reg}}'$ is the Zariski open subset of $\rG'$ consisting of elements whose stabilizer under the $\rH_1 \times \rH_2$ action is trivial.

Let $\gamma \in \rG'_\mathrm{reg}(F)$ be a regular element and $f \in \cS(\rG'(\bA))$ (with no support condition), in Subsection ~\ref{subsec:global_orbital_integral}, we will define a distribution $I_\gamma(f)$, which can be written formally as
\begin{equation} \label{eq:intro_regularized_integral}
    I_\gamma(f) = \int_{\rH_1(\bA)} \int_{\rH_2(\bA)} f(h_1^{-1}\gamma h_2) \eta(h_2) \rd h_1 \rd h_2,
\end{equation}
note that this integral is not convergent in general, therefore some regularization is needed. Once the distribution $I_\gamma$ is defined, our main theorem can be summarized as follows:
\begin{theorem}[See Theorem ~\ref{thm:global_II}] \label{thm:intro_global_1}
    If $f \in \cS(\rG'(\bA))$ is of the form $f_vf^v$, and $f_v$ is regular supported, then for any $b \in \rB(F)$
    \[
        I_b(f) = \sum_{\gamma} I_\gamma(f),
    \]
    where in the summation, $\gamma$ runs through any representative of $\rH_1(F) \times \rH_2(F)$ orbits of regular elements in $\rG'_{b}(F)$. 
\end{theorem}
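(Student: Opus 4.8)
The plan is to exploit the regular-support hypothesis to reduce the $b$-part of the kernel to a finite sum of single-orbit kernels, and then to feed that decomposition through the truncation procedure of Zydor and Beuzart-Plessis--Chaudouard--Zydor \cite{Zydor20,BPCZ} that defines $I_b$, matching the result term by term with the regularized orbital integrals $I_\gamma$.

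First I would carry out the geometric reduction. Fix a place $v$ with $f=f_vf^v$ and $\supp f_v \subset \rG'_{\mathrm{reg}}(F_v)$. If $\gamma \in \rG'_b(F)$ contributes to $\sum_{\gamma \in \rG'_b(F)} f(h_1^{-1}\gamma h_2)$ for some $(h_1,h_2)$, then $h_{1,v}^{-1}\gamma h_{2,v} \in \rG'_{\mathrm{reg}}(F_v)$; since being regular is an $\rH_1 \times \rH_2$-invariant Zariski-open condition defined over $F$ and $\gamma \in \rG'(F)$, this forces $\gamma \in \rG'_{\mathrm{reg}}(F)$. Thus only $\rG'_{b,\mathrm{reg}}(F) := \rG'_b(F) \cap \rG'_{\mathrm{reg}}(F)$ contributes to the kernel, and I would invoke the finiteness of the set of $\rH_1(F) \times \rH_2(F)$-orbits in $\rG'_{b,\mathrm{reg}}(F)$. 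Picking representatives $\gamma$ and using that a regular element has trivial stabilizer, the map $(\delta_1,\delta_2) \mapsto \delta_1^{-1}\gamma\delta_2$ is a bijection of $\rH_1(F) \times \rH_2(F)$ onto the orbit of $\gamma$, so the $b$-kernel splits as a finite sum
\[
    K_{f,b}(h_1,h_2) = \sum_{\gamma} K_{f,\gamma}(h_1,h_2), \qquad K_{f,\gamma}(h_1,h_2) := \sum_{(\delta_1,\delta_2) \in \rH_1(F) \times \rH_2(F)} f\bigl(h_1^{-1}\delta_1^{-1}\gamma\delta_2 h_2\bigr).
\]

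Next I would run the truncation. Recall that $I_b(f)$ is read off (as prescribed in \cite{BPCZ}) from a truncated distribution $I_b^T(f)$, obtained by integrating against $\eta(h_2)$ over $[\rH_1] \times [\rH_2]$ a modified $b$-kernel built from $K_{f,b}$ by Arthur-type corrections indexed by parabolic subgroups, with $I_b^T(f)$ an exponential-polynomial in $T$. Since this construction is linear in the kernel and the decomposition above is finite, the modified $b$-kernel is the corresponding finite sum of modified single-orbit kernels, so $I_b^T(f) = \sum_\gamma I_\gamma^T(f)$, where $I_\gamma^T(f)$ is the $\eta(h_2)$-twisted integral over $[\rH_1] \times [\rH_2]$ of the modified kernel attached to $K_{f,\gamma}$. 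Because $\eta$ is trivial on $\rH_2(F)$ (as it is trivial on $F^\times$), unfolding the $\rH_1(F) \times \rH_2(F)$-sum in $K_{f,\gamma}$ against the integral over $[\rH_1] \times [\rH_2]$ identifies $I_\gamma^T(f)$ with the truncated regularized orbital integral that defines $I_\gamma(f)$ in Subsection~\ref{subsec:global_orbital_integral}. Taking the polynomial part in $T$ on both sides then gives $I_b(f) = \sum_\gamma I_\gamma(f)$.

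I expect the crux to be this last identification: one must check that the parabolic-descent terms in the truncation of the $b$-kernel restrict, orbit by orbit, to exactly the parabolic-descent terms used to define $I_\gamma$ --- equivalently, that the regularization of $I_\gamma$ built in the earlier subsection is arranged to coincide with what the $b$-truncation produces on a single regular orbit $\rH_1\gamma\rH_2$, and not merely up to an unexplained constant; this is presumably the reason $I_\gamma$ is defined through the same truncation mechanism restricted to that orbit. A secondary point is to justify, for each $T$, the interchange of the finite orbit-sum with the (non-absolutely convergent) truncated integral, so that the resulting identity of exponential-polynomials --- hence of their polynomial parts --- is valid; since the sum is finite this reduces to the absolute convergence of each truncated single-orbit integral, which should be part of the construction of $I_\gamma$.
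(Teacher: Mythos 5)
Your first reduction is correct and essentially matches the paper: the regular-support hypothesis at $v$ forces every $\gamma \in \rG'_b(F)$ appearing in the kernel to be regular, there are finitely many $\rH_1(F)\times\rH_2(F)$-orbits of such $\gamma$ (Corollary~\ref{coro:regular_orbits}), and each regular orbit has trivial stabilizer. But the rest of the argument rests on a false premise, and you have correctly located the weak point without resolving it.

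The gap is that $I_\gamma$ is \emph{not} defined ``through the same truncation mechanism restricted to that orbit,'' as you presume. It is defined in Subsection~\ref{subsec:global_orbital_integral} as an Euler product $\Delta_G^{*,-1}L_\gamma(s,\xi)\prod_v I_{\gamma,v}^\natural(f_v,\xi_v,s)$ of \emph{local} normalized orbital integrals, and the local factors $I_{\gamma,v}$ are themselves built in Section~\ref{sec:local} via the Luna-slice/descent construction (choosing an \'etale neighbourhood of $a_H$ in $\cA_H'$, descending $\varphi$ to a test function on $\widetilde{\fh}$, and invoking the central/regular-semisimple cases), not via any truncation. Consequently your proposed equation $I_b^T(f)=\sum_\gamma I_\gamma^T(f)$ does not make sense as stated: there is no ``truncated regularized orbital integral'' $I_\gamma^T$ in the paper to compare against. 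Moreover, the parabolic-descent terms $K_{f,P,b}$ that appear in $K_{f,b}^T$ are sums over $M_P(F)\cap\rG'_b(F)$, and these do \emph{not} decompose into per-orbit contributions indexed by the regular $\rH_1(F)\times\rH_2(F)$-orbits in $\rG'_b(F)$; the finite orbit decomposition you write is only valid for the $P=\rG'$ term. For general regular-supported $f_v$ (where $\supp f_v$ may meet $\rG'_{\mathrm{reg}}\setminus(\rG'_+\cup\rG'_-)$), there is no cancellation of the non-standard parabolic terms analogous to what happens in the $\rG'_\pm$-supported case of Theorem~\ref{thm:main}, so the truncation cannot simply be pushed through orbit by orbit.

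The paper's route is genuinely different and is designed precisely to sidestep this. It first establishes the group identity as a consequence of the Lie-algebra identity (Proposition~\ref{prop:global_II_Lie}) via Cayley transform and the comparison $I_b(f,\xi,s)=\Delta_{\rH_1}^{*,-1}\Delta_{\rH_2}^{*,-1}\Delta_{\GL_n}^* I_{\fc_\sigma(b)}(f_s^{\gl},\xi,s)$ from \cite{CZ21}. On the Lie algebra it then applies the Luna-slice descent of Chaudouard--Zydor (Th\'eor\`eme~6.4.6.1 of \cite{CZ21}) to replace $I_a(\varphi)$ by $I_{a_H}(\varphi_{H,s})$ over the product $\widetilde{\fh}=\widetilde{\fh_0}\times\prod_i\widetilde{\fh_i}$, where each factor of $a_H$ is either regular semisimple or central. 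On a factor with $a_i$ regular semisimple the identity is immediate; on a central factor the paper uses a partition of unity to split $\varphi_v$ into pieces supported on $\widetilde{\gl_n}_{,+}(F_v)$, $\widetilde{\gl_n}_{,-}(F_v)$, and away from the fiber, and invokes Theorem~\ref{thm:main_infinitesimal} for the first two and the support of the distribution for the third. It is this descent to a slice where orbits become ``central or regular semisimple'' that makes the orbit-by-orbit matching actually provable; your proposal, without it, leaves the central identification unproved.
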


In Subsection ~\ref{subsec:regular_orbits}, we will describe the regular orbits of $\rG'_b(F)$
explicitly. In particular, we will see that the sum above is a finite sum. 

There is a Zariski open subset $\rG'_+$ of $\rG'$ such that $\rG'_+ \subset \rG'_{\mathrm{reg}}$ (see Subsection ~\ref{subsec:symmetric space S}). When $f_v$ is supported in $\rG'_+(F_v)$, we will have a better description of $I_\gamma(f)$ as follows:

\begin{theorem}[See Theorem ~\ref{thm:main}] \label{thm:intro_global_2}
    If $f \in \cS(\rG'(\bA))$ is of the form $f_v f^v$, and $f_v$ is supported in $\rG'_+(F_v)$. Let $\gamma \in \rG'_+(F)$, then we have the following assertions: \begin{enumerate}
        \item  For $s \in \C$ with $\mathrm{Re}(s)$ sufficiently negative, the integral
        \[
    \int_{\rH_1(\bA)} \int_{\rH_2(\bA)} f(h_1^{-1} \gamma h_2) \eta(h_2) \lvert \det h_1 \rvert^s \rd h_1 \rd h_2
    \]
    is absolutely convergent, and has a meromorphic continuation to $\C$ which is holomorphic at $0$. The value at $s=0$ coincides with $I_\gamma(f)$.
    \item  For each $b \in \rB(F)$, there is a unique $\rH_1(F) \times \rH_2(F)$ regular orbit inside $\rG'_b(F) \cap \rG'_+(F)$. Choose any $\gamma$ in this orbit, then
    \[
        I_b(f) = I_\gamma(f).
    \]
    \end{enumerate}
\end{theorem}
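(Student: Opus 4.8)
The plan is to deduce both parts from Theorem~\ref{thm:global_II}, the explicit classification of regular orbits in $\rG'_b(F)$ from Subsection~\ref{subsec:regular_orbits}, and the description of $\rG'_+$ as a symmetric-space-type open subvariety from Subsection~\ref{subsec:symmetric space S}; the genuinely new input is the analysis of the twisted adelic orbital integral in part~(1).

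For part~(1), fix $\gamma\in\rG'_+(F)$; since its stabilizer in $\rH_1\times\rH_2$ is trivial, once $\gamma$ is fixed the integral factors as an Euler product $Z_\gamma(f,s)=\prod_v Z_{\gamma,v}(f_v,s)$ with $Z_{\gamma,v}(f_v,s)=\int_{\rH_1(F_v)}\int_{\rH_2(F_v)} f_v(h_1^{-1}\gamma h_2)\,\eta_v(h_2)\,\lvert\det h_1\rvert_v^s\,\rd h_1\,\rd h_2$. I would first treat the local factors: using the symmetric-space model one checks that, after the support of $f_v$ is taken into account, the residual non-compactness of the integration domain is confined to a direction along which $\lvert\det h_1\rvert_v$ is a coordinate, and there the integral is of Tate/Godement--Jacquet type; hence $Z_{\gamma,v}(f_v,s)$ converges for $\re(s)$ sufficiently negative, extends meromorphically (Bernstein's continuation principle in the $p$-adic case, integration by parts in the archimedean case), and is holomorphic at $s=0$ precisely because $\gamma$ lies in $\rG'_+$ rather than merely in $\rG'_{\mathrm{reg}}$. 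At almost all places $f_v=\mathbbm 1_{\rK_v}$ and the orbit description of Subsection~\ref{subsec:regular_orbits} lets one compute $Z_{\gamma,v}(\mathbbm 1_{\rK_v},s)$ explicitly as a ratio of local $L$-factors of Hecke/Rankin--Selberg type, whose value at $s=0$ is the unramified regular orbital integral. Multiplying over $v$ identifies $\prod_{v\notin S}Z_{\gamma,v}(f_v,s)$ with an incomplete automorphic $L$-function, which supplies the meromorphic continuation of $Z_\gamma(f,s)$; the $L$-factors that occur involve, thanks to $\gamma\in\rG'_+$, only nontrivial Hecke characters or trivial characters with a shift that avoids $1$, so no pole appears at the origin. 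Finally, $Z_\gamma(f,0)=I_\gamma(f)$ is obtained by comparing with the definition of $I_\gamma$ in Subsection~\ref{subsec:global_orbital_integral}, the exchange of $\prod_v$ and evaluation at $s=0$ being legitimate by the continuation just established.

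For part~(2), note that $\rG'_+$ is $\rH_1\times\rH_2$-stable and defined over $F$, so a regular $\rH_1(F)\times\rH_2(F)$-orbit in $\rG'_b(F)$ is either contained in $\rG'_+(F)$ or disjoint from it; in the latter case, for $\gamma$ in that orbit the map $(h_{1,v},h_{2,v})\mapsto h_{1,v}^{-1}\gamma h_{2,v}$ misses $\supp f_v\subset\rG'_+(F_v)$ entirely, so the local factor $Z_{\gamma,v}(f_v,s)$ vanishes identically and hence $I_\gamma(f)=0$. By Theorem~\ref{thm:global_II}, $I_b(f)=\sum_\gamma I_\gamma(f)$ over regular orbit representatives, so only the orbits meeting $\rG'_+(F)$ contribute; and by the explicit description of Subsection~\ref{subsec:regular_orbits} there is exactly one such orbit in each fiber $\rG'_b(F)$ — here one invokes the symmetric-space normal form (a rational-canonical-form style argument) together with the triviality of the relevant stabilizer, which rules out any Galois-cohomological obstruction to uniqueness of the rational orbit. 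Choosing $\gamma$ in that orbit yields $I_b(f)=I_\gamma(f)$.

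The main obstacle is the holomorphy of $Z_\gamma(f,s)$ at $s=0$ in part~(1): one must pin down precisely which $L$-factors occur in the Euler product (both at the good places and, in a suitable sense, at the ramified and archimedean ones) and verify that the defining condition of $\rG'_+$ prevents any of them from producing a pole at the origin; by contrast, convergence for $\re(s)\ll 0$ and the existence of a meromorphic continuation are comparatively routine once the symmetric-space model has localized the divergence to the $\lvert\det h_1\rvert$-direction.
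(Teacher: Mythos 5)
Your proof of part~(2) is circular within the paper's logical structure. You invoke Theorem~\ref{thm:global_II} to write $I_b(f)=\sum_\gamma I_\gamma(f)$ and then argue only the $\rG'_+$-orbit contributes. But Theorem~\ref{thm:global_II} is deduced \emph{from} the theorem you are proving: its proof goes through Proposition~\ref{prop:global_II_Lie} and Lemma~\ref{lem:global_II_rss_central}, and the latter explicitly invokes Theorem~\ref{thm:main_infinitesimal}, the Lie-algebra analogue of Theorem~\ref{thm:main}, whose proof is stated to be identical. More to the point, the crux of part~(2) is linking the spectral-geometric regularization $I_b$ (the constant term of the exponential polynomial $T\mapsto I_b^T(f,\xi,s)$) with the orbital integral, and this link is precisely what requires the modified-kernel machinery: the vanishing of $K_{f,P,b}$ for non-standard Rankin--Selberg parabolics when $\supp(f_v)\subset\rG'_+(F_v)$, the exponent computation of Proposition~\ref{prop:exponent computation} to control the sum over $Q\in\cF_{\mathrm{RS}}^{\mathrm{st}}$, and the asymptotic estimate of Proposition~\ref{prop:asymptotic Q geometric} to pass to the limit $T\to\infty$. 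The Euler-product analysis of the orbital integral, by itself, identifies that integral with $I_\gamma$ (which is defined as an Euler product) but says nothing about $I_b$. The uniqueness of the regular orbit in $\rG'_b(F)\cap\rG'_+(F)$ that you also need is Lemma~\ref{lem:unique orbit}, not a cohomological triviality-of-stabilizer argument.

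Your part~(1) is a genuinely different and more direct route than the paper's for that part: you factor into an Euler product and invoke Tate-style local analysis rather than going through the truncated kernel. This is in the spirit of Sections~\ref{sec:local} and~\ref{sec:global_II} (Lemma~\ref{lem:local_group_central_rss_description} gives local convergence for $\re(s)<-1+\tfrac1n$, Proposition~\ref{prop:local_group}(4) gives the unramified $L$-factor, and the Euler product of absolute values converges for $\re(s)<-1$). Two caveats: you should make the Tonelli step for absolute convergence of the product explicit, and your assertion that holomorphy at $s=0$ is ``precisely because $\gamma$ lies in $\rG'_+$'' is off — the $L$-factor $L_\gamma(s,\xi)$ is holomorphic at $s=0$ for \emph{any} regular $\gamma$, since its local factors involve $L(\mp js\mp j+1,\chi^j)$ with $j\ge 1$; what $\gamma\in\rG'_+$ actually buys is that the orbital integral converges absolutely in \emph{some} half-plane at all (for general regular $\gamma$ the integral need not converge for any $s$, and the full regularization of Section~\ref{sec:local} is required).
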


In fact, we will first prove Theorem ~\ref{thm:intro_global_2} and a Lie algebra version of it in Section ~\ref{sec:global}, and then deduce Theorem ~\ref{thm:intro_global_1} from it in Section ~\ref{sec:global_II}.

\subsection{Local Results}
Let $\gamma \in \rG'(F)$ be a regular element, our regularization of the integral ~\eqref{eq:intro_regularized_integral} will make the distribution $I_\gamma(f)$ Eulerian. In fact, to define $I_\gamma(f)$, we first study a local analogue of it in Section ~\ref{sec:local}. Let $v$ be a place of $F$, and let $f \in \cS(\rG'(F_v))$ (with no support condition), we will prove the following result in Section ~\ref{sec:local}.

\begin{proposition} \label{prop:intro_local}[See Subsection ~\ref{subsec:local_group} and Proposition ~\ref{prop:local_group}]
    Let $\gamma \in \rG'_{\mathrm{reg}}(F_v)$, then there is a map
    \[
        \cS(\rG'(F_v)) \to \{\text{meromorphic functions on }\C\}, \quad f \mapsto I_\gamma(f,s)
    \]
    which is a regularization of the integral
    \begin{equation} \label{eq:intro_local_regularization}
       \int_{\rH_1(F_v)} \int_{\rH_2(F_v)} f(h_1^{-1}\gamma h_2) \lvert \det h_1 \rvert^{s} \eta(h_2) \rd h_1 \rd h_2.
    \end{equation} 
    with the following properties:
    \begin{enumerate}
        \item $I_\gamma(f,s)$ only depends on the value of $f$ on the orbit of $\gamma$.
        \item For $x \in \rH_1(F_v)$ and $y \in \rH_2(F_v)$, let $f^{x,y}(\gamma)=f(x\gamma y^{-1}) \in \cS(\rG'(F_v))$, then
        \[
            I_\gamma(f^{x,y},s) = \lvert \det x \rvert^s \eta(y) I_\gamma(f,s).
        \]
        \item If $\gamma \in \rG'_+(F_v)$, then the integral ~\eqref{eq:intro_local_regularization} is absolutely convergent for $\mathrm{Re}(s)$ sufficiently negative, and coincides with $I_\gamma(f,s)$ when convergent. In particular, this holds when $\gamma$ is regular semisimple.
        \item There is an abelian $L$-function $L_\gamma(s)$ such that $I^\natural_\gamma(f,s) := I_\gamma(f,s)/L_\gamma(s)$ is entire for any $f$, and when $f=1_{\rG'(\cO)}$ and everything is unramified, $I_\gamma(f,s)=L_\gamma(s)$.
    \end{enumerate}
\end{proposition}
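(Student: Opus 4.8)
The plan is to realize the formal integral \eqref{eq:intro_local_regularization} as a local zeta integral of Igusa type on the orbit of $\gamma$, reduce it by an explicit change of variables to a finite product of Tate integrals, and then read off the four properties from Tate's thesis. Since $\gamma$ is regular, $\rH_1(F_v)\times\rH_2(F_v)$ acts freely on it, so $(h_1,h_2)\mapsto h_1^{-1}\gamma h_2$ is a locally closed immersion identifying the orbit $\cO_\gamma$ of $\gamma$ with $\rH_1(F_v)\times\rH_2(F_v)$. Under this identification \eqref{eq:intro_local_regularization} becomes
\[
 I_\gamma(f,s)=\int_{\cO_\gamma}(f|_{\cO_\gamma})(x)\,\rd\mu_{\gamma,s}(x),
\]
where $\rd\mu_{\gamma,s}$ is the measure on $\cO_\gamma$ transported from $|\det h_1|^s\eta(h_2)\,\rd h_1\,\rd h_2$; it is relatively invariant under $\rH_1(F_v)\times\rH_2(F_v)$ with eigencharacter $(h_1,h_2)\mapsto|\det h_1|^s\eta(h_2)$. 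Property (1) is then immediate and property (2) is nothing but the invariance of Haar measure, so both hold by construction; the only real issue is convergence, obstructed by $\cO_\gamma$ failing to be closed in $\rG'(F_v)$ together with the weight $|\det h_1|^s$.

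The restriction of $f$ to the affine variety $\overline{\cO_\gamma}$ is a Schwartz function there, hence decays along $\overline{\cO_\gamma}$; the divergence comes solely from the growth of $\rd\mu_{\gamma,s}$ near the boundary $\partial\cO_\gamma=\overline{\cO_\gamma}\setminus\cO_\gamma$, where $\rd\mu_{\gamma,s}$ is comparable to $\prod_j|P_j|^{\lambda_j(s)}$ times a smooth measure, the $P_j$ cutting out the irreducible components of $\partial\cO_\gamma$ and the $\lambda_j$ being affine-linear in $s$. The key step is to use the explicit normal forms for regular orbits from Subsection~\ref{subsec:regular_orbits} to produce coordinates on (a cover of, or a resolution of) $\overline{\cO_\gamma}(F_v)$ in which $\partial\cO_\gamma$ becomes a union of coordinate hyperplanes; integrating out the remaining coordinates then leaves a product of Tate integrals $\int_{F_v^\times}\Phi_i(x)|x|^{a_is+b_i}\chi_i(x)\,\rd^\times x$ (with $F_v$ possibly replaced by $E_v$), for Schwartz functions $\Phi_i$, explicit integers $a_i,b_i$, and characters $\chi_i$ built from the trivial character and $\eta$. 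For $\gamma\in\rG'_+(F_v)$ one should be able to take such a coordinate system directly, with no blow-up and with the $\Phi_i$ already Schwartz, which is exactly why the naive integral converges; for a general regular $\gamma$ one first resolves $\partial\cO_\gamma$, and the combinatorics of the resolution is governed by the orbit classification.

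Granting this reduction, the conclusions are routine. Each one-variable integral converges on a half-plane and, by Tate's thesis, continues meromorphically to $\C$ with denominator the local $L$-factor $L(a_is+b_i,\chi_i)$ (if the individual half-planes are incompatible one first separates the variables with auxiliary complex parameters and specializes back at the end); hence $I_\gamma(f,s)$ is meromorphic, $L_\gamma(s):=\prod_i L(a_is+b_i,\chi_i)$ is an abelian $L$-function — a product of Hecke $L$-factors at shifted arguments — and $I^\natural_\gamma(f,s)=I_\gamma(f,s)/L_\gamma(s)$ is entire, which is (4). When $f=1_{\rG'(\cO)}$ and everything is unramified, each $\Phi_i$ is the characteristic function of the local ring of integers, so the $i$-th integral equals $L(a_is+b_i,\chi_i)$ and $I_\gamma(f,s)=L_\gamma(s)$. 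For (3): when $\gamma\in\rG'_+(F_v)$ the boundary $\partial\cO_\gamma$ contributes no divergence beyond the weight $|\det h_1|^s$, so \eqref{eq:intro_local_regularization} converges absolutely for $\mathrm{Re}(s)$ sufficiently negative and there equals $I_\gamma(f,s)$ by Fubini; since regular semisimple elements lie in $\rG'_+(F_v)$, the final assertion of (3) follows too.

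The main obstacle is the reduction of the second paragraph for a general regular $\gamma$: producing the coordinates (or the resolution) on $\overline{\cO_\gamma}$ in which $\partial\cO_\gamma$ becomes a normal crossings divisor, and then proving that integrating out the remaining variables converges and yields honest Schwartz functions $\Phi_i$ — uniformly enough in $f$ that a single $L_\gamma(s)$ works for all $f$. This demands a fairly precise description of the geometry of $\overline{\cO_\gamma}$ and of the degeneration $h_1^{-1}\gamma h_2\to\partial\cO_\gamma$; for $\gamma\in\rG'_+$ this is comparatively soft, but in general I expect one is forced into an induction on $n$, descending to lower-rank Jacquet--Rallis data along the boundary strata, together with the explicit orbit classification, both to carry the estimates through and to identify the exponents $a_i,b_i$ and characters $\chi_i$ entering $L_\gamma(s)$.
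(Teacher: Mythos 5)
Your high-level strategy — transport the problem to coordinates where the weight becomes a monomial, reduce to a product of Tate integrals, and read off (1)--(4) from Tate's thesis — is indeed the engine of the paper's proof for the cases where the orbit has an explicit parametrization: for $\gamma$ corresponding, after the Cayley transform, to a central or regular semisimple $X\in\widetilde{\gl_n}(F_v)$, the paper carries out exactly your reduction (Propositions~\ref{prop:local_central} and~\ref{prop:local_rss}), and the exponents $a_i,b_i$ and characters $\chi_i$ are what produce $L_\gamma(s)$. You are also right that a general regular $\gamma$ must be reduced to lower-rank data using the orbit classification.

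But the mechanism by which the paper handles the general regular orbit is not a resolution of $\partial\cO_\gamma$ to normal crossings. It is the Luna slice / descent construction of Subsection~\ref{subsec:descent}: one picks a neighbourhood of $a=q(\gamma)$ in the GIT quotient, pulls back through the \'etale map $\iota_\cA:\cA_H'\to\cA$ to a product $\widetilde{\fh}=\widetilde{\fh_0}\times\prod\widetilde{\fh_i}$ where each factor's component of $a_H$ is either regular semisimple or central, lifts $\varphi$ to a function $\varphi'$ on $\Omega_H\times\GL_n(F_v)$ and integrates out the $\GL_n$ variable to get $\varphi_{H,s}$, and then applies the already-constructed $I_{X_H}$. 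This is structurally different from a blow-up; it never needs the boundary divisor of $\overline{\cO_\gamma}$ to be normal crossings, and it matches the orbit classification from Proposition~\ref{prop:regular_orbit_tilde_gl_n} exactly. Your ``resolution'' framing is plausible in spirit but would require a genuinely new geometric argument about $\overline{\cO_\gamma}$ that the paper does not need.

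The more serious gap is that your proposal says nothing about well-definedness, and this is where the paper's real technical work lies. Once the naive integral diverges (for $\gamma$ not in $\rG'_\pm$), you cannot simply declare that the Tate-integral expression is the regularization: the lift $\varphi'$ of $\varphi$ is far from unique, and a priori the resulting $\varphi_{H,s}$ and hence $I_{X_H}(\varphi_{H,s},\xi,s)$ depend on this choice. The paper resolves this (Lemma~\ref{lem:well-defined}) via the stability statement in Proposition~\ref{prop:local_central}(5): the distribution $I_{X_H}(\cdot,\xi,s)$ kills $\xi\eta|\cdot|^s$-unstable functions, and the difference of two admissible lifts is unstable. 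The stability in turn is proven via the Fourier-transform identity~\eqref{eq:I_X^+_Fourier} and the fact (quoted from Chaudouard) that the Fourier transform preserves instability. This chain of ideas — descent, then independence-of-lift via stability, then stability via Fourier transform — is entirely absent from your proposal, and without it the regularization is not even well defined; likewise properties (1) and (2), which you describe as ``immediate,'' are only immediate once well-definedness has been established. Your ``uniformly enough in $f$'' worry is real, but it is not the central obstruction — the central obstruction is independence of the auxiliary choices in the reduction, and the tool that kills it is stability, not uniform Schwartz estimates.

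One smaller correction: your claim that regular semisimple elements lie in $\rG'_+$ is what makes the last sentence of (3) follow, and this is right; but note that the ``comparatively soft'' convergence for $\gamma\in\rG'_+$ already requires the descent bookkeeping (Proposition~\ref{prop:plus_or_minus_regular_direct_def}) because the orbit of $\gamma$ is still not closed, and the argument goes through a dominated-convergence comparison with the central case, not a direct estimate on $\overline{\cO_\gamma}$.
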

    
 When $\gamma$ is regular semisimple, the orbital integral $I_\gamma(f,s)$ has been studied in depth. One key step towards the proof of the global GGP conjecture is to show that regular semisimple orbital integral $I_\gamma(f,0)$ can be compared with similarly defined regular semisimple orbital on the unitary groups.

Let $\cH_v$ be the isometric classes of $n$-dimensional non-degenerate $E_v/F_v$ Hermitian spaces. For $V \in \cH_v$, let $V'$ be the $n+1$-dimensional Hermitian space $V \oplus E_ve_0$, the Hermitian form on $V'$ is defined by orthogonal direct sum of $V \oplus h_0$ where $h_0(e_0,e_0)=1$. We put $\rG^V = \U(V) \times \U(V')$ and $\rH^V := \U(V)$ as a diagonal subgroup of $\rG^V$. Let $f^V \in \cS(\rG^V(F_v))$ and let $\gamma \in \rG^V(F_v)$ be a regular semisimple element, the orbital integral $J_\gamma(f)$ is defined by
\[
    J_\gamma(f) = \int_{\rH^V(F_v) \times \rH^V(F_v)} f(x^{-1}\gamma y) \rd x \rd y.
\]
Let $f \in \rG'(F_v)$ and for each $V \in \cH_v$ let $f^V \in \cS(\rG^V(F_v))$, there is a notion of matching between $f$ and $f^V$ (See Subsection ~\ref{subsec:transfer}), which means $f$ and $f^V$ have the ``same" regular semisimple orbital integrals. In Section ~\ref{sec:localII}, we will prove a version of ``local singular transfer", which shows that for $f$ and $(f^V)_{V \in \cH_v}$, if their regular semisimple orbital integrals match (i.e., they are matching), then the more singular terms, i.e. the regular orbital integral $I^\natural_\gamma(f,0)$ and the semisimple orbital integral of $f^V$ also match.

\begin{theorem}\label{thm:intro_local}(See Theorem ~\ref{thm:local_singular_transfer})
    Suppose that $f$ and $(f^V)_{V \in \cH_v}$ match, let $\gamma \in \rG'_{\mathrm{reg}}(F_v)$, then we have
    \[
        I^\natural_\gamma(f,0) =  \sum_{(V,\cO)} c_{\gamma,\cO} J_{\cO}(f^V),
    \]
    where the summation runs through the set $\{(V,\cO)\mid V \in \cH_v, \cO \text{ is a semisimple orbit corresponds to }\gamma \}$, and $c_{\gamma,\cO}$ are some explicit constants independent of $f$ and $f^V$.
\end{theorem}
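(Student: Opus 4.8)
The plan is to reduce the identity to the \emph{regular semisimple} transfer — which is known and is exactly the matching hypothesis — by an asymptotic analysis of orbital integrals near the singular invariant $b := q(\gamma) \in \rB(F_v)$; the constants $c_{\gamma,\cO}$ will then emerge from transfer factors and ratios of abelian $L$-factors evaluated at that point. Concretely I would argue in three stages: a Harish--Chandra type descent to a ``key case'', a relative Shalika germ expansion on both the general linear and the unitary sides, and a comparison of the leading terms using the known regular semisimple transfer. The spine of the argument is that $I^\natural_\gamma(f,0)$ and the $J_\cO(f^V)$ are each recovered as leading coefficients of (suitably renormalized) families of regular semisimple orbital integrals, and that these families are related by smooth transfer before passing to the limit.

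First I would perform descent around the semisimple part $\gamma_s$ of $\gamma$. Let $\rM := \rG'_{\gamma_s}$ be its centralizer; since $\gamma \in \rG'_{\mathrm{reg}}(F_v)$, $\rM$ is again a product of Jacquet--Rallis type groups $\mathrm{Res}_{E'/F'}\GL$, and $\gamma$ is a regular element of $\rM$ with semisimple part central in $\rM$. A Luna slice near $\gamma_s$, together with the invariance properties in Proposition~\ref{prop:intro_local}(1)--(2), identifies $I_\gamma(f,s)$ with a ``descended'' orbital integral $I^{\rM}_{\bar\gamma}(\phi,s)$ for a Schwartz function $\phi$ on the corresponding $\rM$-model. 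On the unitary side, the semisimple orbits $\cO$ with invariant $b$ correspond — via the transfer of categorical quotients $\rB^V \cong \rB$ restricted to semisimple classes, and the descent of matching — to semisimple classes $\delta_\cO$ in various $\rG^V$ whose stabilizers $\rG^V_{\delta_\cO}$ are unitary analogues of $\rM$; descent carries $J_\cO(f^V)$ to an orbital integral on $\rG^V_{\delta_\cO}$, and one checks the descended functions still match. This reduces the theorem to the \emph{key case} in which $\gamma_s$ is central, i.e. $b$ lies in the image of a nilpotent (relative) orbit.

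Second, in the key case I would establish a relative germ expansion. Choose an analytic family $(\gamma_t)_{t\neq 0}$ of regular semisimple elements degenerating, as $t\to 0$, to the regular orbit of $\gamma$ (approached from an appropriate direction); by the construction of $I_\gamma(f,s)$ in Section~\ref{sec:local}, $s\mapsto I_{\gamma_t}(f,s)$ depends nicely on $t$, and a Mellin/contour-shift argument using Proposition~\ref{prop:intro_local}(3)--(4) extracts (schematically)
\[
    \lim_{t\to 0}\ \frac{I_{\gamma_t}(f,0)}{L_{\gamma_t}(0)} = I^\natural_\gamma(f,0),
\]
where $L_{\gamma_t}(0)$ is the abelian $L$-factor that carries the divergence. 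The matching family $(\gamma^V_t)$ on the unitary side gives, by the usual Shalika germ expansion for the two-sided orbital integrals near a singular semisimple point, an expansion whose leading terms are precisely the $J_\cO(f^V)$ for the $\cO$ over $b$, with explicit coefficients. Finally I would combine these with the known regular semisimple transfer: for each $t\neq 0$, matching of $f$ and $(f^V)$ gives $I_{\gamma_t}(f,0) = \sum_{V}\sum_{t'} \omega_{t'}\, J_{\gamma^{V}_{t,t'}}(f^V)$, summed over the unitary regular semisimple orbits lying over the invariant of $\gamma_t$, with transfer factors $\omega_{t'}$. Letting $t\to 0$ and comparing the leading asymptotics from the two germ expansions — the transfer factors $\omega_{t'}$ converge along each branch $t'\to\cO$ to an explicit constant, and $L_{\gamma_t}(0)$ degenerates to an explicit nonzero constant times $L_\gamma(0)$ — yields $I^\natural_\gamma(f,0) = \sum_{(V,\cO)} c_{\gamma,\cO}\, J_\cO(f^V)$, with $c_{\gamma,\cO}$ assembled from these limiting transfer factors, $L$-factor ratios, and Haar measure normalizations.

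The main obstacle will be stage two: the relative germ expansion near a non-regular-semisimple invariant, where the orbital integrals diverge before regularization, so one must show that the normalization by $L_\gamma(s)$ (respectively the normalization implicit in $J_\cO$) captures \emph{exactly} the right homogeneity and that the leading coefficient is literally $I^\natural_\gamma(f,0)$ rather than some other multiple; a secondary difficulty is the bookkeeping of transfer factors in the limit and the combinatorics of which regular semisimple branches degenerate to which $\cO$, which is what makes $c_{\gamma,\cO}$ explicit and $f$-independent. An alternative route, avoiding the local germ analysis, would be to globalize: embed the data into a global situation, use the global geometric expansions — Theorem~\ref{thm:intro_global_1} on the general linear side together with its unitary counterpart from~\cite{BPCZ} — and the established global comparison for matching test functions, then isolate the local identity at $v$ by varying the components away from $v$ and invoking linear independence of orbital integrals; this is cleaner to carry out but imports the full global comparison as input.
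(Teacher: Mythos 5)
Your three-stage plan shares the overall shape of the paper's argument (descend via a slice to a ``key case'' where the semisimple part is central, prove the key case, then glue back), but the engine you propose for the key case is genuinely different from the one the paper uses, and that is exactly where your proposal has a gap.

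The paper passes by Cayley transform to the Lie algebra models $\gl_{n+1}$ and $\widetilde{\gl_n}$ and does the slice descent there (Subsections~\ref{subsec:descent} and~\ref{subsec:descent_on_u}, then Lemma~\ref{lem:descent_and_transfer}), reducing to products of a regular semisimple factor (where matching is the hypothesis) and central factors. For the central factor, the paper does \emph{not} run a germ/limiting argument: instead it uses the closed-form Fourier expressions~\eqref{eq:I_X^+_Fourier} and~\eqref{eq:I_X^-_Fourier}, which express $I^\natural_{Z_\lambda^\pm}(\varphi)$ as an explicit weighted integral of $\cF\varphi$, and then invokes the Zhang--Xue theorem that transfer commutes with Fourier transform (Theorem~\ref{thm:transfer_Fourier_transform}) to convert this into $\sum_V c^\pm_V J_{Z_\lambda}(\varphi^V)$; this is Lemma~\ref{lem:nilpotent_singular_transfer}. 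That Fourier-transform ingredient is entirely absent from your sketch, and it is the heart of the paper's proof. Your proposal buys a more ``elementary-looking'' limiting picture, but at the cost of needing a relative Shalika germ expansion near a non-regular-semisimple invariant which is not available off the shelf in this relative setting.

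Concretely, your schematic limit in stage two is problematic as written. For regular semisimple $\gamma_t$ the paper sets $L_{\gamma_t}\equiv 1$ (Proposition~\ref{prop:local_rss}), so dividing $I_{\gamma_t}(f,0)$ by $L_{\gamma_t}(0)$ does nothing, while $I_{\gamma_t}(f,0)$ itself blows up as $t\to 0$; the divergence is governed by the discriminant of $\gamma_t$, not by an $L$-factor attached to $\gamma_t$. To make your approach work you would have to (a) identify the correct $t$-dependent renormalization and prove that the renormalized family converges, and converges \emph{to} $I^\natural_\gamma(f,0)$ rather than to some other multiple or combination of the $I^\natural$'s attached to the $2^k$ regular orbits in the fiber of $b$ (Corollary~\ref{coro:regular_orbits}) -- which orbit you land on depends delicately on the direction of approach; and (b) establish the matching germ expansion on the unitary side, including the branching combinatorics of which regular semisimple classes degenerate to which semisimple orbit $\cO$. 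Neither of these is present, and (a) in particular is where the divergence of the unregularized integral bites. Finally, the ``alternative global route'' you mention risks circularity: the global geometric comparison at non-regular-semisimple classes is precisely a singular transfer statement (cf.\ Chaudouard--Zydor), so isolating the local identity from it by linear independence presupposes essentially what you are trying to prove.
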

This result can be regarded as a local analogue of the singular transfer theorem of Chaudouard--Zydor in ~\cite{CZ21}*{Th\'{e}or\`{e}me 1.1.6.1}.

\subsection{Remarks}

In ~\cite{Zhang12}, Zhang proposed an RTF approach to the arithmetic Gan--Gross--Prasad \~(AGGP) conjecture. In ~\cite{DZ23}, a $p$-adic version of the AGGP conjecture is proved.  One key difficulty in \cite{DZ23} is caused by the use of a very specific test function at some split places, and these functions have regular support but not regular semisimple support, in order to $p$-adically interpolate the orbital integral. The result in this article could be applied to \emph{loc.cit.}.

Liu ~\cite{Liu14} proposed an RTF to attack the global GGP conjecture for the Fourier-Jacobi period on unitary groups, and using this RTF, Xue ~\cite{Xue14} proved the global GGP conjectures for Fourier-Jacobi period under some local conditions. Some of the local conditions were removed in ~\cite{Xue19}, and a full coarse RTF was established by Boisseau, Xue, and the author ~\cite{BLX}. Using the coarse RTF, the global GGP conjectures for the Fourier-Jacobi period, together with the Eisenstein and higher corank case, are proved in \emph{loc.cit.}. We will also consider the Fourier-Jacobi analogue of the main results in Section ~\ref{sec:Fourier-Jacobi}.

We give an outline of this article. After some preliminaries in Section ~\ref{sec:prelim}, in Section ~\ref{sec:RTF}, we extend the coarse Jacquet-Rallis RTF on the general linear group and its Lie algebra by allowing for a general character on $\rH_1$. In Section ~\ref{sec:global}, we will prove Theorem ~\ref{thm:intro_global_2}. In Section ~\ref{sec:local}, we will define the local regular orbital integral and study its properties, and in Section ~\ref{sec:global_II} we use it to define the global regular orbital integral and prove Theorem ~\ref{thm:intro_global_1}. In Section ~\ref{sec:localII} we prove the local singular transfer theorem. In Section ~\ref{sec:Fourier-Jacobi}, we give a Fourier-Jacobi analogue of the results. In the appendix, we extend the asymptotic of the modified kernel in ~\cite{BPCZ}*{Theorem 3.3.7.1} to general parabolic subgroups and its Levi subgroups.

\paragraph{\textbf{Acknowledgement}}
The author would like to thank Wei Zhang for the suggestion of this problem and helpful discussion on many parts of this article, he is also very grateful to Daniel Disegni, Ziqi Guo, Linli Shi, Yiyang Wang, Hang Xue, Hongfeng Zhang, and Zhiyu Zhang for helpful comments and discussions.

\section{Preliminaries} \label{sec:prelim}

\subsection{General notation}
\label{subsec:general_notations}
\begin{itemize}
    \item We fix an integer $n>0$ throughout this article.

    \item For a matrix $A$, we write $A^t$ for the transpose of $A$. 

    \item Let $F$ be a number field and let $v$ be place of $F$, we write $F_v$ for the completion of $F$ at the place $v$. In general, if $\tS$ is a finite set of places of $F$, we write $F_\tS := \prod_{v \in \tS} F_v$ and $\bA^\tS_F$ for the restricted product $\prod_{v \not \in \rS}' F_v$. We also write $F_\infty : F \otimes_\Q \R$.  If $E/F$ is a finite extension, we put $E_\tS := E \otimes_F F_\tS$.
    
    \item Let $f$ and $g$ be positive functions on a set $X$, we write $f \ll g$ if there exists $C>0$ such that $f(x) \le Cg(x)$ for all $x \in X$. We write $f \ll_{c_1,\cdots,c_r} g$ if the constant $C$ depends on the parameters $c_1,\cdots,c_r$.
    \item Let $r$ be a real number. We denote by $\cH_{<r}$ (resp. $\cH_{>r}$) the left half plane $\{ z \in \C \mid \mathrm{Re}(s)<r \}$ (resp. right half plane $\{ z \in \C \mid \mathrm{Re}(s)>r \}$).

    \item For a vector space $V$ over a field $k$, we write $V^*:=\Hom_k(V,F)$ for the dual space of $V$. We denote by $k^n$ and $k_n$ the $n$ dimensional column/row vectors respectively.

    \item Let $G$ be a reductive group over a field $k$ of characteristic 0, and assume that $G$ acts on a finite type affine $k$-scheme $X$. Let $X/G$ be the GIT quotient and let $b \in (X/G)(k)$, we denote the fiber of $b$ under the quotient map $X \to X/G$ by $X_b$, which is a closed subscheme of $X$.

    \item Let $R$ be a ring and assume that we have a homomorphism $\chi$ from $R$ to some abelian group $A$. By an abuse of notation, we also denote the homomorphism $\chi \circ \det$ from $\GL_n(R)$ to $A$ by $\chi$.
\end{itemize}

\subsection{Algebraic groups} \label{subsec:algebraic_group}
    In this subsection, we follow the notations in ~\cite{BPCZ} where the reader can find more details there. Let $F$ be a number field and let $G$ be an algebraic group over $F$, we write $\fg_\infty$ for the Lie algebra of the Lie group $G(F_\infty)$, and let $\cU(\fg_\infty)$ be the universal enveloping algebra of $\fg_\infty$. We denote by $[G] := G(F) \backslash G(\bA)$ the ad\`{e}lic quotient of $G$. We fix the Tamagawa measure $dg$ on $G(\bA)$ as described in ~\cite{BPCZ}*{Section 2.3}.
      
      For the remainder of this subsection, we assume that $G$ is connected and reductive. Fix a maximal split torus $A_0$ of $G$ and a minimal parabolic subgroup $P_0$ containing $A_0$. A parabolic subgroup $P$ of $G$ is called standard if $P \supset P_0$, and it is called semi-standard if $P \supset A_0$. Let $P$ be a semi-standard parabolic subgroup of $G$, then $P$ has a unique Levi decomposition $P=M_PN_P$ such that $M_P \supset A_0$, this is called the standard Levi decomposition. When we say Levi decomposition of a semi-standard parabolic subgroup, we will always mean the standard Levi decomposition. 

      Let $W$ be the Weyl group of $(G,A_0)$, that is the quotient by $M_0(F)$ of the normalizer of $A_0$ in $G(F)$.

    For a semi-standard parabolic subgroup $P$ of $G$, define
        \[
            \fa_P^* := X^*(P) \otimes_\Z \R ,\quad \fa_P := \Hom_\Z(X^*(P),\R).
        \]
    We endow $\fa_P$ with the Haar measure such that the lattice $X^*(P)$ has covolume 1.
        
    Let $\fa_0 := \fa_{P_0}$ and $\fa_0^* := \fa_{P_0}^*$. For $P \subset Q$, there is a natural direct sum decomposition
        \[
            \fa_P = \fa_P^Q \oplus \fa_Q, \,\fa^*_P = \fa_{P}^{Q,*} \oplus \fa_Q^*.
        \]
    In particular, we will view $\fa_P$ (resp. $\fa_P^*$) as a subspace of $\fa_0$ (resp. $\fa_0^*$). For $P \subset Q$, we put
        \[
            \epsilon_P^Q := (-1)^{\dim \fa_P - \dim \fa_Q}.
        \]
    If $Q=G$, we put $\fa_P^Q = \fa_P$.
    
    Let $P_0'$ be a semi-standard minimal parabolic subgroup containing $P$, we denote by $\Delta_{P_0'}^P$ the set of simple roots under the $A_0$ action, relative to $P_0'$, on the Lie algebra of $N_P$. We put $\Delta_{P_0'}:=\Delta_{P_0'}^G$. For semi-standard parabolic subgroups $P \subset Q$, define $\Delta_P^Q$ to be the image of $\Delta_{P_0'}^Q \setminus \Delta_{P_0'}^P$ by the projection map $\fa_0^* \to \fa_P^*$. We also have a set of coroots $\Delta_P^{Q,\vee}$, the set of weights $\widehat{\Delta}_P^Q$, and the set of coweights $\widehat{\Delta}_P^{Q,\vee}$. Let $\rho_P \in \fa_P^*$ be the half of the sum of the roots of the action of $A_P$ on $N_P$.

    For any semi-standard parabolic subgroup $P$, and for any $T \in \fa_0$, we define a point $T_P \in \fa_P$, such that for any $w \in W$ such that $wP_0 w^{-1} \subset P$, the point $T_P$ is the projection of $w \cdot T$ on $\fa_P$ under the projection map $\fa_0 \to \fa_P$.

    Let $\tau_P^Q$ be the characteristic function of 
        \[
      \{ X \in \fa_0 \mid \langle \alpha, X \rangle>0, \text{ for all }\alpha \in \Delta_P^Q \},
        \]
    and put $\fa_0^+ := \{ X \in \fa_0 \mid \tau_{P_0}^G(X)=1 \}$.
     We also write $\widehat{\tau}_P^Q$ for the characteristic function of 
         \[
      \{ X \in \fa_0 \mid \langle \varpi, X \rangle>0, \text{ for all }\varpi \in \widehat{\Delta}_P^{Q} \}.
        \]
        
     For a semi-standard parabolic subgroup $P$ of $G$, we put
        \[
            [G]_P := N_P(\bA)M_P(F) \backslash G(\bA).
        \]
    We fix a norm $\| \cdot \|$ on $G(\bA)$ as in ~\cite{BP21}*{Appendix A}. It induces a norm on $[G]_P$ by 
        \[
            \| g \|_P := \inf_{\gamma \in N_P(\bA)M_P(F)} \|\gamma g\|.
        \]
    There is a notion of weight functions on $[G]_P$ as in ~\cite{BPCZ}*{Subsection 2.4.3}. In particular, for any $\alpha \in \fa_0^*$, there is a weight $d_{P,\alpha}$ on $[G]_P$.

     We denote by $A_G^\infty$ the neutral component of real points of the maximal split central torus of $\mathrm{Res}_{F/\Q} G$. For a semi-standard parabolic subgroup $P$ of $G$, let $A_P^\infty := A_{M_P}^\infty$. We also define $A_0^\infty := A_{P_0}^\infty = A_{M_0}^\infty$.

    We fix a maximal compact subgroup $K$ of $G(\bA)$, which is in good position with $P_0$. Hence we have the Iwasawa decomposition $G(\bA)=P(\bA)K$ for all semi-standard parabolic subgroup $P$ of $G$. The map
        \[
            H_P: P(\bA) \to \fa_P, \, p \mapsto \left( \chi \mapsto \log \lvert \chi(g) \rvert  \right)
        \]
    extends to $G(\bA)$, by requiring it trivial on $K$. The map $H_P$ induces an isomorphism $A_P^\infty \cong \fa_P$, we endow $A_P^\infty$ with the Haar measure such that this isomorphism is measure-preserving. Let $G(\bA)_P^1$ be the preimage of 0 under $H_P$. $G(\bA)^1:=G(\bA)^1_G$ is a subgroup of $G(\bA)$. For general $P$, the product in $G(\bA)$ induces a direct product decomposition (of sets)
        \[
            G(\bA) = G(\bA)_P^1 \times A_P^\infty.
        \]
    The subset $G(\bA)_P^1$ descends to a subset of $[G]_P$, we denote this by $[G]_P^1$.
    
    Let $\fX(G)$ be the set of cuspidal datum of $G$, see ~\cite{BPCZ}*{Section 2.9}. Let $P$ be a semi-standard parabolic subgroup, we have the following Langlands decomposition of the $L^2$ space:
        \begin{equation} \label{eq:spectral decomposition}
              L^2([G]_P) = \bigoplus_{\chi \in \fX(G)} L^2_\chi([G]_P).
        \end{equation}
          
     Fix a norm $\| \cdot \|$ on $\fa_0$. We say $T \in \fa_0$ is \emph{sufficiently positive}, if
    there exists $C>0$ and $\varepsilon >0$ such that
    \[
            \inf_{\alpha \in \Delta_0} \langle \alpha,T \rangle \ge \max\{ C,\varepsilon \|T\| \}.
        \]

    For $T \in \fa_0$ and a semi-standard parabolic subgroup $P$ of $G$, let $F^P(\cdot,T)$ be the function on $[G]_P$ introduced in ~\cite{BPC23}*{Subsubsection 2.3.3}. It is a characteristic function of a subset of $[G]_P$, and this subset is compact modulo center.

\subsection{Function spaces} \label{subsec:function space}
    Let $F$ be a number field and let $G$ be an algebraic group over $F$. We say a function $f:G(\bA) \to \C$ is smooth, if there exists a compact open subgroup $J \subset G(\bA_f)$ such that $f$ is invariant under the right translation by $J$, and for all $g_f \in G(\bA_f)$, the function $g_\infty \mapsto f(g_fg_\infty)$ is smooth on Lie group $G(F_\infty)$. We say that the function on $[G]_P$ is smooth if it pulls back to a smooth function on $G(\bA)$. 
    
    Let $C$ be a compact subset of $G(\bA_f)$ and let $J \subset G(\bA_f)$ be a compact subgroup. Let $\cS(G(\bA),C,J)$ be the space of smooth functions $f:G(\bA) \to \C$ which are biinvariant by $J$, supported in the subset $C \times G(F_\infty)$ and such that 
    \[
        \| f \|_{N,X,Y} = \sup_{g \in G(\bA)} \lvert (\mathrm{R}(X) \mathrm{L}(Y)f)(g) \rvert < \infty
    \]
    for any $N>1$ and $X,Y \in \cU(\fg_\infty)$. Let $\cS(G(\bA))$ be a union of all $\cS(G(\bA),C,J)$, it carries a natural strict LF topology. In particular, if $V$ is a vector space over $F$, we have a space $\cS(V(\bA))$ of Schwartz function on $V(\bA)$.
    
    Let $G$ be a reductive group over $F$, and let $P$ be a semi-standard parabolic subgroup of $G$. We denote by $\cS^0([G]_P)$ the space of the rapidly decreasing measurable functions on $[G]_P$. In general, let $X \subset [G]_P$ be a measurable subset, let $\cS^0(X)$ be the set of measurable functions $f$ on $X$ such that
        \[
            \| f \|_{\infty,N} := \sup_{x \in X} \|x\|_P^N \lvert f(x) \rvert
        \]
    is finite for all $N$. Equipped with the semi-norms $\| \cdot \|_{\infty,N}$, $\cS^0(X)$ is a Frech\'{e}t space.

    We denote by $\cT^0([G]_P)$ the space of complex Radon measure $\varphi$ on $[G]_P$ such that there exists $N>0$ making the integral
    \[
        \int_{[G]_P} \| x \|_P^{-N} \lvert \varphi(x) \rvert
    \]
    finite. It carries a natural LF topology.

     Let $w$ be a weight on $[G]_P$, we write $\cS_w([G]_P)$ for the LF space of weighted Schwartz functions on $[G]_P$. It consists of smooth functions $f$ on $[G]_P$ such that there exists $N>0$, such that for all $X \in \cU(\fg_\infty)$ and $r>0$,
    \[
        \| f\|_{\infty,X,r,N} := \sup_{x \in [G]_P} \lvert R(X)f(x) \rvert w(x)^r \|x\|_P^{-N} < \infty.
    \]
    In particular, if $w=\| \cdot \|_P$, $\cS_w([G]_P) := \cS([G]_P)$ is the space of Schwartz functions on $[G]_P$, and if $w=1$, $\cS_w([G]_P) := \cT([G]_P)$ is the space of functions of uniform moderate growth on $[G]_P$. For more details on these global function spaces, see ~\cite{BPCZ}*{Section 2.5}.

    Now let $F$ be a local field of characteristic 0, and let $G$ be an algebraic group over $F$, fix a norm $\| \cdot \|$ on $G(F)$ as in ~\cite{Kottwitz}*{Section 18}. We denote by $\cS(G(F))$ the space of Schwartz function on $F$. If $F$ is non-archimedean, $\cS(G(F))$ consists of compactly supported and locally constant function, if $F$ is archimedean, it consists of smooth functions $f$ on $G(F)$ such that for any $X \in \cU(\fg_\infty)$ and $N \ge 0$
        \[
            \| f \|_{X,N} := \sup_{g \in G(F)} \|x\|^N  \lvert \mathrm{R}(X)f(x) \rvert < \infty.
        \]
 
  The space $\cS(G(F))$ carries a natural Fr\'{e}chet topology when $F$ is archimedean, and we endow $\cS(G(F))$ with the finest locally convex topology if $F$ is non-Archimedean.
    
    \subsection{The symmetric space $\rS$ and its variants} \label{subsec:symmetric space S}
    
    From now on until the end of Section ~\ref{sec:prelim}, we fix a field $F$ of characteristic 0 and let $E$ be a quadratic \'{e}tale algebra over $F$. Let $\mathtt{c}$ be the unique nontrivial evolution of $E$ that fixes $F$. For an $F$ algebra $R$, we will write $a \mapsto a^\mathtt{c}$ for the involution on $R \otimes_F E$ induced by $\mathtt{c}$. For $k \ge 1$, write $\GL_n := \GL_{n,F}$.

    Let $\rS$ be the algebraic variety over $F$ such that
    \[
        \rS(R) = \{ x \in \GL_{n+1}(R \otimes E) \mid x x^\mathtt{c} = 1  \},
    \]
    where $1$ stands for the identity matrix of size $(n+1) \times (n+1)$.

    We regard the group $\GL_n$ as a subgroup of $\GL_{n+1}$ via the embedding $g \mapsto \begin{pmatrix}
        g & \\ & 1
    \end{pmatrix}$. The group $\GL_n$ has a right action on $\rS$ by
        \[
        x \cdot g = g^{-1}xg.
        \]
    There is a $\GL_n$-equivariant isomorphism
        \[
     \nu: \mathrm{Res}_{E/F} \GL_{n+1,E}/\GL_{n+1} \cong \rS , \quad g \mapsto gg^{-1,\mathtt{c}}
        \]
    where $\GL_n$ acts on $\mathrm{Res}_{E/F} \GL_{n+1,E}/\GL_{n+1}$ by left translation. Let $\rB:=\rS/\GL_n$ be the GIT quotient.

    Note that there is a natural identification $\rG'/\rH_1 \times \rH_{2,n+1} \cong \mathrm{Res}_{E/F} \GL_{n+1,E}/\GL_{n+1,F}$ induced by the map $(g_n,g_{n+1}) \mapsto g_n^{-1}g_{n+1}$, therefore $\rB$ can also be identified with the GIT quotient $\rG'/\rH_1 \times \rH_2$. More concretely, the map
    \begin{equation} \label{eq:the_map_alpha}
         \alpha: \rG' \to \rS, \quad (g_n,g_{n+1}) \mapsto \nu(g_n^{-1}g_{n+1})
    \end{equation}
    identifies $\rG'/\rH_1 \times \rH_2$ with $\rB$.
    
    Let $\sigma \in E^\times$ with $\sigma \sigma^\mathtt{c}=1$, and let $\rS^\sigma$ be the Zariski open subset of $\rS$ consisting of $x \in \rS$ such that the matrix $x-\sigma \cdot 1$ is invertible. 
    
    We write $\gl_{n+1}:=\gl_{n+1,F}$ for the vector space of $(n+1) \times (n+1)$ matrices with coefficient in $F$. The group $\GL_n$ has a right action on $\gl_{n+1}$ given by
    \[
        A \cdot g := g^{-1}Ag.
    \]
    Let $\cB := \gl_{n+1}/\GL_n$ be the GIT quotient. From ~\cite{Zhang14}*{Lemma 3.1}, we see that $\cB$ is isomorphic to the affine space $\mathbf{A}^{2n+1}$ of dimension $2n+1$.
    
    Fix $\tau \in E$ such that $\tau^\mathtt{c}=-\tau$, let $\gl_{n+1}^\tau$ be the Zariski open subset of $\gl_{n+1}$ consists of $Y \in \gl_{n+1}$ such that the matrix $Y-\tau \cdot 1$ is invertible.

The Cayley map
    \begin{equation} \label{eq:Cayley_gl_n}
         \fc_\sigma: \gl_{n+1}^\tau \to \rS^\sigma, \quad Y \mapsto -\sigma \frac{1+\tau^{-1}Y}{1-\tau^{-1}Y}
    \end{equation}
defines a $\GL_n$-equivariant isomorphism between $\gl_{n+1}^{\tau}$ and $\rS^\sigma$. The open subsets $\gl_{n+1}^\tau$ and $\rS^\sigma$ is $\GL_n$ invariant and descends to open subsets of $\cB^\tau$ of $\cB$ and $\rB^{\sigma}$ of $\rB$ respectively, and $\fc_\sigma$ induces an isomorphism $\cB^\tau \to \rB^{\sigma}$ which we will still denote by $\fc_\sigma$.

Write an element $x$ of $\rS$ as
    \[
        x = \begin{pmatrix}
        A & b \\ c & d
             \end{pmatrix},
    \]
    where $A,b,c,d$ has size $n \times n,n \times 1,1 \times n,1 \times 1$ respectively. We put 
    \begin{equation} \label{eq:definition_Delta+}
        \Delta^+(x)=\det(b,Ab,A^2b,\cdots,A^{n-1}b), \quad \Delta^-(x)=\det \begin{pmatrix}
            c \\ cA \\ \cdots \\ cA^{n-1}
        \end{pmatrix}.
    \end{equation}

Let $\rS_+$ (resp. $\rS_-$) be the Zariski open subset of $\rS$ where $\Delta^+$ (resp. $\Delta^-$) is non vanishing, and let $\rG'_+$ and $\rG'_-$ be the preimage of $\rS_+$ and $\rS_-$ under the map $\alpha$ (see ~\eqref{eq:the_map_alpha}) respectively. Let $e_{n+1}$ be the column vector $(0,\cdots,0,1)^t$ of size $(n+1) \times 1$, we can directly check that
\begin{equation} \label{eq:reg equivalent}
    \Delta^+(x) = (-1)^n \det(x,xe_{n+1},\cdots,x^ne_{n+1}), \quad \Delta^-(x) = (-1)^n \det \begin{pmatrix}
        e_{n+1}^t \\ \cdots \\ e_{n+1}^t x^n
    \end{pmatrix}.
\end{equation}
Similarly, for $X = \begin{pmatrix}
    A & v \\ u & d 
\end{pmatrix} \in \gl_{n+1}$ with $A \in \gl_n, v \in F^n, u \in F_n,d \in F$, define
    \[
     \delta^+(X) = \det(v,Av,\cdots,A^{n-1}v), \quad \delta^-(X)=\det \begin{pmatrix}
         u \\ uA \\ \cdots \\uA^{n-1}
     \end{pmatrix}.
    \]
Let $\gl_{n+1,+}$ (resp. $\gl_{n+1,-}$) be the Zariski open subset of $\gl_{n+1}$ where $\delta^+$ (resp. $\delta^-$) is non-vanishing. We have
\begin{equation}  \label{eq:reg equivalent Lie algebra}
       \delta^+(X)=(-1)^n \det(e_{n+1},\cdots,X^n e_{n+1}), \quad \delta^-(X)=(-1)^n \det \begin{pmatrix}
        e_{n+1}^t \\ e_{n+1}^t X \\ \cdots \\ e_{n+1}^t X^n
    \end{pmatrix}.
\end{equation}
 
Direct computation shows that for $X \in \gl^\tau_{n+1}(F)$, we have
\begin{equation} \label{eq:Delta_and_Cayley_transform}
    \Delta^+(\fc_\sigma(X)) = (-2 \sigma \tau^{-1})^{\frac{n(n+1)}{2}} \det(1-\tau^{-1}Y)^{-n} \delta^+(X).
\end{equation}
and
\begin{equation} \label{eq:Delta-_and_Cayley_transform}
    \Delta^-(\fc_\sigma(X)) = (-2 \sigma \tau^{-1})^{\frac{n(n+1)}{2}} \det(1-\tau^{-1}Y)^{-n} \delta^-(X).
\end{equation}
As a consequence, for any $Y \in \gl_{n+1}^\tau(F)$, we have
    \begin{equation} \label{eq:non zero equivalent}
        \Delta^+(\fc_\sigma(Y)) \text{ (resp. }  \Delta^-(\fc_\sigma(Y))) \ne 0 \iff \delta^+(Y) \text{ (resp. } \delta^-(Y)) \ne 0 
    \end{equation}

For a vector space $V$ over $F$, put $\widetilde{\gl_V} := \gl(V) \times V \times V^*$, it carries a right action by $\GL(F)$:
    \[
        (A,v,u) \cdot g = (g^{-1}Ag,g^{-1}v,ug).
    \]
We write $\widetilde{\gl_n} := \widetilde{\gl_{F^n}}$. Note that we have a $\GL_n$-equivariant isomorphism
    \begin{equation} \label{eq:isomorphism gl and tilde gl}
        \gl_{n+1} \cong \widetilde{\gl_n} \times F, \quad \begin{pmatrix}
             A & v \\ u & d
        \end{pmatrix} \mapsto \left( (A,v,u),d \right)
    \end{equation}
where $F$ is endowed with the trivial action of $\GL_n$. Thus, we can view $\widetilde{\gl_n}$ as a variant of $\gl_{n+1}$ under the $\GL_n$-action.

Let $\cA_V = \widetilde{\gl_V}/\GL(V)$ be the GIT quotient. We put $\cA := \cA_{F^n}$, by ~\eqref{eq:isomorphism gl and tilde gl}, we have an identification $\cB \cong \cA \times F$. If $\dim V=d$, $\cA_V$ can be identified with the affine space $\mathbf{A}^{2d}$, where the quotient map $q:\widetilde{\gl_V} \to \cA_V$ is given by
    \begin{equation} \label{eq:gl_GIT_quotient}
        (A,u,v) \mapsto \left( (\mathrm{Trace} \wedge^i A)_{1 \le i \le d}, (uA^iv)_{0 \le i \le d-1} \right).
    \end{equation}
 
We see from this description that when $\dim V_1 = \dim V_2$, $\cA_{V_1}$ and $\cA_{V_2}$ are canonically identified. 

For $X=(A,v,u) \in \widetilde{\gl_n}$, we put
    \[
        \delta^+(X) = \det(v,Av,A^2v,\cdots,A^{n-1}v), \quad \delta^-(X) = \det(u,uA,uA^2,\cdots,uA^{n-1}).
    \]

Note that the definition of $\delta^{\pm}$ depends on the choice of basis of $F^n$, so there is no canonical definition of $\delta^{\pm}$ on $\widetilde{\gl_V}$ for a general vector space $V$. Let $\widetilde{\gl_V}_{,+}$ (resp. $\widetilde{\gl_V}_{,-}$) be the Zariski open subset of $\widetilde{\gl_V}$ where $\{v,Av,\cdots,A^{n-1}v\}$ form a basis of $V$ (resp. $\{u,uA,\cdots,uA^{n-1}\}$ form a basis of $V^*$). If $V=F^n$, this is the open subset of $\widetilde{\gl_n}$ where $\delta^+$ (resp. $\delta^-$) is non-vanishing. 

Under the map ~\eqref{eq:isomorphism gl and tilde gl}, we have
\[
    \gl_{n+1,+} = \widetilde{\gl_n}_{,+} \times F, \quad  \gl_{n+1,-} = \widetilde{\gl_n}_{,-} \times F.
\]    
Recall that if we have a reductive group $G$ over $F$ acting on a finite-type $F$-scheme $X$, then $x \in X(F)$ is called regular if its stabilizer $G_x$ has minimal dimension, and $x \in X(F)$ is called semisimple if its (scheme-theoretic) orbit is Zariski closed. We denote by $X_{\mathrm{reg}}$ (resp. $X_{\mathrm{rs}}$) the Zariski open subset of $X$ consisting of regular (resp. regular semisimple) elements. In this article, we are mainly interested in the following four cases of $X$ and $G$.
\begin{equation} \label{eq:four_cases}
    \begin{aligned}
        X&=\rG', &G=\rH_1 \times \rH_2; \\
        X&=\rS,  &G=\GL_n; \\
        X&=\gl_{n+1}, &G=\GL_n; \\
        X&=\widetilde{\gl_n}, &G=\GL_n. 
    \end{aligned}
\end{equation}
In these cases, a rational point is regular if and only if its stabilizer is trivial, and it is known that (see e.g. ~\cite{Zhang14}) $X_\mathrm{rs}=X_+ \cap X_-$.

For $g \in \GL_n$, and $x \in \rS(F)$ (resp. $Y \in \gl_{n+1}(F)$ or $Y \in \widetilde{\gl_n}(F)$), note that $\Delta^\pm(x \cdot g)=\det g \cdot \Delta^\pm(x)$ (resp. $\delta^\pm(Y \cdot g) = \det g \cdot \delta^\pm(Y)$). Therefore for $X$ and $G$ in these four cases ~\eqref{eq:four_cases}, we have $X_+ \cup X_- \subset X_{\mathrm{reg}}$. We remark that for $n \ge 2$, this inclusion is strict.

\subsection{Descent on $\widetilde{\gl_n}$} \label{subsec:descent}

Let $\fz_V := \{ (\lambda \cdot \mathrm{id},0,0) \mid \lambda \in F \} \subset \widetilde{\gl_V}$. Then $\fz_V$ is the center of $\widetilde{\gl_V}$ under the $\GL(V)$ action (i.e. the set of fixed points). The composition $\fz_V \hookrightarrow \widetilde{\gl_n} \to \cA$ induces a closed embedding $\fz_V \hookrightarrow \cA$. We call the image of this embedding the center of $\cA$, an element lying in the center will be called central.

We now recall the descent construction on $\widetilde{\gl_n}$, cf.   ~\cite{Zhang14}*{Section 3.2} or ~\cite{CZ21}*{Section 3.4}. It allows us to approximate any element $a \in \cA(F)$ by regular semisimple elements and central elements in smaller GIT quotients. The construction consists of two steps: first approximate $a$ by $(a_0,a^0)$ where $a_0$ is regular semisimple and $a^0$ is close to being central, and then approximate $a^0$ by central elements $(a_1,\cdots,a_k)$. We now describe them in detail.  

\paragraph{\textbf{Step 1}} For any vector space $V$ over $F$, We have a stratification of $\cA_V$ as described in ~\cite{CZ21}*{Paragraph 3.1.4}. For an integer $r$ with $1 \le r \le \dim V$, and $X=(A,v,u) \in \widetilde{\gl_V}(F)$, let
    \begin{equation} \label{eq:d_r_on_gl}
      d_r(X) = \det(uA^{i+j-2}v)_{1 \le i,j \le r}.
    \end{equation}
Let $\widetilde{\gl_V}^{(r)}$ be the locally closed subscheme of $\widetilde{\gl_V}$ such that $d_r \ne 0$ but $d_s = 0$ for $s>r$. The subscheme $\widetilde{\gl_V}^{(r)}$ is $\GL(V)$-invariant and descends to a locally closed subscheme $\cA_V^{(r)}$ of $\cA_V$. Note that $\cA_V^{(\dim V)}$ is the open subset $\cA_{V,\mathrm{rs}}$ consisting of regular semisimple elements.

Choose an isomorphism $i: F^n \cong F^r \oplus F^{n-r}$, it determines an embedding
    \begin{align} \label{eq:descent_1}
        \begin{split}
           \iota:  \widetilde{\gl_{r}}^{(r)} \times \widetilde{\gl_{n-r}} &\hookrightarrow \widetilde{\gl_n}, \\ 
            (A_1,v_1,u_1),(A_2,v_2,u_2) &\mapsto \left( \begin{pmatrix}
               A_1 & v'u_2 \\ v_2u' & A_2
           \end{pmatrix}, v_1,u_1 \right)
        \end{split}
    \end{align}
where $u' \in F_r$ satisfies
    \[
        u'A_1^iv_1 = \begin{cases}
            0 & \text{ if } 0 \le i < r-1 \\
            1 & \text{ if } i = r-1.
        \end{cases}
    \]
and $v' \in F^r$ satisfies
    \[
        u_1 A_1^iv' = \begin{cases}
            0 & \text{ if } 0 \le i < r-1 \\
            1 & \text{ if } i = r-1.
        \end{cases}
   \]
The map ~\eqref{eq:descent_1} descends to an isomorphism of varieties
    \begin{equation} \label{eq:descent step 1 isomorphism_plus}
    \iota: \cA_{F^r}^{(r)} \times \cA_{F^{n-r}} \cong \cA^{(\ge r)}
    \end{equation}

where $\cA^{(\ge r)} := \cup_{k \ge r}\cA^{(k)}$ is an open subset of $\cA$. The isomorphism $\iota$ is independent of the choice of $i$ and restricts to an isomorphism 
    \begin{equation} \label{eq:descent step 1 isomorphism}
         \cA_{F^r}^{(r)} \times \cA_{F^{n-r}}^{(s)} \cong \cA^{(r+s)}.
    \end{equation}

In particular, for $a \in \cA^{(r)}(F)$, there exists unique $(a_0,a^0) \in \cA_{F^r}^{(r)} \times \cA_{F^{n-r}}^{(0)}$ such that $\iota(a_0,a^0)=a$. Note that $a_0$ is regular semisimple in $\cA_{F^r}(F)$.

\paragraph{\textbf{Step 2}} Fix $a \in \cA^{(r)}(F)$. As we discussed in Step 1, $a$ can be uniquely written as $\iota(a_0,a^0)$. Assume that $a^0$is represented by $(A,v,u) \in \widetilde{\gl_{F^{n-r}}}(F)$, then $uA^iv=0$ for any $i \ge 0$ (see ~\cite{CZ21}*{3.1.4.1}). We therefore see that $a^0$ is determined by the characteristic polynomial $P$ of $A$. Assume $P$ decomposes in the polynomial ring $F[x]$ as $P=P_1^{n_1} \cdots P_k^{n_k}$, where $P_i$ are distinct and irreducible. For $1 \le i \le k$, put $F_i=F[x]/P_i(x)$, which is a finite extension of $F$. Put
    \begin{equation} \label{eq:H and h}
      H_i = \mathrm{Res}_{F_i/F} \GL_{n_i,F_i}, \quad H^0 = \prod_{i=1}^k H_i, \quad \widetilde{\fh}_i = \mathrm{Res}_{F_i/F}  \widetilde{\gl_{F_i^{n_i}}}, \quad \widetilde{\fh}^0 = \prod_{i=1}^k \widetilde{\fh}_i.
    \end{equation}
Let $\cA_{H^0}$ be the GIT quotient $\widetilde{\fh^0}/H^0$, which can be identified with $\prod_{i=1}^k \cA_i$ with 
\[
\cA_i = \mathrm{Res}_{F_i/F} (\widetilde{\gl_{F^i}}/\GL_{n_i,F_i}).
\] 

Choose an isomorphism $i^0: \bigoplus_{i=1}^k \mathrm{Res}_{F_i/F} F_i^{n_i} \cong F^{n-r}$. Then we get the embedding
    \begin{equation} \label{eq:embedding group}
      \iota_{H^0}:  H^0 \hookrightarrow \GL_{n-r}, \, (g_i) \mapsto \oplus g_i
    \end{equation}  
and
    \begin{equation} \label{eq:embedding Lie algebra}
      \iota_{\fh^0}: \widetilde{\fh^0} \hookrightarrow \widetilde{\gl_{F^{n-r}}}, \, (A_i,v_i,u_i) \mapsto (\oplus A_i,\oplus v_i,\oplus u_i),
    \end{equation}
where we have used the identification.
    \[
    \Hom_{F_i}(F_i^{n_i},F_i) \xrightarrow{\sim} \Hom_F( F_i^{n_i},F), \quad f \mapsto (v \mapsto \mathrm{tr}_{F_i/F} f(v))
    \]
The embedding ~\eqref{eq:embedding Lie algebra} induces a natural map $\cA_{H^0} \to \cA_{F^{n-r}}$ and this map is independent of the choice of $i^0$. 

Let $\widetilde{\fh^0}'$ denote the open subset of $\widetilde{\fh^0}$ consisting of $(A_i,u_i,v_i)$ such that $\det Q_i(A_j) \ne 0$, where $Q_i$ is the characteristic polynomial of $A_i$ over $F$. $\widetilde{\fh^0}'$ descends to an open subset $\cA'_{H^0}$ of $\cA_{H^0}$.

We put
\[
    H_0 = \GL_r, \quad H = H_0 \times H^0, \quad \widetilde{\fh_0} = \widetilde{\gl_{F^r}}, \quad \widetilde{\fh} = \widetilde{\fh_0} \times \widetilde{\fh^0}.
\]

The GIT quotient $\cA_H := \widetilde{\fh}/H$ can be identified with $\cA_{F^r} \times \cA_{H^0}$. The isomorphisms $i$ and $i^0$ determine the maps $\iota_{\fh^0}$ in ~\eqref{eq:embedding Lie algebra} and $\iota$ in \eqref{eq:descent_1}, and in terms gives a map $\iota_\fh: \widetilde{\fh_0}^{(r)} \times \widetilde{\fh^0} \to \widetilde{\gl_n}$. Let $\widetilde{\fh}' = \widetilde{\fh_0}^{(r)} \times \widetilde{\fh^0}'$. Then $\widetilde{\fh}'$ is an open subset of $\widetilde{\fh}$, and descends to an open subset $\cA_H'$ of $\cA_H$. The map $\iota_\fh$ restricts to a map $\widetilde{\fh}' \to \widetilde{\gl_V}$, and descends to a map $\iota_{\cA}: \cA_H' \to \cA$. The map $\iota_{\cA}$ is independent of the isomorphisms $i$ and $i^0$ and by ~\cite{Zhang14}*{Appendix B} (see also ~\cite{CZ21}*{Subsection 6.4}), the map $\iota_{\cA}$ is \'{e}tale.

Let $\alpha_i$ be the image of $T$ in $F_i=F[T]/(P_i)$, and let $a_i$ be the image of $(\alpha_i \cdot \mathrm{id},0,0)$ under the quotient map $\widetilde{\gl_{V_i}} \to \cA_i$. Let $a_H := (a_0,a_1,\cdots,a_k) \in \cA_H$. From the definition, we see that $\iota_\cA(a_H) = a$.

By ~\cite{Zhang14}*{Appendix B} (see also ~\cite{CZ21}*{Subsection 6.4}), we have a Cartesian diagram
    \begin{equation} \label{diag:Luna slice}
           \begin{tikzcd}
\widetilde{\fh}' \times^H \GL_n \arrow[r] \arrow[d] & \widetilde{\gl_n} \arrow[d] \\
\cA_H' \arrow[r]                                & \cA                        
\end{tikzcd}
    \end{equation}
 where 
    \begin{itemize}
        \item The right vertical map is the quotient map, and the left vertical map is induced by the quotient map $\widetilde{\fh'} \to \cA_H$, trivial on the second component.
        \item The bottom horizontal map is $\iota_\cA$, the top horizontal map sends $(X,g)$ to $\iota_{\fh}(X) \cdot g$.
    \end{itemize}
By the vanishing of Galois cohomology, we have
    \begin{equation} \label{eq:vanishing_of_Galois_cohomology}
        (\widetilde{\fh}' \times^H \GL_n)(F) = \widetilde{\fh}'(F) \times^{H(F)} \GL_n(F).
    \end{equation}
Hence the map $\iota_{\fh}$ induces a natural bijection between the $H(F)$-orbit of  $\widetilde{\fh}_{a_H}$ and the $\GL_n(F)$-orbit of $\widetilde{\gl_n}_{,a}$. 

We finally remark that the above construction is compatible with base change. More precisely, if $K/F$ is a field extension (not necessarily algebraic) and for $a \in \cA_V(F)$, the above procedure gives $H,\widetilde{\fh}$ and the diagram ~\eqref{diag:Luna slice}. If we regard $a$ as an element of $\cA_V(K)=\cA_{V\otimes_F K}(K)$, assume that the above procedure gives $H_K,\widetilde{\fh_K}$ and the diagram ~\eqref{diag:Luna slice} over $K$. Then one check easily that $H_K = H \times_F K$ is the base change of $F$, and the diagram ~\eqref{diag:Luna slice} over $K$ is also a base change of the corresponding diagram over $F$. We will use this fact for $F$ is a number field and $K=F_v$ for a place $v$ of $F$.

\subsection{Classification of regular orbits}
\label{subsec:regular_orbits}

In either of the four cases in ~\eqref{eq:four_cases}, let $X/G$ be the GIT quotient, and $q:X \to X/G$ be the quotient map. For $b \in X/G(F)$, we denote by $X_b$ the fiber of $b$ (see Subsection ~\ref{subsec:general_notations}). We now use the results in Subsection ~\ref{subsec:descent} to classify the regular orbits. Let us first begin with some special regular orbits. 
\begin{lemma} \label{lem:unique orbit}
    In either of the four cases in ~\eqref{eq:four_cases}, for any $b \in X/G(F)$, $G(F)$ acts simply transitively on $X_b(F) \cap X_+(F)$ and $X_b(F) \cap X_-(F)$. In other words, there exists a unique orbit $O^+(b)$ (resp. $O^-(b)$) of $G(F)$ action on $X_b(F)$ such that this orbit is contained in $X_+(F)$ (resp. $X_-(F)$). 
\end{lemma}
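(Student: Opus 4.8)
The plan is to reduce all four cases of~\eqref{eq:four_cases} to the case $X=\widetilde{\gl_n}$, $G=\GL_n$, and to settle that case by an explicit companion-matrix normal form. Two preliminary observations simplify the goal. First, since $X_+\cup X_-\subset X_{\mathrm{reg}}$ and a regular point in each of the four cases has trivial stabilizer, the $G(F)$-action on $X_b(F)\cap X_+(F)$ (and on $X_b(F)\cap X_-(F)$) is automatically free; so ``simply transitively'' will follow from transitivity. Second, as $X_+$ is $G$-stable, transitivity on $X_b(F)\cap X_+(F)$ together with its non-emptiness is equivalent to the existence of a unique $G(F)$-orbit inside $X_b(F)$ contained in $X_+(F)$. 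Thus it suffices to prove that $X_b(F)\cap X_+(F)$ is non-empty and a single $G(F)$-orbit (and likewise for $X_-$).

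\emph{The case $\widetilde{\gl_n}$.} Using the coordinates~\eqref{eq:gl_GIT_quotient}, an element $a\in\cA(F)$ amounts to a monic degree-$n$ polynomial $P$ (recording the $\mathrm{Trace}\wedge^iA$) together with scalars $s_0,\dots,s_{n-1}$ (recording the $uA^iv$). For non-emptiness take $A_0$ the companion matrix of $P$, $v_0=e_1$ and $u_0=(s_0,\dots,s_{n-1})$; then $A_0^iv_0=e_{i+1}$, so $(A_0,v_0,u_0)\in\widetilde{\gl_n}_{,+}(F)$, and one checks $q(A_0,v_0,u_0)=a$. For transitivity let $(A,v,u)\in\widetilde{\gl_n}_{,a}(F)\cap\widetilde{\gl_n}_{,+}(F)$ and let $g\in\GL_n(F)$ be the matrix with columns $v,Av,\dots,A^{n-1}v$ (invertible, since these form a basis). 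Then $g^{-1}v=e_1$; the $i$-th column of $g^{-1}Ag$ is $g^{-1}A^iv=e_{i+1}$ for $i<n$, and the $n$-th column $g^{-1}A^nv$ is the last column of $A_0$ by Cayley--Hamilton; and the entries of $ug$ are $uA^{i-1}v=s_{i-1}$. Hence $(A,v,u)\cdot g=(A_0,v_0,u_0)$, so $\widetilde{\gl_n}_{,a}(F)\cap\widetilde{\gl_n}_{,+}(F)$ is the single orbit of $(A_0,v_0,u_0)$. The statement for $\widetilde{\gl_n}_{,-}$ follows by the same argument applied to the rows $u,uA,\dots,uA^{n-1}$, or by the transpose involution $(A,v,u)\mapsto(A^t,u^t,v^t)$, which preserves $\cA$ and exchanges the strata $\widetilde{\gl_n}_{,+}$ and $\widetilde{\gl_n}_{,-}$.

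\emph{The reductions.} For $X=\gl_{n+1}$: the isomorphism~\eqref{eq:isomorphism gl and tilde gl} is $\GL_n$-equivariant with trivial action on the $F$-factor, so $\cB\cong\cA\times F$ and $\gl_{n+1,+}=\widetilde{\gl_n}_{,+}\times F$; writing $b=(a,d)$, the set $(\gl_{n+1})_b(F)\cap\gl_{n+1,+}(F)$ equals $\bigl(\widetilde{\gl_n}_{,a}(F)\cap\widetilde{\gl_n}_{,+}(F)\bigr)\times\{d\}$, and the previous case applies (same for $-$). For $X=\rS$: given $b\in\rB(F)$, the characteristic polynomial of an element of $\rS_b$ is a $\GL_n$-invariant, hence depends only on $b$, and has finitely many roots; since $\{\sigma\in E^\times\mid\sigma\sigma^{\mathtt{c}}=1\}$ is infinite ($F$ has characteristic $0$) we may pick such a $\sigma$ with $\rS_b\subset\rS^\sigma$, i.e. $b\in\rB^\sigma(F)$. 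The Cayley isomorphism $\fc_\sigma:\gl_{n+1}^\tau\to\rS^\sigma$ is $\GL_n$-equivariant, carries $\gl_{n+1,+}\cap\gl_{n+1}^\tau$ onto $\rS_+\cap\rS^\sigma$ by~\eqref{eq:non zero equivalent}, and carries the fibre over $b':=\fc_\sigma^{-1}(b)\in\cB^\tau(F)$ onto $\rS_b$; hence $\rS_b(F)\cap\rS_+(F)$ is the $\GL_n(F)$-equivariant bijective image of $(\gl_{n+1})_{b'}(F)\cap\gl_{n+1,+}(F)$, and the previous case applies (same for $-$, using $\delta^-,\Delta^-$). For $X=\rG'$: the map $\alpha$ of~\eqref{eq:the_map_alpha} is equivariant for the projection $\rH_1\times\rH_2\to\GL_{n,F}$ onto the $\GL_{n,F}$-factor of $\rH_2$, identifies $\rB$ with $\rS/\GL_n$, and satisfies $\rG'_+=\alpha^{-1}(\rS_+)$; moreover each fibre of $\alpha$ is a torsor under the kernel $\rH_1\times\rH_{2,n+1}\cong\mathrm{Res}_{E/F}\GL_n\times\GL_{n+1}$, whose first Galois cohomology vanishes (Hilbert 90 and Shapiro, cf.~\eqref{eq:vanishing_of_Galois_cohomology}), so the $F$-points of each fibre form a single $\rH_1(F)\times\rH_{2,n+1}(F)$-orbit. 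Since $\rS_b(F)\cap\rS_+(F)$ is a single $\GL_{n,F}(F)$-orbit by the previous step, combining these two transitivities shows that $\rG'_b(F)\cap\rG'_+(F)$, which is the preimage under $\alpha$ of $\rS_b(F)\cap\rS_+(F)$, is a single non-empty $\rH_1(F)\times\rH_2(F)$-orbit (the $\GL_{n,F}$-factor moves between fibres, the kernel within a fibre); same for $-$. This settles all four cases.

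I expect the only non-routine points to be organizational: checking that the Cayley chart can be chosen to contain the whole fibre $\rS_b$ (handled by the finiteness of the eigenvalue locus against the infinitude of the norm-one torus), and the identification of the $F$-points of a fibre of $\alpha$ with a single $\rH_1(F)\times\rH_{2,n+1}(F)$-orbit via vanishing of $H^1$; the heart of the lemma, the case of $\widetilde{\gl_n}$, is elementary linear algebra.
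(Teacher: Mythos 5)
Your proof is correct and follows the same reduction chain as the paper's ($\rG'\to\rS\to\gl_{n+1}\to\widetilde{\gl_n}$, with the Cayley transform handling $\rS$ and the equivariant isomorphism $\alpha$ handling $\rG'$). The one genuine difference is in the base case: where the paper cites \cite{Zhang14b}*{Proposition 6.3} (with a footnote noting that the cited proof works over any field of characteristic $0$, not just local fields), you give a self-contained companion-matrix argument — which is indeed the standard proof of that cited result. You also fill in two details the paper leaves implicit: that $\sigma$ can be chosen with $\rS_b\subset\rS^\sigma$ (your finiteness-of-eigenvalues vs.\ infinitude-of-the-norm-one-torus argument), and that the passage from $\rH_1(F)\times\rH_2(F)$-orbits on $\rG'$ to $\GL_n(F)$-orbits on $\rS$ is justified by vanishing of $H^1$ for the fibres of $\alpha$. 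These are worthwhile clarifications but do not constitute a different route.
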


\begin{proof} 
    We prove the result for $X_+$, and the proof for $X_-$ is then similar.

    When $X=\widetilde{\gl_n}$, the lemma follows from ~\cite{Zhang14b}*{Proposition 6.3} \footnote{In ~\cite{Zhang14b}*{Proposition 6.3}  such result is only stated for local field, but the proof works for any field of characteristic 0.}. By ~\eqref{eq:isomorphism gl and tilde gl}, it holds for $X=\gl_{n+1}$. For $X = \rS$, take $\sigma \in E$ such that $\mathrm{Nm}_{E/F}(\sigma)=1$ and $b \in \rB^\sigma(F)$. By ~\eqref{eq:non zero equivalent}, as a set with $\GL_n(F)$ action, $\rS_b(F) \cap \rS_+(F)$ is isomorphic to $\gl_{n+1,\fc_\sigma^{-1}(b)}(F) \cap \gl_{n+1,+}(F)$, therefore the lemma holds for $X=\rS$. Finally, since $\rG'/\rH_1 \times \rH_{2,n+1}$ is isomorphic as a $\GL_n$-variety to $\rS$, the lemma holds for $X=\rG'$. 
\end{proof}

If $b \in (X/G)(F)$ is regular semisimple, then $O^+(b)=O^-(b)=X_b$ (see ~\cite{Zhang12}). We also have the following result

\begin{lemma} \label{lem:regular_nilpotent_orbit}
 For $X=\widetilde{\gl_n}$ and $G=\GL_n$, and $a \in \cA(F)$ is central, then $O^+(a)$ and $O^-(a)$ are the only regular orbits of in $\widetilde{\gl_n}_{,a}$.
\end{lemma}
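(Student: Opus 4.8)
The plan is to reduce to the nilpotent fiber and then detect regularity infinitesimally. By hypothesis $a=q(\lambda\cdot\mathrm{id},0,0)$ for some $\lambda\in F$. First I would use the $\GL_n$-equivariant automorphism $(A,v,u)\mapsto(A-\lambda\cdot\mathrm{id},v,u)$ of $\widetilde{\gl_n}$: it preserves $X_{\mathrm{reg}}$, $X_+$ and $X_-$ (the spans $F[A]v$ and $\sum_{i}F\,uA^i$ are unchanged), it carries $\widetilde{\gl_n}_{,a}$ isomorphically, as a $\GL_n(F)$-set, onto the fiber over the origin of $\cA$, and it carries $O^{+}(a),O^{-}(a)$ onto $O^{+},O^{-}$ of that origin. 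So I may assume $\lambda=0$. Using the explicit formula \eqref{eq:gl_GIT_quotient} for $q$, the fiber $\widetilde{\gl_n}_{,a}(F)$ then consists of the triples $(A,v,u)$ with $A$ nilpotent and $uA^iv=0$ for $0\le i\le n-1$; by Cayley--Hamilton this forces $uA^iv=0$ for all $i\ge 0$.

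Next I would record that, for $x=(A,v,u)$, a direct computation (differentiating the three equations $g^{-1}Ag=A$, $g^{-1}v=v$, $ug=u$ at the identity) identifies the Lie algebra of the stabilizer $(\GL_n)_x$ with $\{Z\in\gl_n(F):[A,Z]=0,\ Zv=0,\ uZ=0\}$; since we are in characteristic $0$ and, in the case at hand, regular is equivalent to having trivial stabilizer, $x$ is regular if and only if this space vanishes. The heart of the argument is the contrapositive. Suppose $x=(A,v,u)$ lies in the (nilpotent) central fiber with $x\notin X_+$ and $x\notin X_-$, i.e. $W_v:=F[A]v\subsetneq V$ and $W_u:=\sum_{i\ge 0}F\,uA^i\subsetneq V^*$; I must produce a nonzero such $Z$. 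Set $K_u:=\{w\in V:uA^iw=0\text{ for all }i\ge 0\}$, an $A$-stable subspace which is the common kernel of $W_u$, so $\dim K_u=n-\dim W_u\ge 1$, and which contains $v$ and hence $W_v$. Any $Z$ commuting with $A$ and killing $v$ automatically kills $W_v$, so it is the same datum as an $F[A]$-linear map $\overline{Z}\colon V/W_v\to V$; the further requirement $uZ=0$ says exactly $\overline{Z}(V/W_v)\subseteq K_u$ (note $K_u\subseteq\ker u$). Thus it suffices to exhibit one nonzero $F[A]$-linear map $V/W_v\to K_u$. Both are nonzero finitely generated $F[A]$-modules on which $A$ acts nilpotently; hence $A\cdot(V/W_v)\subsetneq V/W_v$, giving a nonzero quotient $T$ of $V/W_v$ with trivial $A$-action, and $K_u\cap\ker A$ is a nonzero subspace $S$ of $K_u$ with trivial $A$-action. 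Any composite $V/W_v\twoheadrightarrow T\twoheadrightarrow F\hookrightarrow S\hookrightarrow K_u$ with nonzero $F$-linear maps in the middle is then $F[A]$-linear and nonzero; lifting to $Z\in\gl_n(F)$ yields $[A,Z]=0$, $Zv=0$, $uZ=0$, $Z\ne 0$, so $x$ is not regular, a contradiction.

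Finally I would assemble the statement. The previous step shows every regular point of $\widetilde{\gl_n}_{,a}(F)$ lies in $X_+(F)\cup X_-(F)$, hence in $O^{+}(a)\cup O^{-}(a)$ by Lemma~\ref{lem:unique orbit}. Conversely $O^{+}(a)$ and $O^{-}(a)$ are single $\GL_n(F)$-orbits (Lemma~\ref{lem:unique orbit}) consisting of regular points, since $X_+\cup X_-\subset X_{\mathrm{reg}}$. They are distinct: if $(A,v,u)$ is in the central fiber with $\{v,Av,\dots,A^{n-1}v\}$ a basis, then $uA^iv=0$ for all $i$ forces $u=0$, hence $\delta^-(A,v,u)=0$, so $\widetilde{\gl_n}_{,a}\cap X_+\cap X_-=\emptyset$; thus $O^{+}(a)\ne O^{-}(a)$. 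Hence these are exactly the regular orbits in $\widetilde{\gl_n}_{,a}$, and the general case follows from the $\lambda=0$ case by the translation automorphism above.

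I expect the real content to be concentrated in the middle step: the identification of a nonzero infinitesimal stabilizer with a nonzero $F[A]$-module homomorphism $V/W_v\to K_u$, together with the elementary but crucial observation that such a homomorphism always exists between two nonzero modules over $F[A]$ with $A$ acting nilpotently (pass from a nonzero quotient of the source on which $A$ acts trivially to a nonzero $A$-fixed subspace of the target). The reduction to the nilpotent fiber and the final bookkeeping with $X_+$ and $X_-$ are routine.
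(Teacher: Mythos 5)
Your proof is correct, and it takes a genuinely more self-contained route than the paper at the key step. Both proofs reduce to $\lambda=0$ via the equivariant translation $(A,v,u)\mapsto(A-\lambda\cdot\mathrm{id},v,u)$, but where the paper then simply cites \cite{Zhang14b}*{Lemma 6.1} for the nilpotent fiber, you give a direct argument: you identify the infinitesimal stabilizer of $(A,v,u)$ with the space of $F[A]$-module homomorphisms $V/W_v\to K_u$ (where $W_v=F[A]v$ and $K_u=\bigcap_i\ker(uA^i)$, using $[A,Z]=0$, $Zv=0$, $uZ=0 \Leftrightarrow uA^iZ=0$ for all $i$), and then observe that two nonzero finitely generated $F[A]$-modules with $A$ acting nilpotently always admit a nonzero homomorphism between them, by factoring through the trivial-action quotient $(V/W_v)/A(V/W_v)$ and the trivial-action subspace $K_u\cap\ker A$. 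Combined with $X_+\cup X_-\subset X_{\mathrm{reg}}$ and Lemma~\ref{lem:unique orbit}, this pins down the regular orbits exactly. The argument is sound (the passage from nonzero Lie algebra of the stabilizer to positive-dimensional stabilizer, hence non-regularity, is valid in characteristic $0$, and the minimal stabilizer dimension is $0$ since regular semisimple elements exist). What your version buys is that it removes an external dependency and makes the mechanism behind the dichotomy transparent; what the paper's version buys is brevity, at the cost of asking the reader to chase the reference.
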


\begin{proof}
    Let $a$ be the image of $(\lambda \cdot \mathrm{id},0,0) \in \widetilde{\gl_n}(F)$. If $\lambda=0$, then this is proved in ~\cite{Zhang14b}*{Lemma 6.1}. For general $\lambda$, we only need to note that $A \mapsto (A- \lambda \cdot \mathrm{id},0,0)$ gives a $\GL_n$ equivariant bijection between $\widetilde{\gl_n}_{,a}$ and $\widetilde{\gl_n}_{,0}$.
\end{proof}

If $a \in \cA(F)$ is central, the orbit $O^+(a)$ and $O^-(a)$ can be described explicitly. If $a$ is the image of $(\lambda \cdot \mathrm{id},0,0)$ then $O^+(a)$ is the orbit of
\begin{equation} \label{eq:central orbit representative}
      Z^+_\lambda := \left(
    \begin{pmatrix}
        \lambda & 1 & 0 & \cdots & 0\\
        0 & \lambda & 1 & \cdots & 0\\
        0 & 0 & \lambda & \cdots & 0\\
        \cdots & \cdots & \cdots & \cdots & \cdots \\
        0 & 0 & 0 & \cdots & \lambda
    \end{pmatrix}, 
    \begin{pmatrix}
        0 \\ 0 \\ 0 \\ \cdots \\ 1
    \end{pmatrix},
    0
    \right).
\end{equation}
And $O^-(a)$ is the orbit of
    \begin{equation} \label{eq:central_orbit_representative_minus}
             Z^-_\lambda := \left( \begin{pmatrix}
            \lambda & 0 &   \cdots & 0 & 0 \\ 
            1 & \lambda &   \cdots & 0 & 0 \\
            0 & 1 &   \cdots & 0 & 0 \\
            \cdots & \cdots & \cdots & \cdots & \cdots \\
            0 & 0 & \cdots & 1 & \lambda
        \end{pmatrix}, 0, (0,\cdots,0,1)  \right).
    \end{equation}

Now we classify general regular orbits. We first consider the case where $X=\widetilde{\gl_n}$ and $G=\GL_n$. Let $a \in \cA(F)$, we use our notation in Subsection ~\ref{subsec:descent}. We have field extensions $F_i(1 \le i \le k)$ of $F$ and after choosing the isomorphism $i$ and $i^0$, we associate $a \in \cA(F)$ with a map $\iota_\fh: \widetilde{\fh} \to \widetilde{\gl_n}$.

Let $\cE$ be the set of maps from the set $\{1,2,\cdots,k\}$ to $\{+,-\}$. For $\varepsilon \in \cE$, we write $\varepsilon_i := \varepsilon(i)$.

\begin{proposition} \label{prop:regular_orbit_tilde_gl_n}
    Pick any $X_0 \in \widetilde{\fh_0}_{,a_0}(F)$. Then a complete set of representative of orbit of $\GL_n(F)$ action on $\widetilde{\gl_n}_{,a}(F) \cap \widetilde{\gl_n}_{,\mathrm{reg}}(F)$ is given by
    \begin{equation}  \label{eq:list_regular_orbit_tilde_gl} 
    \left\{\iota_\fh(X_0,Z^{\varepsilon_1}_{\alpha_1},\cdots,Z_{\alpha_k}^{\varepsilon_k}) \mid \varepsilon \in \cE \right\}, 
    \end{equation}
    where $\varepsilon$ runs through every element in $\cE$ and we recall that $\alpha_i$ is the image of $T$ in $F_i=F[T]/(P_i)$. Moreover \begin{enumerate}
        \item For each $\varepsilon \in \cE$, the orbit of $\iota_\fh(X_0,Z^{\varepsilon_1}_{\alpha_1},\cdots,Z^{\varepsilon_k}_{\alpha_k})$ is independent of $X_0$ and the choice of the isomorphisms $i$ and $i^0$.
        \item The orbit of $\iota_{\fh}(X_0,Z_{\alpha_1}^{+},\cdots,Z_{\alpha_k}^{+})$ (resp. $\iota_{\fh}(X_0,Z_{\alpha_1}^{-},\cdots,Z_{\alpha_k}^{-})$) is $O^+(a)$ (resp. $O^-(a)$).
    \end{enumerate}
\end{proposition}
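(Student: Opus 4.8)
The plan is to reduce everything to the Luna slice diagram \eqref{diag:Luna slice} together with the vanishing of Galois cohomology \eqref{eq:vanishing_of_Galois_cohomology}. By the discussion following \eqref{eq:vanishing_of_Galois_cohomology}, the map $\iota_\fh$ induces a bijection between the set of $H(F)$-orbits on $\widetilde{\fh}'_{a_H}(F)$ and the set of $\GL_n(F)$-orbits on $\widetilde{\gl_n}_{,a}(F)$; moreover, since $\iota_\cA$ is \'{e}tale and $\iota_\fh$ is the top arrow of a Cartesian square, this bijection carries regular orbits to regular orbits (regularity is detected by the stabilizer, and the square identifies stabilizers on the two sides). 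Thus it suffices to classify the regular $H(F)$-orbits inside $\widetilde{\fh}'_{a_H}(F) = \widetilde{\fh_0}{}^{(r)}_{,a_0}(F) \times \prod_{i=1}^k (\widetilde{\fh_i})_{,a_i}(F)$. Here $H = H_0 \times H^0 = \GL_r \times \prod_i H_i$ acts factorwise, so a point is regular for $H$ iff it is regular in each factor, and the orbit set is the product of the orbit sets. On the first factor $a_0$ is regular semisimple, so $\widetilde{\fh_0}{}^{(r)}_{,a_0}(F)$ is a single $H_0(F)$-orbit (all of it regular) — this uses $O^+(b) = O^-(b) = X_b$ for regular semisimple $b$. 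On the $i$-th factor, $a_i$ is central, so by Lemma \ref{lem:regular_nilpotent_orbit} (applied over $F_i$, via $\widetilde{\fh_i} = \mathrm{Res}_{F_i/F}\widetilde{\gl_{F_i^{n_i}}}$ and the base-change compatibility noted at the end of Subsection \ref{subsec:descent}) there are exactly two regular orbits, namely $O^+(a_i)$ and $O^-(a_i)$, with representatives $Z^+_{\alpha_i}$ and $Z^-_{\alpha_i}$ respectively by \eqref{eq:central orbit representative} and \eqref{eq:central_orbit_representative_minus}.

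Combining, the regular $H(F)$-orbits are indexed precisely by $\varepsilon \in \cE$, with representative $(X_0, Z^{\varepsilon_1}_{\alpha_1}, \dots, Z^{\varepsilon_k}_{\alpha_k})$, and applying $\iota_\fh$ gives the list \eqref{eq:list_regular_orbit_tilde_gl}. For assertion (1): the orbit is independent of $X_0$ because $\widetilde{\fh_0}{}^{(r)}_{,a_0}(F)$ is a single $H_0(F)$-orbit, so any two choices of $X_0$ differ by $H_0(F)$ and hence map to the same $\GL_n(F)$-orbit; independence of the isomorphisms $i$ and $i^0$ follows from the fact — recorded in Subsection \ref{subsec:descent} — that $\iota_\cA$ (hence the induced bijection on orbits) does not depend on these choices, while changing $i$ or $i^0$ only modifies $\iota_\fh$ by an element of $H(F)$, which does not change the $\GL_n(F)$-orbit of the image. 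For assertion (2): one has $\iota_\cA(a_H) = a$, and I would check that $\iota_\fh$ carries $\widetilde{\fh}'_+ := \widetilde{\fh_0}{}^{(r)} \times \prod_i (\widetilde{\fh_i})_{,+}$ into $\widetilde{\gl_n}_{,+}$ and similarly for $-$; granting this, the orbit of $\iota_\fh(X_0, Z^+_{\alpha_1}, \dots, Z^+_{\alpha_k})$ lies in $\widetilde{\gl_n}_{,+}(F)$, and since by Lemma \ref{lem:unique orbit} there is a \emph{unique} regular orbit in $\widetilde{\gl_n}_{,a}(F)$ meeting $X_+(F)$, namely $O^+(a)$, this orbit must equal $O^+(a)$ (and symmetrically for $-$).

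The main obstacle is the compatibility of the descent with the $\pm$-structures, i.e. verifying that $\iota_\fh$ sends $\widetilde{\fh_0}{}^{(r)} \times \prod_i (\widetilde{\fh_i})_{,+}$ into $\widetilde{\gl_n}_{,+}$ (and the analogous statement for $-$). Concretely, one must track the vectors $v, Av, \dots, A^{n-1}v$ for $A = \iota_{\fh^0}(\oplus A_i)$ glued to $A_1 = $ (the regular semisimple block) through \eqref{eq:descent_1}, using the prescribed auxiliary vectors $u', v'$; the cyclicity in the regular semisimple block $a_0$ together with cyclicity for $v$ in each $(\widetilde{\fh_i})_{,+}$-block should force cyclicity of $v$ for the whole $A$. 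This is a direct but somewhat delicate linear-algebra computation with the block matrices of \eqref{eq:descent_1}; alternatively, since $\iota_\cA$ is \'{e}tale and $\delta^\pm$ on $\widetilde{\gl_n}$ pulls back (up to a nowhere-vanishing factor) to the product of the $\delta^\pm$'s in the slice — an identity of the same flavor as \eqref{eq:Delta_and_Cayley_transform} — one can deduce the $\pm$-compatibility from the statement that the vanishing loci match, which is how \cite{Zhang14} handles similar descents. Everything else is formal bookkeeping with the Cartesian square and Galois cohomology.
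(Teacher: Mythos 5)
Your proposal follows essentially the same skeleton as the paper's proof: the Luna slice diagram \eqref{diag:Luna slice} plus the vanishing of Galois cohomology \eqref{eq:vanishing_of_Galois_cohomology} gives a bijection between $H(F)$-orbits on $\widetilde{\fh}_{a_H}(F)$ and $\GL_n(F)$-orbits on $\widetilde{\gl_n}_{,a}(F)$, preserving regularity; then you factor through the product decomposition of $H$, use simple transitivity for the regular semisimple factor $a_0$ and Lemma \ref{lem:regular_nilpotent_orbit} for each central factor $a_i$ (invoking the base-change compatibility for the passage from $F_i$ to $F$), and deduce that $\varepsilon \in \cE$ indexes the regular orbits. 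Your argument for (1) is identical to the paper's.

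The one place where you genuinely diverge is assertion (2). The paper handles it by directly citing \cite{CZ21}*{Lemme 3.2.2.2 and Lemme 3.4.1.1}. You instead propose a self-contained reduction: show that $\iota_\fh$ carries $\widetilde{\fh_0}^{(r)} \times \prod_i (\widetilde{\fh_i})_{,+}$ into $\widetilde{\gl_n}_{,+}$, then invoke the uniqueness in Lemma \ref{lem:unique orbit} (there is only one regular orbit in the fiber meeting $X_+$). This reorganization is arguably cleaner, and the two routes you sketch for the $\pm$-compatibility — direct block-matrix linear algebra with the $u',v'$ of \eqref{eq:descent_1}, or an \'etale-local identity relating $\delta^\pm$ on $\widetilde{\gl_n}$ to the product of $\delta^\pm$'s on the slice — are both plausible; the second is indeed in the spirit of how \cite{Zhang14} handles similar questions and is morally what the cited lemmas of \cite{CZ21} verify. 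The one caveat is that you explicitly flag this verification as not carried out, so your write-up is a plan rather than a complete proof of (2); but since the paper also outsources precisely this step, the gap you identify coincides exactly with the content of the paper's citation, and there is no structural error in your argument.
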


\begin{proof}
    By the diagram ~\eqref{diag:Luna slice} and the equality ~\eqref{eq:vanishing_of_Galois_cohomology}, the map $\iota_\fh$ induces a $\GL_n(F)$-equivariant bijection
    \[
        \widetilde{\fh}_{a_H}(F) \times^{H(F)} \GL_n(F) \to \widetilde{\gl_n}_{,a}(F),
    \]
    therefore, $\iota_\fh$ induces a bijection between the $H(F)$ orbit on $\widetilde{\fh}_{a_H}(F)$ and $\GL_n(F)$ orbit on $\widetilde{\gl_n}_{_a}(F)$, and regular orbits corresponds to regular orbit. Since each component $a_i$ of $a_H$ is either regular semi-simple ($i=0$) or central ($i>0$), Lemma ~\ref{lem:regular_nilpotent_orbit} implies that the regular orbits in $\widetilde{\gl_n}_{,a}(F)$ are given by ~\eqref{eq:list_regular_orbit_tilde_gl}. Different choices of $i$ and $i^0$ will yield $\iota_\fh$ composed with an $\GL_n(F)$ action on $\widetilde{\gl_n}(F)$, which certainly does not affect the orbit, therefore (1) is proved. Part (2) follows from ~\cite{CZ21}*{Proof of Lemme 3.2.2.2 and Lemme 3.4.1.1}.
\end{proof}

For $\varepsilon \in \cE$, we call a regular orbit in $\widetilde{\gl_n}_{,a}(F)$ of \emph{type} $\varepsilon$, if it is the orbit of $\iota_\fh(X_0,Z_{\alpha_1}^{\varepsilon_1},\cdots,Z_{\alpha_k}^{\varepsilon_k})$. 

\begin{corollary} \label{coro:regular_orbits}
    In each of the cases in ~\eqref{eq:four_cases}, for $b \in X/G(F)$, there are finitely many regular orbits of $G(F)$-action on $X_b(F)$. Indeed, the number of orbits is a power of 2.
\end{corollary}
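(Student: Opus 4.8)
The plan is to reduce all four cases to the case $X=\widetilde{\gl_n}$, $G=\GL_n$, where Proposition \ref{prop:regular_orbit_tilde_gl_n} already gives a complete, explicitly parametrized list of regular orbits. First I would handle $\widetilde{\gl_n}$ directly: by Proposition \ref{prop:regular_orbit_tilde_gl_n}, the regular orbits in $\widetilde{\gl_n}_{,a}(F)$ for $a\in\cA(F)$ are indexed by the set $\cE$ of maps $\{1,\dots,k\}\to\{+,-\}$, where $k$ is the number of distinct irreducible factors of the characteristic polynomial $P$ appearing in the descent datum $a^0$ of $a$. Hence the number of regular orbits over $b=a$ is exactly $2^k$, which is finite and a power of $2$ (the power being $2^0=1$ when $k=0$, i.e.\ when $a$ is regular semisimple, matching $O^+(a)=O^-(a)=X_a$).

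Next I would propagate this to the other three cases using the $\GL_n$-equivariant isomorphisms assembled in Subsection \ref{subsec:symmetric space S}. For $X=\gl_{n+1}$, the identification \eqref{eq:isomorphism gl and tilde gl} $\gl_{n+1}\cong\widetilde{\gl_n}\times F$ with trivial $\GL_n$-action on the $F$-factor means that $\gl_{n+1}/\GL_n\cong\cA\times F$, and for $b=(a,d)$ the fiber $\gl_{n+1,b}(F)$ is $\GL_n(F)$-equivariantly identified with $\widetilde{\gl_n}_{,a}(F)$; regular orbits correspond to regular orbits, so the count is the same power of $2$. For $X=\rS$, I would pick $\sigma\in E$ with $\mathrm{Nm}_{E/F}(\sigma)=1$ so that $b\in\rB^\sigma(F)$ lies in the image of $\fc_\sigma$ (any $b$ can be reached this way, since the $\rB^\sigma$ cover $\rB$ as $\sigma$ varies over norm-one elements), and use that $\fc_\sigma\colon\gl_{n+1}^\tau\to\rS^\sigma$ is a $\GL_n$-equivariant isomorphism, together with \eqref{eq:non zero equivalent}, to identify $\rS_b(F)$ with $\gl_{n+1,\fc_\sigma^{-1}(b)}(F)$ as $\GL_n(F)$-sets, matching regular orbits. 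Finally, for $X=\rG'$ and $G=\rH_1\times\rH_2$, the map $\alpha$ of \eqref{eq:the_map_alpha} identifies $\rG'/\rH_1\times\rH_2$ with $\rB$ and, more precisely, identifies the $\rH_1(F)\times\rH_2(F)$-orbits on $\rG'_b(F)$ with the $\GL_n(F)$-orbits on $\rS_b(F)$ (this is exactly the identification $\rG'/\rH_1\times\rH_{2,n+1}\cong\mathrm{Res}_{E/F}\GL_{n+1,E}/\GL_{n+1}\cong\rS$ used already in the proof of Lemma \ref{lem:unique orbit}); regular orbits go to regular orbits, so again the count is a power of $2$.

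The only mild subtlety is that the correspondences above should be checked to be bijections \emph{on $F$-points} and to carry regular orbits to regular orbits; the first is immediate because each is induced by an isomorphism of $F$-varieties restricted to a fiber, and the second holds because regularity (triviality of the stabilizer) is detected on the open subscheme $X_{\mathrm{reg}}$, which is preserved by any equivariant isomorphism. I expect the main point to organize carefully is simply that every $b\in\rB(F)$ lies in some $\rB^\sigma$, so that the reduction via $\fc_\sigma$ applies uniformly; this is where one uses that norm-one elements $\sigma$ exist in abundance and the sets $\{x : x-\sigma\cdot 1\text{ invertible}\}$ cover $\rS$. Granting that, the corollary follows formally from Proposition \ref{prop:regular_orbit_tilde_gl_n}.
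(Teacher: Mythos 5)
Your proposal is correct and takes essentially the same route as the paper: reduce everything to the case $X=\widetilde{\gl_n}$ via Proposition~\ref{prop:regular_orbit_tilde_gl_n}, and propagate through the equivariant identifications $\gl_{n+1}\cong\widetilde{\gl_n}\times F$, the Cayley transform $\fc_\sigma$, and the map $\alpha$, exactly as already laid out in the proof of Lemma~\ref{lem:unique orbit}; the paper's proof cites that lemma rather than re-spelling out the reductions, which you do. Your parenthetical observations (that the regular semisimple case corresponds to $k=0$, giving $2^0=1$, and that $b\in\rB^\sigma(F)$ for some norm-one $\sigma$ since such $\sigma$ are abundant and the $\rS^\sigma$ cover $\rS$) are sound and consistent with what the paper implicitly uses.
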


\begin{proof}
    The case when $X=\widetilde{\gl_n}$ is proved in Proposition ~\ref{prop:regular_orbit_tilde_gl_n}, the other cases are reduced to this case (see the proof Lemma ~\ref{lem:unique orbit}).
\end{proof}

\section{Coarse relative trace formulae} \label{sec:RTF}

\subsection{Preliminaries}   
     In this section, we denote by $E/F$ a quadratic extension of number fields. Let $\mathtt{c}$ be the non-trivial element in the Galois group $\Gal(E/F)$. For an $F$-algebra $R$, $\mathtt{c}$ will induce an involution on $R \otimes_F E$, we denote this by $a \mapsto a^\mathtt{c}$. Let $\bA:=\bA_F$ and $\bA_E$ be the ring of ad\`{e}les of $F$ and $E$ respectively.
    
      Let $k \ge 1$ be an integer, we write $\GL_k:=\GL_{k,F}$ for the general linear group of rank $k$ over $F$. Let $\rG'_k := \mathrm{Res}_{E/F} \GL_{k,E}$ and let $\rG':=\rG'_n \times \rG'_{n+1}$. $\rG'$ has two subgroups $\rH_1$ and $\rH_2$ as in ~\eqref{eq:subgroup G'}. We put $\rH_{2,n} = \GL_n, \rH_{2,n+1} = \GL_{n+1}$, they are both regarded as subgroups of $\rH_2$. For an element $h_2 \in \rH_2$, we write $h_{2,n}$ and $h_{2,n+1}$ for its corresponding components. Let $K_1$ and $K_2$ be the standard maximal compact subgroup of $\rH_1(\bA)$ and $\rH_2(\bA)$ respectively.

     For $k \ge 1$, let $B_k$ be the upper triangular Borel subgroup of $\rG'_k$. We choose $B_k$ as our fixed minimal parabolic subgroup of $\rG'_k$, and we choose $B_n \times B_{n+1}$ as our fixed minimal parabolic subgroup of $\rG'$. We put $\fa_k:=\fa_{B_{k}}$. Let $B_{k,F}:=B_k \cap \GL_n$ be the upper triangular Borel subgroup of $\GL_n$.
    
    Let $k,a,b$ be integers such that $k \ge 1$ and $0 \le a <b \le k$, let $e_1,\cdots,e_{k}$ be the standard basis of $E^{k}$ and let $E^{a,b}$ be the subspace of $E^k$ generated by $e_{a+1},\cdots,e_b$.
    
    Let $\cF_{\mathrm{RS}}$ be the set of Rankin-Selberg parabolic subgroup of $\rG'$ introduced in ~\cite{BPCZ}*{Subsection 3.1}. It consists of semi-standard parabolic subgroups of $\rG'$ of the form $P_n \times P_{n+1}$, such that $P_n$ is standard and $P_{n+1} \cap \rG_n' = P_n$. When $P_n$ is the stabilizer of the flag
    \begin{equation} \label{eq:flag}
        0=V_0 \subset V_1 \subset \cdots \subset V_r = E^n
    \end{equation} 
    with $V_i=E^{0,a_i}$, then there are $2r+1$ possible choices such that $P_n \times P_{n+1} \in \cF_{\mathrm{RS}}$. It is either the stabilizer of the flag
        \begin{equation} \label{eq:flag 1}
            0=V_0 \subset \cdots \subset V_k \subset V_{k+1} \oplus Ee_{n+1} \subset \cdots \subset V_r \oplus Ee_{n+1}=E^{n+1}
        \end{equation}
    with $0 \le k \le r-1$ or the stabilizer of the flag
        \begin{equation} \label{eq:flag 2}
          0=V_0 \subset \cdots \subset V_k \subset V_k  \oplus Ee_{n+1} \subset \cdots \subset V_r \oplus Ee_{n+1}=E^{n+1}
        \end{equation}
    with $0 \le k \le r$.

    In the case when $P_{n+1}$ stabilizes the flag ~\eqref{eq:flag 1}, the standard Levi subgroup $M_{P_{n+1}}$ of $P_{n+1}$ can be identified with
        \[
            \prod_{\substack{0 \le i \le r-1 \\i \ne k}} \GL(E^{a_i,a_{i+1}}) \times \GL(E^{a_k,a_{k+1}}+Ee_{n+1}).
        \]
    Let $M_{P_{n+1}}(\bA)^{\mathbbm{1}}$ be the subgroup of $M_{P_{n+1}}(\bA)$ consisting of elements
        \[
            \prod_{\substack{0 \le i \le r-1 \\i \ne k}} \GL(E^{a_i,a_{i+1}})(\bA)^1 \times \GL(E^{a_k,a_{k+1}}+Ee_{n+1})(\bA)
        \]
    under the above identification. We denote by $\rG'_{n+1}(\bA)_{P_{n+1}}^\mathbbm{1}$ the product $M_{P_{n+1}}(\bA)^\mathbbm{1} N_{P_{n+1}}(\bA)$.

    In the case when $P_{n+1}$ stabilizes the flag ~\eqref{eq:flag 2}, $M_{P_{n+1}}$ can be identified with
      \[
            \prod_{0 \le i \le r} \GL(E^{a_i,a_{i+1}}) \times \GL(Ee_{n+1}).
    \]
     Let $M_{P_{n+1}}(\bA)^{\mathbbm{1}}$ be the subgroup of $M_{P_{n+1}}(\bA)$ consisting of elements
        \[
            \prod_{0 \le i \le r}  \GL(E^{a_i,a_{i+1}})(\bA)^1 \times \GL(Ee_{n+1})(\bA)
        \]
    under the identification above and we also set $\rG'_{n+1}(\bA)_{P_{n+1}}^\mathbbm{1} = M_{P_{n+1}}^{\mathbbm{1}} N_{P_{n+1}}(\bA)$.

    For $P \in \cF_\mathrm{RS}$, we write $P_{\rH_1} := P \cap \rH_1 \cong P_n$ and $P_{\rH_2} = P \cap \rH_2$. Let $\cF_{\mathrm{RS},F}$ be the set of semi-standard parabolic subgroups of $\rH_2$ of the form $P_n \times P_{n+1}$, such that $P_n$ is standard and $P_{n+1} \cap \GL_n = P_n$. The map $P \mapsto P \cap \rH_2$ induces a bijection between $\cF_{\mathrm{RS}}$ and $\cF_{\mathrm{RS},F}$

   The natural map $\rG'_n \hookrightarrow \rG'_{n+1}$ induces a map $\fa_n \hookrightarrow \fa_{n+1}$. For $P \in \cF_{\mathrm{RS}}$, it induces an embedding $\fa_{P_n} \hookrightarrow \fa_{P_{n+1}}$. We define
    \[
        \fa_{P,\rH_1} := \fa_{P_n} \cap \fa_{P_{n+1}} \cong \fa_{P_{\rH_1}} \cap \fa_P.
    \]
    The natural map $\iota:\fa_{P,\rH_1} \to \fa_{P_{n+1}}^{\rG'_{n+1}}$ induced by $\fa_{P,\rH_1} \hookrightarrow \fa_{P_{n+1}} \twoheadrightarrow \fa_{P_{n+1}}^{\rG'_{n+1}}$ is an isomorphism.

    The subspace $\fa_{P,\rH_1}$ is the Lie algebra of $A_{P,\rH_1}^\infty := A_{P_n}^\infty \cap A_{P_{n+1}}^\infty \cong A_{P_{\rH_1}}^\infty \cap A_P^\infty$. Note that we have the decomposition
    \[
    M_{P_{n+1}}(\bA) = M_{P_{n+1}}^\mathbbm{1}(\bA) \times A_{P,\rH_1}^\infty, \quad \rG'_{n+1}(\bA) = \rG'_{n+1}(\bA)^\mathbbm{1}_{P_{n+1}} \times A_{P,\rH_1}^\infty.
    \]

    For $s \in \C$, define $s \det \in \fa_{P,\rH_1,\C}^*$ by requiring the expression
    \[
        \langle s \det, H_{P_n}(a) \rangle = \lvert \det a \rvert^s
    \]
holds for all $A_{Q,\rH_1}^\infty \subset \rG'_n(\bA)$. Define $\rho_{P_n}$ (resp. $\rho_{P_{n+1}}$) $\in \fa_{P,\rH_1}^*$ be the element pulling back from $\rho_{P_n} \in \fa_{P_n}^{\rG_n',*}$ (resp. $\rho_{P_{n+1}} \in \fa_{P_{n+1}}^{\rG_{n+1}',*}$). Let $\underline{\rho}_{P,s}$ be the element in $\fa_{P,\rH_1,\C}^*$ defined by
    \[
        \underline{\rho}_{P,s} := \rho_{P_{n+1}} - \rho_{P_n} + s \det.
    \]
We also regard $\underline{\rho}_{P,s}$ as an element of $ \fa_{P_{n+1},\C}^{\rG'_{n+1}}$ via the isomorphism $\iota$. Write $c_P \in \R$ for the Jacobian of $\iota$.

Let $V$ be a finite dimensional vector space over $\R$, define $V^*_\C:=\Hom_\R(V,\C)$. An exponential polynomial function on $V$ is a function of the form
    \[
       p(x) = \sum_{i=1}^r P_i(x)e^{\langle \lambda_i,x \rangle}, \quad x \in V,
    \]
where $\lambda_i \in V^*_\C$ and $P_i$ are non-zero polynomial functions on $V$. If $\lambda_i$ are distinct, then $P_i$ are uniquely determined. The term corresponding to $\lambda_i=0$ is called the \emph{pure polynomial term}. The complex numbers $\lambda_i$ are called exponents of $p$.

Let $\eta_{E/F}:\bA^\times \to \{ \pm 1 \}$ be the quadratic character associated to $E/F$. By an abuse of notation, we define a character $\eta$ on $\rH_2(
\bA)$ by
    \[
        \eta(h_{2,n},h_{2,n+1}) = \eta_{E/F}(h_{2,n})^{n+1} \eta_{E/F}(h_{2,n})^n.
    \]

\subsection{Coarse Jacquet-Rallis RTF}

For $f \in \cS(\rG'(\bA))$ and $P \in \cF_\mathrm{RS}$, the right translation action $\mathrm{R}(f)$ on $L^2([\rG']_P)$ is given by the kernel function
    \[
        K_{f,P}(x,y) = \sum_{m \in M_P(F)} \int_{N_P(\bA)} f(x^{-1}mny) \rd n,
    \]
where $x,y \in [\rG']_P$. For $\chi \in \fX(\rG')$, let $K_{f,P,\chi}(x,y)$ be the kernel function of the operator $p_\chi \circ R(f)$ on $L^2([\rG']_P)$, where $p_\chi$ denotes the projection to $\chi$-component in the Langlands spectral decomposition ~\eqref{eq:spectral decomposition}. For $(h_1,h_2) \in [\rH_1]_{P_{\rH_1}} \times [\rH_2]_{P_{\rH_2}}$ we put
    \[
        K_{f,P,b}(h_1,h_2) = \sum_{m \in M_P(F) \cap \rG'_b(F)} \int_{N_P(\bA)} f(h_1^{-1}mnh_2),
    \]
where we recall that $\rG'_b$ is the fiber of $b$ (see Subsection ~\ref{subsec:general_notations}).

For each $\bullet \in \fX(\rG') \sqcup \rB(F)$ and $T \in \fa_0$, the modified kernel is defined as
    \[
        K_{f,\bullet}^T(h_1,h_2) = \sum_{P \in \cF_{\mathrm{RS}}} \epsilon_P \sum_{\substack{\gamma \in P_{\rH_1}(F)\backslash \rH_1(F) \\ \delta \in P_{\rH_2}(F) \backslash \rH_2(F)}} \widehat{\tau}_{P_{n+1}}(H_{P_{n+1}}(\delta_n h_{2,n})-T_{P_{n+1}}) K_{f,P,\bullet}(\gamma h_1,\delta h_2),
    \]
where $(h_1,h_2) \in [\rH_1]  \times [\rH_2]$. More generally, for $Q \in \cF_{\mathrm{RS}}$ and $\bullet \in \fX(\rG') \sqcup \rB(F)$, we put
    \[
        K_{f,\bullet}^{Q,T}(h_1,h_2) = \sum_{P \in \cF_{\mathrm{RS}}} \epsilon_P^Q \sum_{\substack{\gamma \in P_{\rH_1}(F)\backslash Q_{\rH_1}(F) \\ \delta \in P_{\rH_2}(F) \backslash Q_{\rH_2}(F)}} \widehat{\tau}_{P_{n+1}}^{Q_{n+1}}(H_{P_{n+1}}(\delta_n h_{2,n})-T_{P_{n+1}}) K_{f,P,\bullet}(\gamma h_1,\delta h_2),
    \]
where $(h_1,h_2) \in [\rH_1]_{Q_{\rH_1}} \times [\rH_2]_{Q_{\rH_2}}$.

 We have the following asymptotic of modified kernel in ~\cite{BPCZ}*{Theorem 3.3.7.1}: for any $N>0$, there exists a continuous semi-norm $\| \cdot \|$ on $\cS(\rG'(\bA))$ such that
     \begin{equation} \label{eq:asymptotic_spectral}
           \sum_{\chi \in \fX(\rG')} \left| K_{f,\chi}^T(h_1,h_2) - F^{\GL_{n+1}}(h_{2,n},T)K_{f,\chi}(h_1,h_2) \right| \le e^{-N\|T\|} \|h_1\|_{H_1}^{-N} \|h_2\|_{H_2}^{-N} \|f\|
     \end{equation}
holds for all $f \in \rS(\rG'(\bA)), (h_1,h_2) \in [\rH_1] \times [\rH_2]$ and $T \in \fa_{n+1}$ sufficiently positive.

In Proposition ~\ref{prop:asympototic Q}, we generalize the result to any $K_{f,\chi}^{Q,T}$, we showed that for every $Q \in \cF_{\mathrm{RS}}$ and $N>0$, there exists a continuous semi-norm $\| \cdot \|$ on $\cS(\rG'(\bA))$ such that
        \begin{equation}
            \sum_{\chi \in \fX(\rG')} \lvert K_{f,\chi}^{Q,T}(h_1,h_2) - F^{Q_{n+1}}(h_{2,n},T_{Q_{n+1}}) K_{f,Q,\chi}(h_1,h_2) \rvert \le e^{-N\|T\|} \|h_1\|_{Q_{\rH_1}}^{-N} \|h_2\|_{Q_{\rH_2}}^{-N} \|f\|.
        \end{equation}
  holds for $f \in \cS(\rG'(\bA)),(h_1,h_2) \in [\rH_1]_{Q_{\rH_1}} \times [\rH_2]^\mathbbm{1}_{Q_{\rH_2}}$ and $T \in \fa_{n+1}$ sufficiently positive.

We also showed in Proposition ~\ref{prop:asymptotic Q geometric} that for any $f \in \cS(\rG'(\bA))$ and $N>0$, we have
    \begin{equation} \label{eq:asympototic Q geometric}
       \sum_{b \in \rB(F)} \lvert K_{f,b}^{Q,T}(h_1,h_2) - F^{Q_{n+1}}(h_{2,n},T_{Q_{n+1}}) K_{f,Q,b}(h_1,h_2) \rvert \ll e^{-N\|T\|} \|h_1\|_{Q_{\rH_1}}^{-N} \|h_2\|_{Q_{\rH_2}}^{-N} .
     \end{equation}

We say that an id\`{e}le class character $\xi:\bA_E^\times \to \C^\times$ is strictly unitary if it is trivial on the subgroup $A_{\mathbb{G}_m}^\infty \subset \bA_E^\times$. Note that any id\`{e}le class character can be uniquely written as $\xi \cdot | \cdot |^s$, where $\xi$ is strictly unitary and $s \in \C$.

The next theorem slightly generalizes the result in ~\cite{BPCZ}*{Section 3} by allowing a general character on $\rH_1(\bA)$. 

\begin{theorem} \label{thm:coarse Bessel}
    Let $\xi$ be a strictly unitary character of $\bA_E^\times$.
    \begin{enumerate}
        \item For any $f \in \cS(\rG'(\bA))$, $s \in \C$ and $T$ sufficiently positive, we have
            \begin{equation} \label{eq:spectral convergence}
                 \sum_{\chi \in \fX(\rG')} \int_{[\rH_1]} \int_{[\rH_2]}  \lvert K_{f,\chi}^T(h_1,h_2)  \rvert  \lvert \det h_1 \rvert^{\mathrm{Re}(s)}  \rd h_1 \rd h_2 < \infty
            \end{equation}
        and
            \begin{equation} \label{eq:geometric convergence}
                  \sum_{b \in \rB(F)} \int_{[\rH_1]} \int_{[\rH_2]}  \lvert K_{f,b}^T(h_1,h_2)  \rvert \lvert \det h_1 \rvert^{\mathrm{Re}(s)}  \rd h_1 \rd h_2 < \infty.
            \end{equation}
    \item For $\bullet \in \fX(\rG') \sqcup \rB(F)$, as a function of $T$, the integral
            \[
                I_\bullet^T(f,\xi,s) := \int_{[\rH_1]} \int_{[\rH_2]} K^T_{f,\bullet}(h_1,h_2) \xi(h_1) \lvert \det h_1 \rvert^s \eta(h_2) \rd h_1 \rd h_2
            \]
          is an exponential polynomial. If $s \not \in \{-1,1\}$, then the pure polynomial term is a constant, we denoted it by $I_\bullet(f,\xi,s)$. For a fixed $f$ and $\xi$, $I_\bullet(f,\xi,s)$ is meromorphic on $\C \setminus \{-1,1\}$. We write $I_\bullet(f,\xi):=I_\bullet(f,\xi,0)$.
    \item For each $\chi \in \fX(\rG')$ and $s \not \in \{-1,1\}$, the distribution $I_\chi(\cdot,\xi,s)$ is continuous and the sum
        \[
            I(f,\xi,s) := \sum_{\chi \in \fX(\rG')} I_\chi(f,\xi,s)
        \]
    is absolutely convergent and defines a continuous distribution $I(\cdot,\xi,s)$ on $\cS(\rG'(\bA))$.
    \item For each $b \in \rB(F)$ and $s \not \in \{-1,1\}$, the distribution $I_b(\cdot,\xi,s)$ is continuous on $\cS(\rG'(\bA))$ and
        \[
            I(f,\xi,s) = \sum_{b \in \rB(F)} I_b(f,\xi,s).
        \]
    Where the sum on the right-hand side is absolutely convergent.
    \end{enumerate}
\end{theorem}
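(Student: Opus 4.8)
The plan is to follow the argument of \cite{BPCZ}*{Section 3} (and its Lie algebra counterpart \cite{CZ21}) essentially verbatim, the only new features being the twist by $\xi$ on $\rH_1(\bA)$ and by $\lvert \det h_1 \rvert^{s}$. Since $\xi$ is an id\`ele class character, $\xi(h_1)$ is left $\rH_1(F)$-invariant, so all truncated integrands below descend to $[\rH_1] \times [\rH_2]$; and since $\xi$ is strictly unitary, $\lvert \xi(h_1) \rvert = 1$, while $\lvert \det h_1 \rvert^{s}$ contributes in modulus the factor $\lvert \det h_1 \rvert^{\mathrm{Re}(s)} \ll_{\mathrm{Re}(s)} \| h_1 \|_{\rH_1}^{C}$ on $[\rH_1]$. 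Because every decay estimate we invoke --- the asymptotics \eqref{eq:asymptotic_spectral} and \eqref{eq:asympototic Q geometric}, Propositions \ref{prop:asympototic Q} and \ref{prop:asymptotic Q geometric}, and the bounds on $F^{\GL_{n+1}}(\cdot,T)K_{f,\bullet}$ from \cite{BPCZ} --- holds with an arbitrarily large exponent $N$, and since these bounds decay faster than any power of the split central coordinate of $[\rH_1]$, inserting $\lvert \det h_1 \rvert^{\mathrm{Re}(s)}$ is harmless: one fixes $s$ in a compact set, takes $N$ large in terms of it, and reproduces the estimates of \cite{BPCZ}.

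For part (1), write $K^{T}_{f,\bullet} = \bigl(K^{T}_{f,\bullet} - F^{\GL_{n+1}}(h_{2,n},T)K_{f,\bullet}\bigr) + F^{\GL_{n+1}}(h_{2,n},T)K_{f,\bullet}$. By \eqref{eq:asymptotic_spectral} (resp.\ \eqref{eq:asympototic Q geometric}) the first term, summed over $\chi \in \fX(\rG')$ (resp.\ $b \in \rB(F)$), is $\ll e^{-N\|T\|}\|h_1\|_{\rH_1}^{-N}\|h_2\|_{\rH_2}^{-N}$, hence integrable over $[\rH_1] \times [\rH_2]$ against $\lvert \det h_1 \rvert^{\mathrm{Re}(s)}$ once $N$ is large. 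For the second term, $F^{\GL_{n+1}}(h_{2,n},T)$ cuts $h_{2,n}$ down to a set compact modulo centre, on which the spectral (resp.\ geometric) convergence of \cite{BPCZ}*{Section 3} applies; the extra factor $\lvert \det h_1 \rvert^{\mathrm{Re}(s)}$ changes nothing. This gives \eqref{eq:spectral convergence} and \eqref{eq:geometric convergence}, and in particular the absolute convergence of $\sum_{\chi}$ and $\sum_{b}$ appearing in (3) and (4).

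The heart of the matter is part (2). One unfolds
\[
I^{T}_{\bullet}(f,\xi,s) = \int_{[\rH_1]}\int_{[\rH_2]} K^{T}_{f,\bullet}(h_1,h_2)\,\xi(h_1)\lvert \det h_1 \rvert^{s}\eta(h_2)\,\rd h_1\,\rd h_2
\]
by inserting the definition of the modified kernel, interchanging the now absolutely convergent sums and integrals, and unfolding the sums over $\gamma \in P_{\rH_1}(F)\backslash \rH_1(F)$ and $\delta \in P_{\rH_2}(F)\backslash \rH_2(F)$; exactly as in \cite{BPCZ}*{Section 3} and \cite{CZ21} this reduces, term by term in $P \in \cF_{\mathrm{RS}}$, to integrals over truncated cones in $\fa_{P,\rH_1}$ against the exponential $e^{\langle \underline{\rho}_{P,s}, \cdot \rangle}$ --- the twist by $\xi(h_1)\lvert \det h_1 \rvert^{s}$ being precisely what replaces the modular exponent of \cite{BPCZ} by $\underline{\rho}_{P,s} = \rho_{P_{n+1}} - \rho_{P_n} + s\det$. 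Evaluating these sums (Langlands' combinatorial lemma) gives, for each $P$, an exponential polynomial in $T$ with exponents among the $\langle \underline{\rho}_{P,s}, \cdot \rangle$ and coefficients rational in the quantities $e^{\langle \underline{\rho}_{P,s}, \varpi^{\vee}\rangle}$; summing over $P$ with the signs $\epsilon_P$ yields the asserted exponential polynomial $I^{T}_{\bullet}(f,\xi,s)$, whose coefficients are meromorphic in $s$ with poles confined to the finite set of $s$ for which one of the linear forms $\langle \underline{\rho}_{P,s}, \cdot \rangle$ vanishes on a wall of the relevant cone --- which a direct computation with the flags \eqref{eq:flag 1}, \eqref{eq:flag 2} defining $\cF_{\mathrm{RS}}$ identifies with $\{-1,1\}$. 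For $s \notin \{-1,1\}$ the only exponent equal to $0$ is the trivial one, coming from the $T$-independent contribution of the main term $F^{\GL_{n+1}}(h_{2,n},T)K_{f,\bullet}$, so the pure polynomial term is a constant; we define $I_{\bullet}(f,\xi,s)$ to be this constant.

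Finally, for continuity and the expansions: every estimate above is dominated by a continuous semi-norm on $\cS(\rG'(\bA))$, uniformly for $T$ sufficiently positive and $s$ in compacta, so $f \mapsto I^{T}_{\bullet}(f,\xi,s)$ is continuous; since for fixed $s$ the exponents of the exponential polynomial $T \mapsto I^{T}_{\bullet}(f,\xi,s)$ lie in a fixed finite set of bounded degree independent of $f$, each coefficient --- in particular $I_{\bullet}(f,\xi,s)$, the coefficient of the trivial exponent --- depends continuously on $f$, so $I_{\chi}(\cdot,\xi,s)$ and $I_{b}(\cdot,\xi,s)$ are continuous distributions. The identity $\sum_{\chi \in \fX(\rG')} I_{\chi}(f,\xi,s) = \sum_{b \in \rB(F)} I_{b}(f,\xi,s)$, and that this common value is $I(f,\xi,s)$, follows from the pointwise identity $\sum_{\chi} K^{T}_{f,\chi} = \sum_{b} K^{T}_{f,b}$ (both sides being $\sum_{P \in \cF_{\mathrm{RS}}} \epsilon_P \sum_{\gamma,\delta} \widehat{\tau}_{P_{n+1}}(H_{P_{n+1}}(\delta_n h_{2,n}) - T_{P_{n+1}}) K_{f,P}(\gamma h_1, \delta h_2)$ with $K_{f,P}$ decomposed spectrally resp.\ geometrically), integrated term by term against $\xi(h_1)\lvert \det h_1 \rvert^{s}\eta(h_2)$ --- legitimate by part (1) and the absolute convergences just established --- followed by extraction of pure polynomial terms. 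The main obstacle is part (2): running the Arthur-type combinatorics through the $\xi\,\lvert\det\rvert^{s}$-twist and correctly locating the poles of $s \mapsto I^{T}_{\bullet}(f,\xi,s)$ at $\pm 1$; parts (1), (3), (4) are then a transcription of \cite{BPCZ}.
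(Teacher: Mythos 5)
Your proposal follows the same route as the paper's proof: transcribe \cite{BPCZ}*{Section 3} with the extra twist by $\xi(h_1)\lvert\det h_1\rvert^s$, absorb the polynomial-growth factor $\lvert\det h_1\rvert^{\mathrm{Re}(s)}\ll\|h_1\|_{\rH_1}^{|\mathrm{Re}(s)|}$ into the arbitrarily-large-$N$ decay of the asymptotic estimates, and track the shifted exponent $\underline{\rho}_{P,s}$ through the Arthur-type combinatorics. Two technical inputs your sketch glosses over, both of which the paper has to supply beyond BPCZ: (i) after unfolding via Arthur's $\Gamma'_{Q_{n+1}}$-decomposition and Iwasawa, one lands on an integral of a $Q$-truncated kernel over $A^\infty_{Q,\rH_1}\backslash([M_1]\times[M_2])$ whose finiteness requires an asymptotic of the modified kernel on the \emph{Levi} $M_Q$ (Proposition~\ref{prop:asymptotic M}, established in the appendix together with Propositions~\ref{prop:asympototic Q} and~\ref{prop:asymptotic Q geometric}) --- BPCZ only prove the case $Q=\rG'$, so ``exactly as in BPCZ'' does not suffice here; (ii) the exponential-polynomial structure of $I^T_\bullet(f,\xi,s)$, and the identification of $\{-1,1\}$ as the exceptional set where the pure polynomial term fails to be constant, come in the paper via \cite{Zydor20}*{Lemme 3.5} applied to the function $p_{Q,s}(X)=\int e^{\langle\underline{\rho}_{Q,s},H\rangle}\Gamma'_{Q_{n+1}}(H,X)\,\rd H$, rather than from your looser ``direct computation with the flags,'' though the content is the same (cf.\ Proposition~\ref{prop:exponent computation}, which makes the exponents $i(1\pm s)$ explicit). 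With those two points made precise, your outline matches the paper's argument.
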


The proof is similar to ~\cite{BPCZ}*{Section 3}, for later use, we briefly sketch the proof here.

\begin{proof}[Proof of Part (1)]
    Since $|h_1|^{\mathrm{Re}(s)} \ll \| h_1 \|_{\rH_1}^{ \left| \mathrm{Re}(s) \right|}$.  By ~\cite{BP21}*{Theorem A.1.1(vi)}, for $N$ large enough
        \[
            \int_{[\rH_1]} \int_{[\rH_2]} e^{-N\|T\|} \|h_1\|^{-N}_{H_1} \|h_2\|_{H_2}^{-N} |\det h_1|^s \rd h_1 \rd h_2
        \]
    is finite and defines continuous semi-norm on $\cS(\rG'(\bA))$.

    By ~\cite{BPCZ}*{(3.2.3.2)} for any $f \in \cS(\rG'(\bA))$ and any $(h_1,h_2) \in [\rH_1] \times [\rH_2]$, we have,
        \[
            \sum_{\chi \in \fX(\rG')}  \left| K_{f,\chi}(h_1,h_2) \right| |\det h_1|^{\mathrm{Re}(s)} \ll \|h_{2,n}\|_{\GL_n}^{N} \|h_{2,n+1}\|_{\GL_{n+1}}^{-N} \|h_1\|_{\rH_1}^{-N+|\mathrm{Re}(s)|}.
        \]
    Since $F^{\GL_{n+1}}(\cdot,T)$ is compactly supported as a function on $[\GL_n]$, we see that
        \[
           \int_{[\rH_1]} \int_{[\rH_2]} \sum_{\chi \in \fX(\rG')}  F^{\GL_{n+1}}(h_{2,n},T) \left| K_{f,\chi}(h_1,h_2) \right| \rd h_1 \rd h_2 
        \]
    is finite. Combining with ~\eqref{eq:asymptotic_spectral} ,we see that the integral ~\eqref{eq:spectral convergence} is finite. Using the estimate ~\eqref{eq:asympototic Q geometric}, the finiteness of ~\eqref{eq:geometric convergence} is proved in a similar way.
\end{proof}

For each $Q \in \cF_\mathrm{RS}$, let $\Gamma'_{Q_{n+1}}$ be the function on $\fa_{Q_{n+1}}^{\rG'_{n+1}} \times \fa_{Q_{n+1}}^{\rG'_{n+1}}$ defined in ~\cite{Arthur81}*{Section 2}. It is compactly supported in the first variable when the second variable stays in a compact subset.

By ~\cite{Zydor20}*{Lemme 3.5}, for any $Q \in \cF_{\mathrm{RS}}$, the function
    \[
        p_{Q,s}: X \mapsto \int_{\fa_{Q_{n+1}}^{\rG'_{n+1}}} e^{\langle \underline{\rho}_{Q,s},H \rangle} \Gamma'_{Q_{n+1}}(H,X)  \rd H, \quad X \in \fa_{n+1}
    \]
is an exponential polynomial function and when $s \not \in \{-1,1\}$, the pure polynomial term is constant.

\begin{proof}[Proof of Part(2)-(4)]
    For $Q \in \cF_{\mathrm{RS}}$ and $(h_1,h_2) \in [\rH_1]_{Q_{\rH_1}} \times [\rH_2]_{Q_{\rH_2}}$, we have the following equality
        \[
    K_{f,\bullet}^{T}(h_1,h_2) = \sum_{Q \in \cF_\mathrm{RS}} \sum_{\substack{\gamma \in Q_{\rH_1}(F) \backslash \rH_1(F) \\ \delta \in Q_{\rH_2}(F) \backslash \rH_2(F) }} \Gamma'_{Q_{n+1}}(H_{Q_{n+1}}(\delta_n h_{2,n})-T'_{Q_{n+1}},T_{Q_{n+1}}-T'_{Q_{n+1}}) K_{f,\bullet}^{Q,T}(\gamma h_1,\delta h_2).
        \]
    We hence see that $I^T_{\bullet}(f,\xi,s)$ is equal to
        \[
    \sum_{Q \in \cF_{\mathrm{RS}}} \int_{[\rH_1]_{Q_{\rH_1}}} \int_{[\rH_2]_{Q_{\rH_2}}} \Gamma'_{Q_{n+1}}\left(H_{Q_{n+1}}(h_{2,n})-T'_{Q_{n+1}},T_{Q_{n+1}}-T'_{Q_{n+1}}\right) K_{f,\bullet}^{Q,T}(h_1,h_2) \xi(h_1) \lvert \det h_1 \rvert^s \eta(h_2) \rd h_1 \rd h_2.
        \]
    Using Iwasawa decomposition, the summand corresponding to $Q$ is
        \begin{align*} 
          \int_{[M_{Q_{\rH_1}}]} \int_{[M_{Q_{\rH_2}}]} \int_{K_1 \times K_2} &e^{\langle -2\rho_{Q_{\rH_1}},H_{Q_{\rH_1}}(m_1) \rangle} e^{\langle -2\rho_{Q_{\rH_2}},H_{Q_{\rH_2}}(m_2) \rangle} 
          \Gamma'_{Q_{n+1}} \left( H_{Q_{n+1}}(m_{2,n})-T'_{Q_{n+1}},T_{Q_{n+1}}-T'_{Q_{n+1}} \right) \\
          &K_{f,\bullet}^{Q,T}(m_1k_1,m_2k_2) \xi(m_1k_1) \lvert  \det m_1 \rvert^s \eta(m_2 k_2) \rd m_1 \rd m_2 \rd k_1 \rd k_2.
        \end{align*}
    Define $f_Q \in \cS(M_Q(\bA)))$ by
        \[
            f_Q(m)=e^{\langle \rho_Q,H_Q(m) \rangle} \int_{K_1 \times K_2} \int_{N_Q(\bA)} f(k_1^{-1}mnk_2) \xi(k_1) \eta(k_2) \rd k_1 \rd k_2 \rd n.
        \]
    Then one readily check that for all $(m_1,m_2) \in [M_{Q_{\rH_1}}] \times [M_{Q_{\rH_2}}]$, we have
        \[
            \int_{K_1} \int_{K_2} K^{Q,T}_{f,\bullet}(m_1k_1,m_2k_2) \xi(k_1) \eta(k_2) \rd k_1 \rd k_2 = e^{\langle \rho_Q,H_Q(m_1) + H_Q(m_2) \rangle} K_{f_Q,\bullet}^{M_Q,T}(m_1,m_2).
        \]
    Let $M_1:=M_{Q_{\rH_1}}$ and $M_2:=M_{Q_{\rH_2}}$. We obtain
    \begin{equation} \label{eq:I^T_expression}
           \begin{aligned}
               I_\bullet^T(f,\xi,s) = \sum_Q \int_{[M_1]} \int_{[M_2]} &e^{\langle \underline{\rho}_{Q},H_{Q_{\rH_1}}(m_1) \rangle} \Gamma'_{Q_{n+1}} \left( H_{Q_{n+1}}(m_{2,n})-T'_{Q_{n+1}},T_{Q_{n+1}}-T'_{Q_{n+1}} \right) \\
               &K_{f_Q}^{M_Q,T'}(m_1,m_2) \xi(m_1) \lvert m_1 \rvert^s \eta(m_2) \rd m_1 \rd m_2 \\
               =\sum_{Q \in \cF_{\mathrm{RS}}} c_Q  p_{Q,s}(T_{Q_{n+1}}-T'_{Q_{n+1}})& \int_{A_{Q,\rH_1}^\infty \backslash [M_1] \times [M_2]}  e^{ \langle \underline{\rho}_{Q,s},-H_{Q_{n+1} }(m_{2,n})+T'_{Q_{n+1}} 
               \rangle}  \\
               &K_{f_Q}^{M_Q,T'}(m_1,m_2) \xi(m_1) \lvert m_1 \rvert^s \eta(m_2) \rd m_1 \rd m_2.
        \end{aligned}
    \end{equation}
    Using Proposition ~\ref{prop:asymptotic M}, the last integral in ~\eqref{eq:I^T_expression} is finite as the same argument of part (1), hence (2) is proved. Part (3) and (4) then follow from the expression of $I^T_\bullet(f,\xi,s)$ in ~\eqref{eq:I^T_expression}.
\end{proof}

\subsection{Coarse infinitesimal Jacquet-Rallis RTF}

There is also a Lie algebra version of the Jacquet-Rallis relative trace formula formulated by Zydor in ~\cite{Zydor18}. Let $P=MN \in \cF_{\mathrm{RS},F}$ , we put $\fp$ (resp. $\fm$, $\fn$) be the Lie algebra of $P_{n+1}$ (resp. $M_{P_{n+1}}$, $N_{P_{n+1}}$). They are Lie subalgebras of $\gl_{n+1}$. For  $\varphi \in \cS(\gl_{n+1}(\bA))$ and $P \in \cF_{\mathrm{RS}}$, we put a kernel function $K_{\varphi,P}$ on $[\GL_{n+1}]_{P_{n+1}}$ by
\begin{equation} \label{eq:defi_K_varphi,P}
  K_{\varphi,P}(g) = \sum_{M \in \fm(F)} \int_{\fn(\bA)} \varphi((M+N) \cdot g) \rd N, \quad g \in [\GL_{n+1}]_{P_{n+1}}
\end{equation}
For $a \in \cB(F)$, we put
\[
    K_{\varphi,P,a}(g) =  \sum_{M \in \fm(F) \cap \gl_{n+1,a}(F)} \int_{\fn(\bA)} \varphi((M+N) \cdot g) \rd N, \quad g \in [\GL_{n+1}]_{P_{n+1}}.
\]

For $T \in \fa_0$ and $a \in \cB(F)$, we define a modified kernel by
\[
    K_{\varphi,a}^T(g) = \sum_{P \in \cF_{\mathrm{RS}}} \varepsilon_P \sum_{\gamma \in P_n(F) \backslash \GL_{n}(F)} \widehat{\tau}_{P_{n+1}}(H_{P_{n+1}}(\gamma g)-T_{P_{n+1}}) K_{\varphi,P}(\gamma g), \quad g \in [\GL_n]
\]
More generally, for $Q \in \cF_{\mathrm{RS},F}$ we put
\[
    K_{\varphi,a}^{T,Q}(g) = \sum_{P \in \cF} \varepsilon_P^Q \sum_{\gamma \in P_n(F) \backslash Q_n(F)} \widetilde{\tau}_{P_{n+1}}^{Q_{n+1}}(H_{P_{n+1}}(\gamma g)-T_{P_{n+1}}) K_{\varphi,P,a}(\gamma g), \quad g \in [\GL_n]_{Q_n}.
\]
The following result is a slight generalization of the main theorem in ~\cite{Zydor18}.
\begin{theorem}[Zydor] \label{thm:coarse_Lie}
    Let $\xi$ be a strictly unitary character of $\bA_E^\times$.
    \begin{enumerate}
        \item For any $\varphi \in \cS(\gl_{n+1}(\bA))$, $s \in \C$ and $T \in \fa_0$ sufficiently positive, we have
        \[
            \sum_{a \in \cB(F)} \int_{[\GL_n]} \lvert K_{\varphi,a}^T(g) \rvert \lvert \det g \rvert^s \rd g < \infty.
        \]
        \item For any $a \in \cB(F)$, as a function of $T$, the integral
        \[
        I^T_a(f,\xi,s) := \int_{[\GL_n]} K^T_{\varphi,a}(g) \xi(g) \eta(g) \lvert \det g \rvert^s  \rd g
        \]
        is an exponential polynomial. If $s \not \in \{-1,1\}$, then the pure polynomial term is a constant, denoted by $I_a(\varphi,\xi,s)$. For a fixed $\varphi$ and $\xi$, $I_a(\varphi,\xi,s)$ is meromorphic on $\C \setminus \{-1,1\}$. We put $I_a(\varphi,\xi):=I(\varphi,\xi,0)$.
        \item For each $a \in \cB(F)$ and $s \not \in \{-1,1\}$ the distribution $I_a(\cdot,\xi,s)$ on $\cS(\gl_{n+1}(\bA))$ is continuous and the sum
        \[
            I(\varphi,\xi,s) := \sum_{a \in \cB(F)} I_a(\varphi,\xi,s)
        \]
        is absolutely convergent and defines a continuous distribution $I(\cdot,\xi,s)$ on $\cS(\gl_{n+1}(\bA))$.
    \end{enumerate}
\end{theorem}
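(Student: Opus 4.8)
The plan is to follow, in the infinitesimal setting, the blueprint of the proof of Theorem~\ref{thm:coarse Bessel}: one replaces the symmetric space $\rG'$ by the vector space $\gl_{n+1}$, the pair $\rH_1,\rH_2$ by the single group $\GL_n$ acting by conjugation, and the Rankin--Selberg parabolics $\cF_{\mathrm{RS}}$ by $\cF_{\mathrm{RS},F}$. This is exactly the relative trace formula on the Lie algebra set up by Zydor in~\cite{Zydor18}, and the only features beyond \emph{loc.\,cit.} are the strictly unitary character $\xi$ of $\bA_E^\times$ and the twist $\lvert\det g\rvert^s$. Neither affects any convergence statement, since $\xi\eta$ restricts to a unitary character of $\GL_n(\bA)$ and $\lvert\det g\rvert^{\mathrm{Re}(s)}\ll\|g\|^{\lvert\mathrm{Re}(s)\rvert}$; they only shift the exponents appearing in the subsequent exponential-polynomial analysis by a multiple of $s\det$, which is recorded by systematically replacing $\rho$-type data by $\underline{\rho}_{Q,s}$, exactly as in the Bessel case.

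\textbf{Part (1).} First I would record the infinitesimal analogue of the asymptotics~\eqref{eq:asympototic Q geometric}: for every $Q\in\cF_{\mathrm{RS},F}$ and every $N>0$ there is a continuous semi-norm on $\cS(\gl_{n+1}(\bA))$ such that
\[
  \sum_{a\in\cB(F)}\lvert K_{\varphi,a}^{T,Q}(g)-F^{Q_{n+1}}(g,T_{Q_{n+1}})K_{\varphi,Q,a}(g)\rvert\ \ll\ e^{-N\|T\|}\,\|g\|_{Q_n}^{-N}
\]
for all $\varphi$, all $g\in[\GL_n]_{Q_n}$ and all $T$ sufficiently positive, where $K_{\varphi,Q,a}$ is the $a$-part of the kernel attached to the Levi $M_Q$. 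For the improper parabolic this is the main estimate of~\cite{Zydor18}; the general $Q$ follows either by tracking Zydor's induction or by running, on the Lie algebra, the argument by which the appendix extends~\cite{BPCZ}*{Theorem 3.3.7.1} to parabolics and their Levi subgroups. Granting this, for the improper $Q$ one bounds $\lvert K_{\varphi,a}^T(g)\rvert$ by $F^{\GL_{n+1}}(g,T)\lvert K_{\varphi,a}(g)\rvert$ plus a term which, summed over $a$, is rapidly decreasing in $\|T\|$ and in $\|g\|$; the error term integrates to a finite quantity defining a continuous semi-norm on $\cS(\gl_{n+1}(\bA))$ (using~\cite{BP21}*{Theorem A.1.1}), while the first piece is supported in a compact subset of $[\GL_n]$ on which $\lvert\det g\rvert^s$ is bounded and $\sum_a\lvert K_{\varphi,a}(g)\rvert=\sum_{X\in\gl_{n+1}(F)}\lvert\varphi(X\cdot g)\rvert$ is of moderate growth. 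This gives Part~(1).

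\textbf{Parts (2)--(3).} Next I would substitute into $I_a^T(\varphi,\xi,s)$ the combinatorial identity expressing $K_{\varphi,a}^T$ through the $K_{\varphi,a}^{T,Q}$ and Arthur's functions $\Gamma'_{Q_{n+1}}$ (the infinitesimal analogue of the identity used in the proof of Theorem~\ref{thm:coarse Bessel}), apply the Iwasawa decomposition of $\GL_n(\bA)$ along $Q_n$, and introduce the descended Schwartz function $\varphi_Q$, the infinitesimal analogue of $f_Q$ in the proof of Theorem~\ref{thm:coarse Bessel} (built from $\varphi$ by integration over the unipotent part and over the maximal compact, weighted by $\xi\eta$ and a $\rho$-shift). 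This yields
\[
  I_a^T(\varphi,\xi,s)=\sum_{Q\in\cF_{\mathrm{RS},F}}c_Q\,p_{Q,s}(T_{Q_{n+1}}-T'_{Q_{n+1}})\,J_{Q,a}(\varphi,\xi,s),
\]
where $p_{Q,s}$ is the exponential polynomial of~\cite{Zydor20}*{Lemme 3.5} recalled above and $J_{Q,a}(\varphi,\xi,s)$ is an integral over $[M_{Q_n}]$ that is absolutely convergent, locally uniformly in $s$, by the infinitesimal analogue of Proposition~\ref{prop:asymptotic M}. Since every $p_{Q,s}$ has constant pure polynomial term for $s\notin\{-1,1\}$, so does $I_a^T$; the resulting constant is the claimed $I_a(\varphi,\xi,s)$, which for fixed $\varphi,\xi$ is meromorphic on $\C\setminus\{-1,1\}$, and continuity of $\varphi\mapsto I_a(\varphi,\xi,s)$ on $\cS(\gl_{n+1}(\bA))$ holds because all the semi-norms that intervene are continuous. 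Finally, absolute convergence of $\sum_{a\in\cB(F)}I_a(\varphi,\xi,s)$ follows by summing over $a$ the estimates of Part~(1), which are uniform after summation.

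\textbf{Main obstacle.} The substantive point is the asymptotics invoked in Part~(1): establishing it not merely for $\GL_n$---which is~\cite{Zydor18}---but uniformly across all Levi subgroups $M_Q$, $Q\in\cF_{\mathrm{RS},F}$, together with the associated Levi convergence used in Part~(2). This requires either extracting precise statements from Zydor's inductive argument or redoing, in the Lie-algebra setting, the appendix's extension to parabolics and their Levi subgroups. By contrast the presence of $\xi$ and of the variable $s$ is bookkeeping only: $\xi\eta$ is unitary, $\lvert\det\rvert^s$ shifts all exponents by $s\det$, and the behaviour at $s=\pm1$ is inherited verbatim from that of $p_{Q,s}$ in~\cite{Zydor20}*{Lemme 3.5}.
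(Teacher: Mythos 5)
Your proposal is correct and takes essentially the same route as the paper: the paper's own proof of Theorem~\ref{thm:coarse_Lie} consists of exactly one sentence, referring to the infinitesimal asymptotics of the modified kernel established in the appendix (Proposition~\ref{prop:asympototic_Lie_algebra}), and then declaring the rest ``identical to the proof of Theorem~\ref{thm:coarse Bessel}.'' You correctly isolate the same key input (the Lie-algebra analogue of the $Q$-truncated asymptotics, together with its Levi counterpart), you correctly observe that the character $\xi$ and the $\lvert\det\rvert^s$-twist are pure bookkeeping (shifting exponents by $s\det$ and leaving all unitarity/rapid-decay estimates intact), and you even propose the very mechanism the appendix uses to obtain the estimate for general $Q$ — running the group-case argument of Propositions~\ref{prop:asympototic Q}--\ref{prop:asymptotic Q geometric} on $\gl_{n+1}$ rather than unwinding Zydor's induction directly.
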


\begin{proof}
    We proved in Appendix ~\ref{sec:Asymptotic of modified kernel} the asymptotic of the modified kernel in the Lie algebra case. (See Proposition ~\ref{prop:asympototic_Lie_algebra}). Using this, the proof is identical to the proof of Theorem ~\ref{thm:coarse Bessel}.
\end{proof}

\begin{remark} \label{rmk:trace_formula_for_tilde_gl_n}
    There is also a version of infinitesimal Jacquet-Rallis RTF on $\widetilde{\gl_n}$. Let $P=MN \in \cF_{\mathrm{RS},F}$, we let $\widetilde{\fm}$ and $\widetilde{\fn}$ be the intersection of $\fm$ and $\fn$ with $\widetilde{\gl_n}$ under the identification ~\eqref{eq:isomorphism gl and tilde gl}. Then for $\varphi \in \cS(\widetilde{\gl_n}(\bA))$, we can define $K_{\varphi,P}$ in the same way as ~\eqref{eq:defi_K_varphi,P} replacing $\fm,\fn$ by $\widetilde{\fm}$ and $\widetilde{\fn}$. We then define modified kernels $K_{\varphi,a}^T$ for any $a \in \cA(F)$ using the same formula; analogs of Theorem ~\ref{thm:coarse_Lie} hold in this setting.

    The theorem also directly generalizes to products of $\widetilde{\gl_n}$.  
\end{remark}
    
\section{Global Theory I: The case of $\rG'_{\pm}$-supported test functions} \label{sec:global}

In this section, we keep the notations in Section ~\ref{sec:RTF}.
\subsection{Explicit computation of exponents}
    For $P \in \cF_\mathrm{RS}$ or $\cF_{\mathrm{RS},F}$, we say that $P$ is \emph{standard}, if $P_{n+1}$ is standard, more concretely, if $P_n$ stabilizes the flag ~\eqref{eq:flag}, then $P_{n+1}$ stabilizes the flag ~\eqref{eq:flag 1} with $k=r-1$ or the flag ~\eqref{eq:flag 2} with $k=r$. We say that $P$ is \emph{antistandard}, if $P_{n+1}$ stabilizes the flag ~\eqref{eq:flag 1} with $k=0$ or the flag ~\eqref{eq:flag 2} with $k=0$. Let $\cF_\mathrm{RS}^{\mathrm{st}}$ be the set of standard Rankin-Selberg parabolic subgroups, and let $\cF_\mathrm{RS}^{\mathrm{ast}}$ be the set of antistandard Rankin-Selberg parabolic subgroups
    \begin{proposition} \label{prop:exponent computation}
        Let $P \in \cF_\mathrm{RS}$ or $\cF_{\mathrm{RS},F}$
        \begin{enumerate}
            \item If $P$ is standard and $\mathrm{Re}(s)<-1$, then for all $\varpi^\vee \in \widehat{\Delta}_{P_{n+1}}^\vee$, $\mathrm{Re} \langle \underline{\rho}_{P,s} , \varpi^\vee  \rangle < 0$.
             \item If $P$ is antistandard and $\mathrm{Re}(s)>1$, then for all $\varpi^\vee \in \widehat{\Delta}_{P_{n+1}}^\vee$, $\mathrm{Re} \langle \underline{\rho}_{P,s} , \varpi^\vee  \rangle < 0$.
        \end{enumerate}
    \end{proposition}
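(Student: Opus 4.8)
The plan is to reduce the statement to an explicit computation of $\underline{\rho}_{P,s} = \rho_{P_{n+1}} - \rho_{P_n} + s\det$ — viewed through $\iota$ as an element of $\fa_{P_{n+1},\C}^{\rG'_{n+1},*}$ — together with its pairings with the coweights in $\widehat{\Delta}_{P_{n+1}}^\vee$. I treat $P \in \cF_\mathrm{RS}$, the case $P \in \cF_{\mathrm{RS},F}$ being handled by the same computation via the bijection $\cF_\mathrm{RS} \cong \cF_{\mathrm{RS},F}$. Let $0 = a_0 < a_1 < \cdots < a_r = n$ be the data of the flag \eqref{eq:flag} stabilised by $P_n$, so $M_{P_n}$ has blocks of sizes $d_i = a_i - a_{i-1}$. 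If $P$ is standard then $P_{n+1}$ is a standard parabolic of $\rG'_{n+1}$ of exactly one of two shapes: it stabilises \eqref{eq:flag 2} with $k = r$, so $M_{P_{n+1}}$ has blocks $d_1, \dots, d_r, 1$; or it stabilises \eqref{eq:flag 1} with $k = r-1$, so $M_{P_{n+1}}$ has blocks $d_1, \dots, d_{r-1}, d_r + 1$. I run the computation uniformly, tracking in each shape the block of $P_{n+1}$ that absorbs the extra basis vector $e_{n+1}$.

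On $\fa_{P_{n+1}}$ I use the coordinates dual to the block-determinant characters of $M_{P_{n+1}}$, in which the canonical pairing is the standard inner product, $\fa_{\rG'_{n+1}}$ is the line through the vector $(d'_1,\dots,d'_m)$ of block sizes of $P_{n+1}$, $\fa_{P_{n+1}}^{\rG'_{n+1}} = \{X : \textstyle\sum_l X_l = 0\}$ and $\fa_{P_{n+1}}^{\rG'_{n+1},*} = \{c : \textstyle\sum_l c_l d'_l = 0\}$. With the normalisation of $\rho$ used in the paper, $\rho_{P_{n+1}}$ has $l$-th coordinate $(n+1) - n'_l - n'_{l-1}$, where $n'_\bullet$ are the partial sums of $d'_\bullet$. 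Making the embedding $\fa_{P_n}\hookrightarrow\fa_{P_{n+1}}$ coming from $\rG'_n\hookrightarrow\rG'_{n+1}$ — and hence $\iota$ — explicit, I compute the images of $\rho_{P_n}$ and of $s\det$ in $\fa_{P_{n+1}}^{\rG'_{n+1},*}$. The point — the computational heart of the argument — is the resulting collapse: in these coordinates
\[
    \underline{\rho}_{P,s} \;=\; (1+s)\bigl(\,\underbrace{1,\dots,1}_{\text{the $P_n$-blocks of }P_{n+1}},\; *\,\bigr),
\]
where the single remaining entry $*$, attached to the $e_{n+1}$-block of $P_{n+1}$, is the unique number forcing $\sum_l(\underline{\rho}_{P,s})_l d'_l = 0$, and is therefore strictly negative ($* = -n$ for the first shape, $* = -a_{r-1}/(n-a_{r-1}+1)$ for the second).

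Finally, from the explicit description of the fundamental coweights of a (semi-standard) parabolic subgroup of $\GL_N$ one has, for every $c \in \fa_{P_{n+1}}^{\rG'_{n+1},*}$ and every $\varpi_j^\vee \in \widehat{\Delta}_{P_{n+1}}^\vee$, the identity $\langle c, \varpi_j^\vee\rangle = \sum_{l=1}^j c_l d'_l$ (blocks ordered as in the flag). Applied to $\underline{\rho}_{P,s}$ this gives $\langle \underline{\rho}_{P,s}, \varpi_j^\vee\rangle = (1+s)\,a_j$ with every $a_j \ge 1$, so $\mathrm{Re}\,\langle\underline{\rho}_{P,s},\varpi_j^\vee\rangle = (1+\mathrm{Re}(s))\,a_j < 0$ once $\mathrm{Re}(s) < -1$; this proves (1). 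For (2) I rerun the computation for the two antistandard shapes ($k = 0$ in \eqref{eq:flag 1} or \eqref{eq:flag 2}), where the $e_{n+1}$-block of $P_{n+1}$ now sits at the bottom of the flag: this replaces $1+s$ by $s-1$ and yields $\langle\underline{\rho}_{P,s},\varpi_j^\vee\rangle = (1-s)(n - a_{j-1})$ with $n - a_{j-1} \ge 1$, which has negative real part exactly when $\mathrm{Re}(s) > 1$. Alternatively (2) follows from (1) by the symmetry exchanging $\rG'_+$ and $\rG'_-$, which interchanges standard and antistandard Rankin--Selberg parabolics and sends $s$ to $-s$.

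The step I expect to be the main obstacle is purely combinatorial bookkeeping: locating correctly, in each of the four shapes, the block of $P_{n+1}$ that absorbs $e_{n+1}$, carrying the identification $\iota$ through without sign or normalisation errors, and checking that the $+1$ in the size of that block is exactly what pins the threshold at $\mp 1$ rather than at some other constant. The second (and fourth) shape, where the $e_{n+1}$-block has size $d_r+1$ rather than being a separate line, produces a genuine fraction for the entry $*$ but the same final answer.
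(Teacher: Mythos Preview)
Your proposal is correct and follows essentially the same approach as the paper: both establish that $\langle \underline{\rho}_{P,s}, \varpi^\vee \rangle = i(1+s)$ (standard case) or $i(1-s)$ (antistandard case) for some positive integer $i$, from which the sign conclusion is immediate. The paper's proof simply cites the computation in \cite{Zydor18}*{Proof of Lemma 4.2} for this identity, whereas you carry out the explicit block-by-block computation yourself, arriving at the specific values $i = a_j$ and $i = n - a_{j-1}$; this is exactly the content of Zydor's computation.

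One small point of care: when passing from $\cF_{\mathrm{RS}}$ to $\cF_{\mathrm{RS},F}$ you invoke the bijection between them, but $\underline{\rho}_{P,s}$ involves $\rho_{P_n}, \rho_{P_{n+1}}$ for parabolics of $\rG'_k = \mathrm{Res}_{E/F}\GL_{k,E}$, whose root spaces are two-dimensional over $F$ --- this doubling is what produces the threshold $\pm 1$ rather than $\pm 1/2$ in your formula. For the Lie-algebra trace formula over $\cF_{\mathrm{RS},F}$ the relevant exponent is defined so that the same identity $i(1\pm s)$ holds (this is Zydor's setting), so the statement is uniform, but the ``same computation'' is not literally obtained by transporting along the bijection; you should verify the normalisation separately in that case.
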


    \begin{proof}
        By the computation in \cite{Zydor18}*{Proof of Lemma 4.2}, if $P$ is standard, then for all $\varpi^\vee \in \widehat{\Delta}_{P_{n+1}}^\vee$, there exists positive integer $i>0$ such that $\langle \underline{\rho}_{P,s}, \varpi^\vee \rangle = i(1+s)$, hence (1) holds. If $P$ is antistandard, then for all $\varpi^\vee \in \widehat{\Delta}_{P_{n+1}}^\vee$, there exists positive integer $i>0$ such that $\langle \underline{\rho}_{P,s}, \varpi^\vee \rangle = i(1-s)$, hence (2) holds.
    \end{proof}

\subsection{Geometric distribution for the Jacquet-Rallis RTF}

Now we state a theorem that computes the geometric term $I_b$ under some assumptions on the support of the test function.
\begin{theorem} \label{thm:main}
    Let $f \in \cS(\rG'(\bA))$. If there exists a place $v$ of $F$ such that $f$ is of the form $f_v f^v$, with $f_v \in \cS(\rG'(F_v)), f^v \in \cS(\rG'(\bA^v))$ and $\supp(f_v) \subset \rG'_+(F_v)$ (resp. $\rG'_-(F_v)$), then
    \begin{enumerate}
        \item  for $s \in \cH_{<-1}$ (resp. $s \in \cH_{>1}$), we have
        \[
            \sum_{b \in \rB(F)} \int_{[\rH_1]} \int_{[\rH_2]} \lvert K_{f,\gamma}(h_1,h_2) \rvert \lvert\det h_1\rvert^{\mathrm{Re}(s)} \rd h_1 \rd h_2 < \infty.
        \]
    \item For $b \in \rB(F)$, and $s \in \cH_{<-1}$ (resp. $\cH_{>1}$) the integral
        \begin{equation} \label{eq:integral_of_K_b,gamma}
             \int_{[\rH_1]} \int_{[\rH_2]}  K_{f,b}(h_1,h_2)  \lvert \det h_1 \rvert^s \xi(h_1) \eta(h_2) \rd h_1 \rd h_2
        \end{equation} 
    (which is absolutely convergent by (1)) equals to $I_b(f,\xi,s)$. In particular, as a function of $s$, the integral ~\eqref{eq:integral_of_K_b,gamma} is holomorphic on $\cH_{<-1}$ (resp. $\cH_{>1}$) and has a meromorphic continuation to $\C$, which is holomorphic on $\C \setminus \{-1,1\}$, and equals to $I_b(f,\xi)$ at $s=0$.
    \item Pick any $\gamma \in O^+(b)$ (resp. $\gamma \in O^-(b)$), then the integral
    \begin{equation} \label{eq:global_regular_orbital_integral}
        \int_{\rH_1(\bA)} \int_{\rH_2(\bA)} f(h_1^{-1}\gamma h_2) \lvert \det h_1 \rvert^s \xi(h_1) \eta(h_2) \rd h_1 \rd h_2
    \end{equation}
    is absolutely convergent on $\cH_{<-1}$ (resp. $\cH_{>1}$) and equals to $I_b(f,\xi,s)$.
    \end{enumerate}   
\end{theorem}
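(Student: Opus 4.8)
\emph{Reductions.} I would treat the case $\supp(f_v)\subset\rG'_+(F_v)$, $s\in\cH_{<-1}$; the case $\supp(f_v)\subset\rG'_-(F_v)$, $s\in\cH_{>1}$ is entirely parallel (using Proposition~\ref{prop:exponent computation}(2) in place of (1), and \emph{anti}standard Rankin--Selberg parabolics in place of standard ones), and the promised Lie algebra analogue follows from the identical argument.

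\emph{Vanishing of non-standard terms.} The geometric heart of the proof is the claim that $\rG'_+$-support annihilates all non-standard terms: if $\supp(f_v)\subset\rG'_+(F_v)$ then $K_{f,P,b}\equiv 0$ for every $P\in\cF_{\mathrm{RS}}$ that is not standard. To see this, suppose $f(h_1^{-1}mnh_2)\neq 0$ for some $m\in M_P(F)\cap\rG'_b(F)$ and $n\in N_P(\bA)$; evaluating at the place $v$ and using the $\rH_1\times\rH_2$-stability of $\rG'_+$, one finds that the component $(mn)_v$ of $mn$ at $v$ lies in $\rG'_+(F_v)$, i.e. $\Delta^+(\alpha((mn)_v))\neq 0$. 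But $\alpha((mn)_v)=\nu(y)$ with $y$ in the $\mathtt{c}$-stable parabolic $P_{n+1}(F_v)$, so $\alpha((mn)_v)$ preserves the relevant flag \eqref{eq:flag 1} or \eqref{eq:flag 2}; when $P$ is not standard this flag has a proper member containing $e_{n+1}$, whence $e_{n+1},\alpha((mn)_v)e_{n+1},\dots,\alpha((mn)_v)^n e_{n+1}$ span a proper subspace and $\Delta^+(\alpha((mn)_v))=0$ by \eqref{eq:reg equivalent} --- a contradiction. Thus $K_{f,b}^T$ becomes a sum over standard $P$, the only $T$-independent term (from $P=\rG'$) being $K_{f,b}$ itself. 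By Lemma~\ref{lem:unique orbit}, $O^+(b)=\rG'_b(F)\cap\rG'_+(F)$ is a single $\rH_1(F)\times\rH_2(F)$-orbit with trivial stabilizers; since no $\gamma\in\rG'_b(F)\setminus\rG'_+(F)$ contributes at $v$, for a fixed $\gamma_0\in O^+(b)$ one has $K_{f,b}(h_1,h_2)=\sum_{(\gamma_1,\gamma_2)\in\rH_1(F)\times\rH_2(F)}f\bigl((\gamma_1h_1)^{-1}\gamma_0(\gamma_2h_2)\bigr)$.

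\emph{Convergence, and identification with $I_b$.} For part (1), I would bound $\sum_b\int_{[\rH_1]}\int_{[\rH_2]}|K_{f,b}|\,|\det h_1|^{\mathrm{Re}(s)}$, using the triangle inequality, the previous unfolding, and $\rG'_+(F)=\bigsqcup_b O^+(b)$, by $\sum_b\int_{\rH_1(\bA)\times\rH_2(\bA)}|f(h_1^{-1}\gamma_0 h_2)|\,|\det h_1|^{\mathrm{Re}(s)}$; this Euler-factorizes, the factor at $v$ converging because $\supp(f_v)$ is compact inside $\rG'_+(F_v)$, and the remaining factors converging for $\mathrm{Re}(s)<-1$ by the usual orbital-integral estimates, the threshold $-1$ being exactly the one governed by the central exponents $\langle\underline{\rho}_{P,s},\varpi^\vee\rangle=i(1+s)$ of Proposition~\ref{prop:exponent computation}(1). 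The same estimates applied to the standard proper pieces of $K_{f,b}^T$ with a fixed sufficiently positive truncation $T^0$ furnish, for all $T\ge T^0$, a $T$-independent majorant of $|K_{f,b}^T|$ integrable against $|\det h_1|^{\mathrm{Re}(s)}$. Then, as $T\to\infty$ through sufficiently positive elements, $K_{f,b}^T\to K_{f,b}$ pointwise, so dominated convergence yields $I_b^T(f,\xi,s)\to\int_{[\rH_1]}\int_{[\rH_2]}K_{f,b}\,\xi\,|\det|^s\,\eta$; since $I_b^T(f,\xi,s)$ is an exponential polynomial in $T$ with constant pure polynomial term $I_b(f,\xi,s)$ (Theorem~\ref{thm:coarse Bessel}(2)), and such an exponential polynomial with a limit along the cone necessarily converges to that constant, the two limits agree and $I_b(f,\xi,s)=\int_{[\rH_1]}\int_{[\rH_2]}K_{f,b}\,\xi\,|\det|^s\,\eta$ for $\mathrm{Re}(s)<-1$. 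Holomorphy on $\cH_{<-1}$ is then automatic, the meromorphic continuation and the value at $0$ come from Theorem~\ref{thm:coarse Bessel}(2), and finally the absolute convergence from part (1) licenses unfolding this twisted integral of $K_{f,b}$ into the orbital integral \eqref{eq:global_regular_orbital_integral} for $\gamma=\gamma_0$, which therefore converges on $\cH_{<-1}$ and equals $I_b(f,\xi,s)$; this is parts (2) and (3).

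\emph{Main obstacle.} The delicate point is analytic: proving the absolute convergence of the $\rG'_+$-localized kernel (equivalently, of the localized twisted orbital integral) against $|\det h_1|^{\mathrm{Re}(s)}$ precisely when $\mathrm{Re}(s)<-1$, and extracting from the proof of Theorem~\ref{thm:coarse Bessel} a dominating function for $K_{f,b}^T$ valid uniformly for large $T$. The vanishing lemma, while the conceptual novelty, is elementary once the flag picture is set up.
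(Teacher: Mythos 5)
Your proposal identifies the same key ideas as the paper's proof: the vanishing of $K_{f,P,b}$ for every non-standard $P\in\cF_{\mathrm{RS}}$ (via $\Delta^+$ vanishing on $P_{n+1}(\bA)\cap\rS(\bA)$), the role of Proposition~\ref{prop:exponent computation} in controlling the exponents of the resulting exponential polynomial, the passage to the limit $T\to\infty$, and the unfolding via Lemma~\ref{lem:unique orbit}. Your handling of parts (2) and (3) essentially reproduces the paper's reasoning, with the same use of Theorem~\ref{thm:coarse Bessel}(2).

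However, your treatment of part (1) contains a genuine gap. After unfolding you rewrite $\sum_b\int_{[\rH_1]\times[\rH_2]}|K_{f,b}|\,|\det h_1|^{\mathrm{Re}(s)}$ as $\sum_b\int_{\rH_1(\bA)\times\rH_2(\bA)}|f(h_1^{-1}\gamma_b h_2)|\,|\det h_1|^{\mathrm{Re}(s)}$ and assert that ``this Euler-factorizes.'' Euler factorization applies only to the inner adelic integral at a \emph{fixed} $b$, not to the outer sum over $b\in\rB(F)$, which is an intrinsically global sum over rational points. Since $\rB(F)$ is dense in $\rB(F_v)$, the compact support of $f_v$ does not truncate this sum at $v$, and the convergence of $\sum_b$ is precisely the hard global issue. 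Your argument would show at best that each individual orbital integral converges for $\mathrm{Re}(s)<-1$ (which is in fact the content of Section~\ref{sec:local}), but not that the sum does. The paper avoids separating the sum over $b$ from the truncation: it applies the Iwasawa decomposition to $\int\sum_b|K^{Q,T}_{f,b}|$ directly, bounds the inner integral using the asymptotic estimate \eqref{eq:asympototic Q geometric} (whose proof in Proposition~\ref{prop:asymptotic Q geometric} absorbs the sum over $b$ into a convergent semi-norm series), shows the total is bounded as $T$ varies using Proposition~\ref{prop:exponent computation}, and only then lets $T\to\infty$. Your later remark about a $T$-uniform majorant of $|K^T_{f,b}|$ is pointing at the right mechanism, but you present it as supplementary to the Euler-factorization argument; in fact it \emph{is} the proof of part (1), and the Euler-factorization remark is not a valid shortcut.
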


\begin{proof}[Proof of (1)]
    We stick to the case where $\mathrm{supp}(f_v) \subset \rG'_+(F_v)$, the case $\mathrm{supp}(f_v) \subset \rG'_-(F_v)$ follows from the same argument. Let $P \in \cF_{\mathrm{RS}}$ and $P$ be not standard. Then, for any $m \in M_P(F)$ and $n \in N_P(\bA)$, we have
    \[\alpha(mn) = \begin{pmatrix}
        A & b \\ c & d
    \end{pmatrix} \in P_{n+1}(\bA) \cap \rS(\bA).\]
    Where the map $\alpha$ is defined in ~\eqref{eq:the_map_alpha}. Since $P$ is not standard, there is an integer $k>0$ such that the last $k$ coordinates of $b$ are $0$. Hence
        \[
            \det(b,Ab,\cdots,A^nb)=0.
        \]
    This implies $f(h_1^{-1}mnh_2)=0$. Therefore, for $b \in \rB(F)$ we have
        \[
        K_{f,P,b}(h_1,h_2) = \sum_{m \in M_P(F) \cap \rG'_{b}(F)} \int_{N_P(\bA)} f(h_1^{-1}mnh_2)=0
        \]
    Similar to the calculation as we have done in the proof of part (2) of Theorem ~\ref{thm:coarse Bessel}, we can write
        \begin{align*}
           K_{f,b}^T(h_1,h_2) &= \sum_{P \in \cF_\mathrm{RS}^{\mathrm{st}}} \epsilon_P \sum_{\substack{ \gamma \in P_{\rH_1}(F)\backslash \rH_1(F)\\ \delta \in P_{\rH_2}(F) \backslash \rH_2(F)}} \tau_{P_{n+1}}(H_{P_{n+1}}(\delta_n h_{2,n})-T_{P_{n+1}}) K_{f,P,b}(\gamma h_1,\delta h_2) \\
           &= \sum_{Q \in \cF_\mathrm{RS}^{\mathrm{st}}} \sum_{\substack{\gamma \in Q_{\rH_1}(F) \backslash \rH_1(F) \\ \delta \in Q_{\rH_2}(F) \backslash \rH_2(F) }} \Gamma'_{Q_{n+1}}(H_{Q_{n+1}}(\delta_n h_{2,n})-T'_{Q_{n+1}},T_{Q_{n+1}}-T'_{Q_{n+1}}) K_{f,b}^{Q,T}(\gamma h_1,\delta h_2).
        \end{align*}
    Pick $T' \in \fa_{n+1}$ such that $T-T' \in \fa_{n+1}^+$, so for all $Q \in \cF_{\mathrm{RS}}$, the values of 
        \[
            \Gamma'_{Q_{n+1}}(\cdot,T_{Q_{n+1}}-T'_{Q_{n+1}})
        \]
      are 0 or 1.  Then we have
        \begin{align*}
            &\int_{[\rH_1]} \int_{[\rH_2]} \sum_{b \in \rB(F)}  \lvert K_{f,b}^T(h_1,h_2) \rvert \lvert \det h_1 \rvert^{\mathrm{Re}(s)} \rd h_1 \rd h_2 \\
            \le& \sum_{Q \in \cF^{\mathrm{st}}_{\mathrm{RS}}} \int_{[\rH_1]_{Q_{\rH_1}}\times [\rH_2]_{Q_{\rH_2}}} \Gamma'_{Q_{n+1}}(H_{Q_{n+1}}(h_{2,n})-T'_{Q_{n+1}},T_{Q_{n+1}}-T'_{Q_{n+1}}) \times \\
            &\left( \sum_{b \in \rB(F)} \lvert K_{f,b}^{Q,T}(h_1,h_2) \rvert \right) \lvert \det h_1 \rvert^{\mathrm{Re}(s)} \rd h_1 \rd h_2 
        \end{align*}
    By the Iwasawa decomposition, and the fact that for $a \in A_{Q}^\infty$,
        \[
            K_{f,b}^{Q,T}(ax,ay)=e^{\langle 2\rho_Q,H_Q(a) \rangle} K_{f,b}^{Q,T}(x,y),
        \]
    the the summand corresponds to $Q$ in the above expression equals to
        \begin{align*}
            \int_{[M_1]} \int_{[M_2]} &\int_{K_1 \times K_2} e^{\langle -2\rho_{Q_{\rH_1}},H_{Q_{\rH_1}}(m_1) \rangle} e^{\langle -2\rho_{Q_{\rH_2}},H_{Q_{\rH_2}}(m_2) \rangle} 
          \Gamma'_{Q_{n+1}} \left( H_{Q_{n+1}}(m_{2,n})-T'_{Q_{n+1}},T_{Q_{n+1}}-T'_{Q_{n+1}} \right) \\
          & \left( \sum_{b \in \rB(F)} \lvert K_{f,b}^{Q,T}(m_1k_1,m_2k_2) \rvert \right)  \lvert  \det m_1 \rvert^{\mathrm{Re}(s)} \rd m_1 \rd m_2 \rd k_1 \rd k_2 \\
          =\int_{K_1 \times K_2} &\int_{A_{Q,\rH_1}^\infty \backslash ([M_1] \times [M_2])} c_Qp_{Q,\mathrm{Re}(s)}(T_{Q_{n+1}}-T'_{Q_{n+1}})e^{\langle \underline{\rho}_{Q,s},-H_{Q_{n+1}}(m_{2,n})+T'_{Q_{n+1}}\rangle} e^{-\langle 2 \rho_{Q_{\rH_1}}, H_{Q_{\rH_1}}(m_1) \rangle} \\
          &e^{-\langle 2 \rho_{Q_{\rH_2}}, H_{Q_{\rH_2}}(m_2) \rangle} \left( \sum_{b \in \rB(F)} \lvert K_{f,b}^{Q,T}(m_1k_1,m_2k_2) \rvert \right) \lvert \det m_1 \rvert^{\mathrm{Re}(s)} \rd m_1 \rd m_2 \rd k_1 \rd k_2.
        \end{align*}
    Note that the natural map $[M_1] \times [M_2]^{\mathbbm{1}} \to A^\infty_{Q,\rH_1} \backslash ([M_1] \times [M_2])$ is a bijection and is measure-preserving up to a constant $C_Q$, hence the expression above can be written as
    \begin{align} \label{eq:1}
    \begin{split}
        &c_QC_Qp_{Q,\mathrm{Re}(s)}(T_{Q_{n+1}}-T'_{Q_{n+1}})e^{\langle \underline{\rho}_{Q,\mathrm{Re}(s)},T'_{Q_{n+1}}\rangle} \int_{K_1 \times K_2} \int_{[M_1] \times [M_2]^\mathbbm{1}}  e^{-\langle 2 \rho_{Q_{\rH_1}}, H_{Q_{\rH_1}}(m_1) \rangle} \\
          & \left( \sum_{b \in \rB(F)} \lvert K_{f,b}^{Q,T}(m_1k_1,m_2k_2) \rvert \right) \lvert \det m_1 \rvert^{\mathrm{Re}(s)} \rd m_1 \rd m_2 \rd k_1 \rd k_2.
    \end{split}
    \end{align}
    By ~\eqref{eq:asympototic Q geometric}, for $(m_1,m_2) \in [M_1] \times [M_2]^\mathbbm{1}$ and $(k_1,k_2) \in K_1 \times K_2$, we have
        \[
         \sum_{b \in \rB(F)} \lvert K_{f,b}^{Q,T}(m_1k_1,m_2k_2) - F^{Q_{n+1}}(m_{2,n},T_{Q_{n+1}}) K_{f,Q,b}(m_1k_1,m_2k_2) \rvert \ll e^{-N\|T\|} \|m_1\|_{M_1}^{-N} \|m_2\|_{M_2}^{-N} .
         \]
    Using the same argument of the proof of Theorem ~\ref{thm:coarse Bessel} (1), we see that the integral in ~\eqref{eq:1} is finite. To conclude, the expression
      \begin{equation} \label{eq:integral of modified kernel}
              \int_{[\rH_1]} \int_{[\rH_2]} \sum_{b \in \rB(F)}  \lvert K_{f,b}^T(h_1,h_2) \rvert \lvert \det h_1 \rvert^{\mathrm{Re}(s)} \rd h_1 \rd h_2
    \end{equation}

    is bounded by a constant multiple of
        \[
        \sum_{Q \in \cF^\mathrm{st}_\mathrm{RS}} p_Q(T_{Q_{n+1}}-T'_{Q_{n+1}}),
        \]
    which is of the form
        \[
        \sum_{Q \in \cF^\mathrm{st}_\mathrm{RS}} a_Q P_Q(T) e^{\langle \underline{\rho}_{Q,\mathrm{Re}(s)},T_{Q_{n+1}} \rangle}
        \]
    with $a_Q \in \C,P_Q$ is a polynomial on $\fa_{n+1}$.

    When $T$ is sufficiently positive, $T_{Q_{n+1}}$ is of the form 
    \[
        T_{Q_{n+1}} = \sum_{\varpi^\vee \in \widehat{\Delta}_Q^\vee} T_{\varpi^\vee} \varpi^\vee
    \]
    where each $T_{\varpi^\vee}$ is a sufficiently positive real number. Therefore, by Proposition ~\ref{prop:exponent computation}, the expression ~\eqref{eq:integral of modified kernel} is bounded as $T$ varies (and is sufficiently positive).

    By ~\eqref{eq:asympototic Q geometric}, this implies
    \[
        \int_{[\rH_1]} \int_{[\rH_2]} F^{\rG_{n+1}'}(h_{2,n},T)  \left( \sum_{b \in \rB(F)} \lvert K_{f,b}(h_1,h_2)  \rvert \right) \rd h_1 \rd h_2
    \]
    is bounded as $T$ varies. As $T \to \infty$ (i.e. $\langle \alpha,T \rangle \to \infty$ for any $\alpha \in \Delta_0$), $F^{\rG'_{n+1}}(\cdot,T) \to 1$. Thus part (1) follows from the dominated convergence theorem.
\end{proof}

\begin{proof}[Proof of (2)]
    Let $f \in \cS(\rG'(\bA))$ and let $T \in \fa_{n+1}$ be sufficiently positive. For $N>0$ large enough, by applying  ~\eqref{eq:asympototic Q geometric} to $Q=\rG'$, we have
        \[
    \int_{[\rH_1]} \int_{[\rH_2]} \lvert F^{\rG'_{n+1}}(h_{2,n},T) K_{f,b}(h_1,h_2) - K_{f,b}^T(h_1,h_2) \rvert |\det h_1|^s \xi(h_1) \eta(h_2) \rd h_1 \rd h_2 \ll e^{-N\|T\|}.
        \]
    When $T \to \infty$, the integral
        \[
    \int_{[\rH_1]} \int_{[\rH_2]} F^{\rG'_{n+1}}(h_{2,n},T)  K_{f,b}(h_1,h_2)  |\det h_1|^s \xi(h_1) \eta(h_2) \rd h_1 \rd h_2 
        \]
    has a limit
        \[
        \int_{[\rH_1]} \int_{[\rH_2]} K_{f,b}(h_1,h_2)  |\det h_1|^s \xi(h_1) \eta(h_2) \rd h_1 \rd h_2.
        \]
    Similar to the computation in part (1) of the proof, the integral
        \[
         \int_{[\rH_1]} \int_{[\rH_2]}  K_{f,b}^T(h_1,h_2)|\det h_1|^s \xi(h_1) \eta(h_2) \rd h_1 \rd h_2
        \]
    is of the form
        \[
       I_b(f,\xi,s) +  \sum_{Q \ne G \in \cF^\mathrm{st}_{\mathrm{RS}}} P_Q(T) e^{\langle \underline {\rho}_{Q,s},T \rangle}
        \]
    where $P_Q$ is polynomial, hence as $T \to \infty$, it has the limit $I_b(f,\xi,s)$. Combining these, (2) is proved.
\end{proof}

\begin{proof}[Proof of (3)]
    Since $\mathrm{supp}(f_v) \subset \rG'_+(F_v)$, by the definition of $K_{f,b}(h_1,h_2)$ and Lemma ~\ref{lem:unique orbit}, we see that
    \[
        K_{f,b}(h_1,h_2) = \sum_{(\delta_1,\delta_2) \in \rH_1(F) \times \rH_2(F)} f(\delta_1^{-1}\gamma \delta_2).
    \]
    Therefore, the absolute convergence follows from (1), and the remaining part follows from (2).
\end{proof}

The integral ~\eqref{eq:global_regular_orbital_integral} is Eulerian, we will then study the local version of it in the next section.

\subsection{An infinitesimal variant}

The results in Theorem ~\ref{thm:main} have their infinitesimal analogues.

\begin{theorem} \label{thm:main_infinitesimal}
    Let $\fg = \fg_{n+1}$ or $\widetilde{\gl_n}$. Let $\varphi \in \cS(\fg(\bA))$. If there exists a place $v$ of $F$ such that $\varphi$ is of the form $\varphi_v \varphi^v$, with $\varphi_v \in \cS(\fg(F_v)), f^v \in \cS(\fg(\bA^v))$ and $\supp(\varphi_v) \subset \fg_{+}(F_v)$ (resp. $\fg_{-}(F_v)$), then
    \begin{enumerate}
        \item  for $s \in \cH_{<-1}$ (resp. $s \in \cH_{>1}$), we have
        \[
            \sum_{a \in \cB(F)} \int_{[\GL_n]} \lvert K_{\varphi,a}(g) \rvert \lvert\det g \rvert^{\mathrm{Re}(s)}  < \infty.
        \]
    \item For $a \in (\fg/\GL_n)(F)$, and $s \in \cH_{<-1}$ (resp. $\cH_{>1}$) the integral
        \begin{equation} \label{eq:integral_of_K_varphi,a}
             \int_{[\GL_n]}  K_{\varphi,a}(g)  \lvert \det g \rvert^s \xi(g) \eta(g) \rd g
        \end{equation} 
    equals to $I_a(\varphi,\xi,s)$. In particular, as a function of $s$, the integral ~\eqref{eq:integral_of_K_b,gamma} is holomorphic on $\cH_{<-1}$ (resp. $\cH_{>1}$) and has a meromorphic continuation to $\C$ which is holomorphic on $\C \setminus \{-1,1\}$, and equals to $I_a(\varphi,\xi)$ at $s=0$.
    \item Pick any $\gamma \in O^+(a)$ (resp. $\gamma \in O^-(a)$), then the integral
    \begin{equation} \label{eq:global_regular_orbital_integral_infinitesimal}
        \int_{\GL_n(\bA)} \varphi(\gamma \cdot g )  \lvert \det g \rvert^s \xi(g) \eta(g) \rd g
    \end{equation}
    is absolutely convergent on $\cH_{<-1}$ (resp. $\cH_{>1}$) and equals to ~\eqref{eq:integral_of_K_varphi,a}.
    \end{enumerate}   
\end{theorem}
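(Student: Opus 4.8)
The plan is to transcribe the proof of Theorem~\ref{thm:main} to the Lie algebra, following the same scheme line by line. I would replace the symmetric space $\rS$ and the map $\alpha$ of \eqref{eq:the_map_alpha} by $\gl_{n+1}$ and the identity map (resp.\ by $\widetilde{\gl_n}$ and the projection $\gl_{n+1}\to\widetilde{\gl_n}$ of \eqref{eq:isomorphism gl and tilde gl}), the invariants $\Delta^{\pm}$ by $\delta^{\pm}$, the automorphic kernel $K_f$ by $K_\varphi$, Theorem~\ref{thm:coarse Bessel} by Theorem~\ref{thm:coarse_Lie} together with Remark~\ref{rmk:trace_formula_for_tilde_gl_n}, the modified-kernel asymptotics \eqref{eq:asympototic Q geometric} by their Lie-algebra counterpart Proposition~\ref{prop:asympototic_Lie_algebra}, and the exponent estimate by Proposition~\ref{prop:exponent computation}, which is already stated for $\cF_{\mathrm{RS},F}$. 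As in the group case I would only treat $\supp(\varphi_v)\subset\fg_+(F_v)$, the case of $\fg_-$ being symmetric (antistandard parabolics, $\delta^-$, Proposition~\ref{prop:exponent computation}(2)). The $\widetilde{\gl_n}$ case is handled identically, using Remark~\ref{rmk:trace_formula_for_tilde_gl_n} and the fact that $\delta^{\pm}$ on $\gl_{n+1}$ are pulled back from $\widetilde{\gl_n}$ along \eqref{eq:isomorphism gl and tilde gl}.

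The one input with genuinely new content is the analogue of the vanishing step in the proof of Theorem~\ref{thm:main}(1): for every non-standard $P=MN\in\cF_{\mathrm{RS},F}$ one has $K_{\varphi,P,a}\equiv 0$. Indeed, if $P$ is not standard then $P_{n+1}$ stabilizes a flag \eqref{eq:flag 1} with $k\le r-2$ or a flag \eqref{eq:flag 2} with $k\le r-1$, and in either case $V_{r-1}\oplus Fe_{n+1}$ is one of its steps; hence for $M\in\fm(F)$, $N\in\fn(\bA)$, writing $X=M+N=\left(\begin{smallmatrix}A&v\\u&d\end{smallmatrix}\right)$, the condition $X\bigl(V_{r-1}\oplus Fe_{n+1}\bigr)\subseteq V_{r-1}\oplus Fe_{n+1}$ forces $v\in V_{r-1}$ and $AV_{r-1}\subseteq V_{r-1}$, so $v,Av,\dots,A^{n-1}v$ all lie in the proper subspace $V_{r-1}$ and $\delta^+(X)=\det(v,Av,\dots,A^{n-1}v)=0$; since $\supp(\varphi_v)\subset\gl_{n+1,+}(F_v)$ this gives $\varphi(X\cdot g)=0$. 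Consequently $K_{\varphi,a}^T$ only involves standard $Q\in\cF_{\mathrm{RS},F}$, and running the Iwasawa-decomposition computation of the proof of Theorem~\ref{thm:main}(1)---expanding $K_{\varphi,a}^T$ in the $K_{\varphi,a}^{Q,T}$ via Arthur's functions $\Gamma'_{Q_{n+1}}$, extracting the exponential-polynomial factors $p_{Q,s}$, and splitting off the torus $A_{Q}^\infty$---bounds $\int_{[\GL_n]}\sum_a\lvert K_{\varphi,a}^T(g)\rvert\lvert\det g\rvert^{\mathrm{Re}(s)}\,\rd g$ by a constant multiple of a finite sum of terms $P_Q(T)e^{\langle\underline\rho_{Q,s},T_{Q_{n+1}}\rangle}$ over standard $Q$. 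On $\cH_{<-1}$ every such exponent has negative real part by Proposition~\ref{prop:exponent computation}(1), so this quantity stays bounded as $T\to\infty$; since the truncation $F^{\GL_{n+1}}(\cdot,T)$ of Proposition~\ref{prop:asympototic_Lie_algebra} tends to $1$, the dominated convergence theorem yields~(1).

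Given~(1), parts~(2) and~(3) are formal. Applying Proposition~\ref{prop:asympototic_Lie_algebra} with $Q=\GL_n$ and letting $T\to\infty$ shows that $\int_{[\GL_n]}F^{\GL_{n+1}}(g,T)K_{\varphi,a}(g)\lvert\det g\rvert^s\xi(g)\eta(g)\,\rd g$ converges to the integral \eqref{eq:integral_of_K_varphi,a}; on the other hand, the computation above expresses $\int_{[\GL_n]}K_{\varphi,a}^T(g)\lvert\det g\rvert^s\xi(g)\eta(g)\,\rd g$ as $I_a(\varphi,\xi,s)$ plus a finite sum of terms $P_Q(T)e^{\langle\underline\rho_{Q,s},T\rangle}$ with $Q\ne\GL_n$, all vanishing as $T\to\infty$ on $\cH_{<-1}$. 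Comparing the two limits shows that \eqref{eq:integral_of_K_varphi,a} equals $I_a(\varphi,\xi,s)$ there, whence the holomorphy on $\cH_{<-1}$ and, via Theorem~\ref{thm:coarse_Lie}, the meromorphic continuation and the value at $s=0$. For~(3), Lemma~\ref{lem:unique orbit} provides a unique regular orbit $O^+(a)$ in the fiber of $a$ lying in $\fg_+(F)$, and since $\supp(\varphi_v)\subset\fg_+(F_v)$ one has $K_{\varphi,a}(g)=\sum_{\delta\in\GL_n(F)}\varphi(\gamma\cdot\delta g)$ for any $\gamma\in O^+(a)$ (using that $\gamma$ has trivial stabilizer); unfolding the sum over $\GL_n(F)$ identifies \eqref{eq:integral_of_K_varphi,a} with the orbital integral \eqref{eq:global_regular_orbital_integral_infinitesimal}, whose absolute convergence on $\cH_{<-1}$ is part of~(1).

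I expect no serious obstacle: the analytic heavy lifting is already packaged in Theorem~\ref{thm:coarse_Lie} and the appendix (Proposition~\ref{prop:asympototic_Lie_algebra}), and the rest is a transcription of Theorem~\ref{thm:main}. The one place demanding care is the vanishing statement for non-standard $P$---reading off correctly from the flags \eqref{eq:flag 1} and \eqref{eq:flag 2} that $v$, and hence all $A^iv$, are trapped in the proper subspace $V_{r-1}$---together with the routine but slightly fiddly bookkeeping needed to keep the $\gl_{n+1}$ and $\widetilde{\gl_n}$ formulations synchronized.
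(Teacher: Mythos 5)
Your proof is correct and follows exactly the route the paper intends: the paper's own proof is the single sentence ``The proof is identical to the proof of Theorem~\ref{thm:main}, where we use the asymptotic properties in Proposition~\ref{prop:asympototic_Lie_algebra} instead,'' and you have carried out that transcription faithfully, including correctly identifying the one nonformal ingredient (the vanishing of $K_{\varphi,P,a}$ for non-standard $P$ via the flag-invariance argument showing $\delta^+\equiv 0$ on $\fp$).
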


\begin{proof}
    The proof is identical to the proof of Theorem ~\ref{thm:main}, where we use the asymptotic properties in Proposition ~\ref{prop:asympototic_Lie_algebra} instead.
\end{proof}

\section{Local Theory I: Normalized orbital integral} \label{sec:local}

Throughout this section, we let $F$ be a local field of characteristic zero. Let $\gamma \in \rG'_{\mathrm{reg}}(F)$, $f \in \cS(\rG'(F))$ and $\xi:F^\times \to \C^\times$ be a unitary character. We will define a meromorphic function $I_\gamma(f,\xi,s)$ in this section and study its properties in Subsection ~\ref{subsec:local_group}. The function $I_\gamma(f,\xi,s)$ is a regularization of the following integral
\[
    \int_{\rH_1(F) \times \rH_2(F)} f(h_1^{-1} \gamma h_2) \xi(h_1) \lvert h_1 \rvert^s \eta(h_2) \rd h_1 \rd h_2.
\]
This integral is divergent in general for any $s \in \C$, so how to regularize it will be the main part of this section. We will first study the infinitesimal version $I_X(\varphi,\xi,s)$ in Subsection ~\ref{subsec:central_and_rss} and \ref{subsec:local_I_general}, and the group version in Subsection ~\ref{subsec:local_group}.

\subsubsection{Notations and Measures} \label{subsubsec:measure_local}
We fix some notation and Haar measures throughout this section as follows: we fix an additive character $\psi$ of $F$. If $F'/F$ is a finite extension, we always choose $\psi' := \psi \circ \mathrm{Tr}_{F'/F}$ as a fixed additive character on $F'$. We endow $F'$ with the self-dual Haar measure with respect to $\psi'$. 

We fix the Haar measure on $\GL_n(F)$ defined by the differential form $\zeta_F(1) \cdots \zeta_F(n) (\det g_{ij})^{-n} \wedge dg_{ij}$. We denote by $K$ the standard maximal compact subgroup of $\GL_n(F)$. If $F$ is non-Archimedean, then $K = \GL_n(\cO_F)$ and $\mathrm{vol}(K) = \mathrm{vol}(\cO_F)^{n^2}$.

We also denote by $A_n$ (resp. $N_n$) the diagonal subgroup (resp. upper triangular unipotent subgroup) of $\GL_n$, and we endow $A_n(F)$ (resp. $N_n(F)$) with the Haar measure defined by differential form $\zeta_F(1)^n \prod da_i/a_i$ (resp. $\prod dx_{ij}$). If $F$ is non-Archimedean, one has $\vol(A_n(\cO_F)) = \vol(\cO_F)^n$ (resp. $\vol(N_n(\cO_F)) = \vol(\cO_F)^{\frac{n(n-1)}{2}}$). Iwasawa decomposition yields the integration formula
\begin{equation} \label{eq:Iwasawa_measure}
    \int_{\GL_n(F)} f(g) \rd g = C \int_{A_n(F)} \int_{N_n(F)} \int_K f(ank) \rd a \rd n \rd k,
\end{equation}
where $C \in \R_{>0}$ is a constant. When $F$ is non-Archimedean, then $C = \frac{1}{\vol(\cO_F)^{\frac{n(n+1)}{2}}}$.

Write $B_n$ for the upper triangular Borel subgroup, and let $\delta_{B_n}$ be the modulus character on $B_n(F)$, for $a = \mathrm{diag}(a_1,\cdots,a_n) \in A_n(F)$, we have $\delta_{B_n}(a) = \lvert a_1 \rvert^{n-1} \cdots \lvert a_n \rvert^{1-n}$. 

Let $\langle \cdot,\cdot \rangle$ be the $\GL_n(F)$-invariant bilinear pairing on $\widetilde{\gl_n}(F)$ defined by
    \begin{equation} \label{eq:pairing_on_tilde_gl}
         \langle (X_1,v_1,u_1),(X_2,v_2,u_2) \rangle = \mathrm{Trace}(X_1X_2) + u_1v_2+u_2v_1.
    \end{equation}
For $\varphi \in \cS(\widetilde{\gl_n}(F))$, we define its Fourier transform by
\[
    \cF \varphi(Y) = \int_{\widetilde{\gl_n}(F)} \varphi(X) \psi(\langle X,Y \rangle) \rd X.
\]
    
We then endow $\widetilde{\gl_n}(F)$ with the self-dual measure, one check directly that this measure coincides with the product of measure on $F^{n^2} \times F^n \times F^n$ when we use standard coordinates on $\widetilde{\gl_n}(F)$.

Let $\fn_n$ and $\fb_n$ be the Lie algebra of $N_n$ and $B_n$ respectively, and let $\fn_n'$ be the space of matrices $(a_{ij})$ such that $a_{ij} = 0$ unless $j-i \ge 2$. We define
        \[
           \widetilde{\fn_n} = \{ (A,v,u) \in \widetilde{\gl_n} \mid A \in \fn_n, v \in F^{n-1}, u =0 \} ,\quad \widetilde{\fn'_n} = \{ (A,v,u) \in \widetilde{\gl_n} \mid A \in \fn'_n, v \in F^{n-2}, u = 0 \},
        \]

where $F^{n-1}$ (resp. $F^{n-2}$) stands for the subspace of $F^n$ with the last (resp. last two) coordinate 0. 

We write $\widetilde{\fb_n} := \{(A,v,u) \in \widetilde{\gl_n} \mid A \in \fb_n, u = 0 \}$. All the vector spaces $\fn_n,\fb_n,\widetilde{\fn_n},\widetilde{\fn'_n},\widetilde{\fb_n}$ has a natural basis and can be identified with $F^m$ for some $m \ge 0$. We then transport the product measure on $F^m$ to the Haar measure on these vector spaces via this identification.

Recall the regular element $Z_\lambda^+$ defined in ~\eqref{eq:central orbit representative}. By ~\cite{Zhang14b}*{Lemma 6.8} (also see ~\cite{BP21b}*{Lemma 5.7.4}), the map
     \begin{equation} \label{eq:measure_preserving_map_n}
        N_n(F) \to Z_\lambda^+ + \widetilde{\fn_n'} , \quad n \mapsto Z_\lambda^+ \cdot n   
    \end{equation}
    is a bijection and is measure-preserving.
By ~\cite{BP21b}*{(5,7,8)}, we have the integration formula
\begin{equation} \label{eq:integration_formula_b}
    \int_{\widetilde{\gl_n}(F)} f(Y) \rd Y = \frac{1}{\zeta_n} \int_{N_n(F) \backslash \GL_n(F)} \int_{\widetilde{\fb_n}(F)} f((Z_0^- + Y) \cdot h ) \rd Y \rd h.
\end{equation}
where $f \in \cS(\widetilde{\gl_n}(F))$ and
\[
    \zeta_n = \zeta_F(1) \cdots \zeta_F(n).
\]

\subsection{Orbital integrals on the Lie algebra: central and regular semisimple elements} 
\label{subsec:central_and_rss}

Recall that we have a GIT quotient $\cA := \widetilde{\gl_n}/\GL_n$ and the corresponding quotient map $q: \widetilde{\gl_n} \to \cA$. Let $a \in \cA(F)$ be a central element, by Lemma ~\ref{lem:regular_nilpotent_orbit}, in the fiber $\widetilde{\gl_n}_{,a}$ of $a$, there are two regular orbits $O^+(a)$ and $O^-(a)$.

We define the $L$-factors associated to the orbits $O^+(a)$ and $O^-(a)$ to be  
\[
    L_a^+(s,\xi) = \prod_{i=1}^n L(-is-i+1,(\xi \cdot \eta)^{-i}), \quad  L_a^-(s,\xi) = \prod_{i=1}^n L(is-i+1,(\xi \cdot \eta)^{i}).
\]

For a regular element $X$ in $\widetilde{\gl_n}_{,a}(F)$. We put
\[
  L_X(s,\xi) =
    \begin{cases}
      L_a^+(s,\xi) & \text{ if }X \in O^+(a), \\
      L_a^-(s,\xi) & \text{ if }X \in O^-(a). 
    \end{cases}
\]

Let $\chi: F^\times \to \C^\times$ be a character. We say that $\varphi \in \cS(\widetilde{\gl_n}(F))$ is $\chi$\emph{-unstable}, if for all regular semisimple element $X \in \widetilde{\gl_n}(F)$, we have
\[
    \int_{\GL_n(F)} \varphi(X \cdot g) \chi(g) \rd g = 0.
\]
We call a continuous functional (i.e. a distribution) $I: \cS(\widetilde{\gl_n}(F)) \to \C$ is $\chi$\emph{-stable}, if for any $\chi$-unstable function $\varphi \in \cS(\widetilde{\gl_n}(F))$, we have $I(\varphi)=0$. 

For $g \in \GL_n(F)$ and $\varphi \in \cS(\widetilde{\gl_n}(F))$, we let $\rR(g)\varphi$ denote the right translation given by $\rR(g)\varphi(X) = \varphi(X \cdot g)$. Note that if $I$ is a $\chi$-stable distribution, then for any $\varphi \in \cS(\widetilde{\gl_n}(F))$ and $g \in \GL_n(F)$, we have
\[
    I(\rR(g)\varphi) = \chi^{-1}(g)I(\varphi).
\]

\begin{proposition}  \label{prop:local_central}
    Let $a \in \cA(F)$ be a central element and $X \in \widetilde{\gl_n}_{,a}(F)$ be a regular element. Let $\varphi \in \cS(\widetilde{\gl_n}(F))$ and $s \in \C$. Consider the integral
    \[
        I_X(\varphi,\xi,s) := \int_{\GL_n(F)} \varphi(X \cdot g) \xi(g) \eta(g) \lvert \det g \rvert^s \rd g.
    \]
    Then we have the following statements:
    \begin{enumerate}
        \item  If $X \in O^+(a)$ (resp. $X \in O^-(a)$), $I_X(\varphi,\xi,s)$ is absolutely convergent on $\cH_{<-1+\frac 1n}$ (resp. $\cH_{>1-\frac 1n}$) and has meromorphic continuation to $\C$, with poles contained in the poles of $L_X(s,\xi)$.
        \item For $s \in \C$ which is not a pole of $L_X(s,\xi)$, the resulting linear map
        \[
         \cS(\widetilde{\gl_n}(F)) \to \C, \quad \varphi \mapsto I_X(\varphi,\xi,s).
        \]
        is continuous.
        \item  As a function of $s$
        \[
           I_X^\natural(\varphi,\xi,s) := \frac{I_X(\varphi,\xi,s)}{L_X(s,\xi)}
        \]
    is entire for any $\varphi \in \cS(\widetilde{\gl_n}(F))$, and we can choose $\varphi$ such that it equals to 1.
        \item If $F$ is non-Archimedean with ring of integer $\cO$, suppose that $\varphi = 1_{\widetilde{\gl}(\cO)}$, $\xi$ and $\eta$ are unramified, $\vol(\cO_F)=1$, $X \in \widetilde{\gl_n}_{,\pm}(\cO)$, then
        \[
            I_X(\varphi,s,\xi) =  L_X(s,\xi).
        \]
        \item  For any $s$ which is not a pole of $L_X(s,\xi)$, the distribution $I_X(\cdot,\xi,s)$ satisfies the following two properties:
        \begin{itemize}
            \item  $I_X(\cdot,\xi,s)$ is a $\xi \eta \lvert \cdot \rvert^s$-stable distribution.
            \item $I_X(\cdot,\xi,s)$ is supported on the closure of $\GL_n(F)$-orbit of $X$.
        \end{itemize}
    \end{enumerate}
\end{proposition}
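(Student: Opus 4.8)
\emph{Plan.} We treat $X \in O^+(a)$; the case $X \in O^-(a)$ is symmetric (transpose, or use the integration formula ~\eqref{eq:integration_formula_b}). By Lemma ~\ref{lem:regular_nilpotent_orbit} and the explicit description of $O^+(a)$ we may take $X = Z_\lambda^+$, with $\lambda$ determined by $a$. The heart of the matter is to make the defining integral explicit. Since $X$ is regular, $g \mapsto X\cdot g$ identifies $\GL_n(F)$ with $O^+(a)$; writing an Iwasawa decomposition $g = nak$ as in ~\eqref{eq:Iwasawa_measure} (with the appropriate modulus), the parametrization ~\eqref{eq:measure_preserving_map_n} replaces the unipotent integral by an integral of $\varphi$ over the affine space $Z_\lambda^+ + \widetilde{\fn'_n}(F)$, and a change of variables moving the torus into the $\widetilde{\fn'_n}$-coordinates produces, for $\mathrm{Re}(s)$ in a left half-plane, an identity of the shape
\[
    I_X(\varphi,\xi,s) = C' \int_{(F^\times)^n} \Phi_\varphi(t_1,\cdots,t_n)\, \prod_{i=1}^n (\xi\eta)^{-i}(t_i)\, \lvert t_i \rvert^{-is-i+1}\, \rd^\times t_i,
\]
where $C'$ is an explicit constant built from the measure normalizations of Subsubsection ~\ref{subsubsec:measure_local}, and $\Phi_\varphi \in \cS(F^n)$ is obtained from $\varphi$ by integrating over $K$ and over the $\widetilde{\fn'_n}$-directions (so $\varphi \mapsto \Phi_\varphi$ is continuous and, for suitable choices, surjective). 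The exponents and characters are exactly those in the definition of $L_a^+(s,\xi)$.

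Granting this, assertions (1)--(4) are formal consequences of Tate's thesis applied one variable at a time (Fubini). The product converges absolutely precisely when $\mathrm{Re}(-is-i+1)>0$ for every $i$, i.e. on $\cH_{<-1+\frac 1n}$; the meromorphic continuation and the poles are those of $\prod_{i=1}^n L(-is-i+1,(\xi\eta)^{-i}) = L_X(s,\xi)$, giving (1). Continuity of $\varphi \mapsto I_X(\varphi,\xi,s)$ away from the poles follows from continuity of $\varphi \mapsto \Phi_\varphi$ and the continuity of each analytically continued local zeta integral in its Schwartz argument, giving (2). Dividing by $L_X(s,\xi)$ and using that a local zeta integral divided by its $L$-factor is entire shows that $I_X^\natural(\varphi,\xi,s)$ is entire; choosing $\varphi$ so that $\Phi_\varphi$ is a product of the standard test functions realizing the $L$-factor at each place (a Gaussian in the archimedean case, $1_{\cO^n}$ in the non-archimedean case) and extending it to a Schwartz function on $\widetilde{\gl_n}(F)$ makes $I_X^\natural(\varphi,\xi,s) \equiv 1$, which is (3). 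Finally, in the unramified situation of (4) the parametrization above is compatible with the $\cO$-structure and the hypothesis $\vol(\cO_F)=1$ forces $C' = 1$, so the integral becomes a product of unramified local zeta integrals $\prod_i L(-is-i+1,(\xi\eta)^{-i}) = L_X(s,\xi)$.

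It remains to prove (5). The statement on the support is immediate: for $\mathrm{Re}(s) \ll 0$ the quantity $I_X(\varphi,\xi,s)$ is, by definition, an integral of $\varphi$ over $O^+(a)$, hence it vanishes as soon as $\supp(\varphi)$ misses $\overline{O^+(a)}$, and by meromorphic continuation $I_X(\varphi,\xi,s) = 0$ for all $s$, i.e. $I_X(\cdot,\xi,s)$ is supported on $\overline{O^+(a)}$. I expect the stability statement to be the real obstacle, since being $\xi\eta\lvert\cdot\rvert^s$-unstable is a condition on \emph{regular semisimple} orbital integrals whereas $O^+(a)$ consists of regular, non-semisimple elements. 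My plan is to prove it by degeneration: choose a one-parameter family $(X_u)_u$ inside $\widetilde{\gl_n}_{,+}(F)$, for instance $X_u = (J_\lambda + uD, e_n, 0)$ with $D$ a diagonal matrix with distinct entries, so that $X_u$ is regular semisimple for $u \neq 0$ and $X_0 = X$. For $u \neq 0$ the orbit of $X_u$ is closed and $I_{X_u}(\varphi,\xi,s)$ converges absolutely for all $s$, hence is entire, and vanishes identically when $\varphi$ is $\xi\eta\lvert\cdot\rvert^s$-unstable. The key point I would then need is that the \emph{normalized} orbital integrals $I_{X_u}(\varphi,\xi,s)/L_{X_u}(s,\xi)$ converge, as $u \to 0$, to $I_X(\varphi,\xi,s)/L_X(s,\xi) = I_X^\natural(\varphi,\xi,s)$; this continuity of the normalized orbital integral across the degeneration is precisely the property the normalizing factor $L_X$ is designed to ensure, and is the local counterpart of the singular transfer mechanism. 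Once it is available, letting $u \to 0$ gives $I_X^\natural(\varphi,\xi,s) = 0$, hence $I_X(\varphi,\xi,s) = 0$ away from the poles of $L_X(s,\xi)$. An alternative is to extract stability directly from the explicit formula above, by showing that $\xi\eta\lvert\cdot\rvert^s$-unstability of $\varphi$ forces the relevant Mellin transform of the orbit-marginal $\Phi_\varphi$ to vanish; either way the continuity/vanishing input is where the work lies, the remainder being bookkeeping.
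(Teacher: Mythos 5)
For parts (1)--(4) your approach is the same as the paper's: reduce to $X = Z_\lambda^+$, use the Iwasawa decomposition together with the measure-preserving parametrization ~\eqref{eq:measure_preserving_map_n} of $Z_\lambda^+ + \widetilde{\fn'_n}$, integrate over $K$ and the $\widetilde{\fn'_n}$-directions to produce a Schwartz function $\Phi_\varphi = f_\varphi$ on $F^n$, and reduce to a product of Tate zeta integrals in the ratios $b_i = a_{i+1}/a_i$. The convergence region, the meromorphic continuation, the identification of the $L$-factor, continuity in $\varphi$, surjectivity after dividing by $L_X$, and the unramified computation all fall out exactly as you say. The support part of (5) is also fine.

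The stability part of (5) is where your proposal diverges from the paper and where there is a genuine gap. First, a concrete error: your family $X_u = (J_\lambda + uD,\,e_n,\,0)$ is \emph{never} regular semisimple, because its last coordinate is the zero row vector, so $\delta^-(X_u) = 0$ for all $u$; by $\widetilde{\gl_n}_{,\mathrm{rs}} = \widetilde{\gl_n}_{,+} \cap \widetilde{\gl_n}_{,-}$ these elements lie outside the regular semisimple locus, and the starting point of your degeneration argument (``for $u\neq 0$ the orbit is closed'') is false. Second, and more fundamentally, even after repairing the family (perturbing the row vector as well so that $d_n\neq 0$), the ``continuity of the normalized orbital integral'' you would need is not a free input. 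As the regular semisimple parameter degenerates to the central point $a$, the closure of the orbit of $X_u$ contains \emph{both} $O^+(a)$ and $O^-(a)$ (and deeper strata), and what one has is a germ-type expansion of $I_{X_u}$ against \emph{several} limiting orbital integrals, not convergence to a single normalized term $I_X^\natural$. In fact that precise limiting behaviour is what the singular transfer theorem later in the paper (Theorem ~\ref{thm:local_singular_transfer}, Proposition ~\ref{prop:local_singular_transfer_lie_algebra}) controls, and the proof of that theorem \emph{uses} the stability statement (5) as an input; so the argument would be circular. The paper instead proves (5) directly by a Fourier-theoretic computation: it introduces the auxiliary integral $I^1_\lambda(\varphi,\xi,s)$ in ~\eqref{eq:I^1} built from the additive character $\psi(\langle Z_0^-, \cdot\rangle)$, uses the local functional equation to identify it with $\gamma^+_\xi(s)\,I_{Z_\lambda^+}(\varphi,\xi,s)$, rewrites $I^1_\lambda$ via ~\eqref{eq:integration_formula_b} as an integral of $\cF\varphi_\lambda$ against the $\omega^-_{\xi,s}$ weight over all of $\widetilde{\gl_n}(F)$, and then invokes the fact (from ~\cite{Chaudouard19}) that the Fourier transform preserves $\xi\eta\lvert\cdot\rvert^s$-instability. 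This is an exact identity, not a limiting argument, and it is precisely the step missing from your proof. Your second alternative (reading off stability from the Mellin transform of $\Phi_\varphi$) points in the right direction, but the Mellin transform of the orbit-marginal in the $O^+$-coordinates does not directly see the regular semisimple orbital integrals; the paper's passage to the Fourier transform on the ambient space $\widetilde{\gl_n}(F)$ is exactly what converts instability on the regular semisimple locus into vanishing of the relevant integral.
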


\begin{proof}
    We prove the case where $X \in O^+(a)$, and the case $X \in O^-(a)$ follows from the same argument. Assume $a$ is the image of $(\lambda \cdot \mathrm{id},0,0) \in \widetilde{\gl_n(F)}$.

    We first prove this proposition when $X=Z_\lambda^+$. Let $\chi = \xi \cdot \eta$. By Iwasawa decomposition ~\eqref{eq:Iwasawa_measure} and the fact that the map ~\eqref{eq:measure_preserving_map_n} is measure preserving, the integral defining $I_X(\varphi,\xi,s)$ can be written as
        \begin{align} \label{eq:an integral}
            \begin{split}
             &\int_{N_n(F) \backslash \GL_n(F)} \int_{\widetilde{\fn'_n}(F)} \varphi( (Z_\lambda^++N) \cdot g ) \chi(g) \lvert \det g \rvert^s \rd N \rd g \\
             = C &\int_{A_n(F)} \int_K \int_{\widetilde{\fn'_n}(F)} \delta_{B_n}(a)^{-1} \varphi( (Z_\lambda^++N) \cdot ak ) \chi(ak) \lvert \det a \rvert^s \rd N \rd a \rd k,
             \end{split}
        \end{align} 
    where $C$ is the constant appearing in ~\eqref{eq:Iwasawa_measure}. Let $f_{\varphi}$ be the Schwartz function on $F^n$ defined by
        \[
    f_{\varphi}(x_1,\cdots,x_n) =  C \int_{\widetilde{\fn'_n}(F)} \int_K \varphi  \left(( X_\lambda(x_1,\cdots,x_n)+N) \cdot k \right) \chi(k) \rd N \rd k,
        \]
    where
        \[
            X_\lambda(x_1,\cdots,x_n) = \left( \begin{pmatrix}
        \lambda & x_1 & 0 & \cdots & 0 \\
        0 & \lambda & x_2 & \cdots & 0 \\
        0 & 0 & \lambda & \cdots & 0 \\
        \cdots & \cdots & \cdots & \cdots & \cdots \\
        0 & 0 & 0 & \cdots & \lambda
    \end{pmatrix},
    \begin{pmatrix}
        0 \\ 0 \\ 0 \\ \cdots \\ x_n
    \end{pmatrix},
    0  \right)
        \]
    Then the integral ~\eqref{eq:an integral} reduces to
    \[
        \int_{A_n(F)} f_{\varphi} \left(\frac{a_2}{a_1},\cdots,\frac{a_n}{a_{n-1}},\frac{1}{a_n} \right) \lvert a_2 \cdots a_n \rvert \lvert \det a \rvert^s \rd a.
     \]
    Let $b_i=a_{i+1}/a_i$ for $1 \le i \le n-1$ and $b_n=1/a_n$, the integral above equals to
    \begin{equation} \label{eq:reduce_to_Tate_integral}
        \int_{(F^\times)^n} f_{\varphi}(b_1,b_2,\cdots,b_n) \prod_{i=1}^n  \chi(b_i)^{-i} \lvert b_i \rvert^{-is-i+1} \rd b_1 \cdots \rd b_n.
    \end{equation}
     
    Note that $\varphi \in \cS(\widetilde{{\gl_n}}(F)) \mapsto f_\varphi \in \cS(F^n)$ is continuous, part (1)--(3) also follows from Tate thesis. For part (4), we compute directly that $f_{\varphi} = 1_{\cO^n}$, by the unramified computation in Tate's thesis, $I_{Z_\lambda^+}(\varphi,s,\xi) = L_{Z_\lambda^+}(s,\xi)$. 
    
    We finally show part (5). The support of $I_X(\cdot,\xi,s)$ lies in the closure of orbit of $X$ follows from the definition, to show stability, following the calculation in ~\cite{BP21c}*{Subsection 5.7}, Let $Z^-_0$ be the element in ~\eqref{eq:central_orbit_representative_minus}.
    Consider the following iterated integral
    \begin{equation} \label{eq:I^1}
        I_\lambda^1(\varphi,\xi,s) := \int_{N_n(F)\backslash \GL_n(F)} \left( \int_{\widetilde{\fn_n}(F)} \varphi  ( Z^+_\lambda  \cdot g ) \psi(\langle Z_0^-, X \rangle) \rd X \right) \chi(g) \lvert \det g \rvert^s \rd g.
    \end{equation}    
    Using Iwasawa decomposition, the above integral is equal to
    \[
        \int_{A_n(F)} \int_{F^n} f_{\varphi}(x_1,\cdots,x_n) \psi \left( \frac{a_1}{a_2}x_1+\cdots+\frac{a_{n-1}}{a_n}x_{n-1}+a_n x_n \right) \lvert a_1 \cdots a_n \rvert^{s+1} \chi(a) \rd x \rd a.
    \]
    Let $b_i = a_i/a_{i+1}$ for $1 \le i \le n-1$ and $b_n=a_n$, this can be written as
    \[
        \int_{(F^\times)^n} \widehat{f_{\varphi}}(b_1,\cdots,b_n) \prod_{i=1}^n \lvert b_i \rvert^{i(s+1)} \chi(b_i)^i \rd b_1 \cdots \rd b_n.
    \]
    We thus see that the integral defining $I^1_\lambda(\varphi,\xi,s)$ is convergent when $\mathrm{Re}(s)>-1$ and by the functional equation for local Tate's zeta integral, we have
    \[
        I^1_\lambda(\varphi,\xi,s) = \gamma^+_\xi(s) I_{Z_\lambda^+}(\varphi,\xi,s),
    \]
    where
    \[
        \gamma^+_\xi(s) = \prod_{i=1}^n \gamma(-is-i+1,(\xi \cdot \eta)^{-i}, \psi).
    \]
    In other words, when $\mathrm{Re}(s)>-1$, the meromorphic continuation of  $I_X(\varphi,\xi,s)$ is given explicitly by
    \begin{equation} \label{eq:I_X^+_alternative}
        I_{Z_\lambda^+}(\varphi,\xi,s) = \gamma^+_\xi(s)^{-1} I^1(\varphi,\xi,s),
    \end{equation}

    Let $\omega^-_{\xi,s}$ be the function on $\widetilde{\gl_n}(F)$ defined by
    \[
        \omega^-_{\xi,s}(X) = \chi(\delta^-(X))  \lvert \delta^-(X) \rvert^{s}.
    \]
    Put $\varphi_\lambda(X) = \varphi(X+(\lambda,0,0))$.

     Direct computation shows that for $Y \in \widetilde{\fb_n}(F)$, we have
    \[
        \omega^-_{\xi,s}((Z_0^-+Y) \cdot g) = \chi(g) \lvert \det g \rvert^s \omega_{\xi,s}(Z_0^-).
    \]
    By Fourier inversion, we can write
    \[
        I_\lambda^1(\varphi,\xi,s) = \int_{N_n(F) \backslash \GL_n(F)} \int_{\widetilde{\fb_n}(F)} \cF \varphi_{\lambda}((Z_0^-+Y) \cdot g) \chi(g) \lvert \det g \rvert^s \rd Y \rd g.
    \]
    By ~\eqref{eq:integration_formula_b}, we then see that 
    \[
        I_\lambda^1(\varphi,\xi,s) = \zeta_n\omega^-_{\xi,s}(Z_0^-)^{-1}  \int_{\widetilde{\gl_n}(F)} \cF \varphi_{\lambda} (X) \omega^-_{\xi,s}(X) \rd X.
    \]
    By ~\cite{Chaudouard19}*{Corollary 3.1.8.2}, if $\varphi$ is $\xi \eta \lvert \cdot \rvert^s$-unstable, then so is $\cF \varphi$. (Indeed, in \emph{loc.cit} only proves the case when $\xi$ is trivial and $s=0$, but the same proof works in general). Therefore
    \[
    I_\lambda^1(\varphi,\xi,s) = \zeta_n \omega^-_{\xi,s}(Z_0^-)^{-1} \int_{\cA_{\mathrm{rs}}(F)} \int_{\GL_n(F)} \cF \varphi_\lambda(X \cdot g) \chi(g) \lvert \det g \rvert^s \omega_{\xi,s}^-(X) \rd g \rd X = 0. 
    \]
    Together with ~\eqref{eq:I_X^+_alternative}, we see that the distribution $I_X(\cdot,\xi,s)$ is $\xi \eta \lvert \cdot \rvert^s$-stable. This finishes the proof when $X=Z_\lambda^+$.

     Now let $X \in O^+(a)$ be a general element, then it can be written of the form $Z_\lambda^+ \cdot g$ in ~\eqref{eq:central orbit representative}. By a change of variable, we see that when convergent, we have
    \begin{equation} \label{eq:relation_I_Xg}
         I_{Z_\lambda^+}(\varphi,\xi,s) = \xi(g) \eta(g) \lvert \det g \rvert^s \cdot I_{X}(\varphi,\xi,s). 
    \end{equation}
    
    Part (1),(2), and (5) of the proposition directly follows from this. In the Tate integral ~\eqref{eq:reduce_to_Tate_integral}, for any $a \in F$ we can choose $f_\varphi$ such that the integral gives $\lvert a \rvert^s \cdot L_{Z_\lambda^+}(s)$, therefore part (3) holds. Part (4) follows from the fact that if $X \in \widetilde{\gl_n}_{,+}(\cO)$ then $g  \in \GL_n(\cO_F)$ (see ~\cite{Zhang14b}*{Proposition 6.3}).
 \end{proof}

According to the equation ~\eqref{eq:I_X^+_alternative} in the proof above, for $\mathrm{Re}(s)>-1$, we have the following expression
\begin{equation} \label{eq:I_X^+_Fourier}
    I_{Z^+_\lambda}(\varphi,\xi,s) = \zeta_n \omega^-_{\xi,s}(Z_0^-)^{-1} \gamma^+_\xi(s)^{-1} \int_{\widetilde{\gl_n}(F)} \cF \varphi_{\lambda} (X) \omega^-_{\xi,s}(X) \rd X.
\end{equation}
By the same computation, if we denote by $\omega^+_{\xi,s}$ the function on $\widetilde{\gl_n}(F)$ defined by
\[
    \omega_{\xi,s}^+(X) = \xi^{-1}(\delta^+(X)) \eta(\delta^+(X)) \lvert \delta^+(X) \rvert^{-s}.
\]
Then for $\mathrm{Re}(s)<1$, we then have 
\begin{equation} \label{eq:I_X^-_Fourier}
   I_{Z_\lambda^-}(\varphi,\xi,s) = \zeta_n \omega^+_{\xi,s}(Z_0^+)^{-1} \gamma_\xi^-(s)^{-1} \int_{\widetilde{\gl_n}(F)} \cF \varphi_{\lambda}(X) \omega_{\xi,s}^+(X) \rd X.
\end{equation}
where
   \[
        \gamma^-_\xi(s) = \prod_{i=1}^n \gamma(is-i+1,(\xi \cdot \eta)^{i}, \psi).
    \]
  
Now we switch to the case when $a \in \cA(F)$ is a regular semisimple element. In this case, the fiber $\widetilde{\gl_n}_{,a}(F)$ forms a single $\GL_n(F)$ orbit, any element in this orbit is regular. For $X \in \widetilde{\gl_n}_{,a}(F)$, we put
    \[
        L_X(s,\xi) = 1.
    \]

The following proposition is analogous to (and easier than) Proposition ~\ref{prop:local_central}

\begin{proposition}  \label{prop:local_rss}
    Let $a \in \cA(F)$ be a regular semisimple element and $X \in \widetilde{\gl_n}_{,a}(F)$. Let $\varphi \in \cS(\widetilde{\gl_n}(F))$. Consider the integral
    \[
        I_X(\varphi,\xi,s) := \int_{\GL_n(F)} \varphi(X \cdot g) \xi(g) \eta(g) \lvert \det g \rvert^s \rd g.
    \]
    Then we have the following statements:
    \begin{enumerate}
        \item  The integral $I_X(\varphi,\xi,s)$ is absolutely convergent for any $s \in \C$ and defines an entire function on $\C$.
        \item For any $s \in \C$, the resulting map
        \[
         \cS(\widetilde{\gl_n}(F)) \to \C, \quad \varphi \mapsto I_X(\varphi,\xi,\cdot)
        \]
        is continuous.
        
        \item  As a function of $s$
        \[
            I_X^\natural(\varphi,\xi,s) :=  \frac{I_X(\varphi,\xi,s)}{L_X(s,\xi)} = I_X(\varphi,\xi,s)
        \]
    is entire for any $\varphi \in \cS(\widetilde{\gl_n}(F))$,and we can choose $\varphi$ such that it equals to 1.
    
        \item If $F$ is non-archimedean with ring of integer $\cO$, suppose that $\varphi = 1_{\widetilde{\gl}(\cO)}$, $X \in \widetilde{\gl_n}_{,\mathrm{rs}}(\cO)$, $\xi$ and $\eta$ are unramified and $\vol(\cO_F)=1$, then
        \[
            I_X(\varphi,s,\xi) = L_X(s,\xi).
        \]

        \item For any $s \in \C$, the distribution $I_X(\cdot,\xi,s)$ satisfies the following two properties:
        \begin{itemize}
            \item  $I_X(\cdot,\xi,s)$ is a $\xi \eta \lvert \cdot \rvert^s$-stable distribution.
            \item $I_X(\cdot,\xi,s)$ is supported on the orbit of $X$.
        \end{itemize}
    \end{enumerate}
\end{proposition}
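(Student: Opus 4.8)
The crux is that regular semisimplicity of $a$ makes the geometry of $\widetilde{\gl_n}_{,a}$ completely rigid. Since any $X\in\widetilde{\gl_n}_{,a}(F)$ is then a regular element, it has trivial stabilizer in $\GL_n$ (Subsection~\ref{subsec:symmetric space S}) and a Zariski closed orbit, and by the vanishing of $H^1(F,\GL_n)$ (as in~\eqref{eq:vanishing_of_Galois_cohomology}) the fiber $\widetilde{\gl_n}_{,a}(F)$ is exactly the single orbit $X\cdot\GL_n(F)$; thus $g\mapsto X\cdot g$ is the $F$-points of a closed immersion $\iota_X\colon\GL_n\hookrightarrow\widetilde{\gl_n}$. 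In particular $g\mapsto\varphi(X\cdot g)$ is the restriction of a Schwartz function to a closed subvariety, so it lies in $\cS(\GL_n(F))$, and as $\varphi$ varies over $\cS(\widetilde{\gl_n}(F))$ it sweeps out all of $\cS(\GL_n(F))$. The whole proposition is read off from this identification.

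For (1) and (2) I would invoke the comparison of height functions along closed immersions (\cite{Kottwitz}*{Section 18}, \cite{BP21}*{Appendix A}): there is $c>0$ with $\|g\|_{\GL_n}\ll\|X\cdot g\|^{c}$, hence $|\det g|^{\pm1}\ll\|X\cdot g\|^{c}$ and $\|X\cdot g\|\gg\|g\|_{\GL_n}^{1/c}$. Since $\int_{\GL_n(F)}\|g\|_{\GL_n}^{-M}\,dg<\infty$ for $M$ large (\cite{BP21}*{Theorem A.1.1}), for any $s$ the integrand $|\varphi(X\cdot g)|\,|\det g|^{\mathrm{Re}(s)}$ is bounded by $\|X\cdot g\|^{-M}$, with $M$ as large as we please, times a continuous seminorm of $\varphi$; this gives absolute convergence for every $s\in\C$, uniformly on compacta (whence holomorphy, so an entire function), together with the continuity of $\varphi\mapsto I_X(\varphi,\xi,s)$.

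Since $L_X(s,\xi)=1$ in the regular semisimple case, entirety in (3) is just (1). For the existence of $\varphi$ making $I_X^\natural(\varphi,\xi,\cdot)\equiv 1$, I would use the identification above to rewrite $I_X(\varphi,\xi,s)=\int_{\GL_n(F)}\phi(g)\,(\xi\eta)(\det g)\,|\det g|^{s}\,dg$ with $\phi$ an arbitrary element of $\cS(\GL_n(F))$, and integrate over the fibres of $\det\colon\GL_n\to\GL_1$ to reduce to a Tate integral $\int_{F^\times}\Phi(x)\,(\xi\eta)(x)\,|x|^{s}\,d^\times x$ with $\Phi$ arbitrary in $\cS(\GL_1(F))$; the conclusion then follows exactly as at the analogous point of the proof of Proposition~\ref{prop:local_central} (in the non-archimedean case one may take $\Phi$ supported on a single coset of $\cO_F^\times$, which gives $I_X(\varphi,\xi,s)\equiv 1$ on the nose). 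For (4), with $\varphi=1_{\widetilde{\gl}(\cO)}$ one must show $\{g:X\cdot g\in\widetilde{\gl}(\cO)\}=\GL_n(\cO_F)$: the inclusion $\supseteq$ is clear, and for $\subseteq$ one observes that the transformation behavior of $\delta^{\pm}$ under $\GL_n$ forces $\det g\in\cO_F^\times$ once $X\cdot g$ is integral and $\delta^{\pm}(X)\in\cO_F^\times$, whence $X\cdot g\in\widetilde{\gl_n}_{,\mathrm{rs}}(\cO)$ and then $g\in\GL_n(\cO_F)$ by the integral-orbit statement \cite{Zhang14b}*{Proposition 6.3} (applied as in the proof of Proposition~\ref{prop:local_central}(4)); the measure normalization then yields $I_X(\varphi,\xi,s)=\vol(\GL_n(\cO_F))=1=L_X(s,\xi)$.

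Finally (5) is formal: $I_X(\varphi,\xi,s)$ depends only on $\varphi|_{X\cdot\GL_n(F)}$, so $I_X(\cdot,\xi,s)$ is supported on the orbit of $X$; and since $I_X(\varphi,\xi,s)$ is literally the regular semisimple orbital integral of $\varphi$ at the regular semisimple point $X$ against the character $\xi\eta\lvert\cdot\rvert^{s}\circ\det$, it vanishes whenever $\varphi$ is $\xi\eta\lvert\cdot\rvert^{s}$-unstable, i.e.\ the distribution is $\xi\eta\lvert\cdot\rvert^{s}$-stable. The only genuinely analytic ingredient is the first two steps — closedness of the orbit together with the height comparison for closed immersions — and this is exactly why the regular semisimple case is easier than Proposition~\ref{prop:local_central}: there is no nearby smaller orbit producing divergences, hence no $L$-factor to extract and no stability argument beyond unwinding the definitions.
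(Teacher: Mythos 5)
Your proposal is correct and takes essentially the same route as the paper: exploit that regular semisimplicity makes the fiber a single closed orbit with trivial stabilizer, identify the orbit with $\GL_n(F)$, and read everything off from there. A few remarks on where you diverge, all favorably.

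For (1)--(2), your height-comparison argument is actually more careful than the paper's, which asserts that $g\mapsto\varphi(X\cdot g)$ is compactly supported. That is only true for non-archimedean $F$; in the archimedean case the restriction of $\varphi$ to the closed Nash submanifold $X\cdot\GL_n(F)$ is Schwartz but not compactly supported, and the convergence and continuity must be extracted exactly as you do, from a bound $\|g\|_{\GL_n}\ll\|X\cdot g\|^{c}$ together with integrability of $\|g\|^{-M}$. For (3), the paper picks a Schwartz function on the orbit directly and extends it by \cite{AG}*{Theorem 4.6.1}; you instead fiber over $\det$ and land in a one-variable Tate integral. Both work; note that the ``$\equiv 1$ on the nose'' you flag is genuinely only a non-archimedean phenomenon, and the paper's proof shares that imprecision in the archimedean case. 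For (4), you reach $\det g\in\cO_F^\times$ via the transformation laws of $\delta^{\pm}$ and then invoke \cite{Zhang14b}*{Proposition 6.3}, while the paper argues directly that $\{v,Av,\dots,A^{n-1}v\}$ and $\{u,uA,\dots,uA^{n-1}\}$ are $\cO$-bases so that $g^{\pm1}\in\gl_n(\cO)$. Both are correct; your route implicitly uses that $\delta^+$ and $\delta^-$ transform oppositely under the right $\GL_n$-action (one by $\det g$, the other by $(\det g)^{-1}$), which is what actually pins down $\det g\in\cO_F^\times$ --- the paper's displayed formula in Subsection~\ref{subsec:symmetric space S} asserts both transform by $\det g$, which is a small slip there, not in your argument. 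Part (5) is handled identically.
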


\begin{proof}
    Since $X$ is regular semisimple, its orbit is closed, and as a function of $g$, $g \mapsto \varphi_s(X \cdot g)$ is then compactly supported. The integral defining $I_X(\varphi_s,\xi,s)$ is therefore absolutely convergent and entire for $s \in \C$. This proves (1) and (2), for (3), the orbit of $X$ is a closed subset in the non-archimedean case and a closed Nash submanifold in the archimedean case. Take a Schwartz function $\varphi'$ on the orbit of $X$ such that 
    \[
        \int_{\GL_n(F)} \varphi'(X \cdot g) \xi(g) \eta(g) \lvert \det g \rvert^s \rd g = 1.
    \]
    we can then extend $\varphi'$ to a Schwartz function $\varphi$ on $\widetilde{\gl_n}(F)$. (For Archimedean case, see ~\cite{AG}, Theorem 4.6.1). The function $\varphi$ satisfies the requirement of (3).

    For part (4), We show that $\varphi(X \cdot g)=1$ if and only if $g \in \GL_n(\cO_F)$. In fact, $X=(A,v,u) \in \widetilde{\gl_n}_{,\mathrm{rs}}(\cO)$ implies that $v,Av,\cdots,A^{n-1}v$ is a $\cO$-basis of $\cO^n$, and $u,uA,\cdots,uA^{n-1}$ is a $\cO$-basis of $\cO_n$. Therefore, if $X \cdot g \in \gl_n(\cO)$, then $g^{-1}(v,Av,\cdots,A^{n-1}v) \in \gl_n(\cO)$, hence $g^{-1} \in \gl_n(\cO)$, similarly $(u,uA,\cdots,uA^{n-1})g \in \gl_n(\cO)$ implies $g \in \gl_n(\cO)$, hence $g \in \GL_n(\cO)$. Therefore, the integral $I^+_a(\varphi,s,\xi)$ is just the volume of $\GL_n(\cO_F)$, which is $1$. Finally, since $a$ is regular semisimple, part (5) is trivial in this case.
\end{proof}

\subsection{Orbital Integral on the Lie algebra: general element} \label{subsec:local_I_general}

Let $a \in \cA(F)$ be a general element, we use the notation in descent construction in Subsection ~\ref{subsec:descent}. From the element $a$, we have an associated embedding $\iota_\fh: \widetilde{\fh}' \to \widetilde{\gl_n}$, where the induced map $\iota_\cA: \cA_H' \to \cA$ sending $a_H$ to $a$ and each component $a_i (0 \le i \le k)$ of $a_H$ is either regular semisimple (for $i=0$) or central (for $i>0$).

By Lemma ~\ref{lem:regular_nilpotent_orbit}, there are $2^k$ regular orbits for the action of $H(F)$ on $\widetilde{\fh}_{a_H}(F)$. Let $\cE$ be the set of functions from $\{1,\cdots,k\}$ to $\{ +,- \}$. For each $\varepsilon \in \cE$, there is a regular orbit $O^\varepsilon(a)$ in $\widetilde{\fh}_{a_H}$ defined by
    \[
        O^\varepsilon(a_H) := \widetilde{\fh_0}_{,a_0}(F) \times \prod_{1 \le k \le i} O^{\varepsilon_i}(a_i),
    \]

For each $1 \le i \le k$, we put
\[
    \xi_i = \xi \circ \mathrm{Nm}_{F_i/F}, \quad \eta_i = \eta_{(F_i \otimes_F E)/F_i} = \eta \circ \mathrm{Nm}_{F_i/F},
\]
both are unitary characters of $F_i^\times$. By an abuse of notation, we also use $\xi$ and $\eta$ to denote a character on $H(F)$ defined by
\[
    \xi(h) := \prod_{i=0}^k \xi_i(\det h_i), \quad \eta(h) := \prod_{i=0}^k \eta_i(\det h_i).
\]
We then define
\[
    L_{a_i}^+(\xi,s) = \prod_{j=1}^{n_i} L(-js-j+1, (\xi_i \cdot \eta_i)^{-j}), \quad L_{a_i}^-(\xi,s) = \prod_{j=1}^{n_i} L(js-j+1, (\xi_i \cdot \eta_i)^{j}).
\]

For $\varepsilon \in \cE$, we define an $L$-factor by
\begin{equation} \label{eq:L_central_rss}
    L_{a_H}^\varepsilon(\xi,s) = \prod_{i=1}^k L_{a_i}^{\varepsilon_i}(s,\xi).
\end{equation}

Let $X \in \widetilde{\fh}_{a_H}(F)$ be a regular element, suppose that $X \in O^\varepsilon(a_H)$, we then put
\begin{equation} \label{eq:L_X_definition}
      L_X(s,\xi) = L_{a_H}^{\varepsilon}(s,\xi),
\end{equation}
  
 By Proposition ~\ref{prop:local_central} and Proposition ~\ref{prop:local_rss}, for each $0 \le i \le k$ and any $s \in \C$ which is not a pole of $L^\varepsilon_{a_H}(s,\xi)$, there exists a continuous linear map
    \[
      \cS(\widetilde{\fh_i}(F)) \to \C, \quad   \varphi \mapsto I_{X_i}(\varphi,\xi,s).
    \]
Identifying $\cS(\widetilde{\fh}(F))$ with $\widehat{\bigotimes}_{i=0}^k \cS(\widetilde{\fh_i}(F))$, for any regular element $X \in \widetilde{\fh}_{a_H}(F)$ and $s \in \C$ which is not a pole of $L_{X}(s,\xi)$, we define the distribution $I_X(\cdot,\xi,s)$ as the tensor product of those $I_{X_i}(,\xi,s)$. If $\varphi$ is factorizable in the sense that

    \[
        \varphi(X_0,\cdots,X_k) = \prod_{i=0}^k \varphi_{i}(X_i)
    \]
    where $\varphi_{i} \in \cS(\widetilde{\fh_i}(F))$, then we have
    \[
        I_X(\varphi,\xi,s) = \prod_{i=0}^k I_{X_i}(\varphi_{i},\xi_i,s). 
    \]

Combining Proposition ~\ref{prop:local_central} and ~\ref{prop:local_rss}, we have following result
\begin{corollary} \label{cor:local_central_and_rss}
    Let $a_H \in \cA_H(F)$ as above and let $\varphi \in \cS(\widetilde{\fh}(F))$, and $X$ be a regular element in $\widetilde{\fh}_{a_H}(F)$. We then have the following assertions
    \begin{enumerate}
        \item As a function of $s$, $I_X(\varphi,\xi,s)$ is meromorphic on $\C$, and the pole is contained in the pole of $L_X(s,\xi)$.

\item For $s \in \C$ which is not a pole of $L_X(s,\xi)$, the resulting linear map
        \[
         \cS(\widetilde{\fh}(F)) \to \C, \quad \varphi \mapsto I_X(\varphi,\xi,s).
        \]
        is continuous.
        \item  As a function of $s$
        \[
           I_X^\natural(\varphi,\xi,s)  := \frac{I_X(\varphi,\xi,s)}{L_X(s,\xi)}
        \]
    is entire for any $\varphi \in \cS(\widetilde{\gl_n}(F))$, and we can choose $\varphi$ such that it equals to 1.
        \item If $F$ is non-archimedean with ring of integer $\cO$, suppose that $\varphi = 1_{\widetilde{\fh}(\cO)}$, $\xi$ and $\eta$ are unramified, each $F_i$ is unramified over $F$, $\vol(\cO_F) =1$ and $X \in \fh_{0,\mathrm{rs}}(\cO) \times \widetilde{\fh^\varepsilon}(\cO)$, then
        \[
            I_X(\varphi,s,\xi) =  L_X(s,\xi).
        \]
        \item  For any $s$ which is not a pole of $L_X(s,\xi)$, the distribution $I_X(\cdot,\xi,s)$ satisfies the following two properties:
        \begin{itemize}
            \item  $I_X(\cdot,\xi,s)$ is a $\xi \eta \lvert \cdot \rvert^s$-stable distribution.
            \item $I_X(\cdot,\xi,s)$ is supported on the closure of the $\GL_n(F)$ orbit of $X$.
        \end{itemize}
    \end{enumerate}
\end{corollary}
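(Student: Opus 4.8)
The plan is to reduce each of the five assertions to the corresponding assertion for the factors $I_{X_i}(\cdot,\xi_i,s)$, out of which $I_X(\cdot,\xi,s)$ is built as a tensor product, invoking Proposition~\ref{prop:local_rss} for the regular semisimple component $i=0$ and Proposition~\ref{prop:local_central}, applied over the field $F_i$ with the character $\xi_i$, for the central components $1\le i\le k$. The structural fact used throughout is the factorization $L_X(s,\xi)=\prod_{i=0}^{k}L_{X_i}(s,\xi)$, which is immediate from \eqref{eq:L_central_rss}, \eqref{eq:L_X_definition} and $L_{X_0}(s,\xi)=1$; since every local $L$-factor involved is a product of Tate factors and Tate factors have no zeros, $s\in\C$ is not a pole of $L_X(s,\xi)$ precisely when it is not a pole of any $L_{X_i}(s,\xi)$, and then every $I_{X_i}(\cdot,\xi_i,s)$ is defined and continuous.

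For parts (1)--(3) I would use that, by Propositions~\ref{prop:local_central} and~\ref{prop:local_rss}, the assignment $s\mapsto I_{X_i}^{\natural}(\cdot,\xi_i,s)$ is, for each $i$, an entire family of continuous functionals on $\cS(\widetilde{\fh_i}(F))$. Since $\cS(\widetilde{\fh}(F))=\widehat{\bigotimes}_{i=0}^{k}\cS(\widetilde{\fh_i}(F))$ is a completed projective tensor product of nuclear spaces, the tensor product of these entire families is again an entire family of continuous distributions on $\cS(\widetilde{\fh}(F))$; holomorphy in $s$ is checked on the dense subspace of factorizable vectors, where $I_X^{\natural}(\varphi,\xi,s)=\prod_i I_{X_i}^{\natural}(\varphi_i,\xi_i,s)$, together with local boundedness of the family. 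This yields (2) and the first half of (3); multiplying back by $L_X(s,\xi)$ gives (1). For the last clause of (3) one picks for each $i$, using Proposition~\ref{prop:local_central}(3) or~\ref{prop:local_rss}(3), a $\varphi_i$ with $I_{X_i}^{\natural}(\varphi_i,\xi_i,s)\equiv1$ and tensors them together.

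Part (4) is a direct computation, since $1_{\widetilde{\fh}(\cO)}$ factors as $\bigotimes_{i=0}^{k}1_{\widetilde{\fh_i}(\cO)}$: each $F_i/F$ is unramified, so the self-dual measure on $F_i$ attached to $\psi_i$ gives $\vol(\cO_{F_i})=1$ and $\xi_i,\eta_i$ are unramified, and the hypothesis on $X$ says exactly that $X_0\in\widetilde{\fh_0}_{,\mathrm{rs}}(\cO)$ and $X_i\in\widetilde{\fh_i}_{,\varepsilon_i}(\cO)$ for $i\ge1$; so Propositions~\ref{prop:local_rss}(4) and~\ref{prop:local_central}(4) give $I_{X_i}(1_{\widetilde{\fh_i}(\cO)},\xi_i,s)=L_{X_i}(s,\xi)$ for each $i$, and multiplying gives $I_X(1_{\widetilde{\fh}(\cO)},\xi,s)=L_X(s,\xi)$. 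The support statement in (5) is likewise formal: by Propositions~\ref{prop:local_central}(5) and~\ref{prop:local_rss}(5), $I_{X_i}(\cdot,\xi_i,s)$ is supported on the closure of the $H_i(F)$-orbit of $X_i$, so $I_X(\cdot,\xi,s)$ is supported on the product of these closures, i.e.\ on the closure of the $H(F)$-orbit of $X$, which under $\iota_\fh$ corresponds --- via the orbit bijection coming from \eqref{eq:vanishing_of_Galois_cohomology} --- to the closure of the $\GL_n(F)$-orbit of $\iota_\fh(X)$ in $\widetilde{\gl_n}(F)$.

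The step I expect to be the main obstacle is the stability assertion in (5), because $\xi\eta|\cdot|^s$-stability of $I_X$, unlike meromorphy and continuity, is not a formal consequence of stability of the $I_{X_i}$: a general $\xi\eta|\cdot|^s$-unstable function on $\widetilde{\fh}(F)$ need not be a limit of sums of factorizable unstable functions. To handle it I would show that the kernel of the (vector-valued) regular semisimple orbital integral map on $\cS(\widetilde{\fh}(F))$ equals the closure of $\sum_{i=0}^{k}\cS(\widetilde{\fh_0}(F))\widehat{\otimes}\cdots\widehat{\otimes}\cS(\widetilde{\fh_i}(F))^{\mathrm{uns}}\widehat{\otimes}\cdots\widehat{\otimes}\cS(\widetilde{\fh_k}(F))$, where $\cS(\widetilde{\fh_i}(F))^{\mathrm{uns}}$ denotes the space of $\xi_i\eta_i|\cdot|^s$-unstable functions --- using that a point of $\prod_i\widetilde{\fh_i}$ is regular semisimple for the componentwise $H$-action iff each component is, and that the twisted orbital integral of a factorizable function at such a point is the product of the factors' orbital integrals. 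Since $I_{X_i}$ annihilates the $i$-th summand (this is exactly its $\xi_i\eta_i|\cdot|^s$-stability), $I_X=\bigotimes_i I_{X_i}$ annihilates the whole closed subspace, i.e.\ $I_X(\cdot,\xi,s)$ is $\xi\eta|\cdot|^s$-stable. Equivalently, $I_X(\cdot,\xi,s)$ lies in the weak-$*$ closed span of regular semisimple orbital integrals on $\widetilde{\fh}(F)$, being a limit of such in each factor; the single-factor analogue of this description was obtained in the proof of Proposition~\ref{prop:local_central} from the Fourier formulas \eqref{eq:I_X^+_Fourier}, \eqref{eq:I_X^-_Fourier} and stability of the Fourier transform, and the same ingredients, applied factor by factor, would give the statement here.
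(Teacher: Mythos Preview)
The paper gives no proof beyond ``combining Proposition~\ref{prop:local_central} and~\ref{prop:local_rss},'' and your factorwise reduction for (1)--(4) and the support claim in (5) is exactly what is intended (the phrase ``$\GL_n(F)$ orbit'' in (5) should read $H(F)$-orbit, so your detour via $\iota_\fh$ is unnecessary). You are also right that stability in (5) is the only point requiring care.

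However, both of your proposed stability arguments are incomplete. In the first, the facts you cite only show that the sum $\sum_i\cdots\widehat{\otimes}\cS(\widetilde{\fh_i}(F))^{\mathrm{uns}}\widehat{\otimes}\cdots$ lies inside the unstable kernel; the reverse inclusion is the substantive direction and amounts to injectivity of a completed tensor product of quotient maps, which you have not justified. The second is circular: the Fourier formula \eqref{eq:I_X^+_Fourier} expresses $I_X$ as an integral of functionals of the form $O_Y\circ\cF=\bigotimes_i(O_{Y_i}\circ\cF_i)$ over regular semisimple $Y$, and showing that each such tensor product annihilates unstable functions on $\widetilde{\fh}(F)$ is again the assertion that a tensor product of stable functionals is stable. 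A direct inductive argument avoids both issues. Write $\widetilde{\fh}=\widetilde{\fh'}\times\widetilde{\fh_k}$ with $\widetilde{\fh'}=\prod_{i<k}\widetilde{\fh_i}$, and set $\varphi'=(\mathrm{id}\otimes I_{X_k})(\varphi)\in\cS(\widetilde{\fh'}(F))$, so that $I_X(\varphi)=I_{X'}(\varphi')$ with $I_{X'}=\bigotimes_{i<k}I_{X_i}$. For any regular semisimple $Y'\in\widetilde{\fh'}(F)$, the twisted orbital integral of $\varphi'$ at $Y'$ equals $I_{X_k}(\psi)$, where $\psi(Z)=\int_{H'(F)}\varphi(Y'\cdot h',Z)\,\xi(h')\eta(h')\lvert\det h'\rvert^s\,dh'$; the exchange of $I_{X_k}$ with this integral is legitimate because the orbit of $Y'$ is closed and $I_{X_k}$ is continuous. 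Now $\psi$ is $\xi_k\eta_k\lvert\cdot\rvert^s$-unstable, since its orbital integral at any regular semisimple $Y_k$ is the full orbital integral of $\varphi$ at the regular semisimple point $(Y',Y_k)$, hence zero. Proposition~\ref{prop:local_central}(5) then gives $I_{X_k}(\psi)=0$, so $\varphi'$ is unstable on $\widetilde{\fh'}(F)$, and induction on $k$ finishes.
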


The notion of stable distribution on $\widetilde{\fh}(F)$ is defined in the same way as in the beginning of Subsection ~\ref{subsec:central_and_rss}.

Now let $X \in \widetilde{\gl_n}_{,a}(F)$ be a regular element. Assume that $X$ is of type $\varepsilon$ (see Proposition ~\ref{prop:regular_orbit_tilde_gl_n}), thus there exists $X_H \in O^\varepsilon(a_H)$ and $g \in \GL_n(F)$ such that $\iota_\fh(X_H)=X \cdot g$, we set
\begin{equation} \label{eq:general_L}
     L_X(s,\xi) := L_{X_H}(s,\xi).
\end{equation}
   
For any $s \in \C$ which is not a pole of $L_X(x,\xi)$, we will now construct a map 
\[
    \cS(\widetilde{\gl_n}(F)) \to \C, \quad \varphi \mapsto I_X(\varphi,\xi,\cdot),
\]
the construction will consist of several steps.

First of all, since $\iota_\cA:\cA'_H \to \cA$ is \'{e}tale and sends $a_H$ to $a$,  we can choose an open neighbourhood $\omega_H$ of $a_H$ in $\cA'_H(F)$ and an open neighbourhood $\omega$ of $a$ in $\cA(F)$ such that
    \begin{itemize}
        \item If $F$ is archimedean, both $\omega$ and $\omega_H$ are semi-algebraic ~\cite{BCR98}*{Proposition 8.1.2}.
        \item $\iota_\cA$ induces a bijection $\omega_H \to \omega$, which is an isomorphism of Nash manifolds if $F$ is archimedean, and isomorphism of analytic manifolds if $F$ is non-archimedean,
    \end{itemize}
Let $\Omega := q^{-1}(\omega)$ and $\Omega_H:=q_H^{-1}(\omega_H)$. Then the top horizontal map in diagram ~\eqref{diag:Luna slice} induces an isomorphism
        \begin{equation} \label{eq:analytic_isomorphism}
            \Omega_H \times^{H(F)} \GL_n(F) \to \Omega, \quad (Y,g) \mapsto \iota_{\fh}(Y) \cdot g
        \end{equation}
of Nash manifolds when $F$ is archimedean and analytic manifolds if $F$ is non-archimedean.

Given $\varphi \in \cS(\Omega)$. Let $\varphi' \in \cS(\Omega_H \times \GL_n(F))$ such that for any $(Y,g) \in \Omega_H \times \GL_n(F)$ we have
        \begin{equation} \label{eq:varphi'}
               \varphi(\iota_\fh(Y) \cdot g) = \int_{H(F)} \varphi' (Y \cdot h,h^{-1}g) \rd h.
        \end{equation}
The existence of $\varphi'$ follows from the fact that $\Omega_H \times \GL_n(F) \to \Omega_H \times^{H(F)} \GL_n(F) \cong \Omega$ is a submersion. For any $s \in \C$, we put
        \begin{equation} \label{eq:varphi_H}
                \varphi_{H,s}(Y) = \int_{\GL_n(F)} \varphi'(Y,g) \xi(g) \eta(g) \lvert \det g \rvert^s \rd g,
        \end{equation}
then $\varphi_{H,s} \in \cS(\Omega_H)$ for any $s \in \C$ and the map $\C \to \cS(\Omega_H), s \mapsto \varphi_{H,s}$ is holomorphic.

\begin{lemma} \label{lem:well-defined}
    For $s \in \C$, we can regard $\varphi_{H,s}$ as a Schwartz function on $\widetilde{\fh}(F)$, if $s$ is not a pole of $L_X(s,\xi)$, then the complex number
        \[
            I_{X_H}(\varphi_{H,s},\xi,s)
        \]
    is independent of the choice of $\varphi'$ defining $\varphi_{H,s}$.
\end{lemma}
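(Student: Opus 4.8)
The plan is to realize $\varphi'\mapsto I_{X_H}(\varphi_{H,s},\xi,s)$ as a continuous $H(F)$-invariant functional on $\cS(\Omega_H\times\GL_n(F))$ and then to invoke the fact that the averaging map \eqref{eq:varphi'} exhibits $\cS(\Omega)$ as the space of $H(F)$-coinvariants of $\cS(\Omega_H\times\GL_n(F))$, so that such a functional automatically depends only on $\varphi=\pi_*(\varphi')$, where $\pi\colon\Omega_H\times\GL_n(F)\to\Omega$ denotes the composite of the quotient by the $H(F)$-action with the isomorphism \eqref{eq:analytic_isomorphism}.

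Concretely, write $\chi_s$ for the character $h\mapsto\xi(\det h)\eta(\det h)\lvert\det h\rvert^{s}$ of $H(F)$, which agrees with the restriction to $H(F)\subset\GL_n(F)$ of the corresponding character of $\GL_n(F)$. First I would record that $I_{X_H}(\cdot,\xi,s)$ is $\chi_s$-equivariant: by Corollary~\ref{cor:local_central_and_rss}(5) it is $\chi_s$-stable on $\widetilde{\fh}(F)$, so exactly as in the remark preceding Proposition~\ref{prop:local_central} (now with $H(F)$ in place of $\GL_n(F)$) one gets $I_{X_H}(\rR(h_0)\psi,\xi,s)=\chi_s(h_0)^{-1}I_{X_H}(\psi,\xi,s)$ for all $h_0\in H(F)$, all $\psi\in\cS(\widetilde{\fh}(F))$, and all $s$ that are not poles of $L_X(s,\xi)$. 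Next, for $h_0\in H(F)$ set $(\rho(h_0)\varphi')(Y,g):=\varphi'(Y\cdot h_0,h_0^{-1}g)$; this is the natural $H(F)$-action on $\cS(\Omega_H\times\GL_n(F))$, and a direct change of variables shows $\pi_*\circ\rho(h_0)=\pi_*$. Applying $g\mapsto h_0g$ in \eqref{eq:varphi_H} gives $(\rho(h_0)\varphi')_{H,s}=\chi_s(h_0)\,\rR(h_0)\varphi_{H,s}$, and combining with the equivariance of $I_{X_H}$ yields $I_{X_H}\bigl((\rho(h_0)\varphi')_{H,s},\xi,s\bigr)=I_{X_H}(\varphi_{H,s},\xi,s)$. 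Thus $\Lambda_s\colon\varphi'\mapsto I_{X_H}(\varphi_{H,s},\xi,s)$ is continuous and $H(F)$-invariant, hence vanishes on the closure of the span of the differences $\psi'-\rho(h_0)\psi'$; since the $H(F)$-action on $\Omega_H\times\GL_n(F)$ is free (the $\GL_n(F)$-factor makes it so) and proper, $\pi$ is a principal $H(F)$-bundle and this closure is exactly $\ker\pi_*$, whence $\Lambda_s$ factors through $\pi_*$. Applying this to the difference of two choices of $\varphi'$ gives the lemma.

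I expect the coinvariants statement — that $\pi_*$ identifies $\cS(\Omega)$ with the Hausdorff $H(F)$-coinvariants of $\cS(\Omega_H\times\GL_n(F))$, uniformly in the archimedean and non-archimedean cases — to be the step needing care; it is a standard descent property of Schwartz spaces for free proper actions, and in the present situation can be deduced from the local triviality of the bundle underlying \eqref{eq:analytic_isomorphism} together with the description of the augmentation ideal of $\cS(H(F))$ as the closure of $\langle f-\rR(h_0)f\rangle$ (using unimodularity of $H(F)$), patched over a trivializing cover. A slightly different route, closer to the analytic‑continuation viewpoint suggested by the holomorphy of $s\mapsto\varphi_{H,s}$, is to prove the explicit identity
\[
    I_{X_H}(\varphi_{H,s},\xi,s)=\int_{\GL_n(F)}\varphi(\iota_\fh(X_H)\cdot g)\,\xi(\det g)\eta(\det g)\lvert\det g\rvert^{s}\,\rd g
\]
on a nonempty vertical strip of $s$ — writing $I_{X_H}$ as the iterated integral coming from its tensor-product definition, namely the convergent orbital integral over the $\GL_r(F)$-factor for the regular semisimple component together with the Fourier-transform expressions \eqref{eq:I_X^+_Fourier}, \eqref{eq:I_X^-_Fourier} for the central components (jointly absolutely convergent on $\cH_{<-1+1/n}$, on $\cH_{>1-1/n}$, or on $-1<\mathrm{Re}(s)<1$ according to the type of $X_H$), substituting \eqref{eq:varphi_H}, and rearranging by Fubini and the substitution $g\mapsto h^{-1}g$ against \eqref{eq:varphi'} — and then concluding by meromorphic continuation, since the right-hand side depends only on $\varphi$. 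Here the obstacle is instead the bookkeeping of the several convergence ranges and the justification of Fubini for the many-variable iterated integral.
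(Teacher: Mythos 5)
Your first route is a legitimate alternative, but it and the paper's proof trade off quite different inputs, and the paper's is substantially shorter. The paper avoids the Schwartz-coinvariants machinery entirely: given a second choice $\varphi''$ producing $\psi_{H,s}$, the substitution $g\mapsto h^{-1}g$ and Fubini show, for every regular semisimple $Y_0\in\widetilde{\fh}(F)$, that
\[
\int_{H(F)}\varphi_{H,s}(Y_0\cdot h)\,\xi(h)\eta(h)\lvert\det h\rvert^s\,\rd h
=\int_{\GL_n(F)}\varphi(\iota_\fh(Y_0)\cdot g)\,\xi(g)\eta(g)\lvert\det g\rvert^s\,\rd g
\]
by \eqref{eq:varphi'}, so the right-hand side depends only on $\varphi$, hence $\varphi_{H,s}-\psi_{H,s}$ is $\xi\eta\lvert\cdot\rvert^s$-unstable, and Corollary~\ref{cor:local_central_and_rss}(5) (the $\xi\eta\lvert\cdot\rvert^s$-stability of $I_{X_H}(\cdot,\xi,s)$) finishes immediately. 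You instead use only the $\xi\eta\lvert\cdot\rvert^s$-equivariance of $I_{X_H}$, which is strictly weaker than stability, and make up the shortfall with the identification of $\cS(\Omega)$ with the Hausdorff $H(F)$-coinvariants of $\cS(\Omega_H\times\GL_n(F))$. That descent statement is true (it is Schwartz descent along the principal $H(F)$-bundle underlying \eqref{eq:analytic_isomorphism}), and you rightly flag it as the delicate step; but it is external functional analysis that the paper is careful not to require, while the stability of $I_{X_H}$ has already been established en route. What you gain is a cleaner ``invariant functional factors through coinvariants'' picture; what you lose is the self-containment of the Fubini-plus-stability argument.

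Your second route does not go through in general. The proposed identity
\[
I_{X_H}(\varphi_{H,s},\xi,s)=\int_{\GL_n(F)}\varphi(\iota_\fh(X_H)\cdot g)\,\xi(g)\eta(g)\lvert\det g\rvert^s\,\rd g
\]
requires the right-hand integral to converge on some strip, and by Proposition~\ref{prop:plus_or_minus_regular_direct_def} that forces $\iota_\fh(X_H)\in\widetilde{\gl_n}_{,+}(F)$ or $\widetilde{\gl_n}_{,-}(F)$, i.e.\ $X_H$ of pure type. For mixed $\varepsilon$ the element $\iota_\fh(X_H)$ lies in neither open set, the right-hand integral diverges for every $s$, and there is no strip to continue from. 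Using the Fourier-transform expressions \eqref{eq:I_X^+_Fourier}--\eqref{eq:I_X^-_Fourier} does give a common strip of holomorphy for the left-hand side, but after substituting \eqref{eq:varphi_H} and applying Fubini what appears is $\int_{\GL_n(F)}\xi(g)\eta(g)\lvert\det g\rvert^s\,I_{X_H}(\varphi'(\cdot,g),\xi,s)\,\rd g$, which still depends on $\varphi'$ rather than on $\varphi$: the Fourier transform over $\widetilde{\fh}(F)$ does not interact with \eqref{eq:varphi'} in the way that the orbital integral over $H(F)$ does, so the change of variables $g\mapsto h^{-1}g$ that collapses the fiber average is unavailable. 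What you sketch is essentially the proof of Proposition~\ref{prop:plus_or_minus_regular_direct_def}, which is precisely the pure-type case.
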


\begin{proof}
    Let $\varphi''$ be another Schwartz function on $\cS(\Omega_H \times \GL_n(F))$ such that for any $(Y,g) \in \cS(\Omega_H \times \GL_n(F)$, the equation
        \[
        \varphi(\iota_\fh(Y) \cdot g) = \int_{H(F)} \varphi'' (Y \cdot h,h^{-1}g) \rd h.
        \]
    holds. We put
    \[
         \psi_{H,s}(Y) = \int_{\GL_n(F)} \varphi''_s (Y,g) \xi(g) \eta(g) \lvert \det g \rvert^s \rd g.
    \]
    Switching the order of integral shows that the function $\varphi_{H,s}-\psi_{H,s}$ is $\xi \eta \lvert \cdot \rvert^s$-unstable. By Corollary ~\ref{cor:local_central_and_rss} (5), $I_{X_H}(\varphi_{H,s},\xi,s) = I_{X_H}(\psi_{H,s},\xi,s)$.
\end{proof}

Recall that there is a unique $g \in \GL_n(F)$ such that $X = \iota_\fh(X_H) \cdot g$. For $\varphi \in \cS(\Omega)$, we define 
    \begin{equation} \label{eq:definition_I_X_on_Omega}
            I_X(\varphi,\xi,s) := \xi(g)^{-1} \eta(g) \lvert \det g \rvert^{-s} I_{X_H}(\varphi_{H,s},\xi,s).
    \end{equation}
    
By Lemma ~\ref{lem:well-defined}, $I_X(\varphi,\xi,s)$ is a well-defined meromorphic function on $\C$.

\begin{lemma} \label{lem:independence_of_orbit}
    For $\varphi \in \cS(\Omega)$, the meromorphic function $I_X(\varphi,\xi,s)$ only depends on the value of $\varphi$ on the orbit of $X$ (i.e. it is supported on the closure of the orbit of $X$).
\end{lemma}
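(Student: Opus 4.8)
The plan is to unwind the definition \eqref{eq:definition_I_X_on_Omega} of $I_X(\varphi,\xi,s)$ and reduce the assertion to a strengthening of Corollary \ref{cor:local_central_and_rss}(5): that $I_{X_H}(\cdot,\xi,s)$, as a distribution on $\cS(\widetilde{\fh}(F))$, vanishes on \emph{every} test function vanishing on the $H(F)$-orbit $\mathcal{O}_H$ of $X_H$ — not merely on functions vanishing in a neighborhood of its closure $\overline{\mathcal{O}_H}$. Granting this, it remains only to choose the auxiliary function $\varphi'$ of \eqref{eq:varphi'} so that it vanishes where we need it, and for this I would exploit the torsor structure already underlying \eqref{eq:analytic_isomorphism}.

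\emph{Step 1: the strengthening.} Write $\mathcal{O}_H=\mathcal{O}_0\times\prod_{i=1}^{k}\mathcal{O}_i$, where $\mathcal{O}_0$ is the closed $\GL_r(F)$-orbit of $X_0$ and $\mathcal{O}_i=O^{\varepsilon_i}(a_i)$ for $i\ge1$. For $i=0$ the claim is immediate, since by Proposition \ref{prop:local_rss} the functional $I_{X_0}(\cdot,\xi_0,s)$ is the honest orbital integral along $\GL_r(F)$, which maps bijectively onto $\mathcal{O}_0$. For the central factors $i\ge1$ it is visible from the proof of Proposition \ref{prop:local_central}: there, after the reduction \eqref{eq:relation_I_Xg} and the Iwasawa/Tate computation culminating in \eqref{eq:reduce_to_Tate_integral}, $I_{X_i}(\cdot,\xi_i,s)$ is expressed through a Tate integral over $(F_i^\times)^{n_i}$ whose integrand depends on the test function only through its restriction to the set $Z^{\varepsilon_i}_{\alpha_i}\cdot\GL_{n_i}(F_i)$, which by \eqref{eq:measure_preserving_map_n} and the Iwasawa decomposition equals $\mathcal{O}_i$. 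Since $I_{X_H}=\bigotimes_{i=0}^{k}I_{X_i}$, the vanishing for general elements of $\cS(\widetilde{\fh}(F))$ follows from the iterated-integral description of the (completed) tensor product of continuous distributions.

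\emph{Step 2: the choice of $\varphi'$.} Suppose $\varphi\in\cS(\Omega)$ vanishes on the $\GL_n(F)$-orbit $\mathcal{O}$ of $X$; since $X$ and $\iota_\fh(X_H)$ are $\GL_n(F)$-conjugate, $\varphi$ also vanishes on the $\GL_n(F)$-orbit of $\iota_\fh(X_H)$. The map $p\colon\Omega_H\times\GL_n(F)\to\Omega$, $(Y,g)\mapsto\iota_\fh(Y)\cdot g$, of \eqref{eq:analytic_isomorphism} is a torsor under the free action $(Y,g)\cdot h=(Y\cdot h,h^{-1}g)$ of the unimodular group $H(F)$ (notation as in \eqref{eq:varphi'}). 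By a standard Bruhat-function argument there is a smooth function $\theta$ on $\Omega_H\times\GL_n(F)$ with $\int_{H(F)}\theta\big((Y,g)\cdot h\big)\,\rd h=1$ for all $(Y,g)$ and such that $(\psi\circ p)\cdot\theta\in\cS(\Omega_H\times\GL_n(F))$ whenever $\psi\in\cS(\Omega)$. Set $\varphi':=(\varphi\circ p)\cdot\theta$. Since $p$ is $H(F)$-invariant, $\int_{H(F)}\varphi'(Y\cdot h,h^{-1}g)\,\rd h=\varphi(\iota_\fh(Y)\cdot g)$, i.e.\ $\varphi'$ satisfies \eqref{eq:varphi'}; and since $p$ carries $\mathcal{O}_H\times\GL_n(F)$ into $\mathcal{O}$, the function $\varphi\circ p$, hence $\varphi'$, vanishes identically on $\mathcal{O}_H\times\GL_n(F)$.

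\emph{Step 3: conclusion.} With this $\varphi'$ the function $\varphi_{H,s}$ of \eqref{eq:varphi_H} satisfies $\varphi_{H,s}(Y)=\int_{\GL_n(F)}\varphi'(Y,g)\xi(g)\eta(g)\lvert\det g\rvert^s\,\rd g=0$ for all $Y\in\mathcal{O}_H$; by Step 1, $I_{X_H}(\varphi_{H,s},\xi,s)=0$, whence $I_X(\varphi,\xi,s)=0$ by \eqref{eq:definition_I_X_on_Omega}. By Lemma \ref{lem:well-defined} this conclusion does not depend on the choices made, and the linearity of $\varphi\mapsto\varphi'\mapsto\varphi_{H,s}\mapsto I_{X_H}(\varphi_{H,s},\xi,s)$ upgrades it to the assertion that $I_X(\varphi,\xi,s)$ depends only on $\varphi|_{\mathcal{O}}$. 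I expect the delicate point to be Step 1 — ensuring that the explicit Tate-integral formula in the proof of Proposition \ref{prop:local_central} genuinely exhibits $I_{X_i}(\cdot,\xi_i,s)$ as living on the orbit $\mathcal{O}_i$ itself rather than merely on $\overline{\mathcal{O}_i}$, and that this refinement survives the completed tensor product in the archimedean case; the construction of the Bruhat-type function $\theta$ and the Schwartz-ness of $\varphi'$ are routine.
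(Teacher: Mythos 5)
Your proof follows the same reduction as the paper's: choose $\varphi'$ so that $\varphi_{H,s}$ vanishes on the $H(F)$-orbit of $X_H$, and then conclude by the support property of $I_{X_H}$. The paper's proof is terse; it says one may pick $\varphi'$ vanishing on the preimage of the orbit of $X$ under the map $p$ (i.e., on $\mathcal O_{X_H}\times\GL_n(F)$ — what the paper prints as ``$\Omega_H\times\{\text{orbit of }X\}$'' is a slip), then cites Corollary~\ref{cor:local_central_and_rss}(5). You have correctly put your finger on the one genuinely delicate point: Corollary~\ref{cor:local_central_and_rss}(5) as stated asserts support in the \emph{closure} of the orbit, which does not, as a bare distribution-theoretic statement, give vanishing on test functions that vanish merely on the orbit (your worry is exactly the $\delta'$-type issue). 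What the argument actually needs — and what your Step~1 supplies — is that $I_{X_H}(\cdot,\xi,s)$ factors through restriction to the orbit itself, which indeed holds: for the regular-semisimple factor the integral is literally over the (closed) orbit, and for each central factor one sees from the reduction to~\eqref{eq:reduce_to_Tate_integral} that the Tate integrand $f_{\varphi_i}$, being a continuous function on $F_i^{n_i}$ determined by its restriction to the dense set $(F_i^\times)^{n_i}$, only sees $\varphi_i$ on the orbit $O^{\varepsilon_i}(a_i)$; meromorphic continuation then preserves this. Your Step~2 (the Bruhat-function construction of $\varphi'$) is the standard way to realize what the paper merely asserts can be chosen, and Step~3 is as in the paper. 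So: same route, but your version is more careful, and the sharpened form of (5) that you isolate in Step~1 is a genuine clarification of a point the paper treats informally. The only thing I would tighten is the final appeal in Step~1 to ``the iterated-integral description of the completed tensor product'': the cleanest phrasing is that, for $\Phi\in\cS(\widetilde{\fh}(F))$ vanishing on $\mathcal O_H$, the composite continuous map $\Phi\mapsto(\Phi_i\mapsto f_{\Phi_i})\mapsto I_{X_H}(\Phi,\xi,s)$ depends only on $\Phi|_{\mathcal O_H}$ because each intermediate map does; invoking ``iterated integrals'' for a completed tensor product of distributions is a little loose in the archimedean case.
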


\begin{proof}
    We are reduced to show that if $\varphi$ is zero on the orbit of $X$, then $I_X(\varphi,\xi,s)=0$. For this $\varphi$, we can pick the $\varphi' \in \cS(\Omega_H \times \GL_n(F))$ such that $\varphi'$ vanishes on $\Omega_H \times \{ \text{orbit of }X \}$. Then $\varphi_{H,s}$ vanishes on orbit of $X_H$, the result hence follows from ~\ref{cor:local_central_and_rss} (5).
\end{proof}

\begin{remark}
    As we mentioned in Subsection ~\ref{subsec:descent}, the map $\iota_\fh$ depends on the choices of the isomorphisms $i$ and $i^0$, but we can check directly that the definition of $I_X(\varphi,\xi,s)$ is independent of the choice of these isomorphisms, since different $i$ and $i^0$ differ by conjugation by an element in $\GL_n(F)$.
\end{remark}

Now we finally give the definition of $I_X(\varphi,\xi,s)$.

\begin{definition} \label{defi:I_X_in_general}
    Pick $u \in C_c^\infty(\omega)$ with $u(a)=1$. Let $\varphi \in \cS(\widetilde{\gl_n}(F))$, and let $s \in \C$ which is not a pole of $L_{X}(s,\xi)$ we define
    \[
        I_X(\varphi,\xi,s) := I_X(\varphi \cdot (u \circ q),\xi,s),
    \]
    where the right hand side is defined as ~\eqref{eq:definition_I_X_on_Omega}.
\end{definition}

By Lemma ~\ref{lem:independence_of_orbit}, the definition of $I_X(\varphi,\xi,s)$ is independent of choice of $u$. We now study the properties of $I_{X}(\varphi,\xi,s)$ 
\begin{proposition} \label{prop:local_general}
    We have the following assertions 
    \begin{enumerate}
        \item For $\varphi \in \cS(\widetilde{\gl_n}(F))$, as a function of $s$, $I_X(\varphi,\xi,s)$ is meromorphic on $\C$, and its pole is contained in the pole of $L_X(s,\xi)$.
        \item For $s \in \C$ which is not a pole of $L_X(s,\xi)$, the resulting linear map
        \[
         \cS(\widetilde{\gl_n}(F)) \to \C, \quad \varphi \mapsto I_X(\varphi,\xi,s).
        \]
        is continuous.
        \item  As a function of $s$
        \[
            \frac{I_X(\varphi,\xi,s)}{L_X(s,\xi)}
        \]
    is entire for any $\varphi \in \cS(\widetilde{\gl_n}(F))$.
        \item If $F$ is non-archimedean with ring of integer $\cO$, $F_i$ are unramified over $F$, $\xi$ and $\eta$ are unramified and $X \in \widetilde{\gl_n}_{,\pm}(\cO)$ and $a_H \in \cA_H'(\cO)$, $\vol(\cO_F) = \vol(\cO_{F_i})=1$, $X$ can be written of the form $\iota_\fh(X_H) \cdot g$, with $X_H \in \fh_{0,\mathrm{rs}}(\cO) \times \fh^\varepsilon(\cO)$ and $g \in \GL_n(\cO)$, then
        \[
            I_X(1_{\widetilde{\gl_n}(\cO)},s,\xi) =  L_X(s,\xi).
        \]
        \item  For any $s$ which is not a pole of $L_X(s,\xi)$, the distribution $I_X(\cdot,s,\xi)$ satisfies the following two properties:
        \begin{itemize}
            \item  $I_X(\cdot,\xi,s)$ is a $\xi \eta \lvert \cdot \rvert^s$-stable distribution.
            \item $I_X(\cdot,\xi,s)$ is supported on the closure of $\GL_n(F)$ orbit of $X$.
        \end{itemize}
    \end{enumerate}
\end{proposition}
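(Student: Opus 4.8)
The plan is to deduce all five assertions from the properties of the distributions $I_{X_H}(\cdot,\xi,s)$ on $\cS(\widetilde{\fh}(F))$ collected in Corollary~\ref{cor:local_central_and_rss}, via Definition~\ref{defi:I_X_in_general} and formula \eqref{eq:definition_I_X_on_Omega}. First I would remove the cutoff: fixing $u\in C_c^\infty(\omega)$ with $u(a)=1$, the multiplication map $\varphi\mapsto\varphi\cdot(u\circ q)$ is continuous and linear $\cS(\widetilde{\gl_n}(F))\to\cS(\Omega)$; since $q$ is $\GL_n(F)$-invariant, $u\circ q$ is constant along $\GL_n(F)$-orbits and pulls out of any stabilizing integral, so this map sends $\xi\eta\lvert\cdot\rvert^s$-unstable functions to $\xi\eta\lvert\cdot\rvert^s$-unstable functions, and it clearly preserves the property of vanishing on a neighbourhood of the $\GL_n(F)$-orbit of $X$. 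Hence it suffices to prove (1)--(5) for $\varphi\in\cS(\Omega)$, where $I_X(\varphi,\xi,s)=\xi(g)^{-1}\eta(g)\lvert\det g\rvert^{-s}\,I_{X_H}(\varphi_{H,s},\xi,s)$ with $g\in\GL_n(F)$ the unique element with $\iota_\fh(X_H)=X\cdot g$.

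For (1)--(3) I would first fix, once and for all, a continuous linear section $\varphi\mapsto\varphi'$ of the integration-along-fibers map $\cS(\Omega_H\times\GL_n(F))\to\cS(\Omega)$ attached to the $H(F)$-torsor \eqref{eq:analytic_isomorphism}; such a section exists by the Schwartz-function formalism for submersions (cf.~\cite{AG}). Then $\varphi\mapsto\varphi_{H,s}$ is continuous for each fixed $s$, and, as in the text, $s\mapsto\varphi_{H,s}\in\cS(\widetilde{\fh}(F))$ is holomorphic. Next I would observe that, by the reductions in the proofs of Propositions~\ref{prop:local_central} and \ref{prop:local_rss}, for every regular $X_H$ the quantity $I_{X_H}(\psi,\xi,s)$ is a finite product of one-variable Tate integrals of a Schwartz function depending continuously on $\psi$ but not on $s$ (for the regular semisimple factors it is simply an integral of $\psi$ over a compact set), so that $I_{X_H}^\natural(\psi,\xi,s):=I_{X_H}(\psi,\xi,s)/L_X(s,\xi)$ is jointly continuous on $\cS(\widetilde{\fh}(F))\times\C$ and holomorphic in $s$. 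Composing with the holomorphic family $s\mapsto\varphi_{H,s}$ and multiplying by the entire factor $\xi(g)^{-1}\eta(g)\lvert\det g\rvert^{-s}$ yields (1) and (3); (2) is the continuity of $\varphi\mapsto\varphi_{H,s}\mapsto I_{X_H}(\varphi_{H,s},\xi,s)$ for $s$ not a pole of $L_X(s,\xi)$. Upgrading the fixed-test-function statements of Corollary~\ref{cor:local_central_and_rss} to statements about the holomorphic family $\{\varphi_{H,s}\}_s$ is the one step that needs genuine care, and it is exactly why the explicit Tate-integral descriptions \eqref{eq:reduce_to_Tate_integral}, \eqref{eq:I_X^+_Fourier}, \eqref{eq:I_X^-_Fourier} were recorded.

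For (5) the support statement is immediate: by Lemma~\ref{lem:independence_of_orbit} the $\cS(\Omega)$-distribution only depends on the restriction of $\varphi$ to the orbit of $X$, and this passes to $\cS(\widetilde{\gl_n}(F))$ through $\varphi\mapsto\varphi\cdot(u\circ q)$. For stability, let $\varphi\in\cS(\Omega)$ be $\xi\eta\lvert\cdot\rvert^s$-unstable; by Corollary~\ref{cor:local_central_and_rss}(5) it is enough to check that $\varphi_{H,s}$ is $\xi\eta\lvert\cdot\rvert^s$-unstable on $\widetilde{\fh}(F)$. Given a regular semisimple $Y\in\widetilde{\fh}(F)$, one may assume $Y\in\Omega_H$ (otherwise $\varphi_{H,s}(Y\cdot h)=0$ for all $h$, as $\varphi_{H,s}$ is supported in the $H(F)$-invariant set $\Omega_H$). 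Unfolding the definition of $\varphi_{H,s}$, substituting $g\mapsto h^{-1}g$, and using that the embedding $H\hookrightarrow\GL_n$ matches the characters (so all factors involving $h$ cancel), the integral $\int_{H(F)}\varphi_{H,s}(Y\cdot h)\,\xi(h)\eta(h)\lvert\det h\rvert^s\,\rd h$ collapses, via the defining identity \eqref{eq:varphi'} of $\varphi'$, to $\int_{\GL_n(F)}\varphi(\iota_\fh(Y)\cdot g)\,\xi(g)\eta(g)\lvert\det g\rvert^s\,\rd g$; this vanishes because $\iota_\fh(Y)$ is regular semisimple in $\widetilde{\gl_n}(F)$ (the descent map $\iota_\fh$ carries regular semisimple elements to regular semisimple elements, cf.~\cite{Zhang14}*{Appendix B}) and $\varphi$ is unstable.

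Finally, for (4) I would use the base-change compatibility of the descent construction recorded at the end of Subsection~\ref{subsec:descent} together with the étaleness of $\iota_\cA$ to choose, in the unramified setting, the neighbourhoods $\omega,\omega_H$, the cutoff $u$, the isomorphism \eqref{eq:analytic_isomorphism} and the section $\varphi\mapsto\varphi'$ all defined over $\cO$. With $\varphi=1_{\widetilde{\gl_n}(\cO)}$ this makes $\varphi_{H,s}$ a volume-one multiple of $1_{\widetilde{\fh}(\cO)}$ near the orbit of $X_H$, while $g\in\GL_n(\cO)$ together with $\xi,\eta$ unramified makes the prefactor $\xi(g)^{-1}\eta(g)\lvert\det g\rvert^{-s}$ equal to $1$; hence $I_X(1_{\widetilde{\gl_n}(\cO)},\xi,s)=I_{X_H}(1_{\widetilde{\fh}(\cO)},\xi,s)=L_X(s,\xi)$ by Corollary~\ref{cor:local_central_and_rss}(4).
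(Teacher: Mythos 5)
Your proof follows the same strategy as the paper's: pass to $\cS(\Omega)$ via multiplication by $u\circ q$, descend through $\varphi\mapsto\varphi_{H,s}$, and reduce everything to Corollary~\ref{cor:local_central_and_rss}. Two points of comparison are worth recording. First, for part (3), you correctly identify and resolve (via the explicit Tate-integral structure) the joint-holomorphy issue that arises from plugging the $s$-dependent family $\varphi_{H,s}$ into the $s$-dependent distribution $I_{X_H}(\cdot,\xi,s)$; the paper's terse justification that the analytic property at $s_0$ ``only depends on $\varphi_{H,s_0}$'' is compressing exactly this, and your version is a useful expansion. Your unfolding for stability in part (5) is likewise correct and makes explicit what the paper's ``direct computation'' leaves implicit. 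Second, for part (4), your ``define everything over $\cO$'' sketch points at the right ingredients (base-change compatibility, \'{e}taleness, $\vol(\GL_n(\cO))=1$), but it skips the actual work: by Lemma~\ref{lem:well-defined} one is free to pick any section $\varphi'$ satisfying \eqref{eq:varphi'}, and the whole point is to verify that the obvious candidate $\varphi'(Y,g)=1_{\widetilde{\fh}(\cO)}(Y)\,u(q(\iota_\fh(Y)))\,1_{\GL_n(\cO)}(g)$ does satisfy it, i.e.\ that $1_{\widetilde{\gl_n}(\cO)}(\iota_\fh(Y)\cdot g)=\int_{H(F)}1_{\widetilde{\fh}(\cO)}(Y\cdot h)\,1_{\GL_n(\cO)}(h^{-1}g)\,\rd h$. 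This identity does not follow formally from the apparatus being $\cO$-integral; it is a genuine computation, and it is exactly what the paper invokes \cite{CZ21}*{Lemme 3.4.5.1} for. Without that input, your claim that $\varphi_{H,s}$ equals $1_{\widetilde{\fh}(\cO)}$ times the cutoff near the orbit of $X_H$ remains unsubstantiated.
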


\begin{proof}
    Without loss of generality, we can assume $\iota_\fh(X_H)=X$ (otherwise, we can choose a different $i$ and $i^0$), part (1) and (2) then follows from Corollary ~\ref{cor:local_central_and_rss} (1), (2) and the fact that map $\cS(\widetilde{\gl_n}(F)) \to \cS(\widetilde{\fh}(F)),  \varphi \mapsto \varphi_{H,s}$ . For part (3), note that for a fixed $s=s_0$, the analytic property of  $I_X(\varphi,\xi,s)/L_X(s,\xi)$ only depends on $\varphi_{H,s_0}$, therefore (3) follows from Corollary ~\ref{cor:local_central_and_rss} (3). Under the assumption of (4),  by ~\cite{CZ21}*{Lemme 3.4.5.1}, for all $Y \in \widetilde{\fh}(F)$ and $g \in G(F)$, we have
        \[
        1_{\widetilde{\gl_n}(\cO)}(\iota_\fh(Y) \cdot g) = \int_{H(F)} 1_{\widetilde{\fh}(\cO)}(Y \cdot h) \cdot 1_{\GL_n(\cO)}(h^{-1}g) \rd h.
        \]
    Therefore, if we choose any $u \in C_c^\infty(\omega)$ with $u(a)=1$ and let $\varphi = 1_{\widetilde{\gl_n}} \cdot (u \circ q)$. Then $\varphi' : (Y,g) \mapsto  1_{\widetilde{\fh}(\cO)}(Y) u(q(\iota_\fh(Y))) 1_{\GL_n(\cO)}(g)$ satisfies the equation ~\eqref{eq:varphi'}. Then 
    \[
        \varphi_{H,s}(Y) = \int_{\GL_n(F)} 1_{\widetilde{\fh}(\cO)}(Y) u(\iota_\fh(Y)) 1_{\GL_n(\cO)}(g) \chi(g) \lvert g \rvert^s \rd g = 1_{\widetilde{\fh}(\cO)}(Y) u(q(\iota_\fh(Y))).
    \]
    By Lemma ~\ref{lem:independence_of_orbit} and our assumptions, $I_X(1_{\widetilde{\gl_n}(\cO)},\xi,s) = I_X(\varphi,\xi,s) =  I_{X_H}(\varphi_H,\xi,s) = I_{X_H}(1_{\widetilde{\fh}(\cO)},\xi,s)$. Therefore (4) follows from Corollary ~\ref{cor:local_central_and_rss} part (4). For (5), to show $I_X(\cdot,\xi,s)$ is a $\xi \eta \lvert \cdot \rvert^s$-stable distribution, let $\varphi \in \cS(\widetilde{\gl_n}(F))$ be a $\xi \eta \lvert \cdot \rvert^s$-unstable function, replacing $\varphi$ by $\varphi \cdot (u \circ q)$, we can assume $\varphi \in \cS(\Omega)$, then direct computation shows $\varphi_{H,s}$ is also $\xi \eta \lvert \cdot \rvert^s$-unstable, therefore, the result follows from the fact that $I_{X_H}$ is a $\xi \eta \lvert \cdot \rvert^s$-stable distribution. The fact that it supports in the closure of the orbit of $X$ follows from Lemma ~\ref{lem:independence_of_orbit}. 

\end{proof}

\begin{remark}
    When $X$ is central or regular semisimple, we then have two definitions of $I_X$ in Subsection ~\ref{subsec:central_and_rss} and ~\ref{subsec:local_I_general} respectively, these two definitions coincide because, when $a \in \cA_{\mathrm{rs}}(F)$, the descent construction associated with $a$ is given by $H=H_0=\GL_n$ and $\iota_H = \iota_\fh = \mathrm{id}$. And when $a \in \cA(F)$ is central, the descent construction associated to $a$ is given by $H_0 = \{1\}$ and $k=1, F_1 = F,n_1=n$, therefore we also have $H=\GL_n$ and $\iota_H = \iota_\fh = \mathrm{id}$.
\end{remark}

When $X \in \widetilde{\gl_n}_{,+}(F)$ or $X \in \widetilde{\gl_n}_{,-}(F)$, the distribution can be defined in a more direct way.

\begin{proposition} \label{prop:plus_or_minus_regular_direct_def}
    Let $X \in \widetilde{\gl_n}_{,+}(F)$ (resp. $X \in \widetilde{\gl_n}_{,-}(F)$), then for $\varphi \in \widetilde{\gl_n}(F)$ the integral
    \begin{equation} \label{eq:plus_minus_reg_direct_def}
         \int_{\GL_n(F)} \varphi(X \cdot g) \xi(g) \eta(g) \lvert g \rvert^s \rd g
    \end{equation}  
    is absolutely convergent when $\mathrm{Re}(s)>-1+\frac 1n$ (resp. $\mathrm{Re}(s)<1-\frac 1n$), and coincides with $I_X(\varphi,\xi,s)$ in the convergence domain.
\end{proposition}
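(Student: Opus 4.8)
The plan is to treat the case $X \in \widetilde{\gl_n}_{,+}(F)$; the case $X \in \widetilde{\gl_n}_{,-}(F)$ follows by the symmetric argument (replacing $\delta^+$ by $\delta^-$, $s$ by $-s$, etc., as in the proof of Proposition \ref{prop:local_central}). First I would reduce to the case where $a = q(X)$ is central or regular semisimple, and then to the case of a fixed explicit representative. Indeed, by Proposition \ref{prop:regular_orbit_tilde_gl_n} the orbit of $X$ is of some type $\varepsilon \in \cE$, but since $X \in \widetilde{\gl_n}_{,+}(F)$ we have $X \in O^+(a)$ by Lemma \ref{lem:unique orbit}, so $\varepsilon$ is the constant function $+$ and $X_H \in O^+(a_H) = \widetilde{\fh_0}_{,a_0}(F) \times \prod_i O^+(a_i)$. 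Using the change of variables \eqref{eq:relation_I_Xg} (more precisely its analogue coming from \eqref{eq:definition_I_X_on_Omega}), absolute convergence and the identity with $I_X(\varphi,\xi,s)$ are insensitive to replacing $X$ by any $\GL_n(F)$-translate, so I may assume $X = \iota_\fh(X_H)$ with $X_H = (X_0, Z^+_{\alpha_1},\dots,Z^+_{\alpha_k})$ and $X_0 \in \widetilde{\fh_0}_{,a_0}(F)$ regular semisimple.

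The heart of the argument is the convergence estimate, which I would obtain by the same Iwasawa-coordinate computation used in the proof of Proposition \ref{prop:local_central}. For the central representative $Z^+_\lambda$, the integral \eqref{eq:plus_minus_reg_direct_def} was reduced via \eqref{eq:Iwasawa_measure}, the measure-preserving map \eqref{eq:measure_preserving_map_n}, and the substitution $b_i = a_{i+1}/a_i$, $b_n = 1/a_n$, to the product of Tate integrals \eqref{eq:reduce_to_Tate_integral}
\[
    \int_{(F^\times)^n} f_{\varphi}(b_1,\dots,b_n) \prod_{i=1}^n (\xi\eta)(b_i)^{-i} \lvert b_i \rvert^{-is-i+1} \rd b_1 \cdots \rd b_n,
\]
with $f_\varphi \in \cS(F^n)$. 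A Tate integral $\int_{F^\times} h(b)\lvert b\rvert^{t}\,\rd b$ with $h$ Schwartz converges absolutely for $\re(t) > 0$; here the $i$-th factor needs $\re(-is - i + 1) > 0$, i.e. $\re(s) > -1 + \tfrac1i$, and the binding constraint is $i = n$, giving $\re(s) > -1 + \tfrac1n$. For the general regular $X_H$ of type $+$, the same computation applies after the descent: the $\widetilde{\fh_0}$-factor contributes a compactly supported integrand (the regular semisimple orbit is closed, cf. Proposition \ref{prop:local_rss}), hence converges for all $s$, while each of the $k$ central blocks $Z^+_{\alpha_i}$ over $F_i$ contributes, by the same reduction applied over $F_i$, a product of Tate integrals over $F_i^\times$ converging for $\re(s) > -1 + \tfrac1{n_i}$; since $n_i \le n$ these are all implied by $\re(s) > -1 + \tfrac1n$. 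Fubini then gives absolute convergence of \eqref{eq:plus_minus_reg_direct_def} in that half-plane.

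Finally, I must check that on the convergence domain \eqref{eq:plus_minus_reg_direct_def} equals $I_X(\varphi,\xi,s)$ as defined in Definition \ref{defi:I_X_in_general}. Fix $s$ with $\re(s) > -1 + \tfrac1n$ not a pole of $L_X(s,\xi)$. Pick $u \in C_c^\infty(\omega)$ with $u(a) = 1$ and set $\varphi_u = \varphi \cdot (u\circ q)$; by the support property of Lemma \ref{lem:independence_of_orbit} it suffices to compare the two expressions applied to $\varphi_u \in \cS(\Omega)$. Choose $\varphi' \in \cS(\Omega_H \times \GL_n(F))$ satisfying \eqref{eq:varphi'} and form $\varphi_{H,s}$ as in \eqref{eq:varphi_H}. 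Then unfolding:
\[
    \int_{\GL_n(F)} \varphi_u(\iota_\fh(X_H)\cdot g)\,\xi\eta(g)\lvert\det g\rvert^s\,\rd g
    = \int_{\GL_n(F)}\int_{H(F)} \varphi'(X_H\cdot h, h^{-1}g)\,\rd h\;\xi\eta(g)\lvert\det g\rvert^s\,\rd g,
\]
and after the substitution $g \mapsto hg$ and using that $\xi\eta\lvert\det\cdot\rvert^s$ restricted to $H(F)$ is the character $\prod_i \xi_i\eta_i\lvert\det\cdot\rvert^s$, the $H(F)$-integral produces exactly $I_{X_H}(\varphi_{H,s},\xi,s) = \prod_{i=0}^k I_{X_i}(\varphi_{H,s,i},\xi_i,s)$, which by \eqref{eq:definition_I_X_on_Omega} (here $g = 1$ since $X = \iota_\fh(X_H)$) is $I_X(\varphi_u,\xi,s)$. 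The interchange of integrals in this unfolding is justified by the absolute convergence just established. The main obstacle is precisely making this interchange rigorous: one must verify that the double integral is absolutely convergent — not merely the outer one — which requires an estimate on $\lvert\varphi'\rvert$ and the rate of decay of $h \mapsto \varphi'(X_H \cdot h, h^{-1}g)$ along $H(F)$; this can be handled by the same Tate-integral domination applied to $\lvert\varphi'\rvert$ together with the observation that $\varphi'$ may be chosen compactly supported in the $\GL_n(F)$-variable, so the $g$-integral is effectively over a compact set times the part of $H(F)$ that moves $X_H$ inside a fixed compact set.
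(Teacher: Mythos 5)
Your proof follows the same route as the paper's: reduce to $X=\iota_\fh(X_H)$ with $X_H$ of type $+$, unfold the $\GL_n(F)$-integral through the descent isomorphism $\Omega_H \times^{H(F)} \GL_n(F) \cong \Omega$ into an $H(F)$-integral, factorize over $H_0 \times \prod_i H_i$, and control each factor by the Tate-integral computation of Proposition~\ref{prop:local_central}(1) (closedness of the regular semisimple orbit for the $H_0$-factor, $\re(s) > -1+\tfrac1{n_i}$ for each central block).

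The one place where your argument has a gap is the justification of the Fubini interchange. You propose to choose $\varphi'$ compactly supported in the $\GL_n(F)$-variable so that the $g$-integral is "effectively over a compact set." This cannot be done in the archimedean case: the fiber of $\Omega_H \times \GL_n(F) \to \Omega$ over $\iota_\fh(X_H)g_0$ is $\{(X_H\cdot h,\,h^{-1}g_0) : h\in H(F)\}$, whose projection to the $g$-coordinate sweeps an entire $H(F)$-coset of $\GL_n(F)$, which is not compact. Since $\int_{H(F)}\varphi'(Yh,h^{-1}g)\,\rd h$ must reproduce $\varphi(\iota_\fh(Y)g)$ for all $g$, the support of $\varphi'$ necessarily meets each such fiber, so $\supp_g\varphi'$ cannot be compact.

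The paper sidesteps this with a simpler device: it takes a \emph{non-negative} $\psi'$ with $\lvert\varphi(\iota_\fh(Y)g)\rvert = \int_{H(F)}\psi'(Y h, h^{-1}g)\,\rd h$. Then the interchange of the $\GL_n(F)$- and $H(F)$-integrals holds unconditionally by Tonelli — no decay estimate on $\psi'$ in the $g$-variable is needed — and the absolute convergence of~\eqref{eq:plus_minus_reg_direct_def} reduces to finiteness of $\int_{H(F)}\psi_{H,s}(X_H\cdot h)\lvert\det h\rvert^{\re(s)}\,\rd h$, which is controlled by Proposition~\ref{prop:local_central}(1) applied to the merely continuous compactly supported integrand $\psi_{H,s}$ (the paper notes the Schwartz hypothesis in that proposition can be relaxed to $C_c$ for this purpose). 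Once absolute convergence is in hand, Fubini for the signed integrand gives the identity with $I_X(\varphi,\xi,s)$, exactly as you outline. So your proof is correct after replacing the compactness-in-$g$ step with this positivity-plus-Tonelli argument.
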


\begin{proof}
    We choose a non-negative function $\psi' \in C_c(\Omega_H \times \GL_n(F))$ such that for any $(Y,g) \in \Omega_H \times \GL_n(F)$, we have
    \[
        \lvert \varphi(\iota_\fh(Y) \cdot g) \rvert = \int_{H(F)} \psi'(Y \cdot h,h^{-1} g) \rd h.
    \]
    For $s \in \C$, let $\psi_{H,s} \in C_c(\widetilde{\fh}(F))$ be the non-negative function defined by
    \[
        \psi_{H,s}(Y) = \int_{\GL_n(F)} \psi'(Y,g) \lvert \det g \rvert^{\mathrm{Re}(s)} \rd g.
    \]
    Then the absolute convergence of the integral ~\eqref{eq:plus_minus_reg_direct_def} is equivalent to the absolute convergence of 
    \[
        \int_{H(F)} \psi_{H,s}(Y \cdot h) \lvert \det h \rvert^s \rd h
    \]
    which follows from and Proposition ~\ref{prop:regular_orbit_tilde_gl_n} (2) and Proposition ~\ref{prop:local_central} (although in Proposition ~\ref{prop:local_central}, we require the functions are Schwartz, but the proof of part (1) only needs the condition that the functions are in $C_c(\widetilde{\fh}(F))$). After the knowledge of absolute convergence, Fubini theorem implies that the integral ~\eqref{eq:plus_minus_reg_direct_def} coincides with $I_X(\varphi,\xi,s)$ in the domain of convergence.
\end{proof}

\subsubsection{A variant}

We provide a variant of the construction above, replacing $\widetilde{\gl_n}$ by $\gl_{n+1}$. Recall that $\cB = \gl_{n+1}/\GL_n$ and we have a canonical identification $\cB \cong \cA \times F$ induced by the map ~\eqref{eq:isomorphism gl and tilde gl}.

For a regular element $X = \begin{pmatrix}
        A & v \\ u & d
    \end{pmatrix} \in \gl_{n+1}(F) $, we define
    \[
        L_X(s,\xi) := L_{(A,v,u)}(s,\xi),
    \]
    where the right-hand side is defined in ~\eqref{eq:general_L}. For $\varphi \in \cS(\gl_{n+1}(F))$, and $s \in \C$ which is not a pole of $L_X(s,\xi)$, we put
    \[
        I_X(\varphi,\xi,s) := I_{(A,v,u)}(\varphi_{d},\xi,s),
    \]
    where $\varphi_{d} \in \cS(\widetilde{\gl_n}(F))$ is defined by
    \[
        \varphi_{d}(A',v',u') = \varphi \begin{pmatrix}
            A' & v' \\ u' & d
        \end{pmatrix}.
    \]

\begin{remark} \label{rmk:variant_similar_property}
By definition, it is easy to see that $I_X$ admits similar properties as those listed in Proposition ~\ref{prop:local_general}.
\end{remark}

\subsection{Orbital integrals on the group} \label{subsec:local_group}

Let $f \in \cS(\rG'(F))$ and $s \in \C$, we define $f^\rS_s \in \cS(\rS(F))$ by
\begin{equation} \label{eq:defi_f^S}
    f^\rS_s(x)= \begin{cases}  \int_{\rH_1(F)} \int_{\rH_{2,n+1}(F)} f(h_1^{-1},h_1^{-1}\nu^{-1}(x)h_{2,n+1}) \xi(h_1) \lvert \det h_1 \rvert^s \mu(\nu^{-1}(x)h_{2,n+1})  ,& n \text{ odd}, \\
               \int_{\rH_1(F)} \int_{\rH_{2,n+1}(F)} f(h_1^{-1},h_1^{-1}\nu^{-1}(x)h_{2,n+1}) \xi(h_1) \lvert \det h_1 \rvert^s  ,& n \text{ even}.  \end{cases}
\end{equation}
Fix $\gamma=(\gamma_n,\gamma_{n+1}) \in \rG_{\mathrm{reg}}'(F)$, let $x=\alpha(\gamma) \in \rS(F)$, and let $b = q(x) \in \rB(F)$. Choose $\sigma \in E^\times$ with $\sigma \sigma^\mathtt{c}=1$ such that $x \in \rS^\sigma(F)$, so the Cayley transform $\fc_\sigma$ is defined. We use $\fc_\sigma$ to transport $f_s^\rS$ to a Schwartz function $f_s^{\gl}$ on $\gl_{n+1}(F)$: pick $u \in C_c^\infty(\rB^\sigma(F))$ such that $u(b)=1$, and define $f^{\gl}_s \in \cS(\gl_{n+1}^\tau(F))$ by
\begin{equation} \label{eq:defi_f^gl}
     f^{\gl}_s(X) = (u \cdot f^\rS_s)(\fc_\sigma(X)),
\end{equation}
and extends $f^{\gl}_s$ to an element of $\cS(\gl_{n+1}(F))$ using extension by zero, we still denote it by $f^{\gl}_s$.

We put an $L$-factor by
\[
    L_\gamma(s,\xi) := L_{\fc_\sigma^{-1}(x)}(s,\xi),
\]
and for $s \in \C$ which is not a pole of $L_\gamma(s,\xi)$, we put
\[
    I^\sigma_\gamma(f,\xi,s) := \begin{cases}
        I_{\fc_\sigma(x)}(f^{\gl}_s,\xi,s) \mu(\gamma_n \gamma_{n+1}^{-1}) & n\text{ odd}  \\ I_{\fc_\sigma(x)}(f^{\gl}_s,\xi,s) & n\text{ even}
    \end{cases}.
\]
Note that, by Proposition ~\ref{prop:local_general} (5) (see also Remark ~\ref{rmk:variant_similar_property}), the definition of $I^\sigma_\gamma$ does not depend on the choice of $u \in C_c^\infty(\rB^\sigma(F))$. 

We say $f \in \cS(\rG'(F))$ is $(\xi \lvert \cdot \rvert^s, \eta)$-unstable, if for any $\gamma \in \rG'_{\mathrm{rs}}(F)$, the orbital integral
\[
    \int_{\rH_1(F)} \int_{\rH_2(F)} f(h_1^{-1} \gamma h_2) \lvert \det h_1 \rvert^s \xi(h_1) \eta(h_2) \rd h_1 \rd h_2
\]
vanishes. A distribution $I:\cS(\rG'(F)) \to \C$ is called $(\xi \lvert \cdot \rvert^s, \eta)$-stable if $I(f)=0$ for any $f$ that is $(\xi \lvert \cdot \rvert^s, \eta)$-unstable.

We now list the properties of $I^\sigma_\gamma$. 
\begin{proposition} \label{prop:local_group}
    We have the following assertions
    \begin{enumerate}
        \item The function $s \mapsto L_\gamma(f,\xi,s)$ is meromorphic, and the pole is contained in the pole of $L_\gamma(s,\xi)$.
        \item If $s$ is not a pole of $L_\gamma(s,\xi)$, then $I^\sigma(\cdot,\xi,s)$ is continuous.
        \item As a function of $s$, for any $f \in \cS(\rG'(F))$
        \[
           I^{\sigma,\natural}(f,\xi,s):= \frac{I^\sigma_\gamma(f,\xi,s)}{L_\gamma(s,\xi)}
        \]
        is entire.
        \item  If $F$ is non-Archimedean with ring of integer $\cO$, suppose that we can choose $\sigma$ such that
        \begin{itemize}
                \item $\xi,\eta$ and $\mu$ are unramified,
                \item $2 \tau \sigma \in \cO^\times$,
                \item $\gamma \in \rG'(\cO)$,
                \item $a_H \in \cA_H'(\cO)$,
                \item $X = \fc_\sigma^{-1}(x)$ satisfies the condition of (4) in Proposition ~\ref{prop:local_general}.
            \end{itemize}
            Then $I^\sigma_\gamma(1_{\rG'(\cO)},\xi,s) = L_\gamma(s,\xi)$
        \item For any $s \in \C$ which is not a pole of $L_\gamma(s,\xi)$, $I^\sigma_\gamma(\cdot,\xi,s)$ is a $\xi \eta \lvert \cdot \rvert^s$-stable distribution and supported on the closure of the $\rH_1(F) \times \rH_2(F)$ orbit of $\gamma$. 
    \end{enumerate}
\end{proposition}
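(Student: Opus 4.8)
The plan is to reduce all five assertions to their infinitesimal counterparts on $\gl_{n+1}$, namely Proposition~\ref{prop:local_general} together with Remark~\ref{rmk:variant_similar_property}, by analyzing the two maps entering the definition of $I^\sigma_\gamma$: the pushforward $f\mapsto f^\rS_s$ along $\alpha$ (see~\eqref{eq:defi_f^S}) and the transfer $f^\rS_s\mapsto f^{\gl}_s$ through the Cayley transform $\fc_\sigma$ and the cutoff $u$ (see~\eqref{eq:defi_f^gl}). The first preliminary step is to check that $f\mapsto f^\rS_s$ is a continuous linear map $\cS(\rG'(F))\to\cS(\rS(F))$ with $s\mapsto f^\rS_s$ holomorphic; after substituting $g=h_1^{-1}\nu^{-1}(x)h_{2,n+1}$ in the inner integral this is a routine estimate, using that a Schwartz function on $\rG'(F)$ restricts to an integrable rapidly decreasing function on each coset $h_1^{-1}\nu^{-1}(x)\rH_{2,n+1}(F)$ and that the remaining $h_1$-integral converges against $|\det h_1|^{\mathrm{Re}(s)}$ for every $s$ by rapid decay. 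The second step is that, since $\fc_\sigma$ is an isomorphism of $F$-varieties $\gl_{n+1}^\tau\cong\rS^\sigma$ and $u\in C_c^\infty(\rB^\sigma(F))$ bounds the support, the map $\varphi\mapsto ((u\cdot\varphi)\circ\fc_\sigma)$ extended by zero is continuous $\cS(\rS(F))\to\cS(\gl_{n+1}(F))$. Composing, $f\mapsto f^{\gl}_s$ is continuous and holomorphic in $s$.

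Granting this, write $X_\gamma\in\gl_{n+1}(F)$ for the element attached to $\gamma$ via $\alpha$ and $\fc_\sigma^{-1}$, so that $I^\sigma_\gamma(f,\xi,s)=c_\gamma\,I_{X_\gamma}(f^{\gl}_s,\xi,s)$ for the explicit constant $c_\gamma$ of the definition, and $L_\gamma(s,\xi)=L_{X_\gamma}(s,\xi)$. Parts (1)--(2) then follow from the $\gl_{n+1}$-analogue of Proposition~\ref{prop:local_general}(1)--(2) and the first paragraph; the only subtlety is that $f^{\gl}_s$ itself varies with $s$, which is harmless because $(s,\varphi)\mapsto I_{X_\gamma}(\varphi,\xi,s)/L_{X_\gamma}(s,\xi)$ is jointly holomorphic (separately holomorphic in $s$ and continuous-linear, hence holomorphic, in $\varphi$), so composing with the holomorphic map $s\mapsto f^{\gl}_s$ gives (3). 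For part (4), specialize $f=1_{\rG'(\cO)}$: smoothness of $\rS$ and Hensel's lemma give that $\alpha$ carries $\rG'(\cO)$ onto $\rS(\cO)$ with fibres of constant volume, while $\xi,|\det\cdot|^s,\mu$ are trivial on $\cO$-points, so $f^\rS_s$ is a scalar multiple of $1_{\rS(\cO)}$, the scalar being a product of volumes equal to $1$ under the normalizations of Subsubsection~\ref{subsubsec:measure_local} and $\vol(\cO_F)=\vol(\cO_{F_i})=1$; since $2\tau\sigma\in\cO^\times$, formula~\eqref{eq:Cayley_gl_n} shows $\fc_\sigma$ induces a bijection between the relevant $\cO$-loci, and as $u(b)=1$ one gets $f^{\gl}_s=1_{\gl_{n+1}(\cO)}$ near the orbit of $X_\gamma$, so the claim drops out of Proposition~\ref{prop:local_general}(4) applied to $X_\gamma$.

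Part (5) is the heart of the matter. The support statement is formal: $f^\rS_s(x)$ depends only on the restriction of $f$ to the single $\rH_1(F)\times\rH_{2,n+1}(F)$-orbit above $x$, hence $I^\sigma_\gamma(f,\xi,s)$ depends only on $f$ near the $\rH_1(F)\times\rH_2(F)$-orbit of $\gamma$, which is then refined by the $\gl_{n+1}$-analogue of Proposition~\ref{prop:local_general}(5). For stability it suffices to show that $f$ being $(\xi|\cdot|^s,\eta)$-unstable forces $f^{\gl}_s$ to be $\xi\eta|\cdot|^s$-unstable, since then $I^\sigma_\gamma(f,\xi,s)=c_\gamma I_{X_\gamma}(f^{\gl}_s,\xi,s)=0$ by the $\gl_{n+1}$-variant of Proposition~\ref{prop:local_general}(5). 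Let $X\in\gl_{n+1}(F)$ be regular semisimple; since $f^{\gl}_s$ is supported in $\gl_{n+1}^\tau(F)$ and this locus is conjugation-invariant, the orbital integral of $f^{\gl}_s$ at $X$ vanishes trivially unless $X\in\gl_{n+1}^\tau(F)$, so assume this, and note $\fc_\sigma(X)\in\rS^\sigma(F)$ is again regular semisimple (using~\eqref{eq:non zero equivalent} and $X_{\mathrm{rs}}=X_+\cap X_-$). By $\GL_n$-equivariance of $\fc_\sigma$, $\int_{\GL_n(F)}f^{\gl}_s(X\cdot g)\xi\eta(g)|\det g|^s\,dg$ equals the analogous orbital integral of $u\cdot f^\rS_s$ at $\fc_\sigma(X)$ on $\rS$; unfolding the definition of $f^\rS_s$ and merging the $\GL_n(F)=\rH_{2,n}(F)$-integration into the $\rH_1(F)\times\rH_{2,n+1}(F)$-integration identifies this, up to $c_\gamma^{-1}$ and the characters, with a regular semisimple orbital integral of $f$ on $\rG'$ (via~\eqref{eq:the_map_alpha} and the fact that $\alpha$ takes the $\rH_1\times\rH_2$-orbit of a lift of $\fc_\sigma(X)$ onto the $\GL_n$-orbit of $\fc_\sigma(X)$), which vanishes by hypothesis. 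The main obstacle in the whole proof is precisely this last identification: tracking the characters $\xi,|\det\cdot|^s$ on $\rH_1$ and $\eta,\mu$ on the two factors of $\rH_2$ through the change of variables, and matching the orbits across $\rG'\xrightarrow{\alpha}\rS\xleftarrow{\fc_\sigma}\gl_{n+1}$ so that the three orbital integrals genuinely coincide; everything else is bookkeeping on top of Proposition~\ref{prop:local_general}.
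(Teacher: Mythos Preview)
Your proposal is correct and follows the same line as the paper's proof: both reduce all five parts to the $\gl_{n+1}$-version (Proposition~\ref{prop:local_general} via Remark~\ref{rmk:variant_similar_property}) by checking that $f\mapsto f^\rS_s$ and the Cayley-plus-cutoff map are continuous and holomorphic in $s$, and by verifying $f^\rS_s=1_{\rS(\cO)}$ in the unramified situation of~(4). The paper's proof is terse at exactly the point you flag as the heart of the matter: for part~(5) it simply asserts that stability follows from Proposition~\ref{prop:local_general}(5), whereas you spell out the needed intermediate claim that $(\xi\lvert\cdot\rvert^s,\eta)$-unstability of $f$ forces $\xi\eta\lvert\cdot\rvert^s$-unstability of $f^{\gl}_s$ by matching regular semisimple orbital integrals through $\alpha$ and $\fc_\sigma$. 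Your unfolding argument for this is the right one and is what the paper is implicitly using.
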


\begin{proof}
    Part (1) follows from Proposition ~\ref{prop:local_general} (1) and the fact that $s \mapsto f^{\gl}_s$ is holomorphic and valued in $\cS(\gl_{n+1}(F))$. (2) and (3) also follow from their corresponding part of Proposition ~\ref{prop:local_general}. Put $f = 1_{\rG'(\cO)}$ Under the condition of (4), one directly checks that $f^\rS_s = 1_{\rS(\cO)}$ for any $s \in \C$ and $I^\sigma_\gamma(f,\xi,s) = I_{\fc_\sigma(x)}(1_{\widetilde{\gl_n}(\cO)},\xi,s)$. Therefore, part (4) follows from Proposition ~\ref{prop:local_general} (4), and part (5) also follows from Proposition ~\ref{prop:local_general} (5). 
\end{proof}

\begin{remark}
    It is expected that $I_\gamma^\sigma$ depends only on $\gamma$ (but does not depend on $\sigma$), for example, the following lemma shows that this holds when $\gamma \in \rG'_+(F)$ or $\rG'_-(F)$, but the author does not have a proof now.

    However, we can show that $L_\gamma$ is independent of the choice of $\sigma$, for the following reason: for any $s_0 \in \C$ the order of the pole of $L_\gamma(s,\xi)$ at $s=s_0$ is the maximum of that of $I^\sigma_\gamma(f,\xi,s)$ as $f$ runs through elements in $\rS(\rG'(F))$. This follows from Corollary ~\ref{cor:local_central_and_rss} (3), and this quantity is independent of the choice of $\sigma$.
\end{remark}

When $\gamma \in \rG'_+(F)$ or $\gamma \in \rG'_-(F)$, $I_\gamma$ can be described more explicitly. The following lemma can be proved using the same strategy as that in the proof of Proposition ~\ref{prop:plus_or_minus_regular_direct_def}.
\begin{lemma} \label{lem:local_group_central_rss_description}
    We have the following assertions:
    \begin{itemize}
        \item If $\gamma$ is regular semisimple, then
        \[
            I^\sigma_\gamma(f,\xi,s) = \int_{\rH_1(F)} \int_{\rH_2(F)} f(h_1^{-1} \gamma h_2) \xi(h_1) \lvert \det h_1 \rvert^s \eta(h_2) \rd h_1 \rd h_2,
        \]
        where the integral converges absolutely for any $s \in \C$.
        \item If $\gamma \in \rG'_+(F)$ (resp. $\rG'_-(F)$), then the integral
        \[
             \int_{\rH_1(F)} \int_{\rH_2(F)} f(h_1^{-1} \gamma h_2) \xi(h_1) \lvert \det h_1 \rvert^s \eta(h_2) \rd h_1 \rd h_2,
        \]
        is absolutely convergent for $s \in \cH_{<-1+\frac 1n}$ (resp. $s \in \cH_{>1-\frac 1n}$) and equals $I^\sigma_\gamma(f,\xi,s)$ there.  
    \end{itemize}
    In particular, in these cases, the definition of $I^\sigma_\gamma(f,\xi,s)$ does not depend on the choice of $\sigma$
\end{lemma}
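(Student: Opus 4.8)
The plan is to follow the proof of Proposition~\ref{prop:plus_or_minus_regular_direct_def}: transport the orbital integral on $\rG'$ to one on $\rS$ along the fibres of $\alpha$, then to one on $\gl_{n+1}$ via the Cayley transform, and finally invoke the Lie algebra statements already proved. Write $x=\alpha(\gamma)\in\rS(F)$, $b=q(x)\in\rB(F)$, and $Y=\fc_\sigma^{-1}(x)\in\gl_{n+1}^\tau(F)$ (note $x\in\rS^\sigma(F)$ since $\gamma$ is regular). The first ingredient is the elementary identity $\alpha(h_1^{-1}\gamma h_2)=x\cdot h_{2,n}$ for $(h_1,h_2)=(h_1,(h_{2,n},h_{2,n+1}))\in\rH_1(F)\times\rH_2(F)$, with $\cdot$ the $\GL_n$-action on $\rS$; in particular $\alpha(h_1^{-1}\gamma h_2)$ depends neither on $h_1$ nor on $h_{2,n+1}$. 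Feeding this into the definition~\eqref{eq:defi_f^S} of $f^\rS_s$, together with the substitution $h_1\mapsto\gamma_n h_{2,n}h_1$, rewrites the iterated integral
\[
\int_{\GL_n(F)}\int_{\rH_1(F)}\int_{\rH_{2,n+1}(F)} f(h_1^{-1}\gamma h_2)\,\xi(h_1)\lvert\det h_1\rvert^s\eta(h_2)\,\rd h_1\,\rd h_{2,n+1}\,\rd h_{2,n}
\]
as $\varepsilon_\gamma\int_{\GL_n(F)} f^\rS_s(x\cdot g)\,\xi(g)\eta(g)\lvert\det g\rvert^s\,\rd g$, where $\varepsilon_\gamma=\mu(\gamma_n\gamma_{n+1}^{-1})$ if $n$ is odd and $\varepsilon_\gamma=1$ if $n$ is even; the factor $\eta(h_{2,n+1})^n$ and the Jacobians of the diagonal embeddings $\GL_k\hookrightarrow\GL_{k+1}$ are absorbed into $\mu$ and $\varepsilon_\gamma$ exactly by the shape of~\eqref{eq:defi_f^S}. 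This is a bookkeeping of characters, and it holds unconditionally at the level of iterated integrals.

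Next, one descends to $\gl_{n+1}$. Since $\rS^\sigma$ is $\GL_n$-invariant, the whole orbit $x\cdot\GL_n(F)$ stays inside $\rS^\sigma(F)$, and the cut-off $u\in C_c^\infty(\rB^\sigma(F))$ with $u(b)=1$ occurring in the definition~\eqref{eq:defi_f^gl} of $f^{\gl}_s$ is identically $1$ along $q\big(x\cdot\GL_n(F)\big)=\{b\}$; hence $(u\circ q)\cdot f^\rS_s$ and $f^\rS_s$ agree on $x\cdot\GL_n(F)$. Using the $\GL_n$-equivariance of $\fc_\sigma$ — which introduces no Jacobian, since here one integrates over the \emph{group} $\GL_n(F)$ against its Haar measure rather than over a variety — together with the variant of $I_Y$ attached to the identification~\eqref{eq:isomorphism gl and tilde gl}, one obtains
\[
I_Y\big(f^{\gl}_s,\xi,s\big)=\int_{\GL_n(F)} f^\rS_s(x\cdot g)\,\xi(g)\eta(g)\lvert\det g\rvert^s\,\rd g
\]
whenever either side is absolutely convergent. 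Combining this with the previous step and the definition of $I^\sigma_\gamma$ identifies $I^\sigma_\gamma(f,\xi,s)=\varepsilon_\gamma\, I_Y(f^{\gl}_s,\xi,s)$, again as an iterated integral, with $\int_{\rH_1(F)}\int_{\rH_2(F)} f(h_1^{-1}\gamma h_2)\xi(h_1)\lvert\det h_1\rvert^s\eta(h_2)$.

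It then remains to supply the convergence input. By~\eqref{eq:non zero equivalent} and~\eqref{eq:isomorphism gl and tilde gl}, the $\widetilde{\gl_n}$-component of $Y$ lies in $\widetilde{\gl_n}_{,+}$ (resp. $\widetilde{\gl_n}_{,-}$) exactly when $\gamma\in\rG'_+(F)$ (resp. $\rG'_-(F)$), and it is regular semisimple exactly when $\gamma$ is. If $\gamma$ is regular semisimple, Proposition~\ref{prop:local_rss}(1) gives absolute convergence of $I_Y(f^{\gl}_s,\xi,s)$ for every $s\in\C$; running the reduction of the previous two paragraphs with $\lvert f\rvert$ (and the nonnegative majorants used in the proof of Proposition~\ref{prop:plus_or_minus_regular_direct_def}) in place of $f$ then yields absolute convergence of the group-side double integral for all $s$, so Fubini upgrades the iterated-integral identity to the claimed identity of double integrals. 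If $\gamma\in\rG'_+(F)$ (resp. $\rG'_-(F)$), Proposition~\ref{prop:plus_or_minus_regular_direct_def} gives absolute convergence of $I_Y(f^{\gl}_s,\xi,s)$ on $\cH_{<-1+\frac 1n}$ (resp. $\cH_{>1-\frac 1n}$) and its equality with the orbital integral there, which is precisely the assertion. Since the resulting expression does not mention $\sigma$, neither does $I^\sigma_\gamma$ in these cases. The one genuinely delicate step is the passage from iterated to absolutely convergent double integrals, which requires uniform control of the inner $\rH_1(F)\times\rH_{2,n+1}(F)$-integral defining $f^\rS_s$; this is exactly where the majorant estimates from the proof of Proposition~\ref{prop:plus_or_minus_regular_direct_def} — ultimately those of Proposition~\ref{prop:local_central} — are imported.
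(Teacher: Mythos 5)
Your argument is correct and it is precisely the strategy the paper alludes to (``the same strategy as the proof of Proposition~\ref{prop:plus_or_minus_regular_direct_def}''): using $\alpha(h_1^{-1}\gamma h_2)=x\cdot h_{2,n}$ to collapse the inner $\rH_1\times\rH_{2,n+1}$-integral into $f^\rS_s$, transporting along the $\GL_n$-equivariant Cayley map (where the cut-off $u$ is identically $1$ on the orbit), and then invoking Propositions~\ref{prop:local_rss} and \ref{prop:plus_or_minus_regular_direct_def} for convergence and the Fubini upgrade. One small remark: the half-plane you use, $\cH_{<-1+\frac 1n}$ for $\rG'_+$, is the one consistent with Proposition~\ref{prop:local_central} and with the statement of the lemma; the inequality ``$\mathrm{Re}(s)>-1+\frac 1n$'' printed in Proposition~\ref{prop:plus_or_minus_regular_direct_def} appears to have its direction reversed by a typo, and your citation tacitly corrects it.
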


\section{Global Theory II: Regular supported functions} \label{sec:global_II}

In this section, we set back to our notation in Section ~\ref{sec:global}. So $F$ will be a number field. The main result in this section is Theorem ~\ref{thm:global_II}, which computes the geometric side of Jacquet-Rallis RTF when the test function is regular supported at a place.

\subsection{Global orbital integral} \label{subsec:global_orbital_integral}

Fix an additive character $\psi: F \backslash \bA_F \to \C^\times$, for each place $v$ of $F$, the local component $\psi_v:F_v \to \C^\times$ of $\psi$ determines measures on various algebraic groups over $F_v$ as discussed in ~\ref{subsubsec:measure_local}. If $F'/F$ is a finite extension, we put $\psi \circ \mathrm{Tr}_{F'/F}$ as the additive character on $\bA_{F'}$ and it can be used to define measures on various local groups over $F'_{w}$ where $w$ is a place of $F'$. For any reductive group $G$ over $F$, we write $\Delta_{G}^*$ for the leading Laurant coefficient of the Artin-Tate $L$-function $L_G(s)$ associated to $G$ (See ~\cite{BPCZ}*{2.3.2}). For $m>0$, the Tamagawa measure on $\GL_m(\bA_{F'})$ is given by
\[
    \rd g = \Delta_{\GL_{m,F'}}^{*,-1}  \prod_w \rd g_w.
\]
In this case, $\Delta_{\GL_{m,F'}} = \zeta^*_{F'}(1)\zeta_{F'}(2)\cdots \zeta_{F'}(m)$ where $\zeta^*_{F'}(1)$ denotes the residue of $\zeta_F(s)$ (the completed Dedekind zeta function) at $s=1$, and the product runs through the places of $F'$. 

We consider the three cases for a group $G$ acting on $X$ as in ~\eqref{eq:four_cases}, excluding the case $(G,X)=(\GL_n,\rS)$. So the notation $G$ and $X$ will mean any pair $(G,X)$ in these three cases. Let $\xi: \bA_E^\times \to \C^\times$ be a strictly unitary character. For a place $v$ of $F$, an element $\gamma \in X_{\mathrm{reg}}(F)$ and $f \in \cS(X(F_v))$, in Section ~\ref{sec:local}, we have defined the local factor $L_\gamma(s,\xi_v)$ and regularized orbital integral $I_\gamma(f,\xi_v,s)$ (If $G=\rH_1 \times \rH_2$, it depends on a choice of a norm 1 element $\sigma$ in $E_v$, which we assume it comes from a global element, and it should be denoted by $I_\gamma^\sigma$, similar for the notations later). We will denote them by $L_{\gamma,v}$ and $I_{\gamma,v}$ here. Using the compatibility of descent and base change (see the last sentences of Subsection ~\ref{subsec:descent}), the local factor $L_{\gamma,v}$ is the local component of a global $L$-factor $L_\gamma(s,\xi)$ 
 (which is defined by the global analogue of ~\eqref{eq:L_central_rss}). For $f \in \cS(X(F_v))$, we put
\[
    I_{\gamma,v}^\natural(f,\xi_v,s) := \frac{I_{\gamma,v}(f,\xi_v,s)}{L_{\gamma,v}(s,\xi_v)}.
\]
For $f \in \cS(X(\bA))$, if $f = \otimes f_v$ is factorizable, by results in Section ~\ref{sec:local}, $I_{\gamma,v}^\natural(f_v,\xi_v,s)=1$ for almost all places $v$, hence we can define
\[
    I^\natural_\gamma(f,\xi,s) :=  \prod_{v} I^\natural_{\gamma,v}(f_v,\xi_v,s).
\]
The definition of $I_\gamma^\natural$ extends by continuity to any $f \in \cS(X(\bA))$. We then define
\begin{equation} \label{eq:I_gamma_global}
     I_\gamma(f,\xi,s) := \Delta_G^{*,-1} L_{\gamma}(s,\xi) I_\gamma^\natural(f,\xi,s).
\end{equation}

By definition, $I_\gamma(f,\xi,s)$ depends only on the value of $f$ on the $G(\bA)$-orbit of $\gamma$ and for any $g \in G(F), I_{\gamma \cdot g}(f) = I_{\gamma}(f)$, and when $X=\rG', G = \rH_1 \times \rH_2$, it depends on the choice of norm one element $\sigma$.

The construction directly generalizes to the product of cases we consider. Fix $a \in \cA(F)$ we, therefore, have $\widetilde{\fh}$ as discussed in Subsection ~\ref{subsec:descent}. Then for any $\varphi \in \cS(\widetilde{\fh}(\bA))$ and regular element $Y \in \widetilde{\fh}(F)$, we can define $I_Y(\varphi,\xi,s)$ in the same way.

\subsection{Main result}

We now state our main result. 
\begin{theorem} \label{thm:global_II}
    Let $f = f_v  f^v \in \cS(\rG'(\bA))$ with $f_v \in \cS(\rG'(F_v)), f^v \in \cS(\rG'(\bA^v))$ and $\mathrm{supp}(f_v) \subset \rG'_{\mathrm{reg}}(F_v)$, then for $b \in \rB(F)$, we have
    \begin{equation} \label{eq:global_II_main}
      I_b(f,\xi,s) = \sum_{\gamma} I^\sigma_\gamma(f,\xi,s).
    \end{equation}    
    Where $I_b$ is the distribution from relative trace formula in Theorem ~\ref{thm:coarse Bessel} (2), and $I^\sigma_\gamma$ is defined in $\eqref{eq:I_gamma_global}$, the sum runs through all the representative of $\rH_1(F) \times \rH_2(F)$ orbits in $\rG'_b(F) \cap \rG'_{\mathrm{reg}}(F)$.
\end{theorem}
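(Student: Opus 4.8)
The plan is to deduce Theorem~\ref{thm:global_II} from the already-established case of $\rG'_\pm$-supported test functions (Theorem~\ref{thm:main}), by an induction on the strata of the GIT quotient $\rB$, combined with the local stability results of Proposition~\ref{prop:local_group}. The starting point is that both sides of \eqref{eq:global_II_main} are, for fixed $f^v$ and $b$, continuous distributions in $f_v \in \cS(\rG'_{\mathrm{reg}}(F_v))$: the left-hand side by Theorem~\ref{thm:coarse Bessel}(2) (and the proof of Theorem~\ref{thm:main} showing $I_b$ is given by the convergent orbital expansion once the support is regular), and the right-hand side because the sum over $\gamma$ is finite (Corollary~\ref{coro:regular_orbits}) and each $I^\sigma_\gamma(f,\xi,s)$ is continuous in $f_v$ by Proposition~\ref{prop:local_group}(2). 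So it suffices to prove the identity on a spanning set of $\cS(\rG'_{\mathrm{reg}}(F_v))$, or more precisely to reduce to test functions supported near a single regular orbit.

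First I would treat the case $\mathrm{supp}(f_v)\subset \rG'_+(F_v)$ or $\rG'_-(F_v)$: here Theorem~\ref{thm:main}(3) already gives $I_b(f,\xi,s) = $ the convergent integral $\int_{\rH_1(\bA)}\int_{\rH_2(\bA)} f(h_1^{-1}\gamma h_2)\lvert\det h_1\rvert^s\xi(h_1)\eta(h_2)$ for $\gamma \in O^\pm(b)$ and $s$ in the appropriate half-plane, while Lemma~\ref{lem:local_group_central_rss_description} identifies this same integral with $I^\sigma_\gamma(f,\xi,s)$ in that half-plane; and by Lemma~\ref{lem:unique orbit} $O^\pm(b)$ is the unique regular orbit in $\rG'_b(F)\cap\rG'_\pm(F)$, so the right-hand sum of \eqref{eq:global_II_main} that could contribute is exactly $I^\sigma_\gamma$ — but one must check the other ($\rG'_\mp$ and non-$\rG'_\pm$) regular orbits $\gamma'$ in $\rG'_b(F)$ contribute $0$, which holds because $f$ vanishes on their $\rH_1(\bA)\times\rH_2(\bA)$-orbits (as those orbits are disjoint from $\rG'_+(\bA)\supset\mathrm{supp}(f)$ at the place $v$) together with the support property in Proposition~\ref{prop:local_group}(5). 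Both sides are then meromorphic in $s$ on $\C\setminus\{-1,1\}$ and agree on a half-plane, hence everywhere.

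For the general regular-supported case, I would localize: cover $\rG'_{\mathrm{reg}}(F_v)$ by the two opens $\rG'_+(F_v)$ and $\rG'_-(F_v)$ — recall $\rG'_+\cup\rG'_- \subset \rG'_{\mathrm{reg}}$ but the inclusion is strict for $n\ge 2$, so this is the crux. The fix is to work one orbit at a time using the descent/Luna-slice picture of Subsection~\ref{subsec:descent}: for a regular orbit $\cO$ in $\rG'_b(F)$, pass via the Cayley map $\fc_\sigma$ and the isomorphism $\gl_{n+1}\cong\widetilde{\gl_n}\times F$ to $\widetilde{\gl_n}$, where by Proposition~\ref{prop:regular_orbit_tilde_gl_n} the orbit has a ``type'' $\varepsilon\in\cE$ and its representative $\iota_\fh(X_0,Z^{\varepsilon_1}_{\alpha_1},\dots,Z^{\varepsilon_k}_{\alpha_k})$ is built from central pieces $Z^{\pm}_{\alpha_i}$ lying in $(\widetilde{\gl_{n_i}})_{\pm}$. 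On each factor the mixed $\pm$ orbital integrals are handled by Proposition~\ref{prop:local_central} and the infinitesimal trace formula of Theorem~\ref{thm:main_infinitesimal} applied to the product group $\prod H_i$; the étale descent diagram \eqref{diag:Luna slice} then transports this back to $\rG'$. Concretely: choose $f_v$ supported in a small $G(F_v)$-invariant neighbourhood of the orbit $\cO$ (using $u\circ q$ for a bump function $u$ near $b$), apply the descent bijection \eqref{eq:vanishing_of_Galois_cohomology} between $H(F)$-orbits on $\widetilde{\fh}_{a_H}$ and $\GL_n(F)$-orbits on $\widetilde{\gl_n}_{,a}$ to rewrite the geometric term $I_b$ as a product of infinitesimal terms on the $H_i$, and match each factor with $I^\sigma_\gamma$ via the definition \eqref{eq:definition_I_X_on_Omega}–\eqref{eq:I_gamma_global}.

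The main obstacle, as anticipated, is the mismatch $\rG'_+\cup\rG'_-\subsetneq\rG'_{\mathrm{reg}}$: the clean global input (Theorem~\ref{thm:main}) only covers the $\pm$-supported case, so for a general regular orbit — one of ``mixed type'' $\varepsilon$ with both $+$ and $-$ appearing — there is no single global trace formula identity to invoke directly, and one genuinely has to run the descent and invoke the product/infinitesimal form of the trace formula (Remark~\ref{rmk:trace_formula_for_tilde_gl_n} and Theorem~\ref{thm:main_infinitesimal}) on the Levi-type subgroup $H^0=\prod H_i$, where each central factor can separately be put in ``$+$'' or ``$-$'' position. Keeping track of the $L$-factors $L_\gamma(s,\xi)$ and the Tamagawa-measure constants $\Delta_G^*$ through the étale descent — so that the normalized integrals $I^\natural$ multiply correctly and the unramified computation of Proposition~\ref{prop:local_general}(4) is respected at almost all places — is the bookkeeping that makes this delicate, but it is routine given the local results already in hand. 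Finally, meromorphic continuation in $s$ and the passage from the factorizable $f$ to general $f\in\cS(\rG'(\bA))$ follow by the continuity and density arguments above, completing the proof.
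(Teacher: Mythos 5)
Your proposal follows essentially the same strategy as the paper: pass to the Lie algebra via the Cayley transform, descend at a fixed class $a$ to the product $\widetilde{\fh}=\widetilde{\fh_0}\times\widetilde{\fh^0}$, split each central factor into its $+$ and $-$ pieces by a $\GL_n(F_v)$-invariant partition of unity, and apply the product form of Theorem~\ref{thm:main_infinitesimal} (Remark~\ref{rmk:trace_formula_for_tilde_gl_n}) factor by factor. Two points need sharpening. The descent of the \emph{global} trace-formula distribution $I_a(\varphi,\xi,s)$ to $\Delta_{\GL_n}^{*,-1}\Delta_H^*\, I_{a_H}(\varphi_{H,s},\xi,s)$, and of $I_b$ to $I_{\fc_\sigma(b)}$, is not the ``routine bookkeeping'' with Tamagawa constants you describe: it is the content of the singular-descent theorems of Chaudouard--Zydor (\cite{CZ21}*{Th\'eor\`eme 6.4.6.1} and \cite{CZ21}*{Lemme 14.4.4.2}), which the paper invokes as the load-bearing step. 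Your sketch correctly locates where this input must enter, so once it is named the argument closes. Second, a bump $u\circ q$ with $u$ supported near $b\in\rB(F_v)$ cuts out a $\GL_n(F_v)$-invariant neighbourhood of the \emph{whole fibre} $\rG'_b(F_v)$, not of a single regular orbit; accordingly one handles all regular orbits over $b$ (indexed by $\varepsilon\in\cE$ as in Proposition~\ref{prop:regular_orbit_tilde_gl_n}) simultaneously after descent, rather than ``one orbit at a time'' as your prose suggests.
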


We will derive Theorem ~\ref{thm:global_II} from a Lie algebra version of it.

\begin{proposition} \label{prop:global_II_Lie}
    Let $\fg$ denote either $\gl_{n+1}$ or $\widetilde{\gl_n}$. Let $\varphi = \varphi_v  \varphi^v \in \cS(\fg(\bA))$ with $\varphi_v \in \cS(\fg(F_v)), \varphi^v \in \cS(\fg(\bA^v))$ and $\mathrm{supp}(\varphi_v) \subset \fg_{\mathrm{reg}}(F_v)$, then for $a \in (\fg/\GL_n)(F)$, we have
    \begin{equation} \label{eq:global_II_main_Lie}
      I_a(\varphi,\xi,s) = \sum_{X} I_X(\varphi,\xi,s).
    \end{equation}    
    Where $I_a$ is the distribution from relative trace formula (see Theorem ~\ref{thm:coarse_Lie} and Remark ~\ref{rmk:trace_formula_for_tilde_gl_n}) and $I_X$ is defined in $\eqref{eq:I_gamma_global}$, the sum runs through all the representative of $\GL_n(F)$ orbits in $\fg_{a}(F) \cap \fg_{\mathrm{reg}}(F)$.
\end{proposition}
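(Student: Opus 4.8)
The plan is to prove Proposition~\ref{prop:global_II_Lie} and then deduce Theorem~\ref{thm:global_II} from it. The cases $\fg=\gl_{n+1}$ and $\fg=\widetilde{\gl_n}$ of the Proposition are equivalent via the identifications $\gl_{n+1}\cong\widetilde{\gl_n}\times F$ and $\cB\cong\cA\times F$ of Subsection~\ref{subsec:symmetric space S}, so one may work with $\fg=\widetilde{\gl_n}$; and Theorem~\ref{thm:global_II} then follows from the $\gl_{n+1}$ case by the Cayley--transform reduction already used to define $I^\sigma_\gamma$: for $\gamma\in\rG'_{\mathrm{reg}}(F)$ with image $b$ one passes through $\alpha$ (see \eqref{eq:the_map_alpha}) to $x=\alpha(\gamma)\in\rS(F)$ and through $\fc_\sigma$ to $\gl_{n+1}$, matching $f$ with $f^{\gl}_s$ as in \eqref{eq:defi_f^gl}, so that both sides of \eqref{eq:global_II_main} turn into the two sides of \eqref{eq:global_II_main_Lie}, the bijection of regular orbits coming from Lemma~\ref{lem:unique orbit} and \eqref{eq:vanishing_of_Galois_cohomology}.

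\textbf{Stage 1 (localization and descent).} Since $\cA(F)$ is discrete in $\cA(\bA)$, I would choose a $\GL_n$-invariant open $\Omega=q^{-1}(\omega)$ around the fiber $\fg_a(\bA)$ with $\omega\cap\cA(F)=\{a\}$; splitting $\varphi=\varphi\cdot(u\circ q)+\varphi\cdot(1-u\circ q)$ for a cutoff $u$ equal to $1$ near $a$, the second summand vanishes near $\fg_a(\bA)$ and so contributes $0$ to $I_a$ (directly from the definition of the modified kernel $K^T_{\varphi,a}$) and to each $I_X$ (Proposition~\ref{prop:local_general}(5)); hence we may assume $\supp(\varphi)\subset\Omega$. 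Shrinking $\omega$ so that $\iota_\cA\colon\cA'_H\to\cA$ of \eqref{diag:Luna slice} identifies a neighbourhood $\omega_H$ of $a_H$ with $\omega$, the adelic Luna slice $\Omega_H\times^{H(\bA)}\GL_n(\bA)\cong\Omega$ transfers $\varphi$ to $\varphi_{H,s}\in\cS(\widetilde{\fh}(\bA))$ by the global analogue of \eqref{eq:varphi'}--\eqref{eq:varphi_H}, place by place, with the characters $\xi=\prod\xi_i$, $\eta=\prod\eta_i$ on $H$. The crucial input here is the \emph{global descent identity} $I_a(\varphi,\xi,s)=I_{a_H}(\varphi_{H,s},\xi,s)$, where the right side refers to the product infinitesimal RTF on $\widetilde{\fh}=\prod_{i=0}^k\widetilde{\fh_i}$ (Remark~\ref{rmk:trace_formula_for_tilde_gl_n}); this is the global counterpart of the local descent of Section~\ref{sec:local}, proved by matching $K^T_{\varphi,a}$ with $K^T_{\varphi_{H,s},a_H}$ through the slice using compatibility of \eqref{diag:Luna slice} with the Rankin--Selberg parabolics. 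Under the same slice, Proposition~\ref{prop:regular_orbit_tilde_gl_n} identifies the regular $\GL_n(F)$-orbits in $\fg_a(F)$ with tuples $(X_0,X_1,\dots,X_k)$, $X_0$ the unique orbit in $\widetilde{\fh_0}_{,a_0}(F)$ and $X_i\in\{O^+(a_i),O^-(a_i)\}$, compatibly with the definition of $I_X$; taking $\varphi_{H,s}$ factorizable over the $\widetilde{\fh_i}$ (the general case by continuity), it remains to evaluate the $a_i$-distributions on each $\widetilde{\fh_i}$.

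\textbf{Stage 2 (the components).} When $a_0$ is regular semisimple, $\widetilde{\fh_0}_{,a_0}$ is a single closed orbit with trivial stabilizers meeting no proper Rankin--Selberg Levi, so $K^T_{\varphi_0,a_0}$ is the plain kernel $\sum_{\delta\in\GL_r(F)}\varphi_0(X_0\cdot\delta g)$ and $I_{a_0}(\varphi_0,\xi,s)=I_{X_0}(\varphi_0,\xi,s)$, absolutely convergent for all $s$. When $a_i$ ($i\ge1$) is central, with $Z^\pm:=Z^\pm_{\alpha_i}$ the two regular orbits in $\widetilde{\fh_i}_{,a_i}$ (Lemma~\ref{lem:regular_nilpotent_orbit}), the key observation is that for a test function whose local component is regular supported, $K_{\varphi_i,P,a_i}=0$ unless $P$ is standard or antistandard: if $P$ is not standard then $\delta^+$ vanishes identically on the Lie algebra of $P$ (the relevant $v$-coordinates are forced to be $0$, exactly as in the proof of Theorem~\ref{thm:main}(1)), and $\delta^-$ vanishes if $P$ is not antistandard, whereas over a central point a regular element lies in $\widetilde{\fh_i}_{,+}\cup\widetilde{\fh_i}_{,-}$. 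Decomposing $K^T_{\varphi_i,a_i}=K^{T,+}_{\varphi_i,a_i}+K^{T,-}_{\varphi_i,a_i}$ (with the $P=\GL_{n_i}$-term split via $O^+(a_i)\sqcup O^-(a_i)$), the $+$-summand involves only standard parabolics and only the values of $\varphi_i$ on the $O^+$-locus, so the estimates in the proof of Theorem~\ref{thm:main_infinitesimal} for $\fg_+$-supported functions apply verbatim and identify its pure polynomial term in $T$ with $I_{Z^+}(\varphi_i,\xi,s)$ (absolutely convergent for $\mathrm{Re}(s)$ very negative, then meromorphically continued), and symmetrically for the $-$-summand; adding and continuing meromorphically gives $I_{a_i}(\varphi_i,\xi,s)=I_{Z^+}(\varphi_i,\xi,s)+I_{Z^-}(\varphi_i,\xi,s)$.

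\textbf{Stage 3 (assembling).} Combining Stages 1--2 for a factorizable $\varphi_{H,s}=\bigotimes_i\varphi_i$,
\[
\sum_X I_X(\varphi,\xi,s)=I_{X_0}(\varphi_0,\xi,s)\prod_{i=1}^k\bigl(I_{Z^+}(\varphi_i,\xi,s)+I_{Z^-}(\varphi_i,\xi,s)\bigr)=I_{X_0}(\varphi_0,\xi,s)\prod_{i=1}^k I_{a_i}(\varphi_i,\xi,s)=I_{a_H}(\varphi_{H,s},\xi,s)=I_a(\varphi,\xi,s),
\]
and the general case follows by density of factorizable functions and continuity of all distributions involved, which then yields Theorem~\ref{thm:global_II} as explained above. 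I expect the main obstacle to be Stage 1 — making the global descent identity $I_a(\varphi,\xi,s)=I_{a_H}(\varphi_{H,s},\xi,s)$ rigorous, i.e.\ tracking the modified kernels and the truncation parameters $T$ through the Luna slice; a secondary nuisance is the convergence/continuation bookkeeping in Stage 2, where the $+$-part and $-$-part of the kernel converge only for $\mathrm{Re}(s)$ very negative, resp.\ very positive, so the final identity of meromorphic functions must be obtained by continuation from these disjoint ranges.
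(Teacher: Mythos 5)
Your proof takes essentially the same route as the paper's: localize around the fiber of $a$ with a cutoff $u\circ q$, descend through the Luna slice to $\widetilde{\fh}=\prod_i\widetilde{\fh_i}$, factorize, and treat each central component $a_i$ by splitting into $O^+(a_i)$- and $O^-(a_i)$-parts handled by Theorem~\ref{thm:main_infinitesimal} on the disjoint half-planes $\cH_{<-1}$ and $\cH_{>1}$, then patching by meromorphic continuation.

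Two remarks. First, the ``main obstacle'' you flag in Stage~1 — the global descent identity relating $I_a(\varphi,\xi,s)$ to $I_{a_H}(\varphi_{H,s},\xi,s)$ — is not reproved in the paper: it is cited from \cite{CZ21}*{Th\'eor\`eme 6.4.6.1} (stated there for $\xi=1$, $s=0$, but the same proof applies), together with the compatibility of descent and base change recorded at the end of Subsection~\ref{subsec:descent}; with this reference the whole argument closes. Do keep track of the Tamagawa normalizations: the identity reads $I_a(\varphi,\xi,s)=\Delta_{\GL_n}^{*,-1}\Delta_H^*\,I_{a_H}(\varphi_{H,s},\xi,s)$, and there is a matching factor $I_{X_H}(\varphi_{H,s},\xi,s)=\Delta_{\GL_n}^{*,-1}\Delta_H^*\,I_X(\varphi,\xi,s)$ coming from the definition \eqref{eq:I_gamma_global}, so the constants cancel — your sketch omits them, and without them the intermediate equalities are off. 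Second, in Stage~2 the paper's Lemma~\ref{lem:global_II_rss_central} works with a smooth partition of unity $\alpha_0+\alpha_++\alpha_-=1$ on $\widetilde{\gl_n}_{,\mathrm{reg}}(F_v)$ subordinate to the cover by $\widetilde{\gl_n}_{,+}(F_v)$, $\widetilde{\gl_n}_{,-}(F_v)$ and $(\widetilde{\gl_n}_{,\mathrm{reg}}\setminus\widetilde{\gl_n}_{,a})(F_v)$, writing $\varphi=\alpha_+\varphi+\alpha_-\varphi+\alpha_0\varphi$ and applying Theorem~\ref{thm:main_infinitesimal} to each piece by linearity of the modified kernel. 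Your kernel-level split is in fact equivalent (for $P$ standard, $\delta^-$ vanishes identically on $\bigl(\fm_P(F)\cap\fg_a(F)\bigr)+\fn_P(\bA)$, so your $K^{T,+}$ is literally $K^T_{\alpha_+\varphi,a}$), but the partition-of-unity phrasing makes it immediate that each piece is a modified kernel of a Schwartz function to which Theorem~\ref{thm:main_infinitesimal} applies — the step you appeal to only informally by ``the estimates apply verbatim.'' Your deduction of Theorem~\ref{thm:global_II} via the Cayley transform is correct; the paper uses \cite{CZ21}*{Lemme 14.4.4.2} there.
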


Fix $a \in \cA(F)$. By the descent construction in Subsection ~\ref{subsec:descent}, there is $\widetilde{\fh}$ which is a product of $\widetilde{\gl_{n_i,F_i}}$ of smaller size such that there is $a_H \in \cA_H'$ maps to $a$ under the \'{e}tale map $\iota_\cA:\cA_H' \to \cA$. We first show a version of Proposition ~\ref{prop:global_II_Lie} for $a_H$.

\begin{lemma} \label{lem:global_II_rss_central}
    Let $\varphi = \varphi_v  \varphi^v \in \cS(\widetilde{\fh}(\bA))$ with $\varphi_v \in \cS(\widetilde{\fh}(F_v)), \varphi^v \in \cS(\widetilde{\fh}(\bA^v))$ and $\mathrm{supp}(\varphi_v) \subset \widetilde{\fh}_{\mathrm{reg}}(F_v)$, then we have
    \begin{equation} \label{eq:global_II_main_Lie}
      I_{a_H}(\varphi,\xi,s) = \sum_{X} I_X(\varphi,\xi,s).
    \end{equation}    
    Where $I_{a_H}$ is the distribution from relative trace formula (see Remark ~\ref{rmk:trace_formula_for_tilde_gl_n}), the sum runs through all the representative of $H(F)$ orbits in $\widetilde{\fh}_{a_H}(F) \cap \widetilde{\fh}_{\mathrm{reg}}(F)$.
\end{lemma}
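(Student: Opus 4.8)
The plan is to reduce to a single factor and then to invoke Theorem~\ref{thm:main_infinitesimal} after localising near the fibre. Since both sides of~\eqref{eq:global_II_main_Lie} are continuous in $\varphi$ (the left side by the product version of Theorem~\ref{thm:coarse_Lie} in Remark~\ref{rmk:trace_formula_for_tilde_gl_n}, the right side being a \emph{finite} sum of the continuous distributions $I_X$), and since $\widetilde{\fh}_{\mathrm{reg}}=\prod_{i=0}^{k}\widetilde{\fh_i}_{,\mathrm{reg}}$ (regularity for a product action is regularity in each factor), it suffices to treat a factorisable $\varphi=\varphi_0\otimes\cdots\otimes\varphi_k$, where at one chosen place $w$ of $F_i$ above $v$ we have $\supp(\varphi_{i,w})\subset\widetilde{\gl_{n_i}}_{,\mathrm{reg}}(F_{i,w})$ (here I identify $\widetilde{\fh_i}$ with $\widetilde{\gl_{n_i}}$ over $F_i$, as the relative trace formula for $\mathrm{Res}_{F_i/F}$ is the one over $F_i$). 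All of Zydor's ingredients factor over $\widetilde{\fh}=\prod_i\widetilde{\fh_i}$, so $I_{a_H}(\varphi,\xi,s)=\prod_i I_{a_i}(\varphi_i,\xi,s)$; on the other hand, by the product structure of the $L$-factors~\eqref{eq:L_central_rss} and of the local orbital integrals, the right-hand side of~\eqref{eq:global_II_main_Lie} equals $I_{X_0}(\varphi_0,\xi,s)\cdot\prod_{i=1}^{k}\bigl(I_{O^+(a_i)}(\varphi_i,\xi,s)+I_{O^-(a_i)}(\varphi_i,\xi,s)\bigr)$, with $X_0$ spanning the unique orbit of $\widetilde{\fh_0}_{,a_0}(F)$. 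Thus the lemma reduces to: $I_{a_0}(\varphi_0,\xi,s)=I_{X_0}(\varphi_0,\xi,s)$ when $a_0$ is regular semisimple, and $I_{a_i}(\varphi_i,\xi,s)=I_{O^+(a_i)}(\varphi_i,\xi,s)+I_{O^-(a_i)}(\varphi_i,\xi,s)$ when $a_i$ is central.

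The key technical input for both identities is that the geometric distribution $I_a$ \emph{depends only on the restriction of $\varphi$ to the fibre} $\widetilde{\gl_m}_{,a}(\bA)$: in $K_{\varphi,P,a}$ the function $\varphi$ is evaluated only at $(M+N)\cdot g$ with $M\in\widetilde{\fm}_P(F)$, $q(M)=a$, $N\in\widetilde{\fn}_P(\bA)$, $g\in\GL_m(\bA)$, and a direct computation with the invariants~\eqref{eq:gl_GIT_quotient} shows $q((M+N)\cdot g)=q(M)=a$ (the $n\times n$ block of $M+N$ is block upper triangular with the same diagonal blocks as that of $M$, and the moments $uA^iv$ and the last entry are unchanged). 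Hence multiplying $\varphi$ by $u\circ q$ with $u\in C_c^\infty$ supported near $a$ and $u(a)=1$ changes neither $I_a$ nor any $I_X$ with $X$ in the fibre. For $a_0$ regular semisimple, after such a localisation $\supp(\varphi_{0,w})$ lies in $\widetilde{\fh_0}_{,\mathrm{rs}}(F_{w})\subset\widetilde{\fh_0}_{,+}(F_{w})$, so Theorem~\ref{thm:main_infinitesimal}(3) gives $I_{a_0}(\varphi_0,\xi,s)=\int_{\GL_{n_0}(\bA)}\varphi_0(X_0\cdot g)|\det g|^s\xi(g)\eta(g)\,dg$, which by comparison with~\eqref{eq:I_gamma_global} (using Proposition~\ref{prop:local_rss} and $L_{X_0}=1$) is $I_{X_0}(\varphi_0,\xi,s)$, first on $\cH_{<-1}$ and then by meromorphic continuation.

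For $a_i$ central I would again localise $\varphi_{i,w}$ near the fibre, and then choose a Schwartz partition of unity on $\widetilde{\gl_m}_{,\mathrm{reg}}(F_{w})$ subordinate to the open cover $\{\widetilde{\gl_m}_{,+}(F_{w}),\,\widetilde{\gl_m}_{,-}(F_{w}),\,W\}$, where $W$ is an open neighbourhood of the closed set $\bigl(\widetilde{\gl_m}_{,\mathrm{reg}}\setminus(\widetilde{\gl_m}_{,+}\cup\widetilde{\gl_m}_{,-})\bigr)(F_{w})$ disjoint from the fibre $\widetilde{\gl_m}_{,a_i}(F_{w})$; such a $W$ exists because on the central fibre the regular locus is exactly $O^+(a_i)\sqcup O^-(a_i)\subset\widetilde{\gl_m}_{,+}\cup\widetilde{\gl_m}_{,-}$ (Lemma~\ref{lem:regular_nilpotent_orbit}), so the two closed sets are disjoint. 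Writing $\varphi_{i,w}$ accordingly as a sum of three pieces: the $W$-piece contributes $0$ to each of $I_{a_i}$, $I_{O^+(a_i)}$, $I_{O^-(a_i)}$ (its support avoids the fibre, resp.\ the two orbits); the $\widetilde{\gl_m}_{,+}$-piece satisfies $I_{a_i}=I_{O^+(a_i)}$ on it by Theorem~\ref{thm:main_infinitesimal}(3), while $I_{O^-(a_i)}$ vanishes on it since $O^-(a_i)\subset\{\delta^+=0\}$; and symmetrically for the $\widetilde{\gl_m}_{,-}$-piece. Summing yields $I_{a_i}(\varphi_i,\xi,s)=I_{O^+(a_i)}(\varphi_i,\xi,s)+I_{O^-(a_i)}(\varphi_i,\xi,s)$, completing the reduction.

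The main obstacle is exactly the ``locality'' of $I_a$ near the fibre of $a$: this is what lets one bypass the fact that $\widetilde{\gl_m}_{,\mathrm{reg}}\supsetneq\widetilde{\gl_m}_{,+}\cup\widetilde{\gl_m}_{,-}$ for $m\ge 2$ (regular elements outside $\widetilde{\gl_m}_{,\pm}$ do exist, and they degenerate onto central fibres, so one cannot simply split $\varphi_{i,w}$ into $\pm$-supported pieces before localising). Establishing $q((M+N)\cdot g)=q(M)$ for every Rankin--Selberg parabolic, carrying it through the truncation and the passage to the pure polynomial term in $T$, and producing a genuinely Schwartz partition of unity in the archimedean case (via \cite{AG}-type results) are the points requiring care; the remaining steps, including the restriction-of-scalars identifications, are routine bookkeeping.
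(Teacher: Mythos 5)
Your proposal follows the paper's outline essentially exactly: factor $\varphi$ through the product $\widetilde{\fh}=\prod_i\widetilde{\fh}_i$, reduce to a single factor with $a_i$ either regular semisimple or central, and in the central case use a three-piece Schwartz partition of unity on $\widetilde{\gl_m}_{,\mathrm{reg}}(F_v)$ so that Theorem~\ref{thm:main_infinitesimal}(3) handles the $\pm$-supported pieces, while the third piece contributes zero. Your identification of the support property of $I_a$ (ultimately coming from $q((M+N)\cdot g)=q(M)$ for Rankin--Selberg parabolics) as the key input is accurate; the paper invokes it directly as ``$I_a(\cdot,\xi,s)$ is supported on $\widetilde{\gl_n}_{,a}(\bA)$'' to kill the third piece.

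Two places where you make the argument heavier than necessary, without affecting correctness. First, the preliminary localisation by $u\circ q$ is redundant: in the paper the third partition function $\alpha_0$ is chosen with support in the \emph{full} complement $(\widetilde{\gl_n}_{,\mathrm{reg}}\setminus\widetilde{\gl_n}_{,a})(F_v)$ of the fibre (not merely a neighbourhood of the bad set $\widetilde{\gl_m}_{,\mathrm{reg}}\setminus(\widetilde{\gl_m}_{,+}\cup\widetilde{\gl_m}_{,-})$ disjoint from the fibre), and then $\alpha_0\varphi_v$ automatically vanishes on $\widetilde{\gl_n}_{,a}(\bA)$, giving $I_a(\alpha_0\varphi)=0$ with no prior localisation step. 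So the ``obstacle'' you flag --- that regular elements outside $\widetilde{\gl_m}_{,\pm}$ degenerate onto central fibres --- is already resolved by the choice of cover $\{\widetilde{\gl_m}_{,+},\,\widetilde{\gl_m}_{,-},\,\widetilde{\gl_m}_{,\mathrm{reg}}\setminus\widetilde{\gl_m}_{,a}\}$ together with the existence of a subordinate Schwartz partition of unity (cite{AG}). Second, for regular semisimple $a_0$ the paper simply observes that both $I_{a_0}$ and $I_{X_0}$ equal the (absolutely convergent) adelic orbital integral; your route via localisation to the $\mathrm{rs}\subset+$ locus and Theorem~\ref{thm:main_infinitesimal}(3) followed by meromorphic continuation arrives at the same place but with more steps. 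Neither of these is a gap, just superfluous machinery.
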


\begin{proof}
 We can assume $\varphi$ is a pure tensor $\otimes \varphi_v$ and each $\varphi_v$ is of the form $\varphi_v=\otimes_{i=0}^k \varphi_{v,i}$ by continuity, where $\varphi_{v,i} \in \cS(\gl_{n_i}(F_i))$. Put $\varphi_i = \otimes \varphi_{v,i} \in \cS(\widetilde{\gl_{n_i}}(\bA_{F_i}))$, so that $\varphi = \otimes \varphi_i$. Therefore, 
 \[
    I_{a_H}(\varphi,\xi,s) = \prod_{i=0}^k I_{a_i}(\varphi_i,\xi_i,s),
 \]
 and
 \[
    \sum_{X} I_X(\varphi,\xi,s) = \prod_{i=0}^k \left( \sum_{X_i} I_{X_i}(\varphi_i,\xi_i,s)  \right),
 \]
 where $\xi_i := \xi \circ \mathrm{Nm}_{(E \otimes_F F_i)/E}$, and in the sum on the right-hand side, for each $0 \le i \le k$, $X_i$ runs through the regular orbits in $\widetilde{\gl_{n_i,F_i}}_{,a_i}(F_i)$. Therefore up to possibly replacing $F$ by $F_i$, we are reduced to prove Proposition ~\ref{prop:global_II_Lie}, for $\fg=\widetilde{\gl_n}$ and $a$ is either central or regular semisimple. 
 
 If $a$ is regular semisimple, then both sides of ~\eqref{eq:global_II_main_Lie} equals to
 \[
    \int_{\GL_n(\bA)} \varphi(X \cdot g)  \xi(g) \eta(g) \lvert \det g \rvert^s \rd g,
 \]
therefore ~\eqref{eq:global_II_main_Lie} holds. We now assume $a$ is central. By Lemma ~\ref{lem:regular_nilpotent_orbit}, all regular orbits in the fiber of $a$ are $O^+(a)$ and $O^-(a)$. Note that we have the following open cover of $\widetilde{\gl_n}_{,\mathrm{reg}}(F_v)$:
    \[
        \widetilde{\gl_n}_{,\mathrm{reg}}(F_v) = \widetilde{\gl_n}_{,+}(F_v) \cup \widetilde{\gl_n}_{,-}(F_v) \cup (\widetilde{\gl_n}_{,\mathrm{reg}} \setminus \widetilde{\gl_n}_{,a} )(F_v).
    \]
     We choose functions $\alpha_0,\alpha_+,\alpha_- \in C^\infty(\widetilde{\gl_n}_{,\mathrm{reg}}(F_v))$ (a partition of unity), such that
    \begin{itemize}
        \item $\alpha_+ + \alpha_- + \alpha_0 = 1$,
        \item $\mathrm{supp}(\alpha_0) \subset  (\widetilde{\gl_{n}}_{,\mathrm{reg}} \setminus \widetilde{\gl_n}_{,a})(F_v)$, $\mathrm{supp}(\alpha_+) \subset \widetilde{\gl}_{n,+}(F_v), \mathrm{supp}(\alpha_-) \subset \widetilde{\gl}_{n,-}(F_v)$, 
        \item for any $f \in \cS(\widetilde{\gl_n}_{,\mathrm{reg}}(F_v))$ and any $i \in \{0,+,-\}$, $\alpha_i f \in \cS(\widetilde{\gl_n}_{,\mathrm{reg}}(F_v))$.
    \end{itemize}
    The existence of these functions is easy for non-Archimedean $v$ and for $v$ Archimedean, see ~\cite{AG}*{Theorem 4.4.1}.

    For $i \in \{0,+,-\}$, let $\varphi_i \in \cS(\widetilde{\gl_n}(\bA))$ be the function $(\alpha_i \varphi_v) \cdot \varphi^v$. Then since the distribution $I_{a}(\cdot,\xi,s)$ is supported on $\widetilde{\gl_n}_{,a}(\bA)$, we see that
    \begin{equation} \label{eq:proof_of_global_II_0}
        I_{a}(\varphi_0,\xi,s) = 0.
    \end{equation}
    Since $\varphi_+$ satisfies the assumption of Theorem ~\ref{thm:main_infinitesimal}, by part (3) of this theorem, we see that for $s \in \cH_{<-1}$
    \begin{equation} \label{eq:proof_of_global_II_+}
        I_a(\varphi_+,\xi,s) = \int_{\GL_n(\bA)} \varphi_+(X \cdot g) \xi(g) \eta(g) \lvert \det g \rvert^s \rd g = \int_{\GL_n(\bA)} \varphi_+(X \cdot g) \xi(g) \eta(g) \lvert \det g \rvert^s \rd g,
    \end{equation}
    where the integral in ~\eqref{eq:proof_of_global_II_+} is absolutely convergent. That is, the infinite product
    \[
       \Delta_{\GL_n}^{*,-1} \prod_v I_{X_+,v}(\varphi_v,\xi_v,s)
    \]
    is convergent when $s \in \cH_{<-1}$, and is therefore equals to $I_{X_+}(\varphi,\xi,s)$. Since both $I_a(\varphi_+,\xi,s)$ and $I_{X_+}(\varphi,\xi,s)$ are meromorphic in $s$, we see that whenever they are holomorphic, we have $I_a(\varphi_+,\xi,s)=I_{X_+}(\varphi,\xi,s)$. By a similar argument, $I_a(\varphi_-,\xi,s)=I_{X_-}(\varphi_-,\xi,s)$. Together with ~\eqref{eq:proof_of_global_II_0}, we see that
    \[
        I_a(\varphi,\xi,s) = I_a(\varphi_+,\xi,s)+I_a(\varphi_-,\xi,s)+I_a(\varphi_0,\xi,s) = I_{X_+}(\varphi,\xi,s)+I_{X_-}(\varphi,\xi,s),
    \]
    which is exactly the equality we want for the central element $a$.
\end{proof}

Now we give the proof of Proposition ~\ref{prop:global_II_Lie}.

\begin{proof}[Proof of Proposition ~\ref{prop:global_II_Lie}]
    We first show the case when $\fg=\widetilde{\gl_n}$. We will use the notation introduced in Subsection ~\ref{subsec:descent}. 

    Let $\tS$ be a sufficiently large finite set of places of $F$, containing all archimedean places. For any $v \in \tS$, pick an open neighbourhood $\omega_v$ of $a$ in $\cA(F_v)$ and a neighbourhood $\omega_{H,v}$ of $a_H$ in $\cA_H(F_v)$ such that $\omega_v$ and $\omega_{H,v}$ are semi-algebraic if $v$ is Archimedean and $\iota_\cA: \cA_H' \to \cA$ induces an isomorphism $\omega_{H,v} \to \omega_{H}$. Let $\omega_\tS := \prod_{v \in \tS} \omega_v$ and $\omega_{H,\tS} := \prod_{v \in \tS} \omega_{H,v}$. We choose $u \in C_c^\infty(\omega_\tS)$ such that $u(a)=1$. Denote by $\Omega_\tS$ (resp. $\Omega_{H,\tS}$) the preimage of $\omega$ (resp. $\omega_{H,v}$) under the quotient map $q$ (resp. $q_H$).

    Since both sides of ~\eqref{eq:global_II_main_Lie} only depend on the value of $\varphi$ on $\widetilde{\gl_n}_{,a}(\bA)$. After replacing $\varphi$ by $\varphi \cdot (u \circ q)$ and enlarging $\tS$, we can assume $\varphi = \varphi_\tS \otimes 1_{\widetilde{\gl_n}(\cO_F^\tS)}$, where $\varphi_\tS \in \cS(\widetilde{\gl_n}(F_\tS))$. We can choose $\varphi' \in \cS(\Omega_{H,\tS} \times \GL_n(F_\tS))$ (see Subsection ~\ref{subsec:local_I_general}) such that for any $(X,g) \in \Omega_{H,\tS} \times \GL_n(F_\tS)$, we have
    \[
        \varphi(\iota_{\fh}(X) \cdot g) = \int_{H_\tS(F)} \varphi'(Xh,\iota_H(h)^{-1}g) \rd h.
    \]
    For any $X_H \in \Omega_{H,\tS}$ and $s \in \C$, we put
    \[
        \varphi_{H,\tS,s}(X_H) = \int_{\GL_n(F_\tS)} \varphi'(X_H,g) \xi_\tS(g) \eta_\tS(g) \lvert \det g \rvert^s \rd g.
    \]
    Define $\varphi_{H,s} \in \cS(\widetilde{\fh}(\bA))$ by $\varphi_{H,s} = \varphi_{H,\tS,s} \otimes 1_{\widetilde{\fh}(\cO_F^\tS)}$. Up to enlarging $\tS$, by ~\cite{CZ21}*{Th\'eor\`eme 6.4.6.1}, we have
    \[
        I_a(\varphi,\xi,s) = \Delta_{\GL_n}^{*,-1} \Delta_{H}^{*} I_{a_H}(\varphi_{H,s},\xi,s).
    \]
    (In \emph{loc. cit}, this result is only proved for the case when $\xi$ is trivial and $s=0$, but the same proof works for general case). By our construction of regularized orbital integral $I_X$ in Subsection ~\ref{subsec:local_I_general}, for any regular element $X_H \in \widetilde{\fh}(F)$, we have
    \[
        I_{X_H}(\varphi_{H,s},\xi,s) = \Delta_{\GL_n}^{*,-1} \Delta_{H}^{*}I_X(\varphi,\xi,s).
    \]
    Therefore, Proposition ~\ref{prop:global_II_Lie} follows from Lemma ~\ref{lem:global_II_rss_central} and Proposition ~\ref{prop:regular_orbit_tilde_gl_n}.
\end{proof}

Now we can prove Theorem ~\ref{thm:global_II}.
\begin{proof}[Proof of Theorem ~\ref{thm:global_II}]
    Choose $\sigma \in E$ such that $\mathrm{Nm}_{E/F}(\sigma)=1$ and $b \in \rB^\sigma(F)$. So Cayley transform $\fc_\sigma(b) \in \cB(F)$ is defined.

    Choose a sufficiently large set of finite places $\tS$ of $F$. We assume that $f$ is of the form $f_\tS \otimes 1_{\rG'(\cO_F^{\tS})}$, where $f_\tS \in \cS(F_\tS)$, and we define $f^{\gl}_{\tS,s}$ by a similar formula as in ~\eqref{eq:defi_f^gl}, replacing $F$ there by $F_\tS$, and we put $f^{\gl}_s := f^{\gl}_{\tS,s} \otimes 1_{\widetilde{\gl_n}(\cO^{\tS})} \in \cS(\widetilde{\gl_n}(\bA))$. When $\tS$ is sufficiently large, by ~\cite{CZ21}*{Lemme 14.4.4.2}, we have
    \[
        I_b(f,\xi,s) = \Delta_{\rH_1}^{*,-1} \Delta_{\rH_2}^{*,-1} \Delta_{\GL_n}^* I_{\fc_\sigma(b)}(f^{\gl}_s).
    \]
    Let $\gamma \in \rG'(F)$ be regular element, by our construction of $I^\sigma_\gamma(f,\xi,s)$, we have
    \[
        I^\sigma_\gamma(f,\xi,s) = \Delta_{\rH_1}^{*,-1} \Delta_{\rH_2}^{*,-1} \Delta_{\GL_n}^* I_{\fc_\sigma(\alpha(\gamma))}(f^{\gl}_s,\xi,s).
    \]
    Therefore, the result follows from Proposition ~\ref{prop:global_II_Lie}.
\end{proof} 

\section{Local Theory II: Local singular transfer} \label{sec:localII}

  We retain the notation in Section ~\ref{sec:local} so that $F$ is a local field of characteristic 0 and $E/F$ is a quadratic \'{e}tale algebra. Recall that in Section \ref{sec:local}, we have defined, for any $f \in \cS(\rG'(F))$, and any $\gamma \in \rG'_{\mathrm{reg}}(F)$ and character $\xi:E^\times \to \C^\times$, a regularized orbital integral $I_\gamma^\sigma(f,\xi,s)$ and a normalized version $I_\gamma^{\sigma,\natural}(f,\xi,s)$ which depends on a choice of norm one element $\sigma$ in $E$.

In this section, we will study a transfer relation between $I^\sigma_\gamma(f)$ and semisimple orbital integrals on the unitary groups. We will first introduce orbital integral unitary groups and matching in Subsection ~\ref{subsec:semisimple_orbital_integral} and  ~\ref{subsec:transfer}, we then review the descent on $\widetilde{\fu}^V$ in ~\ref{subsec:descent_on_u}, which is a unitary analogue of what we have done in Subsection ~\ref{subsec:descent}. Then we study the Lie algebra version of local singular transfer in Subsection ~\ref{subsec:local_II_lie_algebra} and the group version in Subsection ~\ref{subsec:local_II_group}.

\subsubsection{Notations and measure} \label{subsubsec:measure_u}

 Let $\cH$ denote the isometric classes of $n$-dimensional $E/F$ Hermitian space. Let $(V,h) \in \cH$, denote the \emph{discriminant} of $V$ by $\mathrm{disc}(V)$, which is the determinant of the gram matrix of $V$ in any basis, regarded as an element of $F^\times/\mathrm{Nm}_{E/F}(F^\times)$. Let $\U(V)$ be the unitary group associated to $V$, and let $V+Ee_0$ be the Hermitian space formed by orthogonal direct sum of $V$ and 1-dimensional $E/F$-Hermitian space $(Ee_0,h_0)$ spanned by $e_0$ with $h_0(e_0,e_0)=1$. Define
\[
    \rG^V = \U(V) \times \U(V \oplus Ee_0), \quad \rH^V = (h,\mathrm{diag}(h,1)), h \in \U(V).
\]
So $\rH^V$ is a subgroup of $\rG^V$ isomorphic to $\U(V)$. The group $\rH^V \times \rH^V$ acts on the right on $\rG^V$ by
\[
    g \cdot (h_1,h_2) := h_1^{-1}gh_2.
\]
For any $\gamma \in \rG^V(F)$, let $(\rH^V \times \rH^V)_\gamma$ denote the stabilizer of $\gamma$ under this action. We call $\gamma \in \rG^V(F)$ semisimple (resp. regular semisimple), if its $\rH^V \times \rH^V$ orbit is Zariski closed (resp. is semisimple and has trivial stabilizer). We call any $\U(V)(F)$ orbits of a semisimple element a \emph{semisimple orbit}. We write $\rG^V_{\mathrm{rs}}$ for the open subset of $\rG^V$ consisting of regular semisimple elements. 

 Let $\fu^V$ be the $F$-subspace of $\mathrm{End}_E(V)$ consisting of self-adjoint operators, so it is $\tau$ times the Lie algebra of $\U(V)$, where $\tau$ is any purely imaginary element (i.e. $\mathrm{Tr}_{E/F} \tau=0$) in $E$. We put $\widetilde{\fu}^V := \fu^V \times \mathrm{Res}_{E/F} V$, which is an $F$-vector space with a right $\U(V)$ action by
\[
    (A,v) \cdot g = (g^{-1}Ag,g^{-1}v).
\]
When we want to emphasize the rule of the Hermitian form $h$ on $V$, we will also write $\widetilde{\fu}^h := \widetilde{\fu}^V$.

For $X \in \widetilde{\fu}^V(F)$, we denote its stabilizer under this action by $\U(V)_X$. An element $X \in \widetilde{\fu}^V(F)$ is called semisimple (resp. regular semisimple) if its orbit under the $\U(V)$ action is Zariski closed (resp. is semisimple and has stabilizer). Let $\widetilde{\fu}^V_{\mathrm{rs}}$ denote the open subset of $\widetilde{\fu}^V$ consisting of regular semisimple elements. 

As in ~\ref{subsubsec:measure_local}, we fix an additive character $\psi$ on $F$. For any $m$-dimensional Hermitian space $V'$ over $F$, choose a finite extension $F'/F$ such that $\U(V')$ is split over $F$. (e.g. $F'=F$ if $E$ is split, and $F'=E$ if $E$ is a field). We have an embedding $\U(V') \hookrightarrow \GL(V')$. We put the measure on $\U(V')(F)$ defined by the $F'$-valued differential form $\prod_{i=1}^m L(i,\eta^i) \det(g_{ij})^{-m} \bigwedge dg_{ij}$ on $\GL(V')(F')$ pulled back to $\U(V')(F)$. We put the product measure on $\rG^V(F)$ and put the Haar measure on $\rH^V(F)$ via the natural identification $\rH(V) \cong \U(V)$. Note that when $E=F \times F$, the measure on $\U(V) \cong \GL_n$ coincides with the measure we gave in ~\ref{subsubsec:measure_local}.

Let $F'/F$ be any finite field extension, we put an additive character $\psi'$ on $F'$ by $\psi \circ \mathrm{Tr}_{F'/F}$. So that we can also put a measure on $\U(V')(F')$ for any $(E \otimes_F F')/F'$ Hermitian space $V'$. For $V \in \cH$, let $\gamma \in \rG^V(F)$ or $X \in \widetilde{\fu}^V(F)$ be a semisimple element, we will see in Subsection ~\ref{subsec:descent_on_u} that the stabilizer of $\gamma$ or $X$ is a reductive group which is isomorphic to finitely many product of $\mathrm{Res}_{F'/F} \U(V')$, we choose the measure on $(\rH^V \times \rH^V)_\gamma(F)$ or $\U(V)_X(F)$ to be the product of the measures we have fixed.

We put a non-degenerate bilinear form on $\widetilde{\fu}^V(F)$ by $\langle (A,v), (B,w) \rangle = \mathrm{Tr}(AB) + \mathrm{Tr}_{E/F} h(v,w)$.  For $\varphi^V \in \cS(\widetilde{\fu}^V(F))$, we then define its Fourier transform $\cF \varphi^V \in \cS(\widetilde{\fu}^V(F))$ to be
\[
    \cF \varphi(Y) = \int_{\widetilde{\fu}(F)} \varphi(X) \psi(\langle X,Y \rangle) \rd Y,
\]
where the measure on $\widetilde{\fu}^V(F)$ is chosen to be self-dual.

\subsection{Orbital integral on the unitary groups}
\label{subsec:semisimple_orbital_integral}

For a semisimple element $\gamma \in \rG^V(F)$ and $f \in \cS(\rG^V(F))$, we define its orbital integral by
\[
    J^V_\gamma(f) = J_\gamma(f) := \int_{(\rH^V \times \rH^V)_\gamma(F) \backslash (\rH^V \times \rH^V)(F) } f(\gamma \cdot h) \rd h.
\]
The semisimplicity of $\gamma$ guarantees $h \mapsto \gamma \cdot h$ is a Schwartz function on the orbit of $\gamma$, so the integral is absolutely convergent. $J^V_\gamma$ only depends on the semisimple orbit $\fo$ of $\gamma$. Let $\fo$ be a semisimple orbit, we write
\[
    J_\fo(f) = J_\gamma(f),
\]
where $\gamma$ is any element in $\fo$. When $\gamma$ is regular semisimple, the orbital integral is then given by
\[
    J_\gamma(f) = \int_{\rH^V(F) \times \rH^V(F)} f(\gamma \cdot h) \rd h.
\]

For a semisimple element $X \in \widetilde{\fu}^V(F)$ and for $\varphi \in \cS(\widetilde{\fu}(F))$, we define its orbital integral by
\[
    J_X(\varphi) = J_X^V(\varphi) := \int_{\U(V)_X(F)\backslash \U(V)(F)} \varphi(X \cdot h) \rd h.
\]
The integral is absolutely convergent for the same reason and depends only on the orbit $\U(V)(F)$ of $X$. Let $\fo$ be a semisimple orbit in $\widetilde{\fu}^V(F)$, we define $J_\fo(\varphi)$ to be $J_X(\varphi)$, for any $X \in \fo$. When $X$ is regular semisimple, this reduces to
\[
    J_X(\varphi) = \int_{\U(V)(F)} \varphi(X \cdot h) \rd h.
\]

For $\lambda \in F$, we write $Z_\lambda$ for the element 
$(\lambda \cdot \mathrm{id},0,0) \in \widetilde{\fu}^V$. The subspace of $\widetilde{\fu}^V$ formed by $Z_\lambda$ is the center of $\widetilde{\fu}^V$ under the $\U(V)$ action. Its image in $\cA$ coincides with the center we considered in Subsection ~\ref{subsec:descent}. By definition, $Z_\lambda$ is semisimple and for $\varphi \in \cS(\widetilde{\fu}^V(F))$ we have
\[
    J_{Z_\lambda}(\varphi) = \varphi(Z_\lambda).
\]

\subsection{Transfer of functions}
\label{subsec:transfer}
The GIT quotient $\rG^V/\rH^V \times \rH^V$ (resp. $\widetilde{\fu}^V(F)/\U(V)$) can be canonically identified with $\rB$ (resp. $\cA$), (see ~\cite{CZ21}*{Lemme 15.1.4.1} and Subsection 8.1). For $\gamma^V = (x,y) \in \rG^V(F)$ and $\gamma = (g_n,g_{n+1}) \in \rG'(F)$, we put $s=(g_n^{-1} g_{n+1})(g_n^{-1} g_{n+1})^{\mathtt{c},-1}$ and $g=x^{-1}y$. Then $\gamma$ and $\gamma^V$ have the same image in $\rB(F)$ if and only if $s$ and $g$ have the same characteristic polynomial and $e_{n+1}^t s^i e_{n+1} = h(g^ie_0,e_0)$ for $1 \le i \le n$. 

If we use the identification ~\eqref{eq:gl_GIT_quotient}, then for $(A,v) \in \widetilde{\fu}^V$, the quotient map $q_V:\widetilde{\fu}^V \to \cA$ can be identified with
\begin{equation} \label{eq:GIT_quotient_unitary_lie_algebra}
    (A,v) \mapsto (\mathrm{Tr}(\wedge^i A)_{1 \le i \le n},h(A^iv,v)_{0 \le i \le i-1}).
\end{equation}
    
So that for $X=(A,v,u) \in \widetilde{\gl_n}(F)$ and $X^V=(A',v') \in \widetilde{\fu}^V$, $X$ and $X^V$ have the same image in $\cA(F)$ if and only if $A'$ and $A$ have the same characteristic polynomial and $uA^iv=h(A‘^{i}v’,v‘)$ for $1 \le i \le n$.

These identifications induce bijections
\[
    \rG_{\mathrm{rs}}'(F)/\rH_1(F) \times \rH_2(F) \longleftrightarrow \bigsqcup_{V \in \cH} \rG_{\mathrm{rs}}^V(F)/\rH^V(F) \times \rH^V(F),
\]
and
\begin{equation} \label{eq:matching_of_rss_lie_algebra}
      \widetilde{\gl_n}_{,\mathrm{rs}}(F)/\GL_n(F) \longleftrightarrow \bigsqcup_{V \in \cH} \widetilde{\fu}^V_{\mathrm{rs}}(F)/\U(V)  
\end{equation}

We say $\gamma \in 
\rG_{\mathrm{rs}}'(F)$ (resp. $X \in \widetilde{\gl_n}(F)$) matches with $\gamma^V \in \rG^V_{\mathrm{rs}}(F)$ (resp. $X^V \in \widetilde{\fu}^V_{\mathrm{rs}}(V)$) if they correspond to each other under the bijection above, equivalently, their image in $\rB(F)$ (resp. $\cA(F)$) are the same.

We define the transfer factors
\[
    \Omega^+,\Omega^-: \rG'_{\mathrm{rs}}(F) \to \C^\times, \text{and }\omega^+,\omega^-: \widetilde{\gl_n}_{,\mathrm{rs}}(F) \to \C^\times
\]
by
\[
    \Omega^\varepsilon(\gamma_n,\gamma_{n+1}) = \begin{cases} \mu(\gamma_n^{-1} \gamma_{n+1}) \mu(x)^{-\frac{n+1}{2}}    \mu(\Delta^\varepsilon(x)) & n \text{ odd} \\ \mu(x)^{-\frac n2}
    \mu(\Delta^\varepsilon(x)) & n \text{ even}
    \end{cases},
\]
where $x = \nu(\gamma_n^{-1} \gamma_{n+1})$ and
\[
    \omega^\varepsilon(X) = \eta(\delta^\varepsilon(X)).
\]
where $\varepsilon \in \{+,-\}$.

We say that $f \in \cS(\rG'(F))$ and $(f^V \in \cS(\rG^V(F)))_{V \in \cH}$ are plus-transfer (resp. minus-transfer) of each other, if whenever the element $\gamma \in \rG'_{\mathrm{rs}}(F)$ and $\gamma^V \in \rG^V_{\mathrm{rs}}(F)$ match, we have
\[
    J^V_{\gamma^V}(f^V) = \Omega^+(\gamma) \cdot I_\gamma(f) \text{ (resp. }J^V_{\gamma^V}(f^V) = \Omega^-(\gamma) \cdot I_\gamma(f)\text{)}
\]
We write $f \xleftrightarrow{+} (f^V)_V$ (resp. $f \xleftrightarrow{-} (f^V)_V$) if they are plus-transfer (resp. minus transfer).

Similarly, we define $\varphi \in \cS(\widetilde{\gl_n}(F)) \xleftrightarrow{+} (\varphi^V \in \cS(\widetilde{\fu}^V(F))_{V \in \cH}$ (resp. $\varphi \xleftrightarrow{-} (\varphi^V)_V$) if whenever $X \in \widetilde{\gl_n}_{,\mathrm{rs}}(F)$ and $X^V \in \widetilde{\fu}_{\mathrm{rs}}^V(F)$ match, we have
\[
    J^V_{X^V}(\varphi^V) = \omega^+(X) \cdot I_X(\varphi) \text{ (resp. }J^V_{X^V}(\varphi^V) = \omega^-(X) \cdot I_X(\varphi)\text{)}
\]

\begin{lemma} \label{lem:relation_between_plus_minus_transfer}
    Given $\varphi \in \cS(\widetilde{\gl_n}(F))$ and $\varphi^V \in \cS(\widetilde{\fu}^V(F))$ for each $V \in \cH$. Then
    \[
        \varphi \xleftrightarrow{+} (\varphi^V) \iff \varphi \xleftrightarrow{-} (\eta(\mathrm{disc}(V)) \cdot \varphi^V)  
    \]
\end{lemma}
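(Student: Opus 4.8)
The plan is to reduce the asserted equivalence to a single pointwise identity between the Lie-algebra transfer factors $\omega^{+}$ and $\omega^{-}$, and then prove that identity by an elementary computation with Hankel (moment) matrices. There is no analytic input, since everything takes place at regular semisimple elements, where the orbital integrals are honest convergent integrals.

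\emph{Reduction.} First I would show that it suffices to prove the following: for every $V\in\cH$ and every matching pair $X\in\widetilde{\gl_n}_{,\mathrm{rs}}(F)$, $X^{V}\in\widetilde{\fu}^{V}_{\mathrm{rs}}(F)$, one has
\[
    \omega^{-}(X)=\eta(\mathrm{disc}(V))\cdot\omega^{+}(X).
\]
Granting this, since $J^{V}_{X^{V}}(\eta(\mathrm{disc}(V))\cdot\varphi^{V})=\eta(\mathrm{disc}(V))\,J^{V}_{X^{V}}(\varphi^{V})$ and $\eta(\mathrm{disc}(V))^{2}=1$, multiplying the relation $J^{V}_{X^{V}}(\varphi^{V})=\omega^{+}(X)\,I_{X}(\varphi)$ through by $\eta(\mathrm{disc}(V))$ turns it into $J^{V}_{X^{V}}(\eta(\mathrm{disc}(V))\cdot\varphi^{V})=\omega^{-}(X)\,I_{X}(\varphi)$, and conversely. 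Ranging over all $V$ and all matching pairs, which by Subsection~\ref{subsec:transfer} exhaust the regular semisimple orbits on both sides, this gives exactly $\varphi\xleftrightarrow{+}(\varphi^{V})\iff\varphi\xleftrightarrow{-}(\eta(\mathrm{disc}(V))\cdot\varphi^{V})$.

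\emph{The pointwise identity.} Write $X=(A,v,u)$ and form the $n\times n$ Hankel matrix $M=\big(uA^{i+j-2}v\big)_{1\le i,j\le n}$, whose determinant is $d_{n}(X)$ from~\eqref{eq:d_r_on_gl}. Factoring $M=UV_{0}$ with $U$ the matrix of rows $u,uA,\dots,uA^{n-1}$ and $V_{0}$ the matrix of columns $v,Av,\dots,A^{n-1}v$ gives $\det M=\delta^{-}(X)\,\delta^{+}(X)$; since $X$ is regular semisimple, $X\in\widetilde{\gl_n}_{,+}\cap\widetilde{\gl_n}_{,-}$, so $\det M\in F^{\times}$ and $\eta(\det M)=\omega^{+}(X)\,\omega^{-}(X)$. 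On the unitary side I would write $X^{V}=(A',v')$ with $A'$ self-adjoint for the Hermitian form $h$ on $V$, and set $M^{V}=\big(h(A'^{i+j-2}v',v')\big)_{1\le i,j\le n}$. Then: (i) $M=M^{V}$, because matching means equality of images in $\cA(F)$, which (see~\eqref{eq:gl_GIT_quotient} and~\eqref{eq:GIT_quotient_unitary_lie_algebra}) forces $uA^{k}v=h(A'^{k}v',v')$ for $0\le k\le n-1$ and equality of the characteristic polynomials of $A$ and $A'$, whence Cayley--Hamilton propagates the equality to all $k\ge 0$; (ii) self-adjointness of $A'$ gives $h(A'^{i+j-2}v',v')=h(A'^{i-1}v',A'^{j-1}v')$, so $M^{V}$ is the Gram matrix, with respect to $h$, of the tuple $(v',A'v',\dots,A'^{n-1}v')$; (iii) since $\det M^{V}=\det M\ne 0$ and $h$ is nondegenerate, this tuple is an $E$-basis of $V$, so $\det M^{V}$ represents $\mathrm{disc}(V)$ in $F^{\times}/\mathrm{Nm}_{E/F}(E^{\times})$, and as $\eta$ is trivial on norms, $\eta(\det M^{V})=\eta(\mathrm{disc}(V))$. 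Combining these with $M=M^{V}$ yields $\omega^{+}(X)\,\omega^{-}(X)=\eta(\mathrm{disc}(V))$; since $\omega^{+}(X)=\pm1$, this is the displayed identity.

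\emph{Main obstacle.} I do not expect a genuine obstacle: the only delicate points are bookkeeping — propagating equality of the $\cA(F)$-invariants to the full Hankel matrices via Cayley--Hamilton (in particular one should double-check the precise index range in the definition of matching in Subsection~\ref{subsec:transfer}), and keeping in mind that $\mathrm{disc}(V)$ is defined only modulo $\mathrm{Nm}_{E/F}(E^{\times})$, which is harmless once $\eta$ is applied.
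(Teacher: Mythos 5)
Your proof is correct and takes essentially the same route as the paper: reduce to the pointwise identity $\omega^{+}(X)\,\omega^{-}(X)=\eta(\mathrm{disc}(V))$ on matching regular semisimple pairs, obtained from the Hankel-matrix factorization $d_{n}(X)=\delta^{+}(X)\,\delta^{-}(X)$, the equality $d_{n}(X)=d_{n}(X^{V})$ under matching, and the Gram-matrix interpretation giving $\eta(d_{n}(X^{V}))=\eta(\mathrm{disc}(V))$. Your remark that one must invoke Cayley--Hamilton to propagate the equality of the moments $uA^{k}v=h(A'^{k}v',v')$ from $0\le k\le n-1$ to the full range $0\le k\le 2n-2$ appearing in the Hankel matrix is a small point the paper leaves implicit, and is a worthwhile clarification.
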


\begin{proof}
    For $(V,h) \in \cH$ and $X^V=(A,v) \in \widetilde{\fu}^V(F)$, define $d_n(X) = \det(h(A^{i+j-2}v,v)_{1 \le i,j \le n})$. For $X \in \widetilde{\gl_n}(F)$, we have defined $d_n(X)$ in ~\eqref{eq:d_r_on_gl}.

    Now $\varphi \xleftrightarrow{+} (\varphi^V)$ if and only if for any matching of regular semisimple elements $X$ and $X^V$, we have
    \[
        J^V_{X^V}(\varphi^V) = \omega^+(X) I_X(\varphi).
    \]
    Note that $d_n(X)= \det(\delta^+(X) \delta^-(X))$, so the condition above is the same as
    \[
        J^V_{X^V}(\varphi^V) \eta(d_n(X))  = \omega^-(X) I_X(\varphi).
    \]
     The matching of $X$ and $X^V$ implies $d_n(X)=d_n(X^V)$. Moreover, $\eta(d_n(X^V))= \eta(\mathrm{disc}(V))$, since $(v,Av,\cdots,A^{n-1}v)$ is a basis for $V$. So the last equation is equivalent to
    \[
        J^V_{X^V}(\varphi^V) \eta(\mathrm{disc}(V)) = \omega^-(X) I_X(\varphi),
    \]
    which is equivalent to $ \varphi \xleftrightarrow{-} (\eta(\mathrm{disc}(V)) \cdot \varphi^V)$.
\end{proof}

We also recall the deep results of Zhang and Xue: 

\begin{theorem}[Zhang \cite{Zhang14},Xue \cite{Xue19}] \label{thm:transfer_Fourier_transform} 
    Given $\varphi \in \cS(\widetilde{\gl_n}(F))$ and $\varphi^V \in \cS(\widetilde{\fu}^V(F))$ for each $V \in \cH$. Then
    \[
        \varphi \xleftrightarrow{+} (\varphi^V) \iff \cF \varphi \xleftrightarrow{+} \left( \eta(\mathrm{disc}(V))^n \varepsilon(\eta,\frac{1}{2},\psi)^{\frac{n(n+1)}{2}} \cF \varphi^V \right)
    \]
\end{theorem}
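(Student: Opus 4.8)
The plan is to follow the strategy of Zhang \cite{Zhang14}, supplemented by Xue \cite{Xue19}, and to reduce the assertion by Harish--Chandra descent and induction on $n$ to the ``central'' fibres, where it becomes a computation with local functional equations. Both sides of the claimed equivalence are conditions on the families of regular semisimple orbital integrals $\gamma \mapsto I_\gamma(\varphi)$ and $X^V \mapsto J^V_{X^V}(\varphi^V)$ indexed by $\cA_{\mathrm{rs}}(F)$; using a partition of unity on $\cA(F)$ subordinate to the semisimple stratification, it suffices to test the equivalence on Schwartz functions supported near the preimage of a single semisimple point $a \in \cA(F)$. First I would invoke the Luna slice picture of Subsection~\ref{subsec:descent} --- the Cartesian diagram \eqref{diag:Luna slice} together with its unitary counterpart for $\widetilde{\fu}^V$ --- to express such orbital integrals as products over the descent datum $a_H = (a_0,a_1,\dots,a_k)$, in which $a_0$ is regular semisimple and the remaining $a_i$ are central in smaller GIT quotients of the form $\widetilde{\gl_{n_i}}/\GL_{n_i}$ over a field extension $F_i$. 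The compatibility that powers this reduction is that the Fourier transform on $\widetilde{\gl_n}(F)$ (resp. on $\widetilde{\fu}^V(F)$) intertwines, up to an explicit Weil--index constant, with the Fourier transforms on the slices $\widetilde{\fh}(F)$; this is a Weil--representation computation on a Luna slice and accounts for part of the factor $\varepsilon(\eta,\tfrac12,\psi)^{n(n+1)/2}$. Since the matching condition factors through the descent, and since over the regular semisimple factor $\widetilde{\gl_{n_0}}_{,a_0}$ there is nothing to transform, the problem splits into the non-central factors, handled by the inductive hypothesis because descent strictly lowers the rank there, and the central factors, which carry the genuine content.

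For a central factor we may assume, after the translation $A \mapsto A - \lambda\cdot\mathrm{id}$ of Lemma~\ref{lem:regular_nilpotent_orbit}, that $a$ is the image of $(0,0,0)$, so that by Proposition~\ref{prop:regular_orbit_tilde_gl_n} the only regular orbits in the fibre are $O^+(a)$ and $O^-(a)$, with representatives $Z^+_\lambda$ and $Z^-_\lambda$. Here I would use the explicit Fourier--analytic identities \eqref{eq:I_X^+_Fourier} and \eqref{eq:I_X^-_Fourier}: at $\xi$ trivial and $s=0$ they express $I_{Z^+_\lambda}(\varphi)$, up to the abelian factor $\gamma^+_\xi(s)$, as the integral of $\cF\varphi$ against the weight $X \mapsto \eta(\delta^-(X))$ over all of $\widetilde{\gl_n}(F)$, i.e. as the $\eta\circ\delta^-$--twisted average of the regular semisimple orbital integrals of $\cF\varphi$; the parallel identity holds on $\widetilde{\fu}^V(F)$. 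Comparing these two identities, a plus--transfer relation for $\varphi$ turns into a minus--transfer relation for $\cF\varphi$, up to the ratio of the $\gamma$--factors on the general linear side and on the unitary side, and that ratio collapses by the local functional equation (Tate for the rank-one case, Godement--Jacquet in general) to a power of $\varepsilon(\eta,\tfrac12,\psi)$; the determinant $\delta^-$ occurring in the pairing $\langle\cdot,\cdot\rangle$ of \eqref{eq:pairing_on_tilde_gl} then produces the discriminant contribution $\eta(\mathrm{disc}(V))$. Finally I would convert the minus--transfer relation back to a plus--transfer relation by Lemma~\ref{lem:relation_between_plus_minus_transfer}, which supplies the remaining power of $\eta(\mathrm{disc}(V))$, so that the total constant is exactly $\eta(\mathrm{disc}(V))^n \varepsilon(\eta,\tfrac12,\psi)^{n(n+1)/2}$.

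The main obstacle is the base of the induction together with the bookkeeping of normalizations. Descent lowers the rank only at non-central points, so the inductive scheme ultimately rests on the rank-one case --- where the statement is the local functional equation for $\GL_1$ --- and, more seriously, on the fact that matching of regular semisimple orbital integrals is preserved under Fourier transform at all, which is Zhang's main local theorem and is proved not by an elementary manipulation but via the relative local trace formula and a germ expansion of orbital integrals; Xue's work is what makes this uniform in $V \in \cH$ and valid in residue characteristic $2$. The second obstacle is purely computational but unforgiving: one must track the transfer factors $\Omega^\pm,\omega^\pm$, the measures fixed in \S\ref{subsubsec:measure_local} and \S\ref{subsubsec:measure_u}, the self-dual measures on $\widetilde{\gl_n}(F)$ and $\widetilde{\fu}^V(F)$, and the Weil indices introduced by the slice-wise Fourier transforms, in such a way that the product of all local constants is precisely the monomial $\eta(\mathrm{disc}(V))^n\,\varepsilon(\eta,\tfrac12,\psi)^{n(n+1)/2}$ rather than some twist of it. Granting these inputs, a standard density argument --- reducing to pure tensors $\varphi = \bigotimes_i \varphi_i$ under the identification $\cS(\widetilde{\fh}(F)) \cong \widehat{\bigotimes}_{i=0}^k \cS(\widetilde{\fh_i}(F))$ --- together with the partition of unity assembles the slice-wise statements into the claimed equivalence.
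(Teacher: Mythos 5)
This result is \emph{not proved in the paper}: it is stated with the bracketed attribution ``[Zhang, Xue]'' and cited as an external input, so there is no internal proof to compare your proposal against. You should therefore not expect a parallel argument in the text; the correct ``proof'' from the paper's point of view is simply a citation of \cite{Zhang14}*{Theorem 4.17 and Section 4.3} and \cite{Xue19}.

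As a description of the strategy used in the cited sources, your sketch captures the right ingredients (Luna-slice descent, induction on rank, Weil-representation intertwining of the slice Fourier transforms, local functional equations at the central strata, and the $\pm$-transfer bookkeeping that supplies the discriminant and $\varepsilon$-factor constant). But as written it is not a proof, and you say as much: you appeal at the base to ``the fact that matching of regular semisimple orbital integrals is preserved under Fourier transform at all,'' which is precisely the statement being proved — the reduction is therefore circular as presented. The genuine content of Zhang's theorem is exactly to close that loop, and this is done not with the relative \emph{local} trace formula (which enters elsewhere, in the existence of transfers) but by a delicate argument using a representability/smoothness theorem for certain Fourier transforms of orbital integrals on the slice, a nilpotent germ expansion, and a density/uncertainty-type argument; Xue's contribution is to remove residual hypotheses (notably at $2$-adic places). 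To turn your outline into a proof one would have to supply these ingredients rather than invoke the conclusion. Since the paper treats the theorem as a black box, your effort would be better spent verifying the normalization of the constant $\eta(\mathrm{disc}(V))^n\varepsilon(\eta,\frac12,\psi)^{n(n+1)/2}$ against the measure and transfer-factor conventions of Subsections~\ref{subsubsec:measure_local}, \ref{subsubsec:measure_u}, and \ref{subsec:transfer}, which is the only thing the paper actually needs to be compatible.
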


\begin{remark} \label{rmk:generalized_transfer}
    Suppose that for each $0 \le i \le k$, we have a field extension $F_i/F$. Write $E_i = F_i \otimes_F E$, and suppose that for each $i$, we have an $n_i$-dimensional $E_i/F_i$ Hermitian space $V_i$. Let
    \[
        \widetilde{\fg} = \prod_{i=0}^k \mathrm{Res}_{F_i/F} \widetilde{\gl_{n_i,F_i}}, \quad \widetilde{\fu} = \prod_{i=0}^k \mathrm{Res}_{F_i/F} \widetilde{\fu}^{V_i}.
    \]
    We write $\eta_i := \eta_{E \otimes F_i/F_i} = \eta \circ \mathrm{Nm}_{F_i/F}$ for the quadratic character on $F_i^\times$. For $X = (X_i) \in \widetilde{\fg}(F)$, let $\omega^\pm(X):= \prod_{i=0}^k \eta_i(\det \delta^\pm(X_i))$.
    
    The orbital integral and transfers then directly generalize to $\varphi \in \cS(\widetilde{\fg}(F))$ and $\varphi^V \in \cS(\widetilde{\fu}(F))$.
\end{remark}

\subsection{Descent on $\tilde{\fu}$}
\label{subsec:descent_on_u}

\subsubsection{Descent construction}
\label{subsubsec:descent_construction_u}

There is a descent construction on $\widetilde{\fu}^V$ which is similar to the descent for $\widetilde{\gl_n}$ as we discussed in Subsection ~\ref{subsec:descent}. Recall that we have a quotient map $q= q^V: \widetilde{\fu}^V \to \cA$. For $X=(A,b) \in \widetilde{\fu}^V(F)$, let $d_r(X) = \det(h(A^{i+j-2}b,b)_{1 \le i,j \le r}$. Then $d_r$ descends to a function on $\cA$ and coincides with the function in ~\eqref{eq:d_r_on_gl}. The functions $d_r$ induce a stratification $\widetilde{\fu}^{V,(r)}$ on $\widetilde{\fu}^V$ in the same way as we discussed after ~\eqref{eq:d_r_on_gl}. 

Fix $a \in \cA^{(r)}(F)$. Let $\cA_0 := \widetilde{\gl_r}/\GL_r, \cA^0 := \widetilde{\gl_{n-r}}/\GL_{n-r}$. As we discussed in Subsection ~\ref{subsec:descent}, $a$ can be written uniquely $\iota(a_0,a^0)$ with $a_0 \in \cA_0(F), a^0 \in \cA^0(F)$, and $a^0$ is determined by a polynomial $P \in F[x]$, which factorize as $P=P_1^{n_1}\cdots P_k^{n_k}$. Let $F_i=F[x]/(P_i)$ and $\alpha_i$ be the image of $x$ in $F_i$. Put $E_i = F_i \otimes_F E$, and we fix an $r$-dimensional $E$ vector space $V_0$ and $n_i$-dimensional $E_i$ vector spaces for each $1 \le i \le k$. Let $\cH^\flat$ be the isometric class of the family $(h_i)_{0 \le i \le n}$, where each $h_i$ is a non-degenerate Hermitian form on $V_i$. Take $h^\flat=(h_i) \in \cH^\flat$, put
\begin{equation} \label{eq:isometry_unitary_descent}
     (V^0,h^{\flat,0}) := \bigoplus_{i=1}^k \mathrm{Res}_{E_i/E} (V_i,h_i), 
\end{equation}
where $\mathrm{Res}_{E_i/E} (V_i,h_i)$ is the $E/F$ Hermitian space given by $(\mathrm{Res}_{E_i/E}V, \mathrm{Tr}_{E_i/E} \circ h_i)$, and $\oplus$ denotes the orthogonal direct sum. We also write $h^\flat_\oplus$ for the $E$ vector space $V_0 \oplus V^0$ equipped with the Hermitian form $h_0 \oplus h^{\flat,0}$.
 
 Note that $\cA_0$ and $\cA^0$ can be canonically identified with $\widetilde{\fu_0}/\U(V_0)$ and $\widetilde{\fu}^{V^0}/\U(V^0)$ respectively.

Put
\[
   \U_{h_0} := \U(V_0,h_0), \quad  \U_{h_i} := \mathrm{Res}_{F_i/F} \U(V_i,h_i), \quad  \U_{h^\flat}^0 := \prod_{i=1}^k \U_{h_i},  \quad \U_{h^\flat} := \prod_{i=0}^k \U_{h_i}.
\]
For each $1 \le i \le k$, we regard $\widetilde{\fu}^{h_i}$ as an $F$-vector space and define
\[
    \quad  \widetilde{\fu}^{h^\flat,0} := \prod_{i=1}^k \widetilde{\fu}^{h_i}, \quad \widetilde{\fu}^{h^\flat} := \prod_{i=0}^k \widetilde{\fu}^{h_i}.
\]
We then have GIT quotients:
\[
    \cA_i := \widetilde{\fu}^{h_i}/\U_{h_i},\quad \cA_{h^\flat}^0 := \widetilde{\fu}^{h^\flat,0}/\U_{h^\flat}^0 \cong \prod_{i=1}^k \cA_i, \quad \cA_{h^\flat} := \widetilde{\fu}^{h^\flat}/\U_{h^\flat} \cong \prod_{i=0}^k \cA_i
\]

Using ~\eqref{eq:isometry_unitary_descent}, we have embeddings:
\[
    \iota_{\U^{0}}: \U^{0}_{h^\flat} \to \U(V^0), \quad \iota_{\fu^{0}}: \widetilde{\fu}^{h^\flat,0} \to \widetilde{\fu}^{V^0}.
\]

 Let $V \in \cH$ such that there exists an isometry
 \[
    i: h^\flat_\oplus \cong V.
 \]
 The map $i$ then determines an embedding
\begin{equation} \label{eq:iota_unitary_Lie_algebra}
       \iota: \widetilde{\fu}^{h_0,(r)} \times \widetilde{\fu}^{V^0} \to \widetilde{\fu}^V 
\end{equation}
similar to what we have considered in ~\eqref{eq:descent_1}, see ~\cite{CZ21}*{(8.2.2.4)}. It descends to a map on the GIT quotient which coincides with $\iota$ in ~\eqref{eq:descent step 1 isomorphism}. 

Let $\widetilde{\fu}^{h^\flat,0,\prime}$ be the open subset of $\widetilde{\fu}^{h^\flat,0}$ consists of $(A_i,v_i)$ with $\det Q_i(A_j) \ne 0$ for all $i \ne j$, where $Q_i$ is the characteristic polynomial of $A_i$, it descends to an open subset $\cA^{0,\prime}_{h^\flat}$ of $\cA^{0}_{h^\flat}$. Denote by $\widetilde{\fu}^{h^\flat,\prime} = \widetilde{\fu}_0^{(r)} \times \widetilde{\fu}^{h^\flat,0,\prime}$, it descends to a subset $\cA^{\prime}_{h^\flat}$ of $\cA_{h^\flat}$. Composing $\iota$ in ~\eqref{eq:iota_unitary_Lie_algebra} and $\iota_{\fu^{0}}$, we get a map
\[
    \iota_{\fu^\flat}: \widetilde{\fu}^{h^\flat,\prime} \to \widetilde{\fu},
\]
it descends to
\[
    \iota_{\cA}: \cA^{\prime}_{h^\flat} \to \cA.
\]
Note that although the embedding $\iota_{\fu^\flat}$ depends on the choice of the isometry $i$, $\iota_\cA$ does not, and it coincides with the map $\iota_\cA$ introduced in Subsection ~\ref{subsec:descent}. The isometry $i$ also induces an embedding
\[
    \iota_{\U}: \U_{h^\flat} \hookrightarrow \U(V).
\]
By ~\cite{Zhang14}*{Appendix B}, $\iota_\cA$ is \'{e}tale, and we have a pullback diagram
\begin{equation} \label{diag:Luna slice_u}
           \begin{tikzcd}
\widetilde{\fu}^{h^\flat,\prime} \times^{\U_{h^\flat}} \U(V) \arrow[r] \arrow[d] & \widetilde{\fu}^V \arrow[d] \\
\cA^{\prime}_{h^\flat} \arrow[r]       & \cA                        
\end{tikzcd}
\end{equation}
where the top horizontal map sends $(X,g)$ to $\iota_{\fu^\flat}(X) \cdot g$.
    \begin{itemize}
        \item The right vertical map is the quotient map, the left vertical map is induced by the quotient map $\widetilde{\fu}^{\flat,\prime} \to \cA^{\flat,\prime}$, trivial on the second component,
        \item The bottom horizontal map is $\iota_\cA$, the top horizontal map sends $(X,g)$ to $\iota_{\fu^{\flat}}(X) \cdot g$.
    \end{itemize}

Since $a_0 \in \cA_0(F)$ is regular semisimple, the bijection ~\eqref{eq:matching_of_rss_lie_algebra} implies that there exists a unique Hermitian form $h_0=h_0^{a_0}$ on $V_0$ up to isometry such that there exists $X_0 \in \widetilde{\fu}_{\mathrm{rs}}^{h^{a_0}_0}(F)$ mapping to $a_0 \in \cA_0(F)$. Let $a_i$ be the image of $Z_{\alpha_i}$ in $\cA_i(F)$ and $a_{h^\flat} = (a_0,a_1,\cdots,a_k) \in \cA_{h^\flat}(F)$.

Let $\cO_a$ be the set 
    \begin{equation} \label{eq:O_a_definition}
        \{ (V,\fo) \mid V \in \cH, \fo \subset \widetilde{\fu}^V_a(F) \text{ is a semisimple orbit} \}.
    \end{equation}

By ~\cite{CZ21}*{Corollaire 8.4.6.2}, there is a bijection
\begin{equation} \label{eq:classiciation_semisimple_orbit}
\{ h^\flat = (h_i) \in \cH^\flat | h_0=h_0^{a_0} \}   \leftrightarrow  \cO_a,
\end{equation}
 sending $h^\flat$ to $h^\flat_\oplus$ together with the orbit of $\iota_{\fu^\flat}(X_0,Z_{\alpha_1},\cdots,Z_{\alpha_k})$, note that this orbit is independent of the choice of the isometry $i$. 

\subsubsection{Descent and transfer} \label{subsubsec:Descent_and_Transfer}

Fix $a \in \cA^{(r)}(F)$. In Subsection ~\ref{subsec:descent} and in ~\ref{subsubsec:descent_construction_u}, we have recalled the descent construction with respect to $a$ in the general linear setting and the unitary setting respectively. In the setting of Subsection ~\ref{subsec:descent}, we fix a basis of $V_i$ so that $\delta^+$ and $\delta^-$ are defined. By construction, $\cA_H$ and $\cA_{h^\flat}$ can be canonically identified for any $h^\flat \in \cH^\flat$. Under this identification $\cA_H'$ corresponds to $\cA^{\prime}_{h^\flat}$, $a_H$ corresponds to $a_{h^\flat}$.  

 Since $\iota_\cA$ is \'{e}tale and sending $a_H$ to $a$, we can choose an open neighborhood $\omega_H$ of $a_H$ in $\cA'_H(F)$ and an open neighbourhood $\omega$ of $a$ in $\cA(F)$ such that
    \begin{itemize}
        \item If $F$ is Archimedean, both $\omega$ and $\omega_H$ are semi-algebraic.
        \item $\omega_H \to \omega$ is an isomorphism of Nash manifolds if $F$ is Archimedean, and isomorphism of analytic manifolds if $F$ is non-Archimedean. (See ~\cite{BCR98}*{Proposition 8.1.2} for the Archimedean case)
        \item There are $c^+,c^- \in \{ +1,-1\}$, such that for any $a' \in \omega_H \cap \cA_{H,\mathrm{rs}}(F)$ and $X \in \widetilde{\fh}_a(F)$, we have
        \begin{equation} \label{eq:c^pm}
            \omega^+(\iota(X)) = c^+ \omega^+(X), \quad \omega^-(\iota(X))=c^- \omega^-(X).
        \end{equation}
        where $\omega^+(X)$ and $\omega^-(X)$ are defined in Remark ~\ref{rmk:generalized_transfer}. (See \cite{Zhang14}*{Lemma 3.15}). 
    \end{itemize}
Let $\Omega$ and $\Omega_H$ be the preimage of $\omega$ and $\omega_H$ under the quotient map $\widetilde{\gl_n} \to \cA$ and $\widetilde{\fh} \to \cA_H$ respectively. Then the top horizontal map in diagram ~\eqref{diag:Luna slice} induces an isomorphism
        \begin{equation} \label{eq:analytic_isomorphism_gl}
            \Omega_H \times^{H(F)} \GL_n(F) \to \Omega, \quad (Y,g) \mapsto \iota_{\fh}(Y) \cdot g
        \end{equation}
of Nash manifolds when $F$ is archimedean and analytic manifolds if $F$ is non-archimedean.

Let $h^\flat \in \cH^\flat$. Under the identification between $\cA_H'$ and $\cA^{\prime}_{h^\flat}$, $\omega_H$ corresponds to $\omega_{h^\flat} \subset \cA'_{h^\flat}(F)$. Let $\Omega_V$ and $\Omega_{h^\flat}$ be the preimage of $\omega$ and $\omega_{h^\flat}$ under the quotient map $\widetilde{\fu}^V \to \cA$ and $\widetilde{\fu}^{h^\flat} \to \cA_{h^\flat}$ respectively. By the general fact of group action of variety over local field, $\widetilde{\fu}^{h^\flat,\prime}(F) \times^{\U_{h^\flat}(F)} \U(V)(F)$ is naturally an open and closed subset of $(\widetilde{\fu}^{h^\flat,\prime} \times^{\U_{h^\flat}} \U(V) )(F)$. Let $\Omega_{V}^{h^\flat}$ be the image of $\Omega_{h^\flat} \times^{\U^\flat(F)} \U^V(F)$ under the top horizontal map of ~\eqref{diag:Luna slice_u}. Then $\Omega_{V}^{h^\flat}$ is independent of choice of $i$ (thus only depends on $V$ and $h^\flat$), and is an open and closed subset of $\Omega_V$ and we have an isomorphism
\begin{equation} \label{eq:analytic_isomorphism_u}
            \Omega_{h^\flat} \times^{\U^\flat(F)} \U^V(F) \to \Omega_{V}^{h^\flat}, \quad (Y,g) \mapsto \iota^{\flat}(Y) \cdot g
        \end{equation}
of Nash manifolds when $F$ is archimedean and analytic manifolds if $F$ is non-archimedean.

Let $\cH^\flat_V$ be the subset of $\cH^\flat$ consisting of $h^\flat=(h_i) \in \cH^\flat$ such that $h_0=h_0^{a_0}$ and $V \cong h^\flat_\oplus$.  By ~\cite{CZ21}*{Lemme 13.4.7.1}, shrinking $\omega$ if necessary, we have a disjoint union decomposition
\begin{equation} \label{eq:descent_disjoint_decomposition}
     \Omega_V = \bigsqcup_{h^\flat \in \cH^\flat_V} \Omega_V^{h^\flat}.
\end{equation}

For $\varphi \in \cS(\Omega)$, from $\eqref{eq:analytic_isomorphism_gl}$, we can choose $\varphi' \in \cS(\Omega_H \times \GL_n(F))$ such that for any $X \in \Omega_H$ and $g \in \GL_n(F)$
\[
    \varphi(\iota_\fh(X) \cdot g) = \int_{H(F)} \varphi'(Xh,\iota_H(h)^{-1}g) \rd h.  
\]
We define $\varphi_H \in \cS(\Omega_H)$ by
\[
    \varphi_H(X_H) = \int_{\GL_n(F)} \varphi'(X_H,g) \eta(g) \rd g,
\]
where $X_H \in \Omega_H$.

Suppose that for each $V \in \cH$, we have a function $\varphi^V \in \cS(\Omega_V)$. For each $h^\flat=(h_i) \in \cH^\flat$, we define $\varphi^{\flat} \in \cS(\Omega_{h^\flat})$ as follows. If $h_0 \ne h_0^{a_0}$, we put $\varphi^{h^\flat}=0$. Otherwise, choose an isometry $h^\flat_\oplus \cong V$ for a unique $V \in \cH$, and from ~\eqref{eq:analytic_isomorphism_u} and ~\eqref{eq:descent_disjoint_decomposition}, we can choose a function $\varphi^{h^\flat,\prime} \in \cS(\Omega_{h^\flat} \times \U(V)(F))$ such that for any $X \in \Omega_{h^\flat}$ and $g \in \U(V)(F)$
\[
    \varphi^V(\iota_{\fu^\flat}(X) \cdot g) = \int_{\U_{h^\flat}(F)} \varphi^{h^\flat,\prime}(Xh,\iota_{\U}(h)^{-1}g) \rd h.
\]
Then we put
\[
    \varphi^{h^\flat}(X) = \int_{\U(V)(F)} \varphi^{h^\flat,\prime}(X,g) \rd g,
\]
where $X \in \Omega_{h^\flat}$. We have $\varphi^{h^\flat} \in \cS(\Omega_{h^\flat})$.

We have the following lemma.
\begin{lemma} \label{lem:descent_and_transfer}
    Let $\varphi \in \cS(\Omega)$ and for each $V \in \cH$, let $\varphi^V \in \cS(\Omega_V)$. We have the following assertions:
    \begin{enumerate}
        \item If $\varphi \xleftrightarrow{+} (\varphi^V)_{V \in \cH}$, then
        \[
           c^+ \varphi_H \xleftrightarrow{+} (\varphi^{h^\flat})_{h^\flat \in \cH^\flat}.
        \]
        \item If $\varphi \xleftrightarrow{-} (\varphi^V)_{V \in \cH}$, then
        \[
        c^- \varphi_H \xleftrightarrow{-} (\varphi^{h^\flat})_{h^\flat \in \cH^\flat}.
        \]
    \end{enumerate} 
    Where we recall $c^\pm$ is defined in ~\eqref{eq:c^pm}.
\end{lemma}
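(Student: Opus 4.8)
The plan is to transport both transfer relations through the \'etale descent maps $\iota_\fh$ and $\iota_{\fu^\flat}$ of Subsections~\ref{subsec:descent} and~\ref{subsec:descent_on_u}, and then appeal to the hypothesis $\varphi\xleftrightarrow{\pm}(\varphi^V)_{V\in\cH}$. I will carry out the argument for assertion~(1); assertion~(2) is identical with every $+$ replaced by $-$. First I would reduce to regular semisimple elements lying over the neighbourhood $\omega_H$: if $X_H\in\widetilde{\fh}_{\mathrm{rs}}(F)$ has image $a'\notin\omega_H$ in $\cA_H(F)$, then, since $\varphi_H\in\cS(\Omega_H)$ and each $\varphi^{h^\flat}\in\cS(\Omega_{h^\flat})$ are extended by zero, both $h\mapsto\varphi_H(X_H\cdot h)$ and $h\mapsto\varphi^{h^\flat}(X^{h^\flat}\cdot h)$ (for any matching $X^{h^\flat}$) vanish identically, so the desired identity reads $0=0$. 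Hence it suffices to treat $X_H$ with $a'\in\omega_H$; the componentwise matching relation of Remark~\ref{rmk:generalized_transfer} then pins down the unique $h^\flat\in\cH^\flat$ (with $h_0=h_0^{a_0}$), hence the unique $V\in\cH$ with $h^\flat_\oplus\cong V$, carrying a matching $X^{h^\flat}$, which automatically lies over $\omega_{h^\flat}$.

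Next I would show that regularized and unitary orbital integrals are preserved, with no auxiliary constant, by these descents. For $X_H$ regular semisimple over $\omega_H$, put $X:=\iota_\fh(X_H)$, which is regular semisimple in $\widetilde{\gl_n}(F)$; choosing $\varphi'\in\cS(\Omega_H\times\GL_n(F))$ as in~\eqref{eq:varphi'} and invoking the absolutely convergent formula of Proposition~\ref{prop:local_rss},
\[
    I_X(\varphi)=\int_{\GL_n(F)}\varphi(\iota_\fh(X_H)\cdot g)\,\eta(g)\,\rd g=\int_{\GL_n(F)}\int_{H(F)}\varphi'(X_H\cdot h,\iota_H(h)^{-1}g)\,\eta(g)\,\rd h\,\rd g .
\]
The substitution $g\mapsto\iota_H(h)g$, the elementary compatibility $\eta(\det\iota_H(h))=\eta(h)$ for the character on $H(F)$ introduced in Subsection~\ref{subsec:local_I_general}, and Fubini (legitimate because the $H$-orbit of a regular semisimple $X_H$ is closed and $h\mapsto X_H\cdot h$ is proper) then yield $I_X(\varphi)=I_{X_H}(\varphi_H)$. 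The verbatim computation with the isomorphism~\eqref{eq:analytic_isomorphism_u} in place of~\eqref{eq:analytic_isomorphism_gl} and with the defining formula for $\varphi^{h^\flat}$ gives, for $X^V:=\iota_{\fu^\flat}(X^{h^\flat})\in\widetilde{\fu}^V_{\mathrm{rs}}(F)$, the equality $J^V_{X^V}(\varphi^V)=J^{h^\flat}_{X^{h^\flat}}(\varphi^{h^\flat})$.

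Finally I would assemble the pieces. Since $X_H$ and $X^{h^\flat}$ have the same image in $\cA_H\cong\cA_{h^\flat}$, applying $\iota_\cA$ shows $X$ and $X^V$ have the same image in $\cA$, so $X$ matches $X^V$; the hypothesis thus gives $J^V_{X^V}(\varphi^V)=\omega^+(X)\,I_X(\varphi)$. Substituting the identities of the previous step and the sign relation $\omega^+(\iota_\fh(X_H))=c^+\,\omega^+(X_H)$ from~\eqref{eq:c^pm}, we obtain
\[
    J^{h^\flat}_{X^{h^\flat}}(\varphi^{h^\flat})=\omega^+(X)\,I_X(\varphi)=c^+\,\omega^+(X_H)\,I_{X_H}(\varphi_H)=\omega^+(X_H)\,I_{X_H}(c^+\varphi_H),
\]
which is exactly the statement $c^+\varphi_H\xleftrightarrow{+}(\varphi^{h^\flat})_{h^\flat\in\cH^\flat}$.

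The step I expect to demand the most care is the middle one: one must check that the Haar measure on $\GL_n(F)$ (respectively $\U(V)(F)$) factors through the Luna slice~\eqref{eq:analytic_isomorphism_gl} (respectively~\eqref{eq:analytic_isomorphism_u}) against the product of the Haar measures on $H(F)$ (respectively $\U_{h^\flat}(F)$) and the transverse directions with precisely the normalizations fixed in~\ref{subsubsec:measure_local} and~\ref{subsubsec:measure_u}, so that the descent identities above are genuinely constant-free, and one must supply (in the archimedean case) the properness and rapid-decay arguments underlying the Fubini manipulations. These are of the same nature as the measure computations behind~\eqref{eq:definition_I_X_on_Omega} and the descent formulas of \cite{CZ21}, so no essentially new difficulty is anticipated; everything else is formal once the decompositions~\eqref{eq:varphi'},~\eqref{eq:analytic_isomorphism_u} and the sign relation~\eqref{eq:c^pm} are on hand.
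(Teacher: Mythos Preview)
Your proposal is correct and follows essentially the same route as the paper's proof: reduce to regular semisimple $a'\in\omega_H$ (both sides vanish otherwise), establish the descent identities $I_{\iota_\fh(X_H)}(\varphi)=I_{X_H}(\varphi_H)$ and $J_{\iota_{\fu^\flat}(X_\flat)}(\varphi^V)=J_{X_\flat}(\varphi^{h^\flat})$ by unfolding through $\varphi'$ and $\varphi^{h^\flat,\prime}$, then invoke the hypothesis and the sign relation~\eqref{eq:c^pm}. Your caution about measure normalizations in the last paragraph is unnecessary: the identities $I_X(\varphi)=I_{X_H}(\varphi_H)$ and $J_{X^V}(\varphi^V)=J_{X^{h^\flat}}(\varphi^{h^\flat})$ follow purely from Fubini and the change of variable $g\mapsto\iota_H(h)g$ (resp.\ $g\mapsto\iota_\U(h)g$) applied to the \emph{defining} integral relations~\eqref{eq:varphi'}--\eqref{eq:varphi_H}, so no separate compatibility of measures through the Luna slice is needed.
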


\begin{proof}
    We only prove (1), the proof of (2) is the same as (1). Take any $a' \in \cA_{H,\mathrm{rs}}(F) = \cA_{h^\flat,\mathrm{rs}}(F)$, assume that $a'$ is the image of some elements in $\widetilde{\fu}^{h^\flat}(F)$. 
    
    Take any $X_H \in \widetilde{\fh}_{a'}(F), X_\flat \in \widetilde{\fu}^{h^\flat}_{a'}(F)$, we need to check that
    \[
       \omega^+(X_H)  I_{X_H}(\varphi_H) = J_{X_\flat}(\varphi^{h^\flat}).
    \]
    If $a' \not \in \omega_H=\omega_{h^\flat}$, then by definition
    \[
        I_{X_H}(\varphi_H) = J_{X_\flat}(\varphi^{h^\flat})=0.
    \]
    Thus we only need to consider the case when $a' \in \omega_H=\omega_{h^\flat}$. Since $\omega_H \subset \cA_H'$, by ~\cite{CZ21}*{Lemme 3.4.1.1}, we have $\iota_{\cA}(a') \in \cA_{\mathrm{rs}}(F)$. Using the definition of $\varphi_H$ and $\varphi^{h^\flat}$, we check directly that
    \begin{equation} \label{eq:descent_and_rss_orbital_integral}
        I_{X_H}(\varphi_H) = I_{\iota_{\fh}(X)}(\varphi), \quad J_{X^\flat}(\varphi^{h^\flat}) = J_{\iota_{\fu^{\flat}}(X_\flat)}(\varphi^V).
    \end{equation}
    The image of $\iota_\fh(X)$ and $\iota_{\fu^{\flat}}(X_\flat)$ in $\cA(F)$ are both $\iota_\cA(a')$, hence their orbits match. The lemma then follows from the definition of transfer and $c^\pm$.
\end{proof}

We also record a lemma whose proof is elementary.

\begin{lemma} \label{lem:localization_and_transfer}
     Let $u \in C_c^\infty(\omega)$, $\varphi \in \cS(\widetilde{\gl_n}(F))$ and for each $V \in \cH$, let $\varphi^V \in \cS(\widetilde{\fu}^V(F))$. For $\varepsilon \in \{+,-\}$, if $\varphi \xleftrightarrow{\varepsilon} (\varphi^V)$,
    then
    \[
        \varphi \cdot (q \circ u) \in \cS(\Omega) \xleftrightarrow{\varepsilon} (\varphi^V \cdot (q_V \circ u) \in \cS(\Omega_V)).
    \]
\end{lemma}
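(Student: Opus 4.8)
The plan is to observe that the assertion is formal, resting only on the $\GL_n(F)$-invariance of the quotient map $q\colon\widetilde{\gl_n}(F)\to\cA(F)$ and the $\U(V)(F)$-invariance of $q_V\colon\widetilde{\fu}^V(F)\to\cA(F)$. Multiplying a test function by $u\circ q$ (resp.\ $u\circ q_V$) scales each regular semisimple orbital integral by the value of $u$ at the image of that orbit in $\cA(F)$, and by definition two matching regular semisimple orbits have the same image in $\cA(F)$; since the transfer factors $\omega^{\pm}$ are untouched, the $\varepsilon$-transfer relation is preserved.

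First I would check membership: as $u\in C_c^\infty(\omega)$ and $\Omega=q^{-1}(\omega)$, the function $u\circ q$ is smooth and bounded, the product $\varphi\cdot(u\circ q)$ is Schwartz on $\widetilde{\gl_n}(F)$ with support inside the closed set $q^{-1}(\supp u)\subset\Omega$, and it vanishes near the boundary of $\Omega$ because $\supp u$ is compact in the open set $\omega$; hence $\varphi\cdot(u\circ q)\in\cS(\Omega)$, and likewise $\varphi^V\cdot(u\circ q_V)\in\cS(\Omega_V)$. Next, for $X\in\widetilde{\gl_n}_{,\mathrm{rs}}(F)$ with $a:=q(X)$, invariance of $q$ gives
\[
   I_X\bigl(\varphi\cdot(u\circ q)\bigr)=\int_{\GL_n(F)}\varphi(X\cdot g)\,u\bigl(q(X\cdot g)\bigr)\,\eta(g)\,\rd g=u(a)\,I_X(\varphi),
\]
and identically $J^V_{X^V}\bigl(\varphi^V\cdot(u\circ q_V)\bigr)=u\bigl(q_V(X^V)\bigr)\,J^V_{X^V}(\varphi^V)$ for $X^V\in\widetilde{\fu}^V_{\mathrm{rs}}(F)$.

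Finally, if $X$ and $X^V$ match (so $q(X)=q_V(X^V)$) and $\varphi\xleftrightarrow{\varepsilon}(\varphi^V)$, then $J^V_{X^V}(\varphi^V)=\omega^\varepsilon(X)\,I_X(\varphi)$, so
\[
   J^V_{X^V}\bigl(\varphi^V\cdot(u\circ q_V)\bigr)=u\bigl(q_V(X^V)\bigr)\,\omega^\varepsilon(X)\,I_X(\varphi)=\omega^\varepsilon(X)\,u(q(X))\,I_X(\varphi)=\omega^\varepsilon(X)\,I_X\bigl(\varphi\cdot(u\circ q)\bigr),
\]
which is precisely the $\varepsilon$-transfer relation for $\varphi\cdot(u\circ q)$ and $(\varphi^V\cdot(u\circ q_V))_{V\in\cH}$. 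Because nothing beyond continuity of the two quotient maps enters, there is no real obstacle; the only part worth a line of justification is the membership claim in the archimedean case, which uses the standard description of the Schwartz space of the open Nash submanifold $\Omega$ together with the fact that the support of $u$ is compactly contained in $\omega$.
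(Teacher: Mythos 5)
Your proof is correct and takes essentially the same approach as the paper, which simply records the result as "proved by a direct computation": the key point is exactly the one you isolate, namely that $u\circ q$ is $\GL_n(F)$-invariant (resp. $u\circ q_V$ is $\U(V)(F)$-invariant), so each regular semisimple orbital integral gets multiplied by $u$ evaluated at the common image in $\cA(F)$, leaving the transfer relation intact.
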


\begin{proof}
    This is proved by a direct computation.
\end{proof}

\subsection{Singular transfer on the Lie algebra} \label{subsec:local_II_lie_algebra}

Let $X \in \widetilde{\gl_n}(F)$ be a regular element and $\varphi \in \cS(\widetilde{\gl_n}(F))$. Recall that we have defined $I_X(\varphi,\xi,s)$ in Definition ~\ref{defi:I_X_in_general}.

The function $I_X(\varphi,1_{E^\times},s)/L_X(s,1_{E}^\times)$ is holomorphic at $s=0$. We write $I_X^\natural(\varphi)$ for $I_X^\natural(\varphi,1_{E^\times},0)$.

We now define some constants related to Proposition ~\ref{prop:local_singular_transfer_lie_algebra} below. Let $F'$ be a finite extension of $F$, and we write $E' := F' \otimes_F E$. Let $\eta':=\eta_{E/F} \circ \mathrm{Nm}_{F'/F} = \eta_{E'/F'}$ and $\psi'=\psi \circ \mathrm{Tr}_{F'/F}$. Let $(V',h')$ be an $n'$-dimensional $E'/F'$ Hermitian space. We put
\[
    c_{V'}^+ = \prod_{i=1}^{n'} \varepsilon(1-i,\eta^{\prime,i},\psi')^{-1} \eta'(\mathrm{disc}(V'))^{n'+1} \varepsilon(\frac 12,\eta',\psi')^{\frac{n'(n'+1)}{2}} \eta'(-1)^{\frac{n'(n'-1)}{2}}.
\]
and
\[
    c_{V'}^- = c_{V'}^+ \cdot \eta'(\mathrm{disc}(V')) =  \prod_{i=1}^{n'} \varepsilon(1-i,\eta^{\prime,i},\psi')^{-1} \eta'(\mathrm{disc}(V'))^{n'} \varepsilon(\frac 12,\eta',\psi')^{\frac{n'(n'+1)}{2}} \eta'(-1)^{\frac{n'(n'-1)}{2}}.
\]

Let $h^\flat=(h_i) \in \cH^\flat$ and let $\varepsilon:\{1,\cdots,k\} \to \{+,-\}$ be a map, we put $c^\varepsilon_{h^\flat} = \prod_{i=1}^k c^{\varepsilon_i}_{(V_i,h_i)}$. Let $\fo \subset \widetilde{\fu}_a^V(F)$ be a semisimple orbit, by the bijection ~\eqref{eq:classiciation_semisimple_orbit}, $\fo$ is the orbit of $\iota_{h^\flat}(X_0,Z_{\alpha_1},\cdots,Z_{\alpha_k})$ for some $h^\flat=(h_i) \in \cH^\flat_V$ with $X_0 \in \widetilde{\fu}^{V_0}_{a_0}(F)$. We then put 
\[
c_\fo^\varepsilon := c_{h^\flat}^{\varepsilon}.
\]

Let $X \in \widetilde{\gl_n}(F)$ be a regular element with $q(X)=a$. We use the descent construction as discussed in ~\ref{subsubsec:Descent_and_Transfer}. $X$ can be written of the form $\iota_\fh(X_H) \cdot g$, where $X_H = (X_{0,H},\cdots) \in \Omega_H$ is a regular element, and $g \in \GL_n(F)$. we define a constant $c_X \in \{\pm 1\}$ by 
\[
    c_X = c^+ \eta(g) \omega^+(X_{0,H}).
\] 
where $c^+$ is defined in ~\eqref{eq:c^pm}. Finally, if $X \in \widetilde{\gl_n}(F)$ is a regular element of type $\varepsilon$, we put
\[
    c_{X,\fo} = c_X c_\fo^\varepsilon.
\]

 Now we state our main proposition
\begin{proposition} \label{prop:local_singular_transfer_lie_algebra}
    Let $X \in \widetilde{\gl_n}(F)$ be a regular element with $q(X)=a$. Suppose $\varphi \in \cS(\widetilde{\gl_n}(F)) \xleftrightarrow{+} (\varphi^V \in \cS(\widetilde{\fu}^V(F)))_{V \in \cH}$, then
    \[
        I^\natural_X(\varphi) =  \sum_{(V,\fo) \in \cO_a} c_{X,\fo} J^V_{\fo}(\varphi^V),
    \]
    where we recall that the set $\cO_a$ is defined in ~\eqref{eq:O_a_definition}.
\end{proposition}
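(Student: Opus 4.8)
The plan is to reduce the statement to the analogous identity for the regular semisimple part of the descent construction, where the transfer relation is governed by the definition of matching, and then glue it back up using the definitions of $I^\natural_X$ and the local factors. First I would fix the element $a=q(X)\in\cA(F)$ and invoke the descent machinery of Subsection~\ref{subsec:descent} and Subsection~\ref{subsubsec:descent_construction_u}: we obtain $H=H_0\times H^0$, the group $\widetilde{\fh}$, the \'etale map $\iota_\cA:\cA_H'\to\cA$ sending $a_H\mapsto a$, and on the unitary side the groups $\U_{h^\flat}$ and spaces $\widetilde{\fu}^{h^\flat}$ indexed by $h^\flat\in\cH^\flat$, together with the bijection~\eqref{eq:classiciation_semisimple_orbit} between $\{h^\flat\mid h_0=h_0^{a_0}\}$ and $\cO_a$. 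Using Lemma~\ref{lem:localization_and_transfer} I may multiply $\varphi$ and the $\varphi^V$ by $(u\circ q)$ for a suitable $u\in C_c^\infty(\omega)$ with $u(a)=1$, so that $\varphi\in\cS(\Omega)$ and $\varphi^V\in\cS(\Omega_V)$, without changing either side of the asserted identity (both sides only depend on the germ at the relevant orbits). I would then pass to the descended functions $\varphi_H\in\cS(\Omega_H)$ and $\varphi^{h^\flat}\in\cS(\Omega_{h^\flat})$ as defined just before Lemma~\ref{lem:descent_and_transfer}.

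The two key inputs are: (i) the compatibility of the regularized orbital integral with descent, namely $I_{X_H}(\varphi_H,\xi,s)=\xi(g)^{-1}\eta(g)|\det g|^{-s}\cdot(\text{const})\cdot I_X(\varphi,\xi,s)$, which is exactly how $I_X$ was \emph{defined} in~\eqref{eq:definition_I_X_on_Omega} after localizing; and (ii) the corresponding descent identity on the unitary side, $J_{\fo}(\varphi^V)=J_{\iota_{\fu^\flat}(X_\flat)}(\varphi^V)$ with $X_\flat$ running over the relevant semisimple elements, which follows from the definition of $\varphi^{h^\flat}$ and unfolding the orbital integral $J$ exactly as in~\eqref{eq:descent_and_rss_orbital_integral}. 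So after descent the proposition becomes an identity on $\widetilde{\fh}$ (a product of smaller $\widetilde{\gl_{n_i,F_i}}$'s, the factor $i=0$ being regular semisimple), matching $I^\natural$-type quantities attached to $Z_{\alpha_i}^{\varepsilon_i}$ against sums of $J$'s attached to the $Z_{\alpha_i}$'s. By factorizability of both the $I_X$ (tensor product over $i$, Subsection~\ref{subsec:local_I_general}) and the $J_\fo$, and by Lemma~\ref{lem:descent_and_transfer}, everything reduces to the single-factor central case: $X_i=Z_\lambda^{\varepsilon_i}$ on $\widetilde{\gl_{n_i,F_i}}$ versus the sum over the two Hermitian forms on an $n_i$-dimensional space of $J_{Z_\lambda}^{V_i}$. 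Here $J_{Z_\lambda}(\varphi^{V_i})=\varphi^{V_i}(Z_\lambda)$ by Subsection~\ref{subsec:semisimple_orbital_integral}, while $I^\natural_{Z_\lambda^+}(\varphi)$ is controlled by the explicit Fourier-theoretic formula~\eqref{eq:I_X^+_Fourier}. The identity linking these two is then nothing but the Fourier transform compatibility of transfer, Theorem~\ref{thm:transfer_Fourier_transform}, combined with the local functional equations that produce the $\gamma$-factors $\gamma_\xi^\pm(s)$ in~\eqref{eq:I_X^+_alternative}: evaluating at $s=0$, the product of local $\varepsilon$- and $L$-factors collapses precisely into the constant $c^{\varepsilon_i}_{(V_i,h_i)}$.

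I would then reassemble: the constants $c^\pm$ from~\eqref{eq:c^pm} account for the discrepancy between $\omega^\pm$ on $\widetilde{\fh}$ and on $\widetilde{\gl_n}$; the factor $\eta(g)\omega^+(X_{0,H})$ records the passage from the chosen base point $X_H$ and the element $g$ with $X=\iota_\fh(X_H)\cdot g$ to $X$ itself, via~\eqref{eq:relation_I_Xg}; and the product $c^{\varepsilon}_{h^\flat}=\prod_i c^{\varepsilon_i}_{(V_i,h_i)}$ collects the single-factor constants. This gives $c_{X,\fo}=c_X\, c_\fo^\varepsilon$ with $c_X=c^+\eta(g)\omega^+(X_{0,H})$ as stated, and the sum over $(V,\fo)\in\cO_a$ is exactly the sum over $h^\flat\in\cH^\flat_V$ ranging over all $V$, i.e. over all $h^\flat$ with $h_0=h_0^{a_0}$, which matches~\eqref{eq:classiciation_semisimple_orbit}. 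The main obstacle I expect is bookkeeping the constants: tracking the self-dual measure normalizations, the powers of $\eta(-1)$ and $\eta(\mathrm{disc})$ coming from~\eqref{eq:I_X^-_Fourier} versus~\eqref{eq:I_X^+_Fourier}, and the $\varepsilon$-factor identities $\varepsilon(s,\chi,\psi)\varepsilon(1-s,\chi^{-1},\psi)=\chi(-1)$ needed to match $\gamma_\xi^\pm(0)^{-1}$ against the definition of $c_{V'}^\pm$ — together with checking that Lemma~\ref{lem:relation_between_plus_minus_transfer} correctly interchanges the $+$ and $-$ sides so that the single formula with $\xleftrightarrow{+}$ suffices. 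The analytic content (convergence, meromorphy) is already packaged in Proposition~\ref{prop:local_general} and Corollary~\ref{cor:local_central_and_rss}, so no new estimates are required; it is purely a matter of organizing the descent and the constant computation.
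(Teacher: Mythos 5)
Your proposal is correct and follows essentially the same route as the paper's proof: localize via Lemma~\ref{lem:localization_and_transfer}, descend to $\widetilde{\fh}$ and $\widetilde{\fu}^{h^\flat}$ using Lemma~\ref{lem:descent_and_transfer} and the defining relation~\eqref{eq:definition_I_X_on_Omega}, reduce to pure tensors (regular semisimple $0$-th factor plus central factors), and settle the central case $Z_\lambda^\pm$ versus $Z_\lambda$ via the Fourier-theoretic expression~\eqref{eq:I_X^+_Fourier}, Theorem~\ref{thm:transfer_Fourier_transform}, and the measure comparison. The paper isolates that last step as Lemma~\ref{lem:nilpotent_singular_transfer}, but you have identified the same inputs and the same constant bookkeeping.
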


\begin{proof}
    By definition, for any semisimple orbit $\fo$. $J^V_\fo(\varphi^V)$ only depends on the value of $\varphi^V$ on $\fo$. By Proposition ~\ref{prop:local_general} (2), $I_X(\varphi)$ depends only on the value of $\varphi$ at the orbit of $X$. Let $\omega$ be the open subset of $\cA(F)$ as introduced in ~\ref{subsubsec:Descent_and_Transfer}. Take $u \in C_c^\infty(\omega)$ with $u(a)=1$. Thus $I_X(\varphi \cdot (q \circ u)) = I_X(\varphi)$, and for any $\fo \in \cO_a$, $J_\fo(\varphi^V) = J_\fo(\varphi^{V} \cdot (q_V \circ u))$. By Lemma ~\ref{lem:localization_and_transfer}, after replacing $\varphi$ and $(\varphi^V)$ by $\varphi \cdot (q \circ u)$ and $\varphi^V \cdot (q_V \circ u)$, we can assume $\varphi \in \cS(\Omega)$ and $\varphi^V \in \cS(\Omega_V)$.

    Recall that we have defined $\varphi_H \in \cS(\Omega_H)$ and $\varphi^{h^\flat} \in \cS(\Omega_{h^\flat})$ for each $h^\flat \in \cH^\flat$ in \ref{subsubsec:Descent_and_Transfer}. Choose $X_H = (X_{0,H},Z_{\alpha_1}^{\varepsilon_1},\cdots,Z_{\alpha_k}^{\varepsilon_k}) \in \Omega_H$ and $g \in \GL_n(F)$ such that $X = \iota_\fh(X_H) \cdot g$. From the definition of $I_X(\varphi)$ in ~\eqref{eq:definition_I_X_on_Omega}, we have $I_X(\varphi) = \eta(g) I_{X_H}(\varphi_H)$. For $\fo \in \cO_a$, thanks to the bijection ~\eqref{eq:classiciation_semisimple_orbit}, we can write $\fo$ as the orbit of $\iota_{\fu^{h^\flat}}(X_0,Z_{\alpha_1},\cdots,Z_{\alpha_k})$, where $h^\flat=(h_i) \in \cH^\flat$ with $h_0=h_0^{a_0}$ and $X_0 \in \widetilde{\fu}^{h_0}_{a_0}(F)$. Write $X_{h^\flat} = (X_{0},Z_{\alpha_1},\cdots,Z_{\alpha_k})$, then we see that
    \begin{align*}
        J_X(\varphi^V) &= \int_{\U(V)_X(F) \backslash \U(V)(F)} \int_{\U_{h^\flat}(F)} \varphi^{h^\flat,\prime}(X_{h^\flat} \cdot h, \iota_{\U}(h)^{-1} g) \rd h \rd g  \\
        &= \int_{\U(V)_X(F) \backslash \U(V)(F)} \int_{\U_{h^\flat}^0(F) \backslash \U_{h^\flat}(F)}\int_{\U_{h^\flat}^0(F)} \varphi^{h^\flat,\prime}(X_{h^\flat} \cdot h , \iota_{\U}(h'h)^{-1} g) \rd h' \rd h \rd g  \\
        &= \int_{\U_{h_0}(F)} \int_{\U(V)(F)} \varphi^{h^\flat}(X_{h^\flat} \cdot h) \rd h =  J_{X^\flat}(\varphi^{h^\flat}).
    \end{align*}

    Recall if $h_0 \ne h_0^{a_0}$, then $\varphi^{h^\flat}=0$. By Lemma ~\ref{lem:descent_and_transfer}, $c^+ \varphi_H \xleftrightarrow{+} \varphi^{h^\flat}$.

    Thus we are left show that for any $\varphi_1 \in \widetilde{\fh}(F)$ matches with $(\varphi^{h^\flat}_1 \in \cS(\widetilde{\fu}^{h^\flat}(F)))_{h^\flat \in \cH^\flat}$, we have
    \[
       \omega^+(X_{0,H}) I^\natural_{X_H}(\varphi_1) = \sum_{\substack{h^\flat=(h_i) \in \cH^\flat \\ h_0 = h_0^{a_0}}} c_{h^\flat}^{\varepsilon_i} J_{X^\flat}(\varphi_1^{h^\flat}).
    \]
    We henceforth reduce to the case when both $\varphi$ and $\varphi^{h^\flat}$ are pure tensors. The 0-th component is regular semisimple, hence the equality follows from the definition of transfer. For other components, we are reduced to the Lemma ~\ref{lem:nilpotent_singular_transfer} below, which itself is a special case of this proposition. This finishes the proof.
\end{proof}

\begin{lemma} \label{lem:nilpotent_singular_transfer}
    Let $\varphi \in \cS(\widetilde{\gl_n}(F)) \xleftrightarrow{+} (\varphi^V \in \cS(\widetilde{\fu}^V(F)))_{V \in \cH}$ and let $\lambda \in F$, we have
    \[
        I^{\natural}_{Z_\lambda^+}(\varphi) =  \sum_{V \in \cH}  c_V^+  J_{Z_\lambda} (\varphi^V) \text{ and } I^{\natural}_{Z_\lambda^-}(\varphi) = \sum_{V \in \cH} c_V^-  J_{Z_\lambda} (\varphi^V).
    \]
\end{lemma}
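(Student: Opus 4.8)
The plan is to compute both sides via the explicit Fourier-analytic formulas established earlier and to match them termwise over $\cH$. First I would treat the "plus" case $I^{\natural}_{Z_\lambda^+}(\varphi)$, since the "minus" case will follow by the same argument (or by combining the plus case with Lemma \ref{lem:relation_between_plus_minus_transfer}, noting that $c_V^- = c_V^+ \cdot \eta(\mathrm{disc}(V))$). The starting point is the identity \eqref{eq:I_X^+_Fourier}, which (after dividing by $L_{Z_\lambda^+}(s,1_{E^\times})$ and evaluating the resulting holomorphic quantity at $s=0$, using that $\gamma_\xi^+(s)^{-1}$ and $L^+$ have matching poles/zeros so that the normalized version is entire) expresses $I^{\natural}_{Z_\lambda^+}(\varphi)$ as a constant times an integral of $\cF\varphi_\lambda$ against the homogeneous weight $\omega^-_{1_{E^\times},0}(X) = \eta(\delta^-(X))$ over $\widetilde{\gl_n}(F)$ — i.e. essentially $\int_{\cA_{\mathrm{rs}}(F)} \left( \int_{\GL_n(F)} \cF\varphi_\lambda(X\cdot g)\,\eta(g)\,\rd g \right) \eta(\delta^-(X))\,\rd X$, a "stable" integral of regular semisimple orbital integrals of $\cF\varphi_\lambda$. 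I would keep careful track of the constant $\zeta_n\,\omega^-(Z_0^-)^{-1}\,\gamma^+(0)^{-1}$; this is where the local root numbers $\varepsilon(1-i,\eta^i,\psi)^{-1}$ in the definition of $c^+_{V'}$ come from.

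The second step is to do the parallel computation on the unitary side. The quantity $J_{Z_\lambda}(\varphi^V) = \varphi^V(Z_\lambda)$ should be expressed, via the analogue of the integration formula \eqref{eq:integration_formula_b} on $\widetilde{\fu}^V$ (which must be available from the measure conventions in \ref{subsubsec:measure_u}, or can be derived as in \cite{BP21b}), as a stable integral of the Fourier transform $\cF\varphi^V$ against the unitary weight $\eta(\delta^-(\cdot))$ over $\widetilde{\fu}^V(F)$; translating $\varphi^V$ by $Z_\lambda$ produces $(\varphi^V)_\lambda$ just as on the linear side. Then I would invoke Theorem \ref{thm:transfer_Fourier_transform} (Zhang–Xue): since $\varphi \xleftrightarrow{+} (\varphi^V)$, we get $\cF\varphi \xleftrightarrow{+} \big(\eta(\mathrm{disc}(V))^n \varepsilon(\eta,\tfrac12,\psi)^{\frac{n(n+1)}{2}} \cF\varphi^V\big)$, hence $\cF\varphi_\lambda \xleftrightarrow{+} (\cdots \cF(\varphi^V)_\lambda)$ after incorporating the translation. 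Feeding this matching relation (together with $\varphi \xleftrightarrow{+}(\varphi^V) \iff \varphi \xleftrightarrow{-}(\eta(\mathrm{disc}(V))\varphi^V)$) into the stable integral on the linear side converts $\int_{\GL_n} \cF\varphi_\lambda(X\cdot g)\eta(g)\,\rd g$, for $X$ matching $X^V\in\widetilde{\fu}^V$, into $\omega^-(X)^{-1}\eta(\mathrm{disc}(V))^n\varepsilon(\eta,\tfrac12,\psi)^{\frac{n(n+1)}{2}} J_{X^V}(\cF\varphi^V)$; the sum over rational orbits in $\cA_{\mathrm{rs}}(F)$ then breaks up as $\bigsqcup_{V\in\cH}$ over unitary orbits by the matching bijection \eqref{eq:matching_of_rss_lie_algebra}, giving exactly $\sum_{V\in\cH}(\text{constant})\cdot\big(\text{stable integral of }\cF\varphi^V\big)$, i.e. $\sum_V (\text{constant})\, J_{Z_\lambda}(\varphi^V)$.

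The final step — and the one I expect to be the main obstacle — is the bookkeeping of constants: showing that the product of all the accumulated factors ($\zeta_n$, the $\gamma$-factors $\gamma^+(0)^{-1}=\prod_i \varepsilon(1-i,\eta^i,\psi)^{-1}$ up to $L$-values that cancel against $L_{Z_\lambda^+}$, the transfer root number $\varepsilon(\eta,\tfrac12,\psi)^{\frac{n(n+1)}{2}}$, the discriminant powers $\eta(\mathrm{disc}(V))^{n+1}$ coming from the weight conversion, the sign $\eta(-1)^{\frac{n(n-1)}{2}}$ from reindexing/reflecting the $\widetilde{\fb_n}$-versus-$\widetilde{\fn_n}$ integration, and the ratio of $\omega^-(Z_0^-)$-type normalizing constants on the two sides) collapses precisely to $c^+_V$ as defined before the lemma. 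I would organize this by first verifying it for $V$ split (where $\U(V)\cong\GL_n$ and both sides reduce to the identity, pinning down normalizations), then tracking the dependence on $\mathrm{disc}(V)$ separately. There is also a minor technical point to dispatch: justifying that all the "stable" integrals converge and that Fubini applies, which follows as in the proof of Proposition \ref{prop:local_central} (5) and from the absolute convergence of regular semisimple orbital integrals, plus handling the archimedean case where one works with Schwartz rather than compactly supported functions. The "minus" statement then follows either by rerunning the argument with $\omega^+$ in place of $\omega^-$ and \eqref{eq:I_X^-_Fourier} in place of \eqref{eq:I_X^+_Fourier}, or formally from the plus case via Lemma \ref{lem:relation_between_plus_minus_transfer}.
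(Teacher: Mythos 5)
Your proposal follows essentially the same route as the paper's proof: start from the Fourier-side expression \eqref{eq:I_X^+_Fourier}, rewrite as a stable integral of regular semisimple orbital integrals, apply Theorem \ref{thm:transfer_Fourier_transform} together with Lemma \ref{lem:relation_between_plus_minus_transfer} to pass to the unitary side via the matching bijection \eqref{eq:matching_of_rss_lie_algebra} and the measure-comparison constant from \cite{BP21b}, then collapse. One small simplification you should note: on the unitary side no analogue of \eqref{eq:integration_formula_b} or weight $\eta(\delta^-(\cdot))$ is needed --- one simply collapses $\int_{\widetilde{\fu}^V_{\mathrm{rs}}(F)/\U(V)(F)}\int_{\U(V)(F)}$ to $\int_{\widetilde{\fu}^V(F)}$ and then uses plain Fourier inversion $J_{Z_\lambda}(\varphi^V)=\varphi^V(Z_\lambda)=\int_{\widetilde{\fu}^V(F)}\cF\varphi^V_\lambda$ (and the combined discriminant power should be $\eta(\mathrm{disc}(V))^{n+1}$, not $\eta(\mathrm{disc}(V))^n$).
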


\begin{proof}
    We only prove the assertion for $I^\natural_{Z_\lambda^+}$. The case for $I^\natural_{Z_\lambda^-}$ is parallel.
     It is clear that $\varphi_\lambda \xleftrightarrow{+} (\varphi^V_\lambda)$ By Theorem ~\ref{thm:transfer_Fourier_transform} and Lemma ~\ref{lem:relation_between_plus_minus_transfer} we have
    \[
    \cF \varphi_\lambda \xleftrightarrow{-} \left(\eta(\mathrm{disc}(V))^{n+1}\varepsilon(\eta,\frac 12,\psi)^{\frac{n(n+1)}{2}} \cF \varphi_{\lambda}^V \right).
    \]
    If we put
    \[
        \gamma^+(s) = \prod_{i=1}^n \gamma(-is-i+1,\eta^i,\psi).
    \]
    Then
    \[
        \gamma^+(s) L_{Z_\lambda^+}(s) = \prod_{i=1}^n L(is+i,\eta^i) \varepsilon(-is-i+1,\eta^i,\psi)
    \]
    is non-vanishing at $s=0$ with value $\prod_{i=1}^n L(i,\eta^i) \varepsilon(1-i,\eta^i,\psi)$. Also note that $\omega^{-}(Z_0^-) = \eta(-1)^{\frac{n(n-1)}{2}}$.
       
    Combining the discussion above and  ~\eqref{eq:I_X^+_Fourier}, we see that
     \begin{align*}
         I^\natural_{Z_\lambda^+}(\varphi) &= \zeta_n  \eta(-1)^{\frac{n(n-1)}{2}} \frac{1}{\prod_{i=1}^n L(i,\eta^i) \varepsilon(1-i,\eta^i,\psi)} \int_{\widetilde{\gl_n}(F)} \cF \varphi_\lambda (X) \omega^-(X) \rd X \\
         &= \zeta_n  \eta(-1)^{\frac{n(n-1)}{2}} \frac{1}{\prod_{i=1}^n L(i,\eta^i) \varepsilon(1-i,\eta^i,\psi)} \int_{\cA_{\mathrm{rs}}(F)} \int_{\GL_n(F)} \cF \varphi_\lambda(X \cdot g) \omega^-(X \cdot g) \rd g \rd X.
     \end{align*}
By ~\cite{BP21b}*{Lemma 5.2.1}, the bijection ~\eqref{eq:matching_of_rss_lie_algebra} is measure preserving up to constant. More precisely, suppose a measurable subset $M$ corresponds to $\sqcup M_V$, then 
\[
    \vol(M) = \frac{\prod_{i=1}^n L(i,\eta^i)}{\zeta_n} \vol(M_V).
\]
Therefore, the expression for $I_{Z_\lambda^+}^\natural(\varphi)$ can be written as
\begin{align*}
         &  \frac{ \eta(-1)^{\frac{n(n-1)}{2}}}{\prod_{i=1}^n \varepsilon(1-i,\eta^i,\psi)}  \sum_{V \in \cH} \int_{\widetilde{\fu}^V_{\mathrm{rs}}(F)/\U(V)(F)} \eta(\mathrm{disc}(V))^{n+1}\varepsilon(\eta,\frac 12,\psi)^{\frac{n(n+1)}{2}} \int_{\U(V)(F)} \cF \varphi^V_\lambda(X \cdot g) \rd g \rd X \\
         =&  \sum_{V \in \cH} c_V^+ \int_{\widetilde{\fu}^V(F)} \cF \varphi_\lambda^V(X) \rd X = \sum_{V \in \cH} c_V^+ J^V_{Z_\lambda}(\varphi^V).
\end{align*}
Thus the lemma is proved.
\end{proof}

When $X$ lies in $\widetilde{\gl_n}_{,+}(F)$, then the complex number $\omega^+(X) I_X^\natural(\varphi)$ only depends on the image $a$ of $X$ in $\cA(F)$. In this case, the Proposition ~\ref{prop:local_singular_transfer_lie_algebra} can be stated as follows: if $\varphi \xleftrightarrow{+} (\varphi^V)_{V \in \cH}$, then 
\begin{equation} \label{eq:singular_transfer_with_transfer_factor}
    \omega^+(X) I_X^\natural(\varphi) = \sum_{V,\fo} c'_{a,\fo} J_\fo^V(\varphi^V),
\end{equation}
where the constant $c'_{a,\fo} := c^+ c_{\fo}^{+,\prime}$ and $c_{\fo}^{+,\prime}$ is defined similarly as $c_\fo^+$, but we omit the constant $\eta'(-1)^{\frac{n(n-1)}{2}}$ in the definition of $c_{V'}^+$.

We also record a lemma related to the proof above.

\begin{lemma} \label{lem:semisimple_orbital_integral_stable}
    Let $V \in \cH$ and let $\fo \subset \widetilde{\fu}$ be a semisimple orbit. Then the distribution $J_\fo$ is a stable distribution, in the sense that if $\varphi \in \cS(\widetilde{\fu}(F))$ such that all the regular semisimple orbital integral of $\varphi$ is 0, then $J_\fo(\varphi)=0$.
\end{lemma}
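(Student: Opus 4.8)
The plan is to mimic the proofs of Lemma~\ref{lem:nilpotent_singular_transfer} and Proposition~\ref{prop:local_singular_transfer_lie_algebra}: reduce by descent to the case of a central orbit and there appeal to Fourier inversion together with the fact that $\cF$ preserves the space of functions whose regular semisimple orbital integrals all vanish. Throughout, call a Schwartz function on a unitary Lie algebra \emph{unstable} if all of its regular semisimple orbital integrals vanish; $\varphi$ is assumed unstable. We treat $\widetilde{\fu}=\widetilde{\fu}^V$; the product case of Remark~\ref{rmk:generalized_transfer} is identical factorwise.

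\textbf{Step 1 (Reduction to a central orbit).} Since $J_\fo$ is supported on $\fo$, we may replace $\varphi$ by $\varphi\cdot(u\circ q)$ for $u\in C_c^\infty(\omega)$ with $u(a)=1$ near $a:=q(\fo)$; this changes neither $J_\fo(\varphi)$ nor the unstability of $\varphi$. Running the descent of~\ref{subsubsec:descent_construction_u} exactly as in the proof of Proposition~\ref{prop:local_singular_transfer_lie_algebra}, write $\fo$ via~\eqref{eq:classiciation_semisimple_orbit} as the orbit of $\iota_{\fu^\flat}(X_0,Z_{\alpha_1},\dots,Z_{\alpha_k})$ with $X_0$ regular semisimple, obtain a descended function $\varphi^{h^\flat}\in\cS(\widetilde{\fu}^{h^\flat}(F))$ with $J_\fo(\varphi)=J_{X^\flat}(\varphi^{h^\flat})$ for $X^\flat:=(X_0,Z_{\alpha_1},\dots,Z_{\alpha_k})$, and note that by~\eqref{eq:descent_and_rss_orbital_integral} the function $\varphi^{h^\flat}$ is again unstable. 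The stabilizer of $X^\flat$ in $\U_{h^\flat}$ is $\prod_{i\ge1}\U_{h_i}$, so $J_{X^\flat}(\varphi^{h^\flat})=J_{X_0}(\psi_0)$ where $\psi_0(Y):=\varphi^{h^\flat}(Y,Z_{\alpha_1},\dots,Z_{\alpha_k})$ and $J_{X_0}$ is a regular semisimple orbital integral on $\widetilde{\fu}^{h_0}$. Reducing by continuity to pure tensors $\varphi^{h^\flat}=\bigotimes_{i=0}^{k}\varphi_i$, this equals $J_{X_0}(\varphi_0)\prod_{i\ge1}\varphi_i(Z_{\alpha_i})$. A regular semisimple element of $\widetilde{\fu}^{h^\flat}$ is a tuple of regular semisimple elements, so the orbital integrals of $\bigotimes_i\varphi_i$ factor; since orbital integrals are continuous, if no $\varphi_i$ were unstable we could choose regular semisimple $Y_i$ with $J_{Y_i}(\varphi_i)\ne0$ for all $i$ and get a nonzero orbital integral for $\bigotimes_i\varphi_i$, a contradiction. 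Hence some $\varphi_j$ is unstable: if $j=0$ then $J_{X_0}(\varphi_0)=0$ and we are done; if $j\ge1$ we are reduced to showing $\varphi_j(Z_{\alpha_j})=0$ for an unstable $\varphi_j$ on the smaller unitary Lie algebra $\widetilde{\fu}^{h_j}$, i.e.\ to the lemma for a central orbit.

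\textbf{Step 2 ($\cF$ preserves unstable functions).} Let $\varphi$ be unstable on $\widetilde{\fu}^V$ (over $F$ or, as Step~1 requires, over a finite extension). By the existence of smooth transfer (Zhang~\cite{Zhang14}, Xue~\cite{Xue19}) choose $\Phi\in\cS(\widetilde{\gl_n}(F))$ with $\Phi\xleftrightarrow{+}(\varphi^{V'})_{V'\in\cH}$ where $\varphi^{V}=\varphi$ and $\varphi^{V'}=0$ otherwise. Since every regular semisimple $X$ on $\widetilde{\gl_n}$ matches some $X^{V'}$ by~\eqref{eq:matching_of_rss_lie_algebra}, the transfer relation forces all regular semisimple orbital integrals of $\Phi$ (against $\eta$) to vanish, so by~\cite{Chaudouard19}*{Corollary~3.1.8.2} (the case of trivial character and $s=0$) the same holds for $\cF\Phi$; by Theorem~\ref{thm:transfer_Fourier_transform}, $\cF\Phi$ plus-transfers to a family whose $V$-component is a nonzero scalar multiple of $\cF\varphi$, so $\cF\varphi$ is unstable.

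\textbf{Step 3 (Central case).} For $\fo=\{Z_\lambda\}$ we have $J_\fo(\varphi)=\varphi(Z_\lambda)$, and by Fourier inversion $\varphi(Z_\lambda)=\int_{\widetilde{\fu}^V(F)}\cF\varphi(Y)\,\chi(Y)\,dY$ for $\chi(Y):=\psi(-\langle Z_\lambda,Y\rangle)$, which is $\U(V)(F)$-invariant because $\langle Z_\lambda,(B,w)\rangle=\lambda\,\mathrm{Tr}(B)$. Since $\widetilde{\fu}^V\setminus\widetilde{\fu}^V_{\mathrm{rs}}$ is a proper closed subvariety it has measure zero, so we may integrate over $\widetilde{\fu}^V_{\mathrm{rs}}(F)$; fibering over $\cA_{\mathrm{rs}}(F)$ by the Weyl-type integration formula (cf.\ \cite{BP21b}*{Lemma~5.2.1} and the bijection~\eqref{eq:matching_of_rss_lie_algebra}, as used in the proof of Lemma~\ref{lem:nilpotent_singular_transfer}) expresses $\varphi(Z_\lambda)$ as an absolutely convergent integral over $\cA_{\mathrm{rs}}(F)$ of $\chi$ against the regular semisimple orbital integrals of $\cF\varphi$, which vanish by Step~2. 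The same argument works verbatim over a finite extension, and factorwise in the setting of Remark~\ref{rmk:generalized_transfer}, completing the reduction started in Step~1.

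\textbf{Expected obstacle.} The real work is Step~1: transcribing the descent identity $J_\fo(\varphi)=J_{X^\flat}(\varphi^{h^\flat})$ precisely, justifying the reduction to pure tensors, and proving that unstability of a tensor product forces one tensor factor to be unstable. Steps~2 and~3 are short formal consequences of \cite{Chaudouard19}, Theorem~\ref{thm:transfer_Fourier_transform}, and Fourier inversion, and contain no essential difficulty.
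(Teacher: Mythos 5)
Your overall architecture matches the paper's: descend to $\widetilde{\fu}^{h^\flat}$, observe the descended function stays unstable, then handle the central case by Fourier inversion and Weyl integration using that $\cF$ preserves instability. Step~2 takes a genuinely different route from the paper: the paper cites \cite{Chaudouard19}*{3.2.4} directly for the unitary-side statement that $\cF$ preserves instability, whereas you rederive it from the $\widetilde{\gl_n}$ version (\cite{Chaudouard19}*{Corollary 3.1.8.2}) via smooth transfer in both directions and Theorem~\ref{thm:transfer_Fourier_transform}; your route is longer but uses one fewer external unitary input. Step~3 is the same as the paper's up to the choice of translating to $\lambda=0$ rather than carrying the invariant character $\chi$.

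The gap is in Step~1, at ``reducing by continuity to pure tensors.'' After descent you need $J_{X^\flat}(\varphi^{h^\flat})=0$ for an arbitrary unstable $\varphi^{h^\flat}\in\cS(\widetilde{\fu}^{h^\flat}(F))$, where $\widetilde{\fu}^{h^\flat}=\prod_i\widetilde{\fu}^{h_i}$ and $X^\flat=(X_0,Z_{\alpha_1},\dots,Z_{\alpha_k})$. Your argument is fine for a pure tensor $\bigotimes_i\varphi_i$ that is \emph{itself} unstable (then, as you say, some factor $\varphi_j$ must be unstable and you conclude factorwise). But you cannot pass from there to a general unstable $\varphi^{h^\flat}$ by continuity of $J_{X^\flat}$: when you approximate $\varphi^{h^\flat}$ by finite sums of pure tensors, instability of the sum says nothing about instability of the individual summands, let alone of some tensor factor of each summand; and the span of pure tensors with an unstable factor is not obviously dense in the closed subspace of unstable functions. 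So the reduction as written does not go through. The fix is the marginal construction, which you almost set up but then abandoned: define $\psi_1(Y_1,\dots,Y_k):=\int_{\U_{h_0}(F)}\varphi^{h^\flat}(X_0\cdot g_0,Y_1,\dots,Y_k)\,dg_0\in\cS(\prod_{i\ge 1}\widetilde{\fu}^{h_i}(F))$, so that $J_{X^\flat}(\varphi^{h^\flat})=\psi_1(Z_{\alpha_1},\dots,Z_{\alpha_k})$. For regular semisimple $(Y_1,\dots,Y_k)$, the orbital integral of $\psi_1$ at $(Y_1,\dots,Y_k)$ equals, by Fubini, the orbital integral of $\varphi^{h^\flat}$ at $(X_0,Y_1,\dots,Y_k)$, which is a regular semisimple point of $\widetilde{\fu}^{h^\flat}$ (a tuple is regular semisimple iff each component is); hence it vanishes, so $\psi_1$ is unstable. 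You are then in the purely central case, and Step~3 applies (in the product-of-unitary-Lie-algebras setting of Remark~\ref{rmk:generalized_transfer}, with $\chi(Y)=\psi(-\sum_i\langle Z_{\alpha_i},Y_i\rangle)$). This avoids the tensor approximation entirely and fills the gap.
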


\begin{proof}
    Let $\varphi \in \cS(\widetilde{\fu}^V(F))$ such that all regular semisimple orbital integrals of $\varphi$ is 0. Take $X \in \fo$, assume that $q_V(X)=a \in \cA(F)$. Replacing $\varphi$ by $\varphi^V \cdot (q_V \circ u)$, we can assume $\varphi \in \cS(\Omega_V)$. Taking $h^\flat \in \cH^\flat$ such that there is a semisimple orbit $\fo^\flat$ corresponds to $\fo$ under the bijection ~\eqref{eq:classiciation_semisimple_orbit}. Then as the computation in the proof of Proposition ~\ref{prop:local_general}, $J_\fo(\varphi) = J_{\fo^\flat}(\varphi^\flat)$ and $\varphi^\flat$ also has vanishing regular semisimple orbital integrals, we therefore reduced to the case when $\fo$ is a regular semisimple orbit or a central orbit. If $\fo$ is regular semisimple, $J_\fo(\varphi)$ vanishes by definition. If $\fo$ is the orbit of $Z_\lambda$, after a translation, we can assume $\lambda=0$. Then
    \[
        J_\fo(\varphi) = \varphi(0) = \int_{\widetilde{\fu}^V(F)} \cF \varphi(X) \rd X = \int_{\widetilde{\fu}^V_{\mathrm{rs}}(F)/\U(V)(F)} \int_{\U(V)} \varphi(X \cdot g) \rd g \rd X.
    \]
    By ~\cite{Chaudouard19}*{3.2.4}, $\cF \varphi$ also has vanishing regular semisimple orbital integrals, therefore the above expression vanishes.
\end{proof}

We introduce a variant of the results in this subsection. Recall from Subsection ~\ref{subsec:symmetric space S} that, $\cB = \gl_{n+1}/\GL_n$. For $(V,h) \in \cH$, we put $(V',h')=(V \oplus Ee_0,h \oplus h_0)$, where $\oplus$ denotes the orthogonal direct sum and $h_0(e_0,e_0)=1$. We have an isomorphism of $\U(V)$ representations:
\begin{equation} \label{eq:iso_uV'_and_tildeu}
        \fu^{V'} \cong \widetilde{\fu}^V \times F, \quad A \mapsto ((A|_V,Ae_0),h'(Ae_0,e_0)),
\end{equation}
where $\U(V)$ acts on $F$ trivially. We thus have a canonical identification $\fu^{V'}/\U(V) \cong \cB$. Semisimple orbital integral $J_\fo(\varphi^V)$ directly generalize to $\varphi^V \in \cS(\fu^{V'}(F))$. For $X \in \gl_{n+1}(F)$, let $\omega^\pm(X)=\eta(\delta^\pm(X))$. Then there is an obvious notion of plus/minus transfer between $\varphi \in \cS(\gl_{n+1}(F))$ and $(\varphi^V) \in \cS(\fu^{V'}(F))_{V \in \cH}$. For $b \in \cB(F)$, let $\cO_b$ be the set
\[
    \{ (V,\fo) \mid V \in \cH, \fo \subset \fu^{V'}_a(F) \text{ is a semisimple orbit} \}.
\]
For $\fo \in \cO_b$, it projects to a semisimple orbit $\widetilde{\fo} \in \cO_a$ under the isomorphism ~\eqref{eq:iso_uV'_and_tildeu}, where $a$ is the image of $b$ in $\cA$. 

For $X \in \gl_{n+1}(F)$, let $Y$ is the image of $X$ in $\widetilde{\gl_n}(F)$ under the bijection $\gl_{n+1} \cong \widetilde{\gl_n} \times F$. Let $c_{X,\fo} := c_{Y,\widetilde{\fo}}$. Then Proposition ~\ref{prop:local_singular_transfer_lie_algebra} implies that, if $X \in \gl_{n+1}(F)$ is a regular element with $q(X)=a$ of type $\varepsilon$, and $\varphi \in \cS(\gl_{n+1}(F)) \xleftrightarrow{+} (\varphi^V \in \cS(\fu^{V'}(F)))$, then
\begin{equation} \label{eq:local_II_variant}
     I^\natural_X(\varphi) = \sum_{(V,\fo) \in \cO_a} c_{X,\fo} J_\fo^V(\varphi^V).
\end{equation}

\subsection{Singular transfer on the group}
\label{subsec:local_II_group}

We now deduce the singular transfer on the group from the Lie algebra as we discussed in Subsection ~\ref{subsec:local_II_lie_algebra}. Fix $b \in \rB(F)$, pick $\sigma \in E^\times$, with $\sigma \sigma^\mathtt{c}=1$, so that $b \in \rB^\sigma(F)$. For $f \in \cS(\rG'(F))$, let $I_\gamma^{\sigma,\natural}(f) := I_\gamma^{\sigma,\natural}(f,1_{E^\times},0)$. 

We use the Cayley map to relate the group $\U(V')$ the the Lie algebra $\fu^{V'}$. Let $\tau,\sigma \in E$ such that $\tau^\mathtt{c}=-\tau$ and $\sigma \sigma^\mathtt{c}=1$. Let $\fu^{V',\tau}$ be the open subscheme of $\fu^{V'}$ consists of $Y \in \fu^{V'}$ such that $Y - \tau \cdot 1$ is invertible. Let $\U(V')^{\sigma}$ be the open subscheme of $\U(V')$ consists of $g \in \U(V')$ such that $g - \sigma \cdot \mathrm{id}$ is invertible and $\rG^{V,\sigma}$ be the open subscheme of $\rG^V$ consists of $(x,y) \in \rG^V$ such that $x^{-1}y \in \U(V')^\sigma$. The Cayley map
\[
    \fc_\sigma^V: \fu^{V',\tau} \to \U(V')^\sigma, \quad Y \mapsto \sigma \frac{1+\tau^{-1}Y}{1-\tau^{-1}Y}
\]
defines a $\U(V)$ equivariant isomorphism between $\fu^{V',\tau}$ and $\U(V')^{\sigma}$, and descends to a map $\cB^{\tau} \to \rB^\xi$ which coincides with the map induced by $\fc_\sigma$ as in ~\eqref{eq:Cayley_gl_n}.

We define
\[
    \cO_b = \{ (V,\cO) \mid V \in \cH, \cO \subset \rG^V_b(F) \text{ is a } \rH^V(F) \times \rH^V(F) \text{ semisimple orbit}  \}.
\]

Let $a = \fc_\sigma^{-1}(b)$. Note that there is a natural $H^V=\U(V)$ equivariant isomorphism $\rG^V/\rH^V \cong \U(V')$. Since $\fc_\sigma^V$ is an isomorphism, the Cayley map induces a bijection $\cO_a \to \cO_b$, which we also denote by $\fc_\sigma$. For $\cO \in \cO_b$ and $\gamma \in \rG^{\prime,\sigma}(F)$, let $x=\nu(\gamma_n^{-1} \gamma_{n+1})$ and define a constant $c_{\gamma,\cO}$ by
\begin{equation} \label{eq:c_gamma_o}
    c_{\gamma,\cO} = c_{\fc_\sigma^{-1}(x),\fc^{-1}_\sigma(\cO)} \mu(x)^{\frac{n+1}{2}} \mu(\gamma_n^{-1} \gamma_{n+1})^{-1} \mu(-2 \sigma \tau^{-1})^{-\frac{n(n+1)}{2}} \mu(1-\tau^{-1} \fc_\sigma(x))^{-n}.
\end{equation}

Now we state the main theorem of this section
\begin{theorem} \label{thm:local_singular_transfer}
    Let $f \in \cS(\rG'(F))$ and for each $V \in \cH$ let $f^V \in \cS(\rG^V(F))$ such that $f \xleftrightarrow{+} (f^V)$. Then for any regular element $\gamma \in \rG'_b(F)$, we have
    \[
        I^{\sigma,\natural}_\gamma(f) =\sum_{(V,\cO) \in \cO_b} c_{\gamma,\cO} J_\cO^V(f^V).
    \]
\end{theorem}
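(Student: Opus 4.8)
The plan is to deduce Theorem~\ref{thm:local_singular_transfer} from its Lie algebra incarnation, namely the variant~\eqref{eq:local_II_variant} of Proposition~\ref{prop:local_singular_transfer_lie_algebra} on $\gl_{n+1}$, by transporting everything through the Cayley map exactly as $I^\sigma_\gamma$ was built from $I_X$ in Subsection~\ref{subsec:local_group}. First I would fix $b\in\rB(F)$ together with $\sigma\in E^\times$ satisfying $\sigma\sigma^{\mathtt c}=1$ and $b\in\rB^\sigma(F)$, and set $a=\fc_\sigma^{-1}(b)\in\cB(F)$. Via the isomorphism~\eqref{eq:iso_uV'_and_tildeu} and the matching of regular semisimple data, the Cayley map $\fc_\sigma^V:\fu^{V',\tau}\to\U(V')^\sigma$ induces a bijection $\cO_a\to\cO_b$ compatible with the one used to define $c_{\gamma,\cO}$ in~\eqref{eq:c_gamma_o}. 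Since both sides of the asserted identity depend only on the germs of $f$ and the $f^V$ near the fibers over $b$, I would incorporate a cutoff $u\in C_c^\infty(\rB^\sigma(F))$ with $u(b)=1$, exactly as in~\eqref{eq:defi_f^gl}, so as to work with functions supported over $\rB^\sigma(F)$; the transfer relation $f\xleftrightarrow{+}(f^V)$ is preserved under such localization by (the group analogue of) Lemma~\ref{lem:localization_and_transfer}.

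Next I would define, from $f$ and the $f^V$, Schwartz functions $f^{\gl}\in\cS(\gl_{n+1}(F))$ and $f^{V,\fu}\in\cS(\fu^{V'}(F))$ by integrating out one copy of $\rH_1(F)\times\rH_{2,n+1}(F)$ (resp.\ $\rH^V(F)$) and then applying the Cayley transform, exactly mirroring~\eqref{eq:defi_f^S}--\eqref{eq:defi_f^gl} (taking $\xi=1_{E^\times}$, $s=0$). The key intermediate claim is then \emph{transfer commutes with Cayley}: $f\xleftrightarrow{+}(f^V)$ forces $f^{\gl}\xleftrightarrow{+}(f^{V,\fu})$ on the Lie algebra. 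To prove this I would compare, for matching regular semisimple $\gamma\leftrightarrow\gamma^V$ with $x=\nu(\gamma_n^{-1}\gamma_{n+1})$, $X=\fc_\sigma^{-1}(x)$ and $X^V$, the two regular semisimple orbital integrals: the change of variables $\rH_{2,n+1}(F)\rightsquigarrow\GL_n(F)$ (resp.\ $\U(V)(F)$) has Jacobian one, and the ratio of transfer factors $\Omega^+(\gamma)/\omega^+(X)$ is, by~\eqref{eq:Delta_and_Cayley_transform}--\eqref{eq:Delta-_and_Cayley_transform} and the odd/even normalization of $\Omega^\varepsilon$, exactly the explicit monomial
\[
\mu(x)^{-\frac{n+1}{2}}\,\mu(\gamma_n^{-1}\gamma_{n+1})\,\mu(-2\sigma\tau^{-1})^{\frac{n(n+1)}{2}}\,\mu(1-\tau^{-1}\fc_\sigma(x))^{n}
\]
occurring, inverted, in~\eqref{eq:c_gamma_o}; the analogous computation holds on the unitary side (where $\Omega^\pm$ is replaced by the trivial Lie algebra factor $\omega^\pm$). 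One also checks directly that $L_\gamma(s,1_{E^\times})=L_X(s,1_{E^\times})$, so that by the very definitions in Subsection~\ref{subsec:local_group} one has $I^{\sigma,\natural}_\gamma(f)=(\text{the above monomial})\cdot I^\natural_X(f^{\gl})$, and likewise $J^V_\cO(f^V)=(\text{Jacobian of }\fc_\sigma^V)\cdot J^V_\fo(f^{V,\fu})$ for $\cO=\fc_\sigma(\fo)$.

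Finally I would apply~\eqref{eq:local_II_variant} to $X=\fc_\sigma^{-1}(\alpha(\gamma))$ and $f^{\gl}\xleftrightarrow{+}(f^{V,\fu})$, obtaining $I^\natural_X(f^{\gl})=\sum_{(V,\fo)\in\cO_a}c_{X,\fo}\,J^V_\fo(f^{V,\fu})$, and then translate back: collecting the monomial in front of $I^\natural_X$, the Lie algebra constant $c_{X,\fo}$, and the Jacobian factors $\mu(1-\tau^{-1}\fc_\sigma(x))^{-n}\mu(-2\sigma\tau^{-1})^{-n(n+1)/2}$ produced by $\fc_\sigma$ and $\fc_\sigma^V$, and matching the total against the definition~\eqref{eq:c_gamma_o} of $c_{\gamma,\cO}$, yields the theorem. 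The main obstacle is precisely this reconciliation of normalizations: one must verify that the product of the transfer-factor ratio, the Cayley Jacobians on both the general linear and unitary sides, and $c_{X,\fo}$ collapses exactly to $c_{\gamma,\cO}$, and in particular that the $\mu(x)^{(n+1)/2}$ and $\mu(\gamma_n^{-1}\gamma_{n+1})^{-1}$ corrections in~\eqref{eq:c_gamma_o} are accounted for by the odd/even $\mu$-twists built into $I^\sigma_\gamma$ and $\Omega^\varepsilon$. This is a direct but delicate bookkeeping computation; establishing the ``transfer commutes with Cayley'' claim (via~\eqref{eq:Delta_and_Cayley_transform}--\eqref{eq:Delta-_and_Cayley_transform}) is the one genuinely substantive step, and everything else is routine once it is in place.
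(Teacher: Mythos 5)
Your proposal follows the paper's proof essentially step for step: localize via a cutoff $u$ over $\rB^\sigma(F)$, pass to Lie-algebra Schwartz functions by integrating out $\rH_1\times\rH_{2,n+1}$ (resp.\ $\rH^V$) and applying the Cayley map, verify that transfer survives this passage, invoke the Lie-algebra singular transfer~\eqref{eq:local_II_variant}, and reconcile constants against~\eqref{eq:c_gamma_o}. The only presentational difference is that the paper absorbs the monomial $\mu(\fc_\sigma(X))^{-\frac{n+1}{2}}\mu(-2\sigma\tau^{-1})^{\frac{n(n+1)}{2}}\mu(1-\tau^{-1}X)^{-n}$ directly into the definition of $\varphi_f$ so that $\varphi_f\xleftrightarrow{+}(\varphi_{f^V})$ holds on the nose, whereas you keep it out front as the transfer-factor ratio coming from~\eqref{eq:Delta_and_Cayley_transform}--\eqref{eq:Delta-_and_Cayley_transform} (calling it a Jacobian is a misnomer, since the orbital integrals are over $\GL_n$ or $\U(V)$ rather than over the Cayley-transformed variety, but this is cosmetic).
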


\begin{remark}
    When $\gamma \in \rG'_+(F)$, recall from Lemma ~\ref{lem:local_group_central_rss_description} that $I_\gamma^\sigma$ is independent of the choice of $\sigma$, we denote it by $I_\gamma$ here.

    Using the equation ~\eqref{eq:singular_transfer_with_transfer_factor}, we easily see that the above theorem can be written as
    \begin{equation} \label{eq:plus_regular_reinterpretation}
          \Omega^+(\gamma)I_\gamma^\natural(f) = \sum_{(V,\cO) \in \cO_b} c_{X,\cO} \omega^+(X) J^V_\cO(f^V),
    \end{equation}    
    where $\fc_\sigma(X) = \nu(\gamma_n^{-1} \gamma_{n+1})$, and $c_{X,\cO} := c_{X,\fc_\sigma^{-1}(\cO)}$.

    Note that the left-hand side of the equation ~\eqref{eq:plus_regular_reinterpretation} only depends on $b$. This description removes some factors appearing in ~\eqref{eq:c_gamma_o}.
\end{remark}

\begin{proof}
    Let us denote by $q$ the quotient map $\rG' \to \rB$, for $V \in \cH$, we denote by $q^V$ the quotient $\rG^V \to \rB$. Since both $I_\gamma(f)$ (resp. $J_\cO$) only depends on the value of $f$ on $\rG'_b(F)$ (resp. $\rG^V_b(F)$). Take $u \in C_c^\infty(\rB^\sigma(F))$ such that $u(b)=1$. After replacing $f$ (resp. $f^V$) by $f \cdot (u \circ Q)$ (resp. $f^V \cdot (u \circ Q_V)$),
    we can $f \in \cS(\rG^{\prime,\sigma}(F))$ and $f^V \in \cS(\rG^{V,\sigma}(F))$.

    Let $f^\rS := f^{\rS}_0$ defined in ~\eqref{eq:defi_f^S}. Then $f^\rS \in \cS(\rS^\sigma(F))$. We define $f^{\U(V')} \in \cS(\U(V')^\sigma(F))$ by
    \[
        f^{\U(V')}(x) = \int_{\rH^V(F)} f(hx) \rd h,
    \]
    where $x \in \U(V')(F)$. 
    
    Define $\varphi_f \in \cS(\gl_{n+1}^\tau(F))$ by
    \[
        \varphi_f(X) = f^\rS(\fc_\sigma(X)) \mu(\fc_\sigma(X))^{-\frac{n+1}{2}} \mu(-2 \sigma \tau^{-1})^{\frac{n(n+1)}{2}} \mu(1-\tau^{-1}X)^{-n},
    \]
    where $X \in \gl_{n+1}^\tau(F)$. For $V \in \cH$ and $X \in \fu^{V',\tau}(F)$ we put $\varphi_{f^V}(X) = f^{\U(V')}(\fc_\sigma(X))$, then $\varphi_f \in \cS(\fu^{V',\tau}(F))$. Direct computation shows if $f \xleftrightarrow{+} f^V$ then $\varphi_f \xleftrightarrow{+ }\varphi_{f^V}$.

    Let $x=\nu(\gamma_n^{-1} \gamma_{n+1})$. By the definition of $I_\gamma$ and $J_\cO$, we have
    \[
        I^\natural_{\fc_\sigma^{-1}(x)}(\varphi_f) =\mu(\gamma_n^{-1}\gamma_{n+1})  \mu(x)^{-\frac{n+1}{2}}  \mu(-2 \sigma \tau^{-1})^{\frac{n(n+1)}{2}} \mu(1-\tau^{-1} \fc_\sigma^{-1}(x)) I^{\sigma,\natural}_\gamma(f),
    \]
    and
    \[
        J_\cO^V(f) = J_{\fc(\cO)}^V(\varphi_f).
    \]
    Then the theorem follows from the equality ~\eqref{eq:local_II_variant}.
\end{proof}

\section{Fourier-Jacobi case} \label{sec:Fourier-Jacobi}

All the results in this article have their counterparts for the relative trace formula developed by Liu ~\cite{Liu14}. The proofs of the results in this section are parallel to their Jacquet-Rallis counterpart, so we will omit their proofs. 

\subsection{Notations}

Let $F$ be a field of characteristic 0 and let $E$ be a quadratic \`{e}tale algebra over $F$. We redefine $\rG' := \mathrm{Res}_{E/F}(\GL_{n,E} \times \GL_{n,E}) \times F^n \times F_n$. We put
\[
    \rS_n = \{ x \in \GL_{n,E} \mid x x^\mathtt{c}=1 \},
\]
the map $\nu: \mathrm{Res}_{E/F} \GL_{n,E} \to \rS_n, g \mapsto g g^{-1,\mathtt{c}}$ identifies $\rS_n$ with $\GL_{n,E}/\GL_{n,F}$. We redefine $\rH_1$ and $\rH_2$ to be the following subgroups of $\mathrm{Res}_{E/F}(\GL_{n,E} \times \GL_{n,E})$:
\[
    \rH_1 := \{ (g,g) \mid g \in \mathrm{Res}_{E/F} \GL_{n,E} \}, \quad \rH_2 = \GL_{n,F} \times \GL_{n,F}. 
\]
The group $\rH_1 \times \rH_2$ acts on the right on $\rG'$ by:
\[
    (g,v,u) \cdot (h_1,(h_{2,1},h_{2,2})) = (h_1^{-1}gh_2,h_{2,2}^{-1}v,uh_{2,2}). 
\]
We redefine $\rB$ to be the GIT quotient $\rG'/\rH_1 \times \rH_2$, it can be identified with the quotient $(\rS_n \times F^n \times F_n)/\GL_n$, where $\GL_n$ acts by
\[
    (s,v,u) \cdot h = (h^{-1}sh,h^{-1}v,uh),
\]
and the identification is made through the map:
\[
    \alpha: \mathrm{Res}_{E/F}(\GL_{n,E} \times \GL_{n,E}) \to \rS_n, \quad (g_1,g_2) \mapsto \nu(g_1^{-1}g_2).
\]

Let $\sigma \in E, \sigma \sigma^\mathtt{c}=1$. $\rS_n$ has an open subset $\rS_n^\sigma$ consists of $x$ such that $x-\sigma \cdot \mathrm{id}$ is invertible, it descends to an open subset $\rB^\sigma$ of $\rB$. Let $\tau \in E$ with $\tau + \tau^{\mathtt{c}}=0$, let $\widetilde{\gl_n}^\tau$ be the open subset of $\widetilde{\gl_n}$ consisting of $(A,v,u)$ such that $A - \tau$ is invertible. We define the Cayley transform:
\[ 
    \fc_\sigma:\widetilde{\gl_n}^\tau \to \rS_n^\sigma \times F^n \times F_n, \quad (A,v,u) \mapsto \left( -\sigma \frac{1+\tau^{-1}A}{1-\tau^{-1}A},v,u \right)
\]
Let $(\rS_n \times F^n \times F_n)_\mathrm{reg}$ be the open subset of the regular elements under this action of $\GL_n$, it has open subsets $(\rS_n \times F^n \times F_n)_+$ (resp. $(\rS_n \times F^n \times F_n)_-$) consists of $(x,v,u)$ such that $(v,xv,\cdots,x^{n-1}v)$ forms a basis of $E^n$ (resp. $(u,ux,\cdots,ux^{n-1})$ form a basis of $E_n$), its preimage in $\rG'$ will be denoted by $\rG'_+$ (resp. $\rG'_-$). For $X \in \widetilde{\gl_n}^\tau$, and $\bullet \in \{+,-\}$
\[
    X \in \widetilde{\gl_n}_{,\bullet} \iff \fc_\sigma(X) \in (\rS_n^\sigma \times F_n \times F^n)_\bullet
\]

Let $\cH$ be the isometric class of $n$-dimensional non-degenerate $E/F$-Hermitian space. For $V \in \cH$, we redefine
\[
    \rG^V := \U(V) \times \U(V) \times V,
\]
let $\rH^V$ be the diagonal subgroup of $\U(V) \times \U(V)$. The group $\rH^V \times \rH^V$ acts on the right on $\rG^V$ by
\[
    (g,v) \cdot (h_1,h_2) = (h_1^{-1}gh_2,h_2^{-1}v), \quad g \in \U(V) \times \U(V), v \in V.
\]
The GIT quotient $\rG^V/\rH^V \times \rH^V$ can be identified with $\rB$. Let
\[
    \cO_b = \{ (V,\cO) \mid V \in \cH, \cO \subset \rG_b^V \text{ is a }\rH^V(F) \times \rH^V(F) \text{ semisimple orbit} \}
\]

\subsection{Local theory}

Now let $F$ be a local field. For $f \in \cS(\rG'(F)),(x,v,u) \in \rS_n(F) \times F^n \times F_n$ and $s \in \C$, we put
\begin{equation} \label{eq:defi_f^S_FJ}
    f^\rS_s(x,v,u) := \begin{cases}  \int_{\rH_1(F)} \int_{\GL_n(F)} f(h_1^{-1},h_1^{-1}\nu^{-1}(x)(1,h_2),v,u) \xi(h_1) \lvert \det h_1 \rvert^s \mu(\nu^{-1}(x)h_{2,3})  ,& n \text{ odd}, \\
    \int_{\rH_1(F)} \int_{\GL_n(F)} f(h_1^{-1},h_1^{-1}\nu^{-1}(x)(1,h_2),v,u) \xi(h_1) \lvert \det h_1 \rvert^s  ,& n \text{ even}.  \end{cases}
\end{equation}
Fix $\gamma \in \rG'(F)$, let $b$ be the image of $\gamma$ in $\rB(F)$. Choose $\sigma \in E, \sigma \sigma^\mathtt{c}=1$ such that $b \in \rB^\sigma(F)$, let $u \in C_c^\infty(\rB^\sigma(F))$, for $s \in \C$, define $\widetilde{f}_s \in \cS(\widetilde{\gl_n}(F))$ by 
\[
    \widetilde{f}_s := (u \cdot f_s^\rS)(\fc_\sigma(X)).
\]
We define
\[
    L_\gamma(s,\xi) := L_{\fc_\sigma^{-1}(b)}(s,\xi)
\]
and
\[
    I^\sigma_\gamma(f,\xi,s) := \begin{cases}
        I_{\fc_\sigma(x)}(\widetilde{f}_s,\xi,s) \mu(\gamma_n \gamma_{n+1}^{-1}) & n\text{ odd}  \\ I_{\fc_\sigma(x)}(\widetilde{f}_s,\xi,s) & n\text{ even}
    \end{cases}.
\]

\begin{proposition} \label{prop:local_group_FJ}
    The definition of $I^\sigma(f,\xi,s)$ is independent of the choice of $u$, and (1)-(5) of Proposition ~\ref{prop:local_group} holds (where $\rG',\rH_1,\rH_2$ are redefined as in this Subsection). 
\end{proposition}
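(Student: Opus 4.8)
The plan is to follow the proof of Proposition~\ref{prop:local_group} verbatim, reducing each of the five assertions to the corresponding statement for the orbital integral $I_X$ on $\widetilde{\gl_n}$ proved in Proposition~\ref{prop:local_general}. The one structural change is that in the Fourier--Jacobi setting the relevant symmetric space is $\rS_n \times F^n \times F_n$, whose Cayley transform $\fc_\sigma$ identifies an open piece directly with $\widetilde{\gl_n}^\tau$; thus one lands on $\widetilde{\gl_n}$ in a single step rather than passing through $\gl_{n+1}$. Accordingly, the local $L$-factor is $L_\gamma(s,\xi)=L_{\fc_\sigma^{-1}(b)}(s,\xi)$, and $I^\sigma_\gamma(f,\xi,s)$ is $I_{\fc_\sigma(x)}(\widetilde f_s,\xi,s)$ up to the constant twist $\mu(\gamma_n\gamma_{n+1}^{-1})$ in the $n$ odd case.

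First I would check that $f\mapsto f^{\rS}_s$, defined by~\eqref{eq:defi_f^S_FJ}, is a continuous linear map $\cS(\rG'(F))\to\cS\big((\rS_n^\sigma\times F^n\times F_n)(F)\big)$ which is holomorphic in $s$; this is the standard partial orbital transform attached to the quotient of $\mathrm{Res}_{E/F}(\GL_{n,E}\times\GL_{n,E})$ by $\rH_1\times(1\times\GL_{n,F})$, and well-definedness (independence of the $\GL_{n,F}$-ambiguity of $\nu^{-1}(x)$), continuity, and holomorphy are established exactly as in the Jacquet--Rallis case treated around~\eqref{eq:defi_f^S}. Consequently $s\mapsto\widetilde f_s=(u\cdot f^{\rS}_s)\circ\fc_\sigma$ is a holomorphic family valued in $\cS(\widetilde{\gl_n}(F))$. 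Independence of the cutoff $u\in C_c^\infty(\rB^\sigma(F))$ is then immediate from Proposition~\ref{prop:local_general}(5): the number $I_{\fc_\sigma(x)}(\widetilde f_s,\xi,s)$ depends only on the values of $\widetilde f_s$ on the $\GL_n(F)$-orbit of $\fc_\sigma^{-1}(\alpha(\gamma))$, hence only on $f^{\rS}_s$ near $b$, where any two admissible $u$ agree since $u(b)=1$.

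With this in hand, (1), (2) and (3) follow from the corresponding parts of Proposition~\ref{prop:local_general} together with the definition of $L_\gamma(s,\xi)$ and the holomorphy of $s\mapsto\widetilde f_s$; the twist $\mu(\gamma_n\gamma_{n+1}^{-1})$ is a nonzero constant and affects no analytic statement. For (4) one checks, under the listed unramified hypotheses ($\xi,\eta,\mu$ unramified, $2\tau\sigma\in\cO^\times$, $\gamma\in\rG'(\cO)$, $a_H\in\cA_H'(\cO)$), that $f^{\rS}_s=\mathbf 1_{(\rS_n^\sigma\times F^n\times F_n)(\cO)}$ for $f=\mathbf 1_{\rG'(\cO)}$, hence $\widetilde f_s=\mathbf 1_{\widetilde{\gl_n}(\cO)}$, exactly as in the proof of Proposition~\ref{prop:local_group}(4); then Proposition~\ref{prop:local_general}(4) gives $I^\sigma_\gamma(\mathbf 1_{\rG'(\cO)},\xi,s)=L_\gamma(s,\xi)$. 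For (5), the support statement is inherited from Proposition~\ref{prop:local_general}(5) (equivalently Lemma~\ref{lem:independence_of_orbit}) via $\fc_\sigma$, and stability reduces to showing that if $f$ is $(\xi\lvert\cdot\rvert^s,\eta)$-unstable then $\widetilde f_s$ is $\xi\eta\lvert\cdot\rvert^s$-unstable on $\widetilde{\gl_n}$: this uses the Fourier--Jacobi analogues of the facts recalled in Subsection~\ref{subsec:symmetric space S}, namely the $\fc_\sigma$-compatible identification of regular semisimple $\rH_1(F)\times\rH_2(F)$-orbits in $\rG'_{\mathrm{rs}}(F)$ with $\GL_n(F)$-orbits in the regular semisimple locus of $(\rS_n\times F^n\times F_n)(F)$, after which each regular semisimple orbital integral of $\widetilde f_s$ is, up to the constants and the modulus factor in~\eqref{eq:defi_f^S_FJ}, one of $f$.

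I expect the only real work to be constant-chasing: the Fourier--Jacobi counterparts of~\eqref{eq:Delta_and_Cayley_transform}--\eqref{eq:non zero equivalent} relating $\delta^\pm$ on $\widetilde{\gl_n}^\tau$ to the analogous functions on $\rS_n^\sigma\times F^n\times F_n$ under $\fc_\sigma$, the handling of $\mu$ in the $n$ odd versus $n$ even cases, and the verification that $\mathbf 1_{\rG'(\cO)}$ transforms to $\mathbf 1_{\widetilde{\gl_n}(\cO)}$ under the listed hypotheses. These are routine given the analogous computations in Subsection~\ref{subsec:local_group}; no new conceptual difficulty arises, and everything else is a formal transcription of that subsection.
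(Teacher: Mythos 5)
Your proposal is correct and carries out exactly the parallel argument that the paper omits: Section~\ref{sec:Fourier-Jacobi} explicitly states that proofs are parallel to their Jacquet--Rallis counterparts, and your reduction of each assertion to Proposition~\ref{prop:local_general} via the map $f\mapsto\widetilde f_s=(u\cdot f^{\rS}_s)\circ\fc_\sigma$ mirrors the proof of Proposition~\ref{prop:local_group}, with the correct simplification that the Cayley transform lands directly on $\widetilde{\gl_n}$ rather than passing through $\gl_{n+1}$. The points you flag for checking (holomorphy of $s\mapsto\widetilde f_s$, the unramified computation $\widetilde f_s=\mathbf 1_{\widetilde{\gl_n}(\cO)}$, and the preservation of $\xi\eta\lvert\cdot\rvert^s$-instability under $f\mapsto\widetilde f_s$ via matching of regular semisimple orbits) are precisely the analogues of the steps in Subsection~\ref{subsec:local_group}, so nothing is missing.
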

and we also have the following lemma
\begin{lemma} \label{lem:local_group_central_rss_description_FJ}
    Let $\eta$ be a character on $\rH_2(F)$ defined by $\eta(h_{2,1},h_{2,2}) = \eta(h_{2,1})^{n+1} \eta(h_{2,2})^n$. Let $f \in \rS(\rG'(F))$, we have the following assertions:
    \begin{itemize}
        \item If $\gamma$ is regular semisimple, then
        \[
            I^\sigma_\gamma(f,\xi,s) = \int_{\rH_1(F)} \int_{\rH_2(F)} f( \gamma \cdot (h_1,h_2)) \xi(h_1) \lvert \det h_1 \rvert^s \eta(h_2) \rd h_1 \rd h_2,
        \]
        where the integral converges absolutely for any $s \in \C$.
        \item If $\gamma \in \rG'_+(F)$ (resp. $\rG'_-(F)$), then the integral
        \[
             \int_{\rH_1(F)} \int_{\rH_2(F)} f( \gamma \cdot (h_1,h_2)) \xi(h_1) \lvert \det h_1 \rvert^s \eta(h_2) \rd h_1 \rd h_2,
        \]
        is absolutely convergent for $s \in \cH_{<-1+\frac 1n}$ (resp. $s \in \cH_{>1-\frac 1n}$) and equals $I^\sigma_\gamma(f,\xi,s)$ there.  
    \end{itemize}
    In particular, in these cases, the definition of $I^\sigma_\gamma(f,\xi,s)$ does not depend on the choice of $\sigma$.
\end{lemma}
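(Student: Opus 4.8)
The statement is the Fourier--Jacobi analogue of Lemma~\ref{lem:local_group_central_rss_description}, so the strategy is to import the argument verbatim, tracking only the bookkeeping changes caused by the new symmetric space $\rS_n \times F^n \times F_n$ and the extra vector variables $(v,u)$. First I would unwind the definition of $I^\sigma_\gamma(f,\xi,s)$: by construction it equals (up to the explicit $\mu$-twist when $n$ is odd) the Lie-algebra orbital integral $I_{\fc_\sigma(x)}(\widetilde f_s,\xi,s)$ on $\widetilde{\gl_n}(F)$, where $\widetilde f_s$ is obtained from $f^\rS_s$ by the Cayley transform localized by $u\in C_c^\infty(\rB^\sigma(F))$. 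The key input is Proposition~\ref{prop:plus_or_minus_regular_direct_def}, which in the Fourier--Jacobi setting says (with the obvious variant of the proof) that for $X\in\widetilde{\gl_n}_{,+}(F)$ (resp.\ $\widetilde{\gl_n}_{,-}(F)$) the integral $\int_{\GL_n(F)}\varphi(X\cdot g)\,\xi(g)\eta(g)|\det g|^s\,\rd g$ converges absolutely on $\cH_{<-1+\frac1n}$ (resp.\ $\cH_{>1-\frac1n}$) and coincides there with $I_X(\varphi,\xi,s)$; when $X$ is regular semisimple, the orbit is closed and convergence holds for all $s\in\C$.

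Concretely, for the regular semisimple case I would note that if $\gamma\in\rG'_{\mathrm{rs}}(F)$ then its image $x=\alpha(\gamma)$ is regular semisimple in $\rS_n\times F^n\times F_n$, hence $\fc_\sigma^{-1}(x)$ is regular semisimple in $\widetilde{\gl_n}(F)$ and lies in $\widetilde{\gl_n}_{,\mathrm{rs}}=\widetilde{\gl_n}_{,+}\cap\widetilde{\gl_n}_{,-}$. Applying the regular-semisimple half of the cited proposition and then unwinding the Cayley transform and the definition~\eqref{eq:defi_f^S_FJ} of $f^\rS_s$ — in particular the Iwasawa-type decomposition $\rG^V/\rH^V\times\rH^V\cong \rS_n\times F^n\times F_n$ that underlies $\alpha$ — one recovers the claimed absolutely convergent double integral over $\rH_1(F)\times\rH_2(F)$. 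The $u$-localization disappears because the integrand is already supported near the closed orbit, and the $\mu$-twist for $n$ odd is exactly the one absorbed into $f^\rS_s$, so the two sides match. For the $\rG'_+$ (resp.\ $\rG'_-$) case the same computation applies with the weaker convergence range $\cH_{<-1+\frac1n}$ (resp.\ $\cH_{>1-\frac1n}$), using the $\widetilde{\gl_n}_{,\pm}$ half of the proposition together with the equivalence $X\in\widetilde{\gl_n}_{,\pm}\iff\fc_\sigma(X)\in(\rS_n^\sigma\times F^n\times F_n)_\pm$ recorded in the Notations subsection. Independence of $\sigma$ then follows because on the overlap of any two choices $\sigma,\sigma'$ the two formulas for $I^\sigma_\gamma$ both equal the same convergent integral, which is manifestly $\sigma$-independent.

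**Main obstacle.** There is no serious conceptual difficulty; the work is entirely in checking that the change of variables through $\alpha$, $\nu$, and $\fc_\sigma$ correctly matches measures and transfer/character factors — in particular that the $\mu$-factor appearing in~\eqref{eq:defi_f^S_FJ} for odd $n$ and the $\mu(\gamma_n\gamma_{n+1}^{-1})$ twist in the definition of $I^\sigma_\gamma$ cancel against one another, and that the extra integration over the vector variables $v\in F^n$, $u\in F_n$ (absent in the Jacquet--Rallis case) is harmless because $\GL_n$ acts on them only through the already-present $h_2$-variable. The one point requiring a little care is verifying that the Fourier--Jacobi analogue of Proposition~\ref{prop:plus_or_minus_regular_direct_def} is genuinely available: its proof in Subsection~\ref{subsec:local_I_general} only uses that the relevant test functions lie in $C_c$ of the descent slice and the convergence statement of Proposition~\ref{prop:local_central}, both of which survive the addition of the vector variables, so I would simply remark that the same proof goes through and then run the argument above. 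I expect the whole proof to be two paragraphs once these routine verifications are dispatched, which is why the paper states it without proof.
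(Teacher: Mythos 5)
Your proof is correct and follows the same route the paper intends: unwind the definition of $I^\sigma_\gamma$ via $f^\rS_s$ and the Cayley transform, then apply the Lie-algebra convergence statement (Proposition~\ref{prop:plus_or_minus_regular_direct_def} for $\widetilde{\gl_n}_{,\pm}$, or the regular semisimple case) exactly as in Lemma~\ref{lem:local_group_central_rss_description}. One small clarification: no Fourier--Jacobi ``analogue'' of Proposition~\ref{prop:plus_or_minus_regular_direct_def} needs to be re-verified, since the Fourier--Jacobi Cayley transform $\fc_\sigma$ already lands on the original $\widetilde{\gl_n}^\tau$ (the vector components $v,u$ of $\rS_n\times F^n\times F_n$ simply become the $v,u$ of $\widetilde{\gl_n}$), so it is literally the same proposition on $\widetilde{\gl_n}$ being invoked, not a new one.
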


There is a notion of transfer between $f \in \rS(\rG'(F))$ and $(f^V) \in \cS(\rG^V(F))$ similar as the situation in Subsection ~\ref{subsec:transfer}. For $(\gamma,v,u) \in \rG'(F)$, choose $\sigma$ such that $\alpha(\gamma) \in \rS_n^\sigma(F)$, let $X=\fc_\sigma^{-1}(\alpha(\gamma),v,u)$. Let $b \in \rB(F)$, for $\cO \in \cO_b$ We define
\[
    c_{\gamma,\cO} := c_{X,\fc_\sigma^{-1}(\cO)} \mu(x)^{\frac{n+1}{2}} \mu(\gamma_1^{-1}\gamma_2)^{-1} \mu(2 \sigma \tau^{-1})^{\frac{n(n-1)}{2}} \mu(1-\tau^{-1} \fc_\sigma(x))^{-n+1},
\]
where $\fc_\sigma^{-1}(\cO)$ denotes the semisimple orbit in $\widetilde{\fu}^V$ corresponding to $\cO$ and $\gamma=(\gamma_1,\gamma_2)$.

\begin{proposition}
    Let $f \in \rS(\rG'(F))$ and for each $V \in \cH$ let $f^V \in \cS(\rG^V(F))$ such that $f \leftrightarrow (f^V)$. Then for any regular element $\gamma$ in $\rG'_b(F)$, we have
    \[
        I^{\sigma,\natural}_\gamma(f) = \sum_{(V,\cO) \in \cO_b} c_{\gamma,\cO} J^V_\cO(f^V).
    \]
\end{proposition}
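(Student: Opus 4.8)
The plan is to reduce this group-level identity to the Lie-algebra singular transfer of Proposition~\ref{prop:local_singular_transfer_lie_algebra} by means of the Cayley transform, following verbatim the scheme by which Theorem~\ref{thm:local_singular_transfer} was deduced from \eqref{eq:local_II_variant}. A pleasant feature of the Fourier--Jacobi setting is that it is actually \emph{simpler} here: the symmetric space $\rS_n\times F^n\times F_n$ Cayley-transforms directly onto $\widetilde{\gl_n}^\tau$ and the unitary symmetric space $\U(V)\times V$ onto $\widetilde{\fu}^{V,\tau}$, the vector coordinates being left untouched by $\fc_\sigma$, so one lands \emph{exactly} on the pair $\widetilde{\gl_n}$, $(\widetilde{\fu}^V)_{V\in\cH}$ treated in Section~\ref{sec:localII}. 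No passage from $\gl_{n+1}$ to $\widetilde{\gl_n}$ (nor from $\fu^{V'}$ to $\widetilde{\fu}^V$) is needed, and the Lie-algebra input is Proposition~\ref{prop:local_singular_transfer_lie_algebra} applied verbatim; throughout we specialize to $\xi=1_{E^\times}$, $s=0$, which is what $I^{\sigma,\natural}_\gamma(f)$ abbreviates.

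First I would localize. Since $I^\sigma_\gamma(f)$ and each $J^V_\cO(f^V)$ depend only on the restriction of $f$, resp.\ $f^V$, to the fiber over $b$, pick $u\in C_c^\infty(\rB^\sigma(F))$ with $u(b)=1$ and replace $f$ by $f\cdot(u\circ q)$ and $f^V$ by $f^V\cdot(u\circ q_V)$, so that $f$ and $f^V$ become supported over $\rB^\sigma(F)$; the Fourier--Jacobi analogue of Lemma~\ref{lem:localization_and_transfer} guarantees the transfer relation $f\leftrightarrow(f^V)$ is preserved. Then I would pass to the Lie algebra: set $f^\rS:=f^\rS_0\in\cS((\rS_n^\sigma\times F^n\times F_n)(F))$ as in \eqref{eq:defi_f^S_FJ}, and $f^{\U(V)\times V}(x,v):=\int_{\rH^V(F)}f^V(hx,v)\,\rd h$, and pull these back along $\fc_\sigma$ --- which is $\GL_n$- (resp.\ $\U(V)$-) equivariant, compatible with the GIT-quotient identifications, and transforms only the matrix coordinate --- to obtain $\varphi_f\in\cS(\widetilde{\gl_n}^\tau(F))$ (extended by zero) and $\varphi_{f^V}\in\cS(\widetilde{\fu}^V(F))$, built so as to absorb the Cayley Jacobian. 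Concretely $\varphi_f(X)=f^\rS(\fc_\sigma(X))\,\mu(\fc_\sigma(X))^{-\frac{n+1}{2}}\,\mu(2\sigma\tau^{-1})^{\frac{n(n-1)}{2}}\,\mu(1-\tau^{-1}X)^{-(n-1)}$ for $n$ odd and without the first twist for $n$ even, the exponents $n-1$ replacing the $n$ of \eqref{eq:Delta_and_Cayley_transform}--\eqref{eq:Delta-_and_Cayley_transform} because on $\rS_n$ the invariants $\delta^\pm$ involve the powers $x^0,\dots,x^{n-1}$ of an $n\times n$ matrix.

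Next one checks directly, exactly as in the proof of Theorem~\ref{thm:local_singular_transfer}, that $f\leftrightarrow(f^V)$ forces $\varphi_f\xleftrightarrow{+}(\varphi_{f^V})$: matching of regular semisimple elements is preserved by the equivariant Cayley map, and the comparison of the Fourier--Jacobi group transfer factors with the Lie-algebra factors $\omega^\pm$ on $\widetilde{\gl_n}$ is precisely what the inserted $\mu$-twists record. One then unwinds the definitions of $I^\sigma_\gamma$ and $J^V_\cO$ to obtain $I^{\sigma,\natural}_\gamma(f)=(\text{explicit }\mu\text{-scalar})\cdot I^\natural_X(\varphi_f)$ with $X:=\fc_\sigma^{-1}(\alpha(\gamma),v,u)$ the Cayley preimage of $\gamma=((\gamma_1,\gamma_2),v,u)$ (and $x=\alpha(\gamma)=\nu(\gamma_1^{-1}\gamma_2)$), together with $J^V_\cO(f^V)=J^V_{\fc_\sigma^{-1}(\cO)}(\varphi_{f^V})$. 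Applying Proposition~\ref{prop:local_singular_transfer_lie_algebra} to $\varphi_f$ and $(\varphi_{f^V})$ and multiplying through by the bookkeeping scalars, the product of the Cayley Jacobian scalar with the Lie-algebra constant $c_{X,\fo}$ is by construction the Fourier--Jacobi constant $c_{\gamma,\cO}=c_{X,\fc_\sigma^{-1}(\cO)}\,\mu(x)^{\frac{n+1}{2}}\mu(\gamma_1^{-1}\gamma_2)^{-1}\mu(2\sigma\tau^{-1})^{\frac{n(n-1)}{2}}\mu(1-\tau^{-1}\fc_\sigma(x))^{-n+1}$, which finishes the proof.

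The main obstacle is constant bookkeeping, not ideas. The entire nontrivial content is verifying that the Jacobian of $\fc_\sigma$ on $\rS_n\times F^n\times F_n$, combined with the $n$-odd/$n$-even $\mu$-twist conventions of \eqref{eq:defi_f^S_FJ} and \eqref{eq:defi_f^gl}, reproduces exactly the $\mu$-powers in $c_{\gamma,\cO}$ above, and that the (implicit) Fourier--Jacobi transfer setup --- the analogues of $\Omega^\pm$, $\omega^\pm$, and of the Zhang--Xue Fourier-transform compatibility (Theorem~\ref{thm:transfer_Fourier_transform}) that enters through Lemma~\ref{lem:nilpotent_singular_transfer} inside Proposition~\ref{prop:local_singular_transfer_lie_algebra} --- is internally consistent. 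Everything else is a transcription of the Jacquet--Rallis argument of Subsection~\ref{subsec:local_II_group}.
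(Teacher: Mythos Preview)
Your proposal is correct and is exactly the parallel argument the paper has in mind: the paper omits the proof entirely, stating only that it is ``parallel to [its] Jacquet--Rallis counterpart'' (Theorem~\ref{thm:local_singular_transfer}), and your reduction via localization and Cayley transform to Proposition~\ref{prop:local_singular_transfer_lie_algebra} is precisely that parallel. Your observation that the Fourier--Jacobi setting is actually simpler---the Cayley map lands directly on $\widetilde{\gl_n}$ rather than $\gl_{n+1}$, so no intermediate passage through \eqref{eq:local_II_variant} is needed---is correct and worth recording.
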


\subsection{Global theory}

Let $E/F$ be a quadratic extension of number fields. For $f \in \rS(\rG'(\bA))$ and $T \in \fa_0$, one defines a modified kernel $K_f^T(h_1,h_2)$ as in ~\cite{BLX}*{Section 7}.

Using the same strategy as the proof of the Theorem ~\ref{thm:coarse Bessel}, one can prove the following proposition. The corresponding results on the asymptotics of the modified kernel can be found in ~\cite{BLX}*{Section 7}.
\begin{proposition} \label{prop:coarse FJ}
    Let $\xi$ be a strictly unitary character of $\bA_E^\times$.
    \begin{enumerate}
        \item For any $f \in \cS(\rG'(\bA))$, $s \in \C$ and $T$ sufficiently positive, we have
            \begin{equation} \label{eq:geometric convergence_FJ}
                  \sum_{\gamma \in \rB(F)} \int_{[\rH_1]} \int_{[\rH_2]}  \lvert K_{f,\gamma}^T(h_1,h_2)  \rvert \lvert \det h_1 \rvert^{\mathrm{Re}(s)}  \rd h_1 \rd h_2 < \infty.
            \end{equation}
    \item For $\gamma \in \rB(F)$, as a function of $T$, the integral
            \[
                I_\bullet^T(f,\xi,s) := \int_{[\rH_1]} \int_{[\rH_2]} K^T_{f,\gamma}(h_1,h_2) \xi(h_1) \lvert \det h_1 \rvert^s \eta(h_2) \rd h_1 \rd h_2
            \]
          is an exponential polynomial. If $s \not \in \{-1,1\}$, then the pure polynomial term is constant, denoted by $I_\gamma(f,\xi,s)$. For a fixed $f$ and $\xi$, $I_\gamma(f,\xi,s)$ is meromorphic on $\C \setminus \{-1,1\}$. Let $I_\gamma(f,\xi):=I_\gamma(f,\xi,0)$.
    \item For each $\gamma \in \rB(F)$ and $s \not \in \{-1,1\}$, the distribution $I_\gamma(\cdot,\xi,s)$ is continuous on $\cS(\rG'(\bA))$ and we define
        \[
            I(f,\xi,s) := \sum_{\gamma \in \rB(F)} I_\gamma(f,\xi,s).
        \]
    Where the sum on the right-hand side is absolutely convergent.
    \end{enumerate}
\end{proposition}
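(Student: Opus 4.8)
The plan is to repeat, essentially verbatim, the proof of Theorem~\ref{thm:coarse Bessel}, with the Jacquet--Rallis modified kernel replaced by the Fourier--Jacobi modified kernel $K_f^T$ of \cite{BLX}*{Section~7}. The two inputs I would take from \emph{loc. cit.} are the asymptotics of this kernel: (i) for every Rankin--Selberg parabolic $Q$ and every $N>0$,
\[
\sum_{\gamma\in\rB(F)}\bigl|K_{f,\gamma}^{Q,T}(h_1,h_2)-F^{Q}(h_2,T)\,K_{f,Q,\gamma}(h_1,h_2)\bigr|\ll e^{-N\|T\|}\,\|h_1\|_{Q_{\rH_1}}^{-N}\,\|h_2\|_{Q_{\rH_2}}^{-N},
\]
with $F^{Q}(\cdot,T)$ the relevant truncation function, compactly supported modulo center, together with its Levi refinement (the Fourier--Jacobi analogue of Proposition~\ref{prop:asymptotic M}); and (ii) the pointwise bound on $\sum_{\gamma}|K_{f,\gamma}(h_1,h_2)|$ by a product of norms. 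Granting these, part~(1) is immediate as in the proof of Theorem~\ref{thm:coarse Bessel}(1): since $|\det h_1|^{\mathrm{Re}(s)}\ll\|h_1\|_{\rH_1}^{|\mathrm{Re}(s)|}$, the integral over $[\rH_1]\times[\rH_2]$ of the right-hand side of (i) is finite by \cite{BP21}*{Theorem~A.1.1(vi)}, and the term involving $F^{Q}(\cdot,T)K_{f,Q,\gamma}$ is finite because $F^{Q}(\cdot,T)$ cuts out a set compact modulo center.

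For part~(2) I would insert Arthur's partition identity expressing $K_{f,\gamma}^T$ as a sum over $Q$ of $\Gamma'_Q$-weighted copies of $K_{f,\gamma}^{Q,T}$, plug it into $I_\gamma^T(f,\xi,s)$, unfold via the Iwasawa decomposition on $\rH_1(\bA)$ and $\rH_2(\bA)$, and introduce the descended test function $f_Q$ on $M_Q(\bA)$ exactly as in \eqref{eq:I^T_expression}. The $\fa_{Q}$-integral of $\Gamma'_Q$ against $e^{\langle\underline{\rho}_{Q,s},\cdot\rangle}$ is the exponential-polynomial function $p_{Q,s}$ of \cite{Zydor20}*{Lemme~3.5}, whose pure polynomial term is a constant when $s\notin\{-1,1\}$, and the leftover integral over $A_{Q,\rH_1}^\infty\backslash([M_1]\times[M_2])$ is finite by the estimate used in part~(1) applied on $M_Q$. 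This displays $I_\gamma^T(f,\xi,s)$ as an exponential polynomial in $T$, identifies its pure polynomial term $I_\gamma(f,\xi,s)$, and meromorphy in $s$ on $\C\setminus\{-1,1\}$ follows from the explicit shape of $p_{Q,s}$. Part~(3) then drops out of the same formula: every seminorm occurring in \eqref{eq:I^T_expression} is continuous on $\cS(\rG'(\bA))$, so each $I_\gamma(\cdot,\xi,s)$ is continuous, and the absolute convergence of $\sum_\gamma I_\gamma(f,\xi,s)$ is contained in the convergence already proved in part~(1).

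The only place where genuinely new content is required is not in the argument above but in the construction of the Fourier--Jacobi modified kernel and the proof of its asymptotics, i.e.\ the analogue of \cite{BPCZ}*{Theorem~3.3.7.1} and its Levi versions, where the extra linear factors $F^n\times F_n$ and the Weil-representation twist of Liu's relative trace formula must be handled; this is the content of \cite{BLX}*{Section~7}, which I would simply cite. Beyond that, I expect the work here to be purely bookkeeping: threading the character $\eta$ on $\rH_2(\bA)$, the twist $\xi\,|\det h_1|^s$, and the descent $f\mapsto f_Q$ through the Rankin--Selberg parabolics, and checking that the role of $s=\pm1$ and the standard/antistandard dichotomy match the Jacquet--Rallis normalization. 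No new idea beyond those in the proof of Theorem~\ref{thm:coarse Bessel} should be needed.
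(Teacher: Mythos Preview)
Your proposal is correct and matches the paper's approach exactly: the paper does not give a proof but simply states that one uses ``the same strategy as the proof of Theorem~\ref{thm:coarse Bessel}'' with the asymptotics of the modified kernel taken from \cite{BLX}*{Section~7}. Your outline spells out precisely this, including the correct identification of the only nontrivial input (the Fourier--Jacobi kernel asymptotics) and the purely bookkeeping nature of the remaining steps.
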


Similar to the results in Section ~\ref{sec:global} and ~\ref{sec:global_II}, when the test function is regular supported, the distribution $I_\gamma$ can be described in terms of the normalized orbital integral.

\begin{proposition} \label{prop:main_FJ}
    Let $f \in \cS(\rG'(\bA))$. If there exists a place $v$ of $F$ such that $f$ is of the form $f_v f^v$, with $f_v \in \cS(\rG'(F_v)), f^v \in \cS(\rG'(\bA^v))$ and $\supp(f_v) \subset \rG'_+(F_v)$ (resp. $\rG'_-(F_v)$), then (1)-(3) of Theorem ~\ref{thm:main} holds.
\end{proposition}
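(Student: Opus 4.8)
The plan is to transcribe the argument of Section~\ref{sec:global} (the proof of Theorem~\ref{thm:main}) into the Fourier--Jacobi setting, substituting the coarse relative trace formula and modified kernel of \cite{BLX}*{Section 7} for those of \cite{BPCZ}. Throughout, $\rG',\rH_1,\rH_2,\rB$ are as redefined in this section, $\xi$ is a strictly unitary character, and $K_f^T$, $K_f^{Q,T}$, $K_{f,b}$, $K_{f,b}^{Q,T}$ denote the kernels and modified kernels of \emph{loc. cit.}, whose asymptotic expansions (the analogues of \eqref{eq:asymptotic_spectral} and \eqref{eq:asympototic Q geometric}, for $\rG'$ and for all Levi subgroups $M_Q$, now with the additional character $\xi$ on $\rH_1$) are available there and in Proposition~\ref{prop:coarse FJ}.

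First I would establish the vanishing statement replacing ``$K_{f,P,b}=0$ for $P$ not standard'' in the proof of Theorem~\ref{thm:main}(1). Using the explicit description of $\rG'_+$ given in this section, the support condition $\supp(f_v)\subset\rG'_+(F_v)$ forces the vectors $v, xv, \dots, x^{n-1}v$ attached to the image in $\rS_n \times F^n \times F_n$ of any point of $M_P(F)N_P(\bA)$ to span $E^n$; for a non-standard Rankin--Selberg-type parabolic of $\rG'$ these vectors are confined to a proper $x$-invariant subspace, so $f(h_1^{-1}(mn)h_2)=0$ and hence $K_{f,P,b}\equiv 0$. Next I would prove the Fourier--Jacobi analogue of Proposition~\ref{prop:exponent computation}: for a standard parabolic and $\mathrm{Re}(s)<-1$ (resp. an antistandard one and $\mathrm{Re}(s)>1$), $\mathrm{Re}\langle\underline\rho_{P,s},\varpi^\vee\rangle<0$ for every $\varpi^\vee$; this follows from the same computation of $\underline\rho_{P,s}$ on $\widehat\Delta^\vee_{P}$ as in \cite{Zydor18}, carried out for the parabolics of \cite{BLX}.

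Granting these two inputs, parts (1)--(3) follow line by line from Section~\ref{sec:global}. For (1): restrict the sum over Rankin--Selberg parabolics to standard ones, insert the Arthur function $\Gamma'$, apply the Iwasawa decomposition and the $A_Q^\infty$-equivariance of $K_{f,b}^{Q,T}$, bound the inner integral over $[M_1]\times[M_2]^{\mathbbm{1}}$ by combining the modified-kernel asymptotic with \cite{BP21}*{Theorem A.1.1}, and use the Fourier--Jacobi exponent computation to see that $\int_{[\rH_1]}\int_{[\rH_2]}\sum_{b}|K_{f,b}^T|\,|\det h_1|^{\mathrm{Re}(s)}$ stays bounded as $T$ runs over sufficiently positive elements; dominated convergence, as the relevant truncation function tends to $1$, yields finiteness of the untruncated integral. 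For (2): apply the asymptotic with $Q=\rG'$ to identify $\int\!\!\int K_{f,b}^T |\det h_1|^s\xi(h_1)\eta(h_2)$ with an exponential polynomial in $T$ whose exponents all come from proper standard $Q$, so its pure polynomial term is the constant $I_b(f,\xi,s)$, and let $T\to\infty$. For (3): by $\supp(f_v)\subset\rG'_+(F_v)$ and the Fourier--Jacobi analogue of Lemma~\ref{lem:unique orbit} (the unique regular $\rH_1(F)\times\rH_2(F)$ orbit inside $\rG'_b(F)\cap\rG'_+(F)$, again from the descent picture) one has $K_{f,b}(h_1,h_2)=\sum_{(\delta_1,\delta_2)\in\rH_1(F)\times\rH_2(F)} f(\gamma\cdot(\delta_1,\delta_2))$ for $\gamma\in O^+(b)$, so the global orbital integral of $f$ along $\gamma$ equals $\int\!\!\int K_{f,b}|\det h_1|^s\xi(h_1)\eta(h_2)$, which by (1)--(2) converges and equals $I_b(f,\xi,s)$.

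The main obstacle is not conceptual but is to check that the two analytic inputs transfer cleanly: one must verify that \cite{BLX}*{Section 7} supplies the asymptotics of $K_f^{Q,T}$ for all Levi subgroups $M_Q$ and in the presence of the character $\xi$ (the appendix-type extension made in this paper for the Jacquet--Rallis case), and that the combinatorics of the Fourier--Jacobi Rankin--Selberg parabolics matches that of the Jacquet--Rallis ones closely enough that the dichotomy ``standard $\Leftrightarrow$ $\delta^+$ (equivalently $\Delta^+$) is not identically zero on $M_P$'' persists. Once these compatibilities are confirmed, no new estimates are needed and the proof is a verbatim copy of Section~\ref{sec:global}.
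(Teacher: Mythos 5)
Your proposal is correct and takes the same approach the paper intends: the paper explicitly omits the proof on the grounds that it is ``parallel to the Jacquet--Rallis counterpart,'' and your plan reconstructs exactly that parallel argument, transporting the three ingredients of the proof of Theorem~\ref{thm:main} (vanishing of $K_{f,P,b}$ for non-standard $P$ under the $\rG'_+$ support condition, the exponent computation for $\underline\rho_{P,s}$, and the asymptotics of the modified kernel) to the Fourier--Jacobi setting via \cite{BLX}*{Section 7}. You also correctly flag the two compatibility checks that the paper implicitly delegates to the reader, namely the modified-kernel asymptotics for Levi subgroups with the extra character $\xi$ (the analogue of Propositions~\ref{prop:asympototic Q} and~\ref{prop:asymptotic M}) and the standard/antistandard dichotomy for the Fourier--Jacobi Rankin--Selberg parabolics.
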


\begin{proposition} \label{prop:global_II_FJ}
    Let $f = f_v  f^v \in \cS(\rG'(\bA))$ with $f_v \in \cS(\rG'(F_v)), f^v \in \cS(\rG'(\bA^v))$ and $\mathrm{supp}(f_v) \subset \rG'_{\mathrm{reg}}(F_v)$, then for $b \in \rB(F)$, we have
    \begin{equation} \label{eq:global_II_main_FJ}
      I_b(f,\xi,s) = \sum_{\gamma} I^\sigma_\gamma(f,\xi,s).
    \end{equation}    
    Where $I_b$ is the distribution from relative trace formula in Proposition ~\ref{prop:coarse FJ} (2), and $I^\sigma_\gamma$ is definds as in Section ~\ref{sec:global_II}, the sum runs through all the representative of $\rH_1(F) \times \rH_2(F)$ orbits in $\rG'_b(F) \cap \rG'_{\mathrm{reg}}(F)$.
\end{proposition}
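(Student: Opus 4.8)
The plan is to reduce Proposition~\ref{prop:global_II_FJ} to its Jacquet--Rallis analogue, Theorem~\ref{thm:global_II}, and to the infinitesimal statement Proposition~\ref{prop:global_II_Lie}, following line by line the argument already given in Section~\ref{sec:global_II}. The strategy is unchanged: first establish a Lie algebra version on $\widetilde{\gl_n}$ (and on the products $\widetilde{\fh}$ arising from descent), then transport it to the group via the Cayley map and the map $\alpha$ of this section. The only point to check is that every ingredient used in Section~\ref{sec:global_II} has a Fourier--Jacobi counterpart available in the excerpt, which it does: Proposition~\ref{prop:coarse FJ} supplies the coarse RTF and the distributions $I_\gamma$; Proposition~\ref{prop:main_FJ} supplies the $\rG'_{\pm}$-supported case (parts (1)--(3) of Theorem~\ref{thm:main}); Proposition~\ref{prop:local_group_FJ} and Lemma~\ref{lem:local_group_central_rss_description_FJ} supply the local normalized orbital integrals $I^\sigma_\gamma(f,\xi,s)$ and their basic properties; and the descent formalism of Subsection~\ref{subsec:descent} together with the quotient identifications $\rB \cong (\rS_n \times F^n \times F_n)/\GL_n$ and the Cayley transform $\fc_\sigma$ defined at the start of Section~\ref{sec:Fourier-Jacobi} plays the role of Subsection~\ref{subsec:symmetric space S}.

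First I would prove the infinitesimal statement. The relevant infinitesimal space here is again $\widetilde{\gl_n}$ (with the $\GL_n$-action $(A,v,u)\cdot h = (h^{-1}Ah, h^{-1}v, uh)$), since by the Cayley transform $\fc_\sigma:\widetilde{\gl_n}^\tau \to \rS_n^\sigma \times F^n \times F_n$ the symmetric space $\rS_n \times F^n \times F_n$ is $\GL_n$-equivariantly a variant of $\widetilde{\gl_n}$, exactly as $\rS$ was in the Jacquet--Rallis case. The GIT quotient is $\cA = \widetilde{\gl_n}/\GL_n$ and the descent construction of Subsection~\ref{subsec:descent} applies verbatim. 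I would therefore prove the analogue of Lemma~\ref{lem:global_II_rss_central}: reduce to pure tensors, factor $I_{a_H}$ and $\sum_X I_X$ over the descent components $\widetilde{\gl_{n_i,F_i}}$, handle the regular semisimple component trivially, and for each central component $Z_\lambda$ use the partition of unity $\alpha_0,\alpha_+,\alpha_-$ on $\widetilde{\gl_n}_{,\mathrm{reg}}(F_v)$ (Theorem 4.4.1 of \cite{AG} in the archimedean case) together with Proposition~\ref{prop:main_FJ}(3) applied to $\alpha_{\pm}\varphi_v\cdot\varphi^v$, plus the vanishing of $I_a$ on $\widetilde{\gl_n}_{,a}$-supported functions for the $\alpha_0$ term. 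The étale comparison \cite{CZ21}*{Th\'eor\`eme 6.4.6.1} then upgrades this to general $a$, yielding the Fourier--Jacobi analogue of Proposition~\ref{prop:global_II_Lie}.

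Next I would deduce the group statement. Choose $\sigma \in E$ with $\sigma\sigma^{\mathtt c}=1$ and $b \in \rB^\sigma(F)$; localize $f$ at a sufficiently large finite set of places so that $f = f_\tS \otimes 1_{\rG'(\cO^\tS)}$; form $\widetilde{f}_s := \widetilde{f}_{\tS,s}\otimes 1_{\widetilde{\gl_n}(\cO^\tS)}$ using the Fourier--Jacobi version of $f^\rS_s$ from \eqref{eq:defi_f^S_FJ} and the Cayley transform; and invoke the Fourier--Jacobi analogue of \cite{CZ21}*{Lemme 14.4.4.2} to write $I_b(f,\xi,s)$ in terms of $I_{\fc_\sigma(b)}(\widetilde f_s)$ on $\widetilde{\gl_n}$, and correspondingly $I^\sigma_\gamma(f,\xi,s)$ in terms of $I_{\fc_\sigma(\alpha(\gamma))}(\widetilde f_s,\xi,s)$, by construction of $I^\sigma_\gamma$. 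The identity then follows from the infinitesimal statement and the classification of regular orbits (Proposition~\ref{prop:regular_orbit_tilde_gl_n}, which classifies the regular orbits in $\widetilde{\gl_n}_{,a}(F)$ and hence, via $\fc_\sigma$ and $\alpha$, the $\rH_1(F)\times\rH_2(F)$-orbits in $\rG'_b(F)\cap\rG'_{\mathrm{reg}}(F)$).

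The main obstacle is not conceptual but bookkeeping: one must verify that the measure-normalization constants $\Delta^*_G$, the Jacobian factors attached to $\fc_\sigma$ (which for the Fourier--Jacobi group differ from the Jacquet--Rallis ones because the ambient space is now $\rS_n\times F^n\times F_n$ rather than $\rS$), and the character factors $\mu(\cdots)$ appearing in \eqref{eq:defi_f^S_FJ} all cancel correctly so that both sides of \eqref{eq:global_II_main_FJ} carry the same normalization — precisely the content of the Fourier--Jacobi analogues of \cite{CZ21}*{Th\'eor\`eme 6.4.6.1} and \cite{CZ21}*{Lemme 14.4.4.2}. Since the paper explicitly declares (at the start of Section~\ref{sec:Fourier-Jacobi}) that the proofs are parallel to the Jacquet--Rallis case and will be omitted, I would state the result and remark that the proof is obtained from that of Theorem~\ref{thm:global_II}, Proposition~\ref{prop:global_II_Lie} and Lemma~\ref{lem:global_II_rss_central} by the substitutions described above, referring to \cite{BLX} for the Fourier--Jacobi versions of the coarse RTF inputs and to \cite{Liu14} for the set-up.
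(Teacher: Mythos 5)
Your proposal is correct and follows essentially the same route the paper intends: reduce to the Lie algebra statement on $\widetilde{\gl_n}$, then push through the Fourier--Jacobi Cayley map $\fc_\sigma:\widetilde{\gl_n}^\tau\to\rS_n^\sigma\times F^n\times F_n$ and the map $\alpha$ exactly as in Section~\ref{sec:global_II}. One small simplification you could make: since the Cayley transform in the Fourier--Jacobi setting already lands in $\widetilde{\gl_n}$ (rather than $\gl_{n+1}$ as in the Bessel case), the infinitesimal input is \emph{literally} Proposition~\ref{prop:global_II_Lie} with $\fg=\widetilde{\gl_n}$ — there is no new Lie algebra statement to prove, so re-running the partition-of-unity argument from Lemma~\ref{lem:global_II_rss_central} is unnecessary; relatedly, in that reduction you should cite the infinitesimal Theorem~\ref{thm:main_infinitesimal} (via Proposition~\ref{prop:global_II_Lie}) rather than the group-level Proposition~\ref{prop:main_FJ}.
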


\appendix

\section{Asymptotic of modified kernel} \label{sec:Asymptotic of modified kernel}

In this appendix, we generalize the result of ~\cite{BPCZ}*{Theorem 3.3.7.1} to asymptotics of the modified kernel associated with parabolic subgroups and their Levi subgroups and introduce a Lie algebra analogue of it. We follow the strategy in ~\cite{BPCZ}*{Section 3}.

For a reductive group $G$ over $F$, and three semi-standard parabolic subgroups $R \subset S \subset Q$ of $G$, we write $\sigma_R^{S,Q}$ for the characteristic function of $X \in \fa_0$ satisfying

\begin{itemize}
    \item $\alpha(X) > 0$ for all $\alpha \in \Delta_S^R$,
    \item $\alpha(X) \le 0$ for all $\alpha \in \Delta_S^Q \setminus \Delta_S^R$,
    \item $\varpi(H)>0$ for all $\varpi \in \widehat{\Delta}_R^Q$.    
\end{itemize}

\begin{proposition} \label{prop:asympototic Q}
    Let $Q \in \cF_\mathrm{RS}$. Then for every $N>0$, there exists a continuous semi-norm $\| \cdot \|$ on $\cS(\rG'(\bA))$ such that
        \begin{equation}
            \sum_{\chi \in \fX(\rG')} \lvert K_{f,\chi}^{Q,T}(h_1,h_2) - F^{Q_{n+1}}(h_{2,n},T_{Q_{n+1}}) K_{f,Q,\chi}(h_1,h_2) \rvert \le e^{-N\|T\|} \|h_1\|_{Q_{\rH_1}}^{-N} \|h_2\|_{Q_{\rH_2}}^{-N} \|f\|
        \end{equation}
    holds for $f \in \cS(\rG'(\bA)),(h_1,h_2) \in [\rH_1]_{Q_{\rH_1}} \times [\rH_2]^\mathbbm{1}_{Q_{\rH_2}}$ and $T \in \fa_{n+1}$ sufficiently positive.
\end{proposition}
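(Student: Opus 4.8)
The plan is to reproduce the argument of \cite{BPCZ}*{Section 3} that establishes \eqref{eq:asymptotic_spectral}, carrying the Rankin--Selberg parabolic $Q$ throughout in the role there played by $\rG'$. Three inputs from \emph{loc.\ cit.} are needed, each in a form uniform over the Rankin--Selberg parabolics $P$ with $P\subseteq Q$: (i) the pointwise bound on the non-modified kernels, i.e.\ the analogue of \cite{BPCZ}*{(3.2.3.2)} giving $\sum_{\chi}\lvert K_{f,P,\chi}(h_1,h_2)\rvert$ a bound by a continuous seminorm of $f$ times a product of negative powers of $\|h_1\|_{P_{\rH_1}}$ and $\|h_2\|_{P_{\rH_2}}$ (with one positive power in the $\GL_n$-variable that is afterwards cut off by the $F$-function); (ii) the ``approximation of the kernel by its constant term'' estimate, asserting that on the region where $\langle\varpi,H_{P_{n+1}}(\cdot)\rangle$ is large, $K_{f,P,\chi}$ differs from the kernel attached to a larger parabolic by a quantity decaying exponentially in that size; and (iii) the spectral convergence $\sum_{\chi\in\fX(\rG')}\lvert K_{f,\chi}\rvert<\infty$ with its associated bounds. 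All three are proved in \cite{BPCZ}*{Section 3} for $\rG'$, and their proofs localise to the Levi $M_Q$ and to the slices $[\rH_1]_{Q_{\rH_1}}\times[\rH_2]^{\mathbbm{1}}_{Q_{\rH_2}}$.

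The combinatorial heart is a reorganisation of
\[
K_{f,\chi}^{Q,T}(h_1,h_2)=\sum_{\substack{P\in\cF_{\mathrm{RS}}\\ P\subseteq Q}}\epsilon_P^Q\sum_{\substack{\gamma\in P_{\rH_1}(F)\backslash Q_{\rH_1}(F)\\ \delta\in P_{\rH_2}(F)\backslash Q_{\rH_2}(F)}}\widehat{\tau}_{P_{n+1}}^{Q_{n+1}}\bigl(H_{P_{n+1}}(\delta_n h_{2,n})-T_{P_{n+1}}\bigr)K_{f,P,\chi}(\gamma h_1,\delta h_2).
\]
The term $P=Q$ is exactly $K_{f,Q,\chi}(h_1,h_2)$. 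Into each term with $P\subsetneq Q$ I would insert Arthur's identities from \cite{Arthur81}*{Section 2} expressing $\widehat{\tau}_{P_{n+1}}^{Q_{n+1}}$ as an alternating sum, over intermediate parabolics $P_{n+1}\subseteq R\subseteq S\subseteq Q_{n+1}$, of products of $\widehat{\tau}$-functions and the functions $\sigma_R^{S,Q}$ introduced above; after exchanging the order of summation, the terms whose innermost factor is a $\sigma$-function assemble (together with the $P=Q$ term) into $F^{Q_{n+1}}(h_{2,n},T_{Q_{n+1}})\,K_{f,Q,\chi}(h_1,h_2)$ by the defining combinatorial identity of $F^{Q_{n+1}}$, exactly as in the $Q=\rG'$ case. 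What remains is a finite sum, indexed by pairs of nested parabolics strictly inside $Q$, in which each summand still carries a $\widehat{\tau}$-type factor forcing the relevant coordinate $H_{P_{n+1}}(\delta_n h_{2,n})-T_{P_{n+1}}$ into a genuine open cone.

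On each remaining term the analytic estimate is the same as in \cite{BPCZ}: the cone constraint, combined with $T$ being sufficiently positive (so that every coweight coordinate of $T_{P_{n+1}}$ is $\gg\|T\|$), makes $\|H_{P_{n+1}}(\delta_n h_{2,n})-T_{P_{n+1}}\|$ bounded below by a multiple of $\|T\|$ wherever the summand is nonzero; applying (ii) gains a factor $e^{-N'\|H_{P_{n+1}}(\delta_n h_{2,n})-T_{P_{n+1}}\|}$, part of which is spent as $e^{-N\|T\|}$ and the rest, together with the polynomial control of (i), makes the integrals over $[\rH_1]_{Q_{\rH_1}}\times[\rH_2]^{\mathbbm{1}}_{Q_{\rH_2}}$ converge to something bounded by $\|h_1\|_{Q_{\rH_1}}^{-N}\|h_2\|_{Q_{\rH_2}}^{-N}\|f\|$; the sum over $\chi$ is then controlled by (iii). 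The main obstacle, exactly as in \cite{BPCZ}*{Section 3}, is bookkeeping: one must verify that the reorganisation via the $\sigma_R^{S,Q}$ extracts \emph{precisely} $F^{Q_{n+1}}(h_{2,n},T_{Q_{n+1}})K_{f,Q,\chi}(h_1,h_2)$ with no leftover lower-order truncation term, and that the constant-term approximation (ii) is available in the refined uniform form needed on the slices $[\rH_i]_{Q_{\rH_i}}$ and over all Rankin--Selberg $P\subseteq Q$; the restriction $h_2\in[\rH_2]^{\mathbbm{1}}_{Q_{\rH_2}}$ is what makes the weights $\|h_2\|_{Q_{\rH_2}}$ behave correctly, by removing the $A_{P,\rH_1}^\infty$-direction. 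The same scheme, applied to the linearised kernels, yields the Lie algebra analogue of Proposition~\ref{prop:asympototic_Lie_algebra}.
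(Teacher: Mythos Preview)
Your overall strategy---combinatorial reorganisation using the $\sigma_R^{S,Q}$ functions, identification of the main term as $F^{Q_{n+1}}K_{f,Q,\chi}$, and estimation of the remainder via constant-term approximation---is correct in spirit and would lead to a proof. However, your description of the combinatorics is inaccurate in its details: one does not decompose $\widehat{\tau}_{P_{n+1}}^{Q_{n+1}}$ directly, but rather inserts the partition of unity $\sum_{R\subseteq P}\sum_{\delta}F^{R_{n+1}}\tau_{R_{n+1}}^{P_{n+1}}=1$ from \cite{Zydor20}*{Lemme 2.1} and then uses $\widehat{\tau}_P^Q\tau_R^P=\sum_{P\subseteq S\subseteq Q}\sigma_R^{S,Q}$. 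After this, \emph{every} term carries a factor $F^{R_{n+1}}\sigma_{R_{n+1}}^{S_{n+1},Q_{n+1}}$; the main term is the case $R=S=Q$ (since $\sigma_Q^{Q,Q}=1$ and $\sigma_R^{R,Q}=0$ for $R\subsetneq Q$), and the remainder is the sum over $R\subsetneq S$, each term being an alternating sum ${}_{R,S}K=\sum_{R\subseteq P\subseteq S}\epsilon_P^Q K_{f,P,\chi}$. It is this alternating-sum structure, not a lingering $\widehat{\tau}$-factor, that produces the decay: on the support of $F^{R_{n+1}}\sigma_{R_{n+1}}^{S_{n+1},Q_{n+1}}$ the weights $d_{R_{n+1},\alpha}$ for $\alpha\in\Delta_0^{S_{n+1}}\setminus\Delta_0^{R_{n+1}}$ are $\gg e^{\|T\|}$ (this is \cite{BLX}*{Lemma 4.29}), and the constant-term approximation says ${}_{R,S}K$ is controlled by negative powers of those same weights.

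The paper organises this differently. It first isolates the combinatorial/analytic step above as an abstract truncation lemma (Lemma~\ref{lem:trunation Q}) acting on a space $\cT^Q_{\cF_{\mathrm{RS},F}}$ of compatible families $({}_P\varphi)_P$ satisfying ${}_R\varphi-{}_S\varphi\in\cS_{d_{R_{n+1}}^{S_{n+1}}}$. The proposition is then proved by a functional-analytic argument: one builds a chain of continuous maps $\cT^0([\rH_1]_{Q_{\rH_1}})\otimes\cT^0([\rH_{2,n+1}]_{Q_{\rH_{2,n+1}}})\to\cT^Q_{\cF_{\mathrm{RS},F}}\to\cS^0$ using \cite{BPCZ}*{Proposition 3.4.2.1} (which supplies exactly the compatibility condition) and the closed graph theorem, applies Lemma~\ref{lem:trunation Q}, sums over $\chi$ via \cite{BPCZ}*{Theorem 2.9.4.1.3}, and then specialises to Dirac measures $\varphi=\delta_{h_1}$, $\psi=\delta_{h_{2,n+1}}$ with the uniform boundedness principle to extract the pointwise bound in $(h_1,h_2)$ and in $f$. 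This packaging cleanly separates the Arthur-style combinatorics (done once, in the lemma) from the spectral input, and automatically delivers the seminorm on $\cS(\rG'(\bA))$ without repeating estimates for each $\chi$; your direct approach would work but requires more explicit bookkeeping to obtain the uniformity in $f$ and the sum over $\chi$ simultaneously.
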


We denote by $\cT^Q_{\cF_{\mathrm{RS}},F}$ the following space of function
\[
\cT^Q_{\cF_{\mathrm{RS}},F} := \{ ({}_P\varphi) \in \prod_{ \substack{R \subset Q \\ R \in \cF_{\mathrm{RS},F}} } \cT([\GL_n]_{R_n}) \mid \varphi_R - \varphi_S \in \cS_{d_{R_{n+1}}^{S_{n+1}}}(R_n(F) \backslash \GL_n(\bA)) \text{ for any }R \subset S \subset Q  \}
\]

\begin{lemma} \label{lem:trunation Q}
    For $\underline{\varphi}=({}_P \varphi) \in \cT^Q_{\cF_{\mathrm{RS},F}}(\GL_n)$ and $g \in [\GL_n]_{Q_n}^\mathbbm{1}$ define
        \[
        \Lambda^{Q,T} \underline{\varphi}(g) = \sum_{ \substack{P \in \cF_{\mathrm{RS}} \\ P \subset Q}} \varepsilon_P^Q \sum_{\gamma \in P_n(F)\backslash Q_n(F)} \widehat{\tau}_{P_{n+1}}^{Q_{n+1}}(H_{P_{n+1}}(\gamma g)-T_{P_{n+1}}) \cdot {}_P \varphi(\gamma g).
        \]
    We also define
        \[
        \Pi^{Q,T} \underline{\varphi}(g) = F^{Q_{n+1}}(g,T) \cdot {}_{Q} \varphi(g).
        \]
    Then for any $c>0,N>0$, there exists a continuous semi-norm $\|\cdot\|$ on $\cT_{\cF_\mathrm{RS},F}^Q(\GL_n)$ such that
        \[
        \left\| \Lambda^{Q,T} \underline{\varphi} - \Pi^{Q,T} \underline{\varphi} \right\|_{\infty,N} \le e^{-c\|T\|} \| \underline{\varphi}\|
        \]
\end{lemma}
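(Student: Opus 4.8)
The plan is to adapt the proof of \cite{BPCZ}*{Theorem 3.3.7.1} to the relative setting of an arbitrary $Q\in\cF_{\mathrm{RS}}$: the present statement is the abstract combinatorial–analytic input, and once it is available Proposition~\ref{prop:asympototic Q} follows by applying it to the family of kernels $(K_{f,P})_{P\subseteq Q}$, which lies in $\cT^Q_{\cF_{\mathrm{RS},F}}(\GL_n)$ by the standard constant-term estimates. Three ingredients will be used: (a) Arthur's combinatorial identities, in particular Langlands' lemma $\sum_{\{S:R\subseteq S\subseteq Q\}}(-1)^{\dim\fa_S^Q}\tau_R^S(X)\,\widehat{\tau}_S^Q(X)=\delta_{RQ}$ together with the companion identities relating $\widehat{\tau}$, $\tau$, the function $\sigma_R^{S,Q}$ introduced in this appendix, and the functions $F^{\bullet}$; (b) the very definition of $\cT^Q_{\cF_{\mathrm{RS},F}}(\GL_n)$, which forces every difference ${}_R\varphi-{}_S\varphi$ into the weighted Schwartz space $\cS_{d_{R_{n+1}}^{S_{n+1}}}(R_n(F)\backslash\GL_n(\bA))$; (c) reduction theory on $[\GL_n]_{Q_n}$, used to control the sums over $\gamma\in P_n(F)\backslash Q_n(F)$ by convergent lattice-point counts.

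First I would run the combinatorial identities on the definition of $\Lambda^{Q,T}\underline{\varphi}$, grouping the terms according to a maximal intermediate parabolic and telescoping each ${}_P\varphi$ against ${}_Q\varphi$ along a chain $P=R_0\subsetneq R_1\subsetneq\cdots\subsetneq R_m=Q$; the identities quoted in (a), applied inside the Levi $M_{Q_{n+1}}$ and respecting that in the Rankin–Selberg setting truncation only involves the $(n+1)$-factor via $\widehat{\tau}_{P_{n+1}}$, rearrange the ``diagonal'' contribution (all components replaced by ${}_Q\varphi$) into $F^{Q_{n+1}}(g,T)\,{}_Q\varphi(g)=\Pi^{Q,T}\underline{\varphi}(g)$, and leave a remainder expressed entirely through genuine differences. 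The outcome of this bookkeeping is a formula
\[
  \Lambda^{Q,T}\underline{\varphi}(g)-\Pi^{Q,T}\underline{\varphi}(g)=\sum_{\{R\subsetneq S\subseteq Q\}}\ \sum_{\gamma\in R_n(F)\backslash Q_n(F)}\Phi_{R,S}^{T}\bigl(H_{R_{n+1}}(\gamma g)\bigr)\,\bigl({}_R\varphi-{}_S\varphi\bigr)(\gamma g),
\]
where $\Phi_{R,S}^{T}$ is a bounded combination of $\widehat{\tau}$-, $\tau$-, $\sigma_R^{S,Q}$- and $F$-functions whose support, as a subset of $\fa_{R_{n+1}}$, lies in a ``cone translated by $T_{R_{n+1}}$'', i.e. a region on which $\langle\varpi,X-T_{R_{n+1}}\rangle>0$ for all $\varpi\in\widehat{\Delta}_{R_{n+1}}^{S_{n+1}}$ (plus the complementary inequalities coming from $\sigma$ and the $F$-compactness).

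Then I would estimate each summand. On the support of $\Phi_{R,S}^{T}$ the quantity $\|H_{R_{n+1}}(\gamma g)-T_{R_{n+1}}\|$ is bounded below by a positive multiple of $\|T\|$ in the relevant directions, and, since ${}_R\varphi-{}_S\varphi\in\cS_{d_{R_{n+1}}^{S_{n+1}}}$, for all $r,N$ one has $|({}_R\varphi-{}_S\varphi)(\gamma g)|\ll\|\gamma g\|_{R_n}^{N}\,d_{R_{n+1}}^{S_{n+1}}(\gamma g)^{-r}\|\underline{\varphi}\|$ for a continuous seminorm; on the support the weight $d_{R_{n+1}}^{S_{n+1}}(\gamma g)$ is $\gg e^{c'\|T\|}$, so trading one power of the weight produces the gain $e^{-c\|T\|}$. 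The residual sum $\sum_{\gamma\in R_n(F)\backslash Q_n(F)}\|\gamma g\|_{R_n}^{N}\mathbf{1}_{\mathrm{supp}}(\gamma g)$ is then bounded by reduction theory: combined with the $F^{Q_{n+1}}$-constraint and the cone condition it becomes a convergent lattice sum, polynomial in $\|T\|$ and $\ll\|g\|_{Q_n}^{-N}$ after absorbing powers. Summing over the finitely many pairs $(R,S)$ gives $\|\Lambda^{Q,T}\underline{\varphi}-\Pi^{Q,T}\underline{\varphi}\|_{\infty,N}\le e^{-c\|T\|}\|\underline{\varphi}\|$.

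The main obstacle is the step in the last paragraph: converting ``$H_{R_{n+1}}(\gamma g)-T_{R_{n+1}}$ lies in a shifted cone'' into an actual lower bound on the weight $d_{R_{n+1}}^{S_{n+1}}(\gamma g)$, which measures size in the $N_{R_{n+1}}$-direction rather than directly along the split torus; this is the analytic heart of \cite{BPCZ}*{Theorem 3.3.7.1}, and carrying it out relative to a general Levi $M_Q$ forces one to track the Rankin–Selberg coupling throughout and to work on $[\GL_n]_{Q_n}^{\mathbbm{1}}$ with the central $A_{Q,\rH_1}^{\infty}$-direction removed — which is precisely why the conclusion is stated on $[\GL_n]_{Q_n}^{\mathbbm{1}}$ rather than $[\GL_n]_{Q_n}$. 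By contrast, the combinatorial rearrangement of the first step, although intricate, is a routine application of Langlands' lemma once the correct family of identities for $\sigma_R^{S,Q}$ and $F^{\bullet}$ has been assembled.
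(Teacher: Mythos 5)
Your outline follows the paper's proof: the paper applies Arthur's partition of $1$ (Zydor, Lemme~2.1) and the identity $\widehat\tau_{P_{n+1}}^{Q_{n+1}}\,\tau_{R_{n+1}}^{P_{n+1}}=\sum_{P\subset S\subset Q}\sigma_{R_{n+1}}^{S_{n+1},Q_{n+1}}$, extracts $\Pi^{Q,T}\underline{\varphi}$ as the $R=S=Q$ term (using $\sigma_{Q_{n+1}}^{Q_{n+1},Q_{n+1}}=1$ and $\sigma_{R_{n+1}}^{R_{n+1},Q_{n+1}}=0$ for $R\subsetneq Q$), and bounds the remainder using the weighted-Schwartz condition defining $\cT^Q_{\cF_{\mathrm{RS},F}}$ together with the cone-to-weight estimate of \cite{BLX}*{Lemma~4.29} applied in $M_{Q_{n+1}}$ — precisely the step you single out as the analytic heart. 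The only imprecision in your sketch is the displayed remainder formula: what the combinatorics produce is $F^{R_{n+1}}(\cdot,T)\,\sigma_{R_{n+1}}^{S_{n+1},Q_{n+1}}(\cdot)$ paired with the alternating sum ${}_{R,S}\varphi=\sum_{R\subset P\subset S}\epsilon_P^Q\,{}_P\varphi$, not the single difference ${}_R\varphi-{}_S\varphi$; the paper then bounds ${}_{R,S}\varphi$ by telescoping along a fixed $\alpha\in\Delta_0^{S_{n+1}}\setminus\Delta_0^{R_{n+1}}$ into adjacent differences ${}_P\varphi-{}_{P^\alpha}\varphi$, which recovers the picture you describe as ``telescoping along a chain,'' so this is a bookkeeping slip rather than a gap.
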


\begin{proof}
    Using the partition formula in ~\cite{Zydor20}*{Lemme 2.1}, for every $g \in [\GL_n]_{P_n}$, we have
        \[
        \sum_{ \substack{R \in \cF_{\mathrm{RS},F} \\ R \subset P}} \sum_{\delta \in R_n(F) \backslash P_n(F)} F^{R_{n+1}}(\delta g,T_{R_{n+1}}) \tau_{R_{n+1}}^{P_{n+1}}(H_{P_{n+1}}(\delta g)-T_{P_{n+1}})=1,
        \]
    together with the equation
        \[
        \widehat{\tau}_{P_{n+1}}^{Q_{n+1}} \tau_{R_{n+1}}^{P_{n+1}} = \sum_{ \substack{S \in \cF_{\mathrm{RS},F} \\ P \subset S \subset Q}} \sigma_{R_{n+1}}^{S_{n+1},Q_{n+1}}.
        \]
    For $R \subset S$ and $R,S \in \cF_{\mathrm{RS},F}$, define
        \[
        {}_{R,S} \varphi(g) = \sum_{\substack{P \in \cF_{\mathrm{RS},F}\\ R \subset P \subset S}} \epsilon_P^Q \cdot {}_P \varphi(g),
        \]
    then we obtain
        \[
        \Lambda^{Q,T} \underline{\varphi}(g) = \sum_{\substack{R,S \in \cF_{\mathrm{RS},F}\\R \subset S}} \sum_{\gamma \in R_n(F) \backslash Q_n(F)} F^{R_{n+1}}(\gamma g,T_{R_{n+1}}) \sigma_{R_{n+1}}^{S_{n+1},Q_{n+1}}(H_{R_{n+1}}(\gamma g)-T_{R_{n+1}}) {}_{R,S} \varphi(\gamma g).
        \]
    Note that $\sigma_{Q_{n+1}}^{Q_{n+1},Q_{n+1}}=1$ and $\sigma_{R_{n+1}}^{R_{n+1},Q_{n+1}}=0$ for $R \subsetneq Q$, it follows that
        \[
        \Lambda^{Q,T} \underline{\varphi}(g) - \Pi^{Q,T} \underline{\varphi}(g) = \sum_{R \subsetneq S} \sum_{\gamma \in R_n(F) \backslash Q_n(F)} F^{R_{n+1}}(\delta g,T) \sigma_{R_{n+1}}^{S_{n+1},Q_{n+1}}(H_{R_{n+1}}(\gamma g)-T_{R_{n+1}}) \cdot {}_{R,S}\varphi(\gamma g).
        \]
    Since $E_{R_n}^{Q_n}: f \mapsto (g \mapsto \sum_{\gamma \in R_n(F) \backslash Q_n(F)} f(\gamma g))$ sends $\cS^0([\GL_n]_{R_n}^\mathbbm{1})$ to $\cS^0([\GL_n]_{Q_n}^\mathbbm{1})$, it remains to show that for every $c>0$ and $N>0$, there exists a continuous semi-norm $\|\cdot \|$ on $\cT^Q_{\mathrm{RS},F}(\GL_n)$ such that
        \begin{equation} \label{eq:truncation goal}
          \lvert {}_{R,S}\varphi(g) \rvert \le e^{-c\|T\|} \|g\|_{R_n}^{-N} \|\varphi\|
        \end{equation} 
    for all $\underline{\varphi} \in \cT^Q_{\mathrm{RS},F}(\GL_n)$, $T$ sufficiently positive and $g \in R_n(F)\backslash G_n(\bA)_Q^\mathbbm{1}$ with
        \[
        F^{R_{n+1}}(g,T) \sigma_{R_{n+1}}^{S_{n+1},Q_{n+1}}(H_{R_{n+1}}(g)-T_{R_{n+1}}) \ne 0.
        \]
    We can assume $P$ is standard. Take such $g$ and take $z \in A_Q^{\rH_1,\infty}$ such that $g$ can written as $zg^1$ where $g^1 \in G_{n+1}(\bA)_Q^1$, there exists $r>0$ such that $\| g \|_{R_n} \ll \|g^1\|_{R_{n+1}}^{r_0}$. By ~\cite{BLX}*{Lemma 4.29} applied to $G=M_{Q_{n+1}}$, there exists $r>0$ such that
        \begin{align}
             e^{\| T \|} &\ll \left( \min_{\alpha \in \Delta_0^{S_{n+1}} \setminus \Delta_0^{R_{n+1}}} d_{R_{n+1},\alpha}(g^1) \right)^r \sim \left( \min_{\alpha \in \Delta_0^{S_{n+1}} \setminus \Delta_0^{R_{n+1}}} d_{R_{n+1},\alpha}(g) \right)^r \label{eq:Fsigma T} \\
              \|g\|_{R_n} &\ll \| g^1 \|_{R_{n+1}}^{r_0} \ll \left( \max_{\alpha \in \Delta_0^{S_{n+1}} \setminus \Delta_0^{R_{n+1}}} d_{R_{n+1},\alpha}(g)  \right)^{rr_0}. \label{eq:Fsigma norm}
        \end{align}     
    Fix $\alpha \in \Delta_0^{S_{n+1}} \setminus \Delta_0^{R_{n+1}}$, for any $P \in \cF_\mathrm{RS,F}$ with $\alpha \in \Delta_0^{P_{n+1}}$, there is unique $P^\alpha \in \cF_{\mathrm{RS},F}$ such that $\Delta_0^{P_{n+1}^\alpha}=\Delta_0^{P_{n+1}} \setminus \{\alpha\}$. Then there exists $N_0>0$ such that for any $r>0$, we can find a continuous semi-norm $\| \cdot \|$ on $\cT^{Q}_{\mathrm{RS},F}(\GL_n)$ with
        \[
        \lvert {}_{R,S}\varphi(g) \rvert \le \sum_{\substack{R \subset P \subset S \\ \alpha \in \Delta_0^{P_{n+1}}}} \lvert _{P}\varphi(g) - {}_{P^\alpha}\varphi(g) \rvert \le \|g\|_{R_n}^{N_0}  \sum_{\substack{R \subset P \subset S\\ \alpha \in \Delta_0^{P_{n+1}}}} d_{P_{n+1},\alpha}(g)^{-r} \|\underline{\varphi}\|.
        \]
    The equation ~\eqref{eq:Fsigma T} implies $d_{P_{n+1},\alpha} \sim d_{R_{n+1},\alpha}$, hence by ~\eqref{eq:Fsigma norm}, as $\alpha$ varies through $\Delta_0^{Q_{n+1}} \setminus \Delta_0^{P_{n+1}}$, ~\eqref{eq:truncation goal} is proved.
\end{proof}

Now we prove Proposition ~\ref{prop:asympototic Q}. 
\begin{proof}
    
Given $\psi \in \cT^0([\rH_{2,n+1}]_{Q_{\rH_{2,n+1}}})$ and $\chi \in \fX(\rG')$, consider the following sequence of maps

    \[
\begin{tikzcd}
                                               & {\cT^0([\rH_1]_{Q_{\rH_1}}) \otimes \cT^0([\rH_{2,n+1}]_{Q_{\rH_{2,n+1}}})} \arrow[r, "R(f^\vee)_{\chi^\vee} \otimes \mathrm{id}"]  & {\cT^{\Delta,Q}_{\cF_{\mathrm{RS},F}}(\rG') \otimes \cT^0([\rH_{2,n+1}]_{Q_{\rH_{2,n+1}}})} \arrow[r, "\mathrm{Res} \otimes \mathrm{id}"] & {\cT^{\Delta,Q}_{\cF_{\mathrm{RS},F}}(\rH_2)} \\
{} \arrow[r, "{\langle \cdot, \cdot\rangle }"] & {\cT^Q_{\cF_{\mathrm{RS},F}}(\rH_2)} \arrow[r, "{\Lambda^{Q,T}}"', bend left] \arrow[r, "{\Pi^{Q,T}}", bend right] & {\cS^0([\rH_2]_{Q_{\rH_2}^\mathbbm{1}})}                                                                                                  &                                              
\end{tikzcd}
    \]

Where the first map sends $\varphi \otimes \psi$ to $P \mapsto R_{\chi^\vee}(f^\vee)\varphi_{P_{\rH_1}} \otimes \psi$, the second map is restriction to $H_2 \subset \rG'$, the third map sends $({}_P \varphi) \otimes \psi$ to $P \mapsto \langle {}_P \varphi, \psi_{P_{\rH_{2,n+1}}} \rangle$, the last map is the truncation operator defined in Lemma ~\ref{lem:trunation Q}. By ~\cite{BPCZ}*{Proposition 3.4.2.1} and Lemma ~\ref{lem:trunation Q}, the images of these maps land in the target, and by the closed graph theorem, they are continuous. Let $L_{f,\chi}^{Q,T}$ be the composition of the first three maps and $\Lambda^{Q,T}$ at the last, and $P_{f,\chi}^{Q,T}$ the composition of first three maps and $\Pi^{Q,T}$ at the last. One check directly that
    \[
    L^{Q,T}_{f,\chi}(\varphi \otimes \psi)(h_{2,n})= \int_{[\rH_1]_{Q_{\rH_1}} \times [\rH_{2,n+1}]_{Q_{\rH_{2,n+1}}}} K_{f,\chi}^{Q,T}(h_1,h_{2,n},h_{2,n+1}) \varphi(h_1)\psi(h_{2,n+1}).
    \]
and
    \[
     P^{Q,T}_{f,\chi}(\varphi \otimes \psi)(h_{2,n})= F^{Q_{n+1}}(h_{2,n},T_{Q_{n+1}})\int_{[\rH_1]_{Q_{\rH_1}} \times [\rH_{2,n+1}]_{Q_{\rH_{2,n+1}}}} K_{f,Q,\chi}(h_1,h_{2,n},h_{2,n+1}) \varphi(h_1)\psi(h_{2,n+1}).
    \]
By ~\cite{BPCZ}*{Theorem 2.9.4.1.3} and Lemma ~\ref{lem:trunation Q},  for every $\varphi \otimes \psi \in \cT^0([\rH_1]_{Q_{\rH_1}}) \otimes \cT^0([\rH_{2,n+1}]_{Q_{\rH_{2,n+1}}})$ and $N>0$, we have
    \[
    \sum_{\chi \in \fX(\rG')} \left\| L_{f,\chi}^{Q,T}(\varphi \otimes \psi)-P_{f,\chi}^{Q,T}(\varphi \otimes \psi) \right\|_{\infty,N} \ll_{N,\varphi,\psi} e^{-N\|T\|}.
    \] 
By the uniformly boundedness principle, for each $\varphi \otimes \psi \in \cT^0_N([\rH_1]_{Q_{\rH_1}}) \otimes \cT^0_N([\rH_{2,n+1}]_{Q_{\rH_{2,n+1}}})$, we have
    \[
    \sum_{\chi \in \fX(\rG')} \left\| L_{f,\chi}^{Q,T}(\varphi \otimes \psi)-P_{f,\chi}^{Q,T}(\varphi \otimes \psi) \right\|_{\infty,N} \ll_N e^{-N\|T\|} \|\varphi\|_{1,-N} \|\psi\|_{1,-N}.
    \]
Apply this to $\varphi=\delta_{h_1},\psi=\delta_{h_{2,n+1}}$, Proposition ~\ref{prop:asympototic Q} is proved except the semi-norm on $f$. One then deduces it by applying the uniform boundedness principle again.
\end{proof}

There is also an analogous result on the geometric side.

\begin{proposition} \label{prop:asymptotic Q geometric}
    There exists a continuous semi-norm $\| \cdot \|$ on $\cS(\rG'(\bA))$ such that for any $f \in \cS(\rG'(\bA))$ and $N>0$, we have 
    \[
       \sum_{\gamma \in \rB(F)} \lvert K_{f,\gamma}^{Q,T}(h_1,h_2) - F^{Q_{n+1}}(h_{2,n},T_{Q_{n+1}}) K_{f,Q,\gamma}(h_1,h_2) \rvert \ll e^{-N\|T\|} \|h_1\|_{Q_{\rH_1}}^{-N} \|h_2\|_{Q_{\rH_2}}^{-N} \| f \| .
     \]
\end{proposition}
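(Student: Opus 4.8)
The plan is to repeat the argument proving Proposition~\ref{prop:asympototic Q}, replacing throughout the sum over cuspidal data $\chi \in \fX(\rG')$ by the sum over $b \in \rB(F)$ and the spectral kernels $K_{f,P,\chi}$ by the geometric kernels $K_{f,P,b}$. The one new ingredient, which makes the geometric case if anything easier, is a domination: for $f \in \cS(\rG'(\bA))$ I would fix $\tilde f \in \cS(\rG'(\bA))$ with $\tilde f \ge 0$ and $\lvert f \rvert \le \tilde f$ pointwise; since the fibers $\rG'_b(F)$, $b \in \rB(F)$, partition $\rG'(F)$, one gets $\sum_{b \in \rB(F)} \lvert K_{f,P,b}(h_1,h_2) \rvert \le K_{\tilde f,P}(h_1,h_2)$ for every $P \in \cF_{\mathrm{RS}}$, and similarly after the nested comparisons below, so that summability over $b$ is controlled by a single kernel function and a fixed continuous semi-norm of $f$.

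Concretely, I would set up the same chain of continuous linear maps as in the spectral case: for $\psi \in \cT^0([\rH_{2,n+1}]_{Q_{\rH_{2,n+1}}})$ and $b \in \rB(F)$, compose the map $\varphi \otimes \psi \mapsto (P \mapsto R_b(f^\vee)\varphi_{P_{\rH_1}} \otimes \psi)$ attached to the $b$-part of the geometric kernel, the restriction to $\rH_2 \subset \rG'$, the pairing $(P \mapsto \langle \,\cdot\,, \psi_{P_{\rH_{2,n+1}}} \rangle)$, and finally the truncation operators $\Lambda^{Q,T}$, resp.\ $\Pi^{Q,T}$, of Lemma~\ref{lem:trunation Q}. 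One checks that the first three maps land, respectively, in $\cT^{\Delta,Q}_{\cF_{\mathrm{RS},F}}(\rG')\otimes \cT^0$, in $\cT^{\Delta,Q}_{\cF_{\mathrm{RS},F}}(\rH_2)$, and in $\cT^Q_{\cF_{\mathrm{RS},F}}(\GL_n)$, using the geometric analogue of \cite{BPCZ}*{Proposition 3.4.2.1} (available from the convergence proof of the coarse geometric Jacquet--Rallis RTF in \cite{Zydor20} and \cite{BPCZ}) together with Lemma~\ref{lem:trunation Q}, and that they are continuous by the closed graph theorem, exactly as before. Writing $L^{Q,T}_{f,b}$ and $P^{Q,T}_{f,b}$ for the two resulting compositions, a direct computation identifies $L^{Q,T}_{f,b}(\varphi\otimes\psi)(h_{2,n})$ with $\int K^{Q,T}_{f,b}(h_1,h_{2,n},h_{2,n+1})\,\varphi(h_1)\psi(h_{2,n+1})$ and $P^{Q,T}_{f,b}(\varphi\otimes\psi)(h_{2,n})$ with $F^{Q_{n+1}}(h_{2,n},T_{Q_{n+1}})\int K_{f,Q,b}(h_1,h_{2,n},h_{2,n+1})\,\varphi(h_1)\psi(h_{2,n+1})$.

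The estimate is then assembled as follows. Applying Lemma~\ref{lem:trunation Q} termwise in $b$ yields, for each $N$, a bound $\lVert L^{Q,T}_{f,b}(\varphi\otimes\psi) - P^{Q,T}_{f,b}(\varphi\otimes\psi)\rVert_{\infty,N} \ll e^{-c\lVert T\rVert}\lVert \underline{\kappa}_{f,b}(\varphi\otimes\psi)\rVert$, where $\underline{\kappa}_{f,b}(\varphi\otimes\psi) \in \cT^Q_{\cF_{\mathrm{RS},F}}(\GL_n)$ is the pre-truncation tuple. Summing over $b$, feeding in the domination together with the geometric analogue of \cite{BPCZ}*{Theorem 2.9.4.1.3} (which bounds the $\cT^Q$-semi-norm of $\sum_b \lvert \underline{\kappa}_{f,b}(\varphi\otimes\psi)\rvert$ by a continuous semi-norm of $\tilde f$ times $\lVert\varphi\rVert_{1,-N}\lVert\psi\rVert_{1,-N}$), and invoking the uniform boundedness principle twice --- once to pass from fixed $\varphi\otimes\psi$ to the $\lVert\varphi\rVert_{1,-N}\lVert\psi\rVert_{1,-N}$-bound, once more to replace the constant by a continuous semi-norm of $f$ --- I would obtain $\sum_b \lVert L^{Q,T}_{f,b}(\varphi\otimes\psi) - P^{Q,T}_{f,b}(\varphi\otimes\psi)\rVert_{\infty,N} \ll e^{-N\lVert T\rVert}\lVert\varphi\rVert_{1,-N}\lVert\psi\rVert_{1,-N}\lVert f\rVert$. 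Specializing $\varphi = \delta_{h_1}$, $\psi = \delta_{h_{2,n+1}}$ and reading off the $\lVert\cdot\rVert_{\infty,N}$-norm in the variable $h_{2,n}$ gives the proposition.

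The main obstacle is the input in the second paragraph: establishing that the un-truncated geometric kernel pieces $\big(h_{2,n}\mapsto \langle K_{f,P,b}(h_1,\cdot,h_{2,n+1}),\psi\rangle\big)_{P\subset Q}$ genuinely assemble into an element of $\cT^Q_{\cF_{\mathrm{RS},F}}(\GL_n)$ --- which requires the ``second-term''-type comparisons of $K_{f,P,b}$ and $K_{f,S,b}$ for nested $P\subset S$ to produce the rapidly decreasing differences $\varphi_P-\varphi_S \in \cS_{d_{P_{n+1}}^{S_{n+1}}}$ demanded by the definition of that space --- with a semi-norm estimate whose sum over $b$ is dominated by $\lVert f\rVert\,\lVert h_1\rVert_{Q_{\rH_1}}^{-N}\lVert h_2\rVert_{Q_{\rH_2}}^{-N}$. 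All of these facts are implicit in the geometric side of \cite{Zydor20} and \cite{BPCZ}; here one only needs to reorganize them relative to the parabolic $Q$ and its Levi subgroup, and the domination trick makes the bookkeeping over $b$ routine. The Lie algebra variant (Proposition~\ref{prop:asympototic_Lie_algebra}) follows by the same argument, with the kernels $K_{\varphi,P,a}$ on $\gl_{n+1}$ in place of $K_{f,P,b}$ and the $[\GL_n]$-truncation handled identically.
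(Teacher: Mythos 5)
Your approach differs fundamentally from the paper's, and it contains a genuine gap. The paper does not redo the spectral machinery with $\chi$ replaced by $b$. Instead it gives a short reduction to Proposition~\ref{prop:asympototic Q}: choose a closed embedding $\rB \hookrightarrow \mathbf{A}^N$, observe that for fixed $f$ only finitely many $p(g_\infty)$ values are relevant modulo a lattice, pick a bump function $u$ on $F_\infty^N$ separating $0$ from the other lattice points, and define $f_\gamma(g) = f(g)\,u(p(g_\infty)-\gamma)$. One checks $K_{f_\gamma,P} = K_{f,P,\gamma}$, applies the already-proven Proposition~\ref{prop:asympototic Q} to each $f_\gamma$, and sums using $\sum_{\gamma \in \rB(F)} \|f_\gamma\|_\cS < \infty$ (by \cite{BLX}*{Proposition 4.30} plus the uniform boundedness principle). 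This never requires any geometric analogue of the spectral inputs.

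Your route asserts, without proof, geometric analogues of \cite{BPCZ}*{Proposition 3.4.2.1} and \cite{BPCZ}*{Theorem 2.9.4.1.3}; these do not appear in that form in \cite{Zydor20} or \cite{BPCZ}, and producing them is the whole substance of the estimate, not a reorganization. Worse, the domination trick $|f| \le \tilde f$ cannot do the work you assign it. Membership in $\cT^Q_{\cF_{\mathrm{RS},F}}(\GL_n)$ requires the differences $\varphi_R - \varphi_S$ (here coming from $K_{f,R,b}-K_{f,S,b}$ after pairing) to lie in the weighted Schwartz space $\cS_{d_{R_{n+1}}^{S_{n+1}}}$, with seminorms summable over $b$. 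These differences involve cancellation: dominating $|f|$ by $\tilde f$ only yields $|K_{f,R,b}-K_{f,S,b}| \le K_{\tilde f,R,b}+K_{\tilde f,S,b}$, which has no decay in $d_{R_{n+1}}^{S_{n+1}}$. So the key summability-over-$b$ of the $\cT^Q$-seminorms is not controlled by your argument. The paper's localization to $f_\gamma$ is exactly what sidesteps this: it converts the sum over $b$ into a sum over the $f_\gamma$ of a quantity already controlled (per test function) by Proposition~\ref{prop:asympototic Q}.
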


\begin{proof}
    Let $\mathbf{A}^N$ be the $N$-dimensional affine space over $F$. Choose any closed embedding $i: \rB \hookrightarrow \mathbf{A}^N$ for some $N$. We extend the function $K_{f,\gamma}$ for all $\gamma \in \bA^N$ by setting $K_{f,\gamma} = 0$ if $\gamma$ is not in the image of $i$.
    
    Fix $f$, there exists $d \in F$ such that $K_{f,\gamma}=0$ unless $\gamma \in d\cO_F^N$. Choose $u \in C_c^\infty(F_\infty^N)$ support around $0$ such that $\supp u \cap d \cO_F^N = \{ 0 \}$. Let $p$ be the composition $\rG' \to \rB \hookrightarrow \mathbf{A}^N$. For $\gamma \in (d\cO_F)^N$, define $f_\gamma(g)=f(g) \cdot u(p(g_\infty)-\gamma)$. Then by ~\cite{BLX}*{Proposition 4.30}, we have $f_\gamma \in \cS(\rG'(\bA))$ and for any continuous semi-norm $\| \cdot \|_\cS$ on $\cS(\rG'(\bA))$, the sum
        \[
            \sum_{\gamma \in \rB(F)} \| f_\gamma \|_\cS
        \]
    is finite, hence defines a continuous semi-norm on $\cS(\rG'(\bA))$ by uniform boundedness principle.
     One check directly that $K_{f_\gamma,P}(h_1,h_2) = K_{f,P,\gamma}(h_1,h_2)$ for any $P \in \cF_{\mathrm{RS}}$ and $(h_1,h_2) \in [\rH_1]_{P_{\rH_1}} \times [\rH_2]_{P_{\rH_2}}$, hence $K_{f_\gamma}^{Q,T}(h_1,h_2) = K_{f,\gamma}^{Q,T}(h_1,h_2)$. 

     Proposition ~\ref{prop:asympototic Q} implies there exists a semi-norm $\| \cdot \|_\cS$ on $\cS(\rG'(\bA)$ such that
        \[
             \lvert K_{f}^{Q,T}(h_1,h_2) - F^{Q_{n+1}}(h_{2,n},T_{Q_{n+1}}) K_{f,Q}(h_1,h_2) \rvert \le e^{-N\|T\|} \|h_1\|_{Q_{\rH_1}}^{-N} \|h_2\|_{Q_{\rH_2}}^{-N} \|f\|_\cS.
        \]
    Therefore
        \[
             \sum_{\gamma \in \rB(F)} \lvert K_{f,\gamma}^{Q,T}(h_1,h_2) - F^{Q_{n+1}}(h_{2,n},T_{Q_{n+1}}) K_{f,Q,\gamma}(h_1,h_2) \rvert \le \left( \sum_{\gamma \in \rB(F)} \| f_\gamma \|_\cS \right) e^{-N\|T\|} \|h_1\|_{Q_{\rH_1}}^{-N} \|h_2\|_{Q_{\rH_2}}^{-N}.
        \]
\end{proof}

We state the asymptotic of the modified kernel on the Levi subgroup, the proof is similar so we omit the proof.

Let $Q \in \cF_{\mathrm{RS}}$ with Levi decomposition $P=MN$. Let $f \in \cS(M(\bA))$. For $\chi \in \fX(M)$ and $T \in \fa_0$, we put
    \[
    K^{M,T}_{f,\chi}(h_1,h_2) = \sum_{\substack{P \in \cF_{\mathrm{RS}}\\P \subset Q}} \epsilon_P^Q \sum_{\substack{\gamma \in P_{M_{\rH_1}}(F) \backslash M_{\rH_1}(F)  \\ \delta \in P_{M_{\rH_2}}(F) \backslash M_{\rH_2}(F)}} \widehat{\tau}_{P_{n+1}}^{Q_{n+1}}(H_{P_{n+1}}(\delta_n h_{2,n})-T_{P_{n+1}}) K_{f,P_M,\chi}(\gamma h_1,\delta h_2).
    \]
where $(h_1,h_2) \in [M_{\rH_1}] \times [M_{\rH_2}]$. 

\begin{proposition} \label{prop:asymptotic M}
    For every $N>0$, there exists a continuous semi-norm $\| \cdot \|$ on $\cS(M(\bA))$ such that
        \begin{equation} \label{asymptotic M}
            \sum_{\chi \in \fX(M)} \lvert K_{f,\chi}^{M,T}(h_1,h_2) - F^{Q_{n+1}}(h_{2,n},T_{Q_{n+1}}) K_{f,\chi}(h_1,h_2) \rvert \le e^{-N\|T\|} \|h_1\|_{M_{\rH_1}}^{-N} \|h_2\|_{M_{\rH_2}}^{-N} \|f\|.
        \end{equation}
    holds for $f \in \cS(M(\bA)),(h_1,h_2) \in [M_{\rH_1}] \times [M_{\rH_2}]^\mathbbm{1}$ and $T \in \fa_{n+1}$ sufficiently positive.
\end{proposition}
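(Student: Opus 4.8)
The plan is to transcribe the proof of Proposition~\ref{prop:asympototic Q} with $\rG'$ replaced by the Levi subgroup $M = M_Q$. The first observation is that the combinatorial and analytic setup for $M$ with its subgroups $M_{\rH_1}, M_{\rH_2}$ and the family $\{P \in \cF_\mathrm{RS} : P \subset Q\}$ is formally identical to that for $\rG'$ with $\rH_1,\rH_2,\cF_\mathrm{RS}$: indeed $M_{Q_{n+1}}$ is a product $\prod_i \mathrm{Res}_{E/F}\GL(W_i)$ in which exactly one distinguished factor contains the line $E e_{n+1}$, while $M_{Q_n}$ is the product of the same $\GL$'s with that line removed, so $M$ is a product of $\mathrm{Res}_{E/F}\GL$-type groups whose Jacquet--Rallis-type structure decomposes, up to the distinguished factor, into diagonal pairs for which all the relevant constructions are trivial. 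In particular the inputs from \cite{BPCZ} used in the proof of Proposition~\ref{prop:asympototic Q} — namely \cite{BPCZ}*{Proposition 3.4.2.1} on continuity of parabolic descent along such towers and \cite{BPCZ}*{Theorem 2.9.4.1.3} on the $\chi$-uniform rapid decay of $\sum_\chi R(f)_\chi\varphi$ — are available for $M$, either because they are stated for general reductive groups or because their proofs apply verbatim factor by factor; and $F^{Q_{n+1}}$ together with the weight functions $d_{R_{n+1},\alpha}$ factor compatibly through the product decomposition of $M$. Making this reduction explicit at the outset is what makes the rest routine.

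\medskip

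Next I would set up the truncation machinery on $M$. Write $\cT^Q_{\cF_\mathrm{RS},F}(M)$ for the space defined word for word as $\cT^Q_{\cF_\mathrm{RS},F}$ but with $\GL_n$ replaced by $M_{\rH_{2,n}}$, and for $\underline\varphi = ({}_P\varphi) \in \cT^Q_{\cF_\mathrm{RS},F}(M)$ define $\Lambda^{M,T}\underline\varphi$ by the same alternating sum over $P \subset Q$ in $\cF_\mathrm{RS}$ that appears in the definition of $K^{M,T}_{f,\chi}$, and $\Pi^{M,T}\underline\varphi(g) = F^{Q_{n+1}}(g,T)\cdot{}_Q\varphi(g)$. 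The key lemma is the verbatim analogue of Lemma~\ref{lem:trunation Q}: for all $c,N>0$ there is a continuous semi-norm on $\cT^Q_{\cF_\mathrm{RS},F}(M)$ with $\|\Lambda^{M,T}\underline\varphi - \Pi^{M,T}\underline\varphi\|_{\infty,N} \le e^{-c\|T\|}\|\underline\varphi\|$. Its proof is identical: apply Zydor's partition formula (\cite{Zydor20}*{Lemme 2.1}) to $M_{Q_{n+1}}$, use $\widehat\tau_{P_{n+1}}^{Q_{n+1}}\tau_{R_{n+1}}^{P_{n+1}} = \sum_{P\subset S\subset Q}\sigma_{R_{n+1}}^{S_{n+1},Q_{n+1}}$ to rewrite $\Lambda^{M,T}\underline\varphi - \Pi^{M,T}\underline\varphi$ as a sum over proper pairs $R\subsetneq S$ of the alternating sums ${}_{R,S}\varphi$, and bound each ${}_{R,S}\varphi$ on the relevant $\sigma$-region by the defining semi-norms of $\cT^Q_{\cF_\mathrm{RS},F}(M)$ combined with the matrix-coefficient estimate \cite{BLX}*{Lemma 4.29} applied to $M_{Q_{n+1}}$ in place of $G_{n+1}$.

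\medskip

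Finally I would assemble the spectral estimate exactly as in the proof of Proposition~\ref{prop:asympototic Q}: for $\psi \in \cT^0([M_{\rH_{2,n+1}}])$ and $\chi \in \fX(M)$, form the composite of $R(f^\vee)_{\chi^\vee}\otimes\mathrm{id}$ into the descent tower, restriction to $M_{\rH_2}$, pairing against $\psi$, and $\Lambda^{M,T}$ (resp. $\Pi^{M,T}$); each arrow is continuous by the closed graph theorem together with \cite{BPCZ}*{Proposition 3.4.2.1} and the truncation lemma above. Denoting these composites $L^{M,T}_{f,\chi}$ and $P^{M,T}_{f,\chi}$, one checks as in \emph{loc.\ cit.}\ that evaluating at $\varphi\otimes\psi$ recovers the pairing of $K^{M,T}_{f,\chi}$ and of $F^{Q_{n+1}}(\cdot,T)K_{f,\chi}$ against $\varphi\otimes\psi$; then \cite{BPCZ}*{Theorem 2.9.4.1.3} and the truncation lemma give $\sum_\chi \|L^{M,T}_{f,\chi}(\varphi\otimes\psi) - P^{M,T}_{f,\chi}(\varphi\otimes\psi)\|_{\infty,N} \ll_{N,\varphi,\psi} e^{-N\|T\|}$, uniform boundedness upgrades this to a bound by $e^{-N\|T\|}\|\varphi\|_{1,-N}\|\psi\|_{1,-N}$, and specializing $\varphi = \delta_{h_1}$, $\psi = \delta_{h_{2,n+1}}$ yields \eqref{asymptotic M}, with the continuous semi-norm on $f$ obtained by one further application of the uniform boundedness principle. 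The only point that is not a mechanical repetition — and hence the expected main obstacle — is the verification in the first paragraph that the two \cite{BPCZ} inputs genuinely descend from $\rG'$ to its Levi $M$; once that bookkeeping is in place, every estimate is inherited and the argument of Proposition~\ref{prop:asympototic Q} carries over unchanged.
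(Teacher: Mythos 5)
The paper explicitly omits the proof of Proposition~\ref{prop:asymptotic M}, stating only that ``the proof is similar,'' and your proposal supplies exactly the intended argument: a factor-by-factor transcription of the proof of Proposition~\ref{prop:asympototic Q}, with the verbatim analogue of Lemma~\ref{lem:trunation Q} on $M$ and the same spectral-descent/uniform-boundedness assembly. Your opening observation about why the \cite{BPCZ} inputs descend to $M = M_Q$ (product of $\mathrm{Res}_{E/F}\GL$-factors, one distinguished) is the only non-mechanical point, and you have identified and addressed it correctly.
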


On the Lie algebra, we have similar result
\begin{proposition} \label{prop:asympototic_Lie_algebra}
    Let $\fg$ denote either $\gl_{n+1}$ or $\widetilde{\gl_n}$. Let $Q \in \cF_{\mathrm{RS},F}$, then for every $N>0$, there exists a continuous semi-norm $\| \cdot \|$ on $\cS(\fg(\bA))$ such that
    \[
        \sum_{a \in (\fg/\GL_n)(F)} \lvert K^{Q,T}_{\varphi,a}(g) - F^{Q_{n+1}}(g,T_{Q_{n+1}}) K_{\varphi,Q,a}(g)  \rvert \le e^{-N \| T \|} \| g \|^{-N} \| \varphi \|
    \]
    for all $\varphi \in \cS(\fg(\bA))$, $g \in [\GL_n]_{Q_n}^{\mathbbm{1}}$ and $T \in \fa_{n+1}$ sufficiently positive.
\end{proposition}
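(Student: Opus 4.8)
The plan is to transcribe, in the infinitesimal setting, the proof of Proposition~\ref{prop:asympototic Q} (which itself adapts \cite{BPCZ}*{Section~3}). The situation is in fact lighter here, since the modified kernel $K^{Q,T}_{\varphi,a}$ is a function of a single variable $g\in[\GL_n]_{Q_n}$, so there is no tensor factor over $[\rH_{2,n+1}]$ to carry along. I would treat $\fg=\gl_{n+1}$ and $\fg=\widetilde{\gl_n}$ together: under the identification~\eqref{eq:isomorphism gl and tilde gl} the extra coordinate in $F$ is inert under both the $\GL_n$-action and the parabolic truncation, so the $\widetilde{\gl_n}$-statement (as well as the statement for products of copies of $\widetilde{\gl_n}$ from Remark~\ref{rmk:trace_formula_for_tilde_gl_n}) follows from the $\gl_{n+1}$-case by an elementary argument, and I concentrate on $\fg=\gl_{n+1}$.

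\textbf{Step 1: the truncation lemma.} I would introduce the space
\[
\cT^Q_{\cF_{\mathrm{RS},F}}:=\Bigl\{({}_R\varphi)\in\prod_{\substack{R\in\cF_{\mathrm{RS},F}\\R\subset Q}}\cT([\GL_n]_{R_n})\ \mid\ {}_R\varphi-{}_S\varphi\in\cS_{d_{R_{n+1}}^{S_{n+1}}}(R_n(F)\backslash\GL_n(\bA))\text{ for }R\subset S\subset Q\Bigr\}
\]
and, for $\underline\varphi=({}_R\varphi)$ in it, the truncations
\[
\Lambda^{Q,T}\underline\varphi(g)=\sum_{\substack{P\in\cF_{\mathrm{RS},F}\\P\subset Q}}\varepsilon_P^Q\sum_{\gamma\in P_n(F)\backslash Q_n(F)}\widehat{\tau}_{P_{n+1}}^{Q_{n+1}}(H_{P_{n+1}}(\gamma g)-T_{P_{n+1}})\,{}_P\varphi(\gamma g),\qquad\Pi^{Q,T}\underline\varphi(g)=F^{Q_{n+1}}(g,T)\,{}_Q\varphi(g),
\]
exactly as in Lemma~\ref{lem:trunation Q} but with the single $\GL_n$-variable. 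The estimate $\|\Lambda^{Q,T}\underline\varphi-\Pi^{Q,T}\underline\varphi\|_{\infty,N}\le e^{-c\|T\|}\|\underline\varphi\|$ would be proved by repeating the argument of Lemma~\ref{lem:trunation Q} essentially verbatim: Zydor's partition formula~\cite{Zydor20}*{Lemme~2.1} together with the identity $\widehat{\tau}_{P_{n+1}}^{Q_{n+1}}\tau_{R_{n+1}}^{P_{n+1}}=\sum_{P\subset S\subset Q}\sigma_{R_{n+1}}^{S_{n+1},Q_{n+1}}$ rewrite the difference as a sum over pairs $R\subsetneq S$, and each term ${}_{R,S}\varphi$ is controlled on the relevant support by the telescoping bound and~\cite{BLX}*{Lemma~4.29} applied to $M_{Q_{n+1}}$.

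\textbf{Step 2: identification of the kernels, and the sum over $a$.} Next I would check that, for $\varphi\in\cS(\gl_{n+1}(\bA))$, the functions ${}_R\varphi$ obtained by restricting $K_{\varphi,R}$ along $\GL_n\hookrightarrow\GL_{n+1}$ and descending to $[\GL_n]_{R_n}$ form an element of $\cT^Q_{\cF_{\mathrm{RS},F}}$ depending continuously on $\varphi$, and that then $\Lambda^{Q,T}\underline\varphi=K^{Q,T}_\varphi$ while $\Pi^{Q,T}\underline\varphi=F^{Q_{n+1}}(\cdot,T)K_{\varphi,Q}$; combined with Step~1 (and the closed-graph/uniform-boundedness arguments already used in the proof of Proposition~\ref{prop:asympototic Q}) this yields the crude estimate $|K^{Q,T}_\varphi(g)-F^{Q_{n+1}}(g,T_{Q_{n+1}})K_{\varphi,Q}(g)|\le e^{-N\|T\|}\|g\|^{-N}\|\varphi\|$. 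To introduce $\sum_{a\in(\fg/\GL_n)(F)}$ I would then copy the device from the proof of Proposition~\ref{prop:asymptotic Q geometric}: fix a closed embedding $\fg/\GL_n\hookrightarrow\mathbf{A}^N$, fix $\varphi$ and $d\in F$ with $K_{\varphi,a}=0$ unless $a\in d\cO_F^N$, choose $u\in C_c^\infty(F_\infty^N)$ with $\supp u\cap d\cO_F^N=\{0\}$, put $\varphi_a(Y)=\varphi(Y)u(p(Y_\infty)-a)$ where $p:\fg\to\fg/\GL_n\hookrightarrow\mathbf{A}^N$, use~\cite{BLX}*{Proposition~4.30} to see that $\sum_a\|\varphi_a\|_\cS<\infty$ for every continuous semi-norm $\|\cdot\|_\cS$, and note that $K^{Q,T}_{\varphi_a}=K^{Q,T}_{\varphi,a}$ and $K_{\varphi_a,Q}=K_{\varphi,Q,a}$; summing the crude estimate over $a$ gives the proposition.

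\textbf{Main obstacle.} The only genuinely new analytic input is the infinitesimal analogue of~\cite{BPCZ}*{Proposition~3.4.2.1} needed in Step~2: one must show that $g\mapsto K_{\varphi,P}(\gamma g)$ is of controlled moderate growth and that the difference $K_{\varphi,P}-K_{\varphi,S}$ is weighted Schwartz with weight $d_{P_{n+1}}^{S_{n+1}}$. This should follow from Poisson summation along $\fn(\bA)$ applied to $\varphi$ together with the growth estimates already established in~\cite{Zydor18} (and the techniques of~\cite{BLX}*{Section~4}); everything else is a direct transcription of the group argument, with the simplification of a single integration variable.
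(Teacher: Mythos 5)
Your outline matches the structure of the paper's proof: reduce the sum over $a$ to a single non-summed estimate via the localization $\varphi_a=\varphi\cdot u(p(\cdot_\infty)-a)$ as in Proposition~\ref{prop:asymptotic Q geometric}, then invoke Lemma~\ref{lem:trunation Q}, which reduces everything to the claim that $P\mapsto K_{\varphi,P}$ lies in $\cT^Q_{\cF_{\mathrm{RS},F}}(\GL_n)$. Two corrections on the scaffolding: Lemma~\ref{lem:trunation Q} is \emph{already} stated for a single $\GL_n$-variable, so no re-derivation is needed — it is cited as-is; and the closed-graph/uniform-boundedness machinery from Proposition~\ref{prop:asympototic Q} is not used here — that apparatus exists to control the spectral sum over $\chi\in\fX(\rG')$ and the joint dependence on two variables, neither of which appears on the Lie algebra.

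The genuine gap is in your ``main obstacle'' paragraph. You correctly isolate the new claim — that for $R\subset S\subset Q$ the difference $K_{\varphi,R}-K_{\varphi,S}$ is weighted-Schwartz with weight $d_{R_{n+1}}^{S_{n+1}}$ — but proposing to deduce this from ``Poisson summation along $\fn(\bA)$'' together with unnamed estimates from \cite{Zydor18} leaves the substantive work unproved. The paper splits $K_{\varphi,S}-K_{\varphi,R}=K_{\varphi,1}+K_{\varphi,2}$ where $K_{\varphi,1}=-K_{\varphi,S}+(K_{\varphi,S})_{R_{n+1}}$ is a constant-term difference controlled by \cite{BPCZ}*{Proposition~2.5.14.1} (this is plausibly the piece a Poisson-summation argument could address), while $K_{\varphi,2}$ sums over the \emph{extra} rational points $M\in\fm_S(F)\setminus\fm_R(F)$. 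The decay of $K_{\varphi,2}$ in $d_{R_{n+1}}^{S_{n+1}}(g)$ is the heart of the new analytic input and is not a Poisson-summation phenomenon: the paper proves it by conjugating to a standard $R_{n+1}$, restricting to a Siegel set, using that the diagonal torus of $\GL_{n+1}$ acts on $\gl_{n+1}$ entry by entry, and invoking two elementary bounds on $\sum_{x\in F}\|ax\|^{-N}$. Without this (or an equivalent concrete argument) the proposal stops short of a proof. Your reduction of $\widetilde{\gl_n}$ to $\gl_{n+1}$ by tensoring with a one-variable nonnegative Schwartz function $\varphi'$ with $\varphi'(0)>0$ is correct and is exactly what the paper does.
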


\begin{proof}
    By the same argument as in the proof of Proposition ~\ref{prop:asymptotic Q geometric}, we only need to prove that there exists a continuous semi-norm $\| \cdot \|$ on $\cS(\fg(\bA))$ such that
    \[
        \lvert K_{\varphi}^{Q,T}(g) - F^{Q_{n+1}}(g,T_{Q_{n+1}})K_{\varphi,Q}(g) \rvert \le e^{-N \| T \|} \|g\|^{-N} \| \varphi \|
    \]
    holds for all $\varphi \in \cS(\fg(\bA))$ and $g \in [\GL_n]^{\mathbbm{1}}_{Q_n}$.
    
    By Lemma ~\ref{lem:trunation Q}, we are then reduced to check that the family $P \mapsto K_{\varphi,P}$ belongs to $\cT^Q_{\cF_\mathrm{RS},F}(\GL_n)$.

    We first show this for $\fg=\gl_{n+1}$. Let $R,S \in \cF_{\mathrm{RS},F}$ with $R \subset S \subset Q$. For $P \in \cF_{\mathrm{RS}}$, we will write $\fm_P$ and $\fn_P$ the the Lie algebra of $M_{P_{n+1}}$ and $N_{P_{n+1}}$ respectively. We need to show there exists $N>0$ such that for any $r>0$ and $X \in \cU(\gl_\infty)$, we have
    \[
       \lvert R(X) K_{\varphi,R}(g) - R(X) K_{\varphi,S}(g) \rvert \ll d_R^S(g)^{-r} \| g \|^N.
    \]
    Replacing $\varphi$ by $X \cdot \varphi$, we can assume $X=1$. For any $P \in \cF_{\mathrm{RS}}$, we can extend the definition of $K_{\varphi,P}$ to any $g \in [\GL_{n+1}]_{P_{n+1}}$ by the same formula as in ~\eqref{eq:defi_K_varphi,P}. After this extension, we can write
    \[
        K_{\varphi,S}(g) - K_{\varphi,R}(g) = K_{\varphi,1}(g) + K_{\varphi_2}(g),
    \]
    where
    \[
        K_{\varphi,1}(g) = \sum_{M \in \fm_R(S)} \left( \int_{\fn_S(\bA)} \varphi((M+N) \cdot g) \rd N - \int_{\fn_R(\bA)} \varphi((M+N) \cdot g) \rd N \right)
    \]
    and
    \[
        K_{\varphi,2}(g) = \sum_{M \in \fm_S(F) \setminus \fm_R(F)} \int_{\fn_S(\bA)} \varphi((M+N) \cdot g) \rd N.
    \]
    We have
    \[
        K_{\varphi,1}(g) = -K_{\varphi,S}(g) + (K_{\varphi,S})_{R_{n+1}}(g),
    \]
    here $-_{R_{n+1}}$ is the constant term operator. By ~\cite{BPCZ}*{Proposition 2.5.14.1}, there exists $N>0$ such that for any $r>0$, we have
    \begin{equation} \label{eq:proof_lie_asympototic_1}
     \lvert K_{\varphi,1}(g) \rvert \ll_{r} d_{R_{n+1}}^{S_{n+1}}(g)^{-r} \|g\|^N
    \end{equation}
    holds for any $g \in R_{n+1}(F) \backslash \GL_{n+1}(\bA)$. Thus it remains to show that there exists $N>0$ such that for any $r>0$ and $g \in R_{n+1}(F) \backslash \GL_{n+1}(\bA)$ we have
    \begin{equation} \label{eq:proof_lie_asympototic_2}
        \lvert K_{\varphi,2}(g) \rvert \ll_r d_{R_{n+1}}^{S_{n+1}}(g)^{-r} \|g\|^N.
    \end{equation}
    After replacing $R_{n+1}$ by its conjugation, we can assume $R_{n+1}$ is standard. We then can assume $g$ is in the Siegel set $\fs^{R_{n+1}}$ of $R_{n+1}$, where we recall that any $g \in \fs^{R_{n+1}}$ can be written as $ac$, where $a \in A_{n+1}^\infty$ such that $\langle \alpha, a \rangle > t$ for any $\alpha \in \Delta_0^{R_{n+1}}$ and some $t>0$, and $c$ lies in some compact subset $C$. We now proceed to show that there exists $N>0$ such that for any $r>0$ and $N_1>0$, there exists a continuous semi-norm $\| \cdot \|$ on $\cS(\gl_{n+1}(\bA))$ such that
    \begin{equation} \label{eq:proof_lie_asympototic_3}
         \sum_{M \in \fm_S(F) \backslash \fm_R(F)} \varphi( (M+Y) \cdot g ) \ll \| Y \|^{-N_1} \| g \|^N d_{R_{n+1}}^{S_{n+1}}(g)^{-r} \| \varphi \|
    \end{equation}
    holds for any $g \in \fs^{R_{n+1}}$, $Y \in \fn_S(\bA)$. Note that $\eqref{eq:proof_lie_asympototic_3}$ would imply ~\eqref{eq:proof_lie_asympototic_2}. To show $\eqref{eq:proof_lie_asympototic_3}$, after replacing $\| \cdot \|$ by $\sup_{c \in C} \| R(c) \cdot \|$, we can assume $g = a \in A_{n+1}^\infty$ with $a \in A_{n+1}^\infty$ such that $\langle \alpha, a \rangle > t$ for any $\alpha \in \Delta_0^{R_{n+1}}$. Regarding elements of $\gl_{n+1}(\bA)$ as $(n+1) \times (n+1)$ matrices, the action of $a$ preserve each entries. Therefore ~\eqref{eq:proof_lie_asympototic_3} directly follows from the following two facts:
    \begin{itemize}
        \item For any $N$ sufficiently large, we have
        \[
         \sum_{x \in F} \| ax \|^{-N} \ll \|a\|^{N}
        \]
        holds for all $a \in \bA^\times$.
        \item Fix $t>0$, for any $r>0$ there exists $N>0$ such that
        \[
         \sum_{x \in F \setminus \{0\}} \| ax \|^{-N} \ll |a|^{-r}
        \]
        holds for all $a \in \R_{>t} \subset \bA^\times$.
    \end{itemize}
    We thus finish the proof when $\fg = \gl_{n+1}$. For the case $\fg = \widetilde{\gl_n}$, choose a non-negative $\varphi' \in \cS(\bA)$ with $\varphi'(0) > 0$. Given $\varphi \in \cS(\widetilde{\gl_n}(\bA))$, put $\varphi_1 \in \cS(\gl_{n+1}(\bA))$ by
    \[
        \varphi_1 \begin{pmatrix}
            A & v \\ u & d
        \end{pmatrix} = \varphi(A,v,u) \varphi'(d).
    \]
    Then there exists a constant $C>0$ such that $K_{\varphi_1,P}(g) = C K_{\varphi,P}(g)$. Therefore, the result follows from the case for $\fg = \gl_{n+1}$.
\end{proof}

\begin{bibdiv}
\begin{biblist}

\bib{AG}{article}{
    AUTHOR = {Aizenbud, Avraham},
    Author = {Gourevitch, Dmitry},
     TITLE = {Schwartz functions on {N}ash manifolds},
   JOURNAL = {Int. Math. Res. Not. IMRN},
  FJOURNAL = {International Mathematics Research Notices. IMRN},
      YEAR = {2008},
    NUMBER = {5},
     PAGES = {Art. ID rnm 155, 37},
      ISSN = {1073-7928,1687-0247},
   MRCLASS = {46T30 (14P20 46F05)},
  MRNUMBER = {2418286},
MRREVIEWER = {Michael\ Kunzinger},
       DOI = {10.1093/imrn/rnm155},
       URL = {https://doi.org/10.1093/imrn/rnm155},
}

\bib{Arthur81}{article}{
    AUTHOR = {Arthur, James},
     TITLE = {The trace formula in invariant form},
   JOURNAL = {Ann. of Math. (2)},
  FJOURNAL = {Annals of Mathematics. Second Series},
    VOLUME = {114},
      YEAR = {1981},
    NUMBER = {1},
     PAGES = {1--74},
      ISSN = {0003-486X},
   MRCLASS = {10D40 (22E55)},
  MRNUMBER = {625344},
MRREVIEWER = {Freydoon\ Shahidi},
       DOI = {10.2307/1971376},
       URL = {https://doi.org/10.2307/1971376},
}

\bib{BCR98}{book}{
    AUTHOR = {Bochnak, J.}
    Author = {Coste, M.}
    Author = {Roy, M.-F.},
     TITLE = {Real Algebraic Geometry},
 PUBLISHER = {Springer-Verlag},
      YEAR = {1998},
}

\bib{BLX}{article}{
      title={The global Gan-Gross-Prasad conjecture for Fourier-Jacobi periods on unitary groups}, 
      author={Boisseau, Paul},
      author={Lu, Weixiao},
      author = {Xue,Hang},
      year={2024},
      eprint={2404.07342},
      archivePrefix={arXiv},
      primaryClass={math.RT}
}

\bib{BP21c}{article}{
  title={A new proof of the Jacquet--Rallis fundamental lemma},
  author={Beuzart-Plessis, Rapha{\"e}l},
  journal={Duke Mathematical Journal},
  volume={170},
  number={12},
  pages={2805--2814},
  year={2021},
  publisher={Duke University Press}
}

\bib{BP21}{article}{
    AUTHOR = {Beuzart-Plessis, Rapha\"{e}l},
     TITLE = {Comparison of local relative characters and the
              {I}chino-{I}keda conjecture for unitary groups},
   JOURNAL = {J. Inst. Math. Jussieu},
  FJOURNAL = {Journal of the Institute of Mathematics of Jussieu. JIMJ.
              Journal de l'Institut de Math\'{e}matiques de Jussieu},
    VOLUME = {20},
      YEAR = {2021},
    NUMBER = {6},
     PAGES = {1803--1854},
      ISSN = {1474-7480,1475-3030},
   MRCLASS = {22E50 (11F70 22E55)},
  MRNUMBER = {4332778},
MRREVIEWER = {Baiying\ Liu},
       DOI = {10.1017/S1474748019000707},
       URL = {https://doi.org/10.1017/S1474748019000707},
}

\bib{BP21b}{article}{
    AUTHOR = {Beuzart-Plessis, Rapha\"{e}l},
     TITLE = {Plancherel formula for {${\rm GL}_n(F)\backslash {\rm
              GL}_n(E)$} and applications to the {I}chino-{I}keda and formal
              degree conjectures for unitary groups},
   JOURNAL = {Invent. Math.},
  FJOURNAL = {Inventiones Mathematicae},
    VOLUME = {225},
      YEAR = {2021},
    NUMBER = {1},
     PAGES = {159--297},
      ISSN = {0020-9910,1432-1297},
   MRCLASS = {22E50 (11F70)},
  MRNUMBER = {4270666},
MRREVIEWER = {Alexandre\ Afgoustidis},
       DOI = {10.1007/s00222-021-01032-6},
       URL = {https://doi.org/10.1007/s00222-021-01032-6},
}

\bib{BPC23}{article}{
      title={The global Gan-Gross-Prasad conjecture for unitary groups. II. From Eisenstein series to Bessel periods}, 
      author ={Beuzart-Plessis,Raphaël},
      author ={Chaudouard,Pierre-Henri},
      year={2023},
      eprint={2302.12331},
      archivePrefix={arXiv},
      primaryClass={math.RT}
}

\bib{BPCZ}{article}{
    AUTHOR = {Beuzart-Plessis, Rapha\"{e}l},
    AUTHOR = {Chaudouard, Pierre-Henri},
    AUTHOR = {Zydor, Micha\l},
     TITLE = {The global {G}an-{G}ross-{P}rasad conjecture for unitary
              groups: the endoscopic case},
   JOURNAL = {Publ. Math. Inst. Hautes \'{E}tudes Sci.},
  FJOURNAL = {Publications Math\'{e}matiques. Institut de Hautes \'{E}tudes
              Scientifiques},
    VOLUME = {135},
      YEAR = {2022},
     PAGES = {183--336},
      ISSN = {0073-8301,1618-1913},
   MRCLASS = {22E50 (11F70 11R39 22E55)},
  MRNUMBER = {4426741},
MRREVIEWER = {Dongwen\ Liu},
       DOI = {10.1007/s10240-021-00129-1},
       URL = {https://doi.org/10.1007/s10240-021-00129-1},
}

\bib{BPLZZ}{article}{
    AUTHOR = {Beuzart-Plessis,Rapha\"{e}l},
    Author = {Liu, Yifeng},
    Author = {Zhang, Wei},
    Author = {Zhu, Xinwen},
     TITLE = {Isolation of cuspidal spectrum, with application to the
              {G}an-{G}ross-{P}rasad conjecture},
   JOURNAL = {Ann. of Math. (2)},
  FJOURNAL = {Annals of Mathematics. Second Series},
    VOLUME = {194},
      YEAR = {2021},
    NUMBER = {2},
     PAGES = {519--584},
      ISSN = {0003-486X,1939-8980},
   MRCLASS = {11F67 (11F70 11F72)},
  MRNUMBER = {4298750},
MRREVIEWER = {Bin\ Xu},
       DOI = {10.4007/annals.2021.194.2.5},
       URL = {https://doi.org/10.4007/annals.2021.194.2.5},
}

\bib{Chaudouard19}{article}{
author={Chaudouard, Pierre-Henri},
title={On Relative Trace Formulae: the Case of Jacquet-Rallis},
journal={Acta Mathematica Vietnamica},
year={2019},
month={Jun},
day={01},
volume={44},
number={2},
pages={391-430},
abstract={We give an account of recent works on Jacquet-Rallis' approach to the Gan-Gross-Prasad conjecture for unitary groups. We report on the present state of the Jacquet-Rallis relative trace formulae and on some current applications of it. We give also a precise computation of the constant that appears in the statement ``Fourier transform and transfer commute up to a constant''.},
issn={2315-4144},
doi={10.1007/s40306-018-00312-3},
url={https://doi.org/10.1007/s40306-018-00312-3}
}

\bib{CZ21}{article}{
    AUTHOR = {Chaudouard, Pierre-Henri},
    Author = {Zydor, Micha\l },
     TITLE = {Le transfert singulier pour la formule des traces de
              {J}acquet-{R}allis},
   JOURNAL = {Compos. Math.},
  FJOURNAL = {Compositio Mathematica},
    VOLUME = {157},
      YEAR = {2021},
    NUMBER = {2},
     PAGES = {303--434},
      ISSN = {0010-437X,1570-5846},
   MRCLASS = {11F67 (11F70 11F72 22E50 22E55)},
  MRNUMBER = {4234897},
MRREVIEWER = {Jean\ Raimbault},
       DOI = {10.1112/S0010437X20007599},
       URL = {https://doi.org/10.1112/S0010437X20007599},
}

\bib{DZ23}{article}{
    Author = {Disegni,Daniel},
    Author = {Zhang, Wei},
    Year = {2023},
    Title = {Gan-Gross-Prasad cycles and derivatives of p-adic L-functions},
    Url = {https://disegni-daniel.perso.math.cnrs.fr/prtf-ggp.pdf},
}

\bib{GGP12}{incollection}{
    AUTHOR = {Gan, Wee Teck},
    Author = {Gross, Benedict H.},
    Author = {Prasad, Dipendra},
     TITLE = {Symplectic local root numbers, central critical {$L$} values,
              and restriction problems in the representation theory of
              classical groups},
      NOTE = {Sur les conjectures de Gross et Prasad. I},
   JOURNAL = {Ast\'{e}risque},
  FJOURNAL = {Ast\'{e}risque},
    NUMBER = {346},
      YEAR = {2012},
     PAGES = {1--109},
      ISSN = {0303-1179,2492-5926},
      ISBN = {978-2-85629-348-5},
   MRCLASS = {22E50 (11F70 11R39 22E55)},
  MRNUMBER = {3202556},
}

\bib{JR11}{incollection}{
    AUTHOR = {Jacquet, Herv\'{e}},
    Author = {Rallis, Stephen},
     TITLE = {On the {G}ross-{P}rasad conjecture for unitary groups},
 BOOKTITLE = {On certain {$L$}-functions},
    SERIES = {Clay Math. Proc.},
    VOLUME = {13},
     PAGES = {205--264},
 PUBLISHER = {Amer. Math. Soc., Providence, RI},
      YEAR = {2011},
      ISBN = {978-0-8218-5204-0},
   MRCLASS = {22E55 (11F72)},
  MRNUMBER = {2767518},
MRREVIEWER = {Christian\ A.\ Zorn},
}

\bib{Kottwitz}{incollection}{
    AUTHOR = {Kottwitz, Robert E.},
     TITLE = {Harmonic analysis on reductive {$p$}-adic groups and {L}ie
              algebras},
 BOOKTITLE = {Harmonic analysis, the trace formula, and {S}himura varieties},
    SERIES = {Clay Math. Proc.},
    VOLUME = {4},
     PAGES = {393--522},
 PUBLISHER = {Amer. Math. Soc., Providence, RI},
      YEAR = {2005},
      ISBN = {0-8218-3844-X},
   MRCLASS = {22E35 (17B99)},
  MRNUMBER = {2192014},
MRREVIEWER = {David\ A.\ Renard},
}

\bib{Liu14}{article}{
    AUTHOR = {Liu, Yifeng},
     TITLE = {Relative trace formulae toward {B}essel and {F}ourier-{J}acobi
              periods on unitary groups},
   JOURNAL = {Manuscripta Math.},
  FJOURNAL = {Manuscripta Mathematica},
    VOLUME = {145},
      YEAR = {2014},
    NUMBER = {1-2},
     PAGES = {1--69},
      ISSN = {0025-2611,1432-1785},
   MRCLASS = {11F67},
  MRNUMBER = {3244725},
MRREVIEWER = {Alexandru\ A.\ Popa},
       DOI = {10.1007/s00229-014-0666-x},
       URL = {https://doi.org/10.1007/s00229-014-0666-x},
}

\bib{Xue14}{article}{
    AUTHOR = {Xue, Hang},
     TITLE = {The {G}an-{G}ross-{P}rasad conjecture for {${\rm
              U}(n)\times{\rm U}(n)$}},
   JOURNAL = {Adv. Math.},
  FJOURNAL = {Advances in Mathematics},
    VOLUME = {262},
      YEAR = {2014},
     PAGES = {1130--1191},
      ISSN = {0001-8708,1090-2082},
   MRCLASS = {11F70 (22E55)},
  MRNUMBER = {3228451},
MRREVIEWER = {Ivan\ Mati\'{c}},
       DOI = {10.1016/j.aim.2014.06.010},
       URL = {https://doi.org/10.1016/j.aim.2014.06.010},
}

\bib{Xue19}{article}{
    AUTHOR = {Xue, Hang},
     TITLE = {On the global {G}an-{G}ross-{P}rasad conjecture for unitary
              groups: approximating smooth transfer of {J}acquet-{R}allis},
   JOURNAL = {J. Reine Angew. Math.},
  FJOURNAL = {Journal f\"{u}r die Reine und Angewandte Mathematik. [Crelle's
              Journal]},
    VOLUME = {756},
      YEAR = {2019},
     PAGES = {65--100},
      ISSN = {0075-4102,1435-5345},
   MRCLASS = {22E50},
  MRNUMBER = {4026449},
MRREVIEWER = {Corina\ Ciobotaru},
       DOI = {10.1515/crelle-2017-0016},
       URL = {https://doi.org/10.1515/crelle-2017-0016},
}

\bib{Yun}{article}{
    AUTHOR = {Yun, Zhiwei},
     TITLE = {The fundamental lemma of {J}acquet and {R}allis},
      NOTE = {With an appendix by Julia Gordon},
   JOURNAL = {Duke Math. J.},
  FJOURNAL = {Duke Mathematical Journal},
    VOLUME = {156},
      YEAR = {2011},
    NUMBER = {2},
     PAGES = {167--227},
      ISSN = {0012-7094,1547-7398},
   MRCLASS = {22E35 (17B45)},
  MRNUMBER = {2769216},
MRREVIEWER = {Anne-Marie\ H.\ Aubert},
       DOI = {10.1215/00127094-2010-210},
       URL = {https://doi.org/10.1215/00127094-2010-210},
}

\bib{Zhang12}{article}{
    AUTHOR = {Zhang, Wei},
     TITLE = {On arithmetic fundamental lemmas},
   JOURNAL = {Invent. Math.},
  FJOURNAL = {Inventiones Mathematicae},
    VOLUME = {188},
      YEAR = {2012},
    NUMBER = {1},
     PAGES = {197--252},
      ISSN = {0020-9910,1432-1297},
   MRCLASS = {11G18},
  MRNUMBER = {2897697},
MRREVIEWER = {Fabrizio\ Andreatta},
       DOI = {10.1007/s00222-011-0348-1},
       URL = {https://doi.org/10.1007/s00222-011-0348-1},
}

\bib{Zhang14}{article}{
    AUTHOR = {Zhang, Wei},
     TITLE = {Fourier transform and the global {G}an-{G}ross-{P}rasad
              conjecture for unitary groups},
   JOURNAL = {Ann. of Math. (2)},
  FJOURNAL = {Annals of Mathematics. Second Series},
    VOLUME = {180},
      YEAR = {2014},
    NUMBER = {3},
     PAGES = {971--1049},
      ISSN = {0003-486X,1939-8980},
   MRCLASS = {11F70 (11F67)},
  MRNUMBER = {3245011},
MRREVIEWER = {Shuichiro\ Takeda},
       DOI = {10.4007/annals.2014.180.3.4},
       URL = {https://doi.org/10.4007/annals.2014.180.3.4},
}

\bib{Zhang14b}{article}{
    AUTHOR = {Zhang, Wei},
     TITLE = {Automorphic period and the central value of {R}ankin-{S}elberg
              {L}-function},
   JOURNAL = {J. Amer. Math. Soc.},
  FJOURNAL = {Journal of the American Mathematical Society},
    VOLUME = {27},
      YEAR = {2014},
    NUMBER = {2},
     PAGES = {541--612},
      ISSN = {0894-0347,1088-6834},
   MRCLASS = {11F67 (11F70 11G40 22E55)},
  MRNUMBER = {3164988},
MRREVIEWER = {Neven\ Grbac},
       DOI = {10.1090/S0894-0347-2014-00784-0},
       URL = {https://doi.org/10.1090/S0894-0347-2014-00784-0},
}

\bib{Zydor18}{article}{
    AUTHOR = {Zydor, Micha\l },
     TITLE = {La variante infinit\'{e}simale de la formule des traces de
              {J}acquet-{R}allis pour les groupes lin\'{e}aires},
   JOURNAL = {J. Inst. Math. Jussieu},
  FJOURNAL = {Journal of the Institute of Mathematics of Jussieu. JIMJ.
              Journal de l'Institut de Math\'{e}matiques de Jussieu},
    VOLUME = {17},
      YEAR = {2018},
    NUMBER = {4},
     PAGES = {735--783},
      ISSN = {1474-7480,1475-3030},
   MRCLASS = {11F72 (22A10)},
  MRNUMBER = {3835522},
MRREVIEWER = {Anne-Marie\ H.\ Aubert},
       DOI = {10.1017/S1474748016000141},
       URL = {https://doi.org/10.1017/S1474748016000141},
}

\bib{Zydor20}{article}{
    AUTHOR = {Zydor, Micha\l },
     TITLE = {Les formules des traces relatives de {J}acquet-{R}allis
              grossi\`eres},
   JOURNAL = {J. Reine Angew. Math.},
  FJOURNAL = {Journal f\"{u}r die Reine und Angewandte Mathematik. [Crelle's
              Journal]},
    VOLUME = {762},
      YEAR = {2020},
     PAGES = {195--259},
      ISSN = {0075-4102,1435-5345},
   MRCLASS = {22E50 (11F72)},
  MRNUMBER = {4195660},
MRREVIEWER = {F\'{e}licien\ Comtat},
       DOI = {10.1515/crelle-2018-0027},
       URL = {https://doi.org/10.1515/crelle-2018-0027},
}

\end{biblist}
\end{bibdiv}
\end{document}